\setlist[description]{leftmargin=*}
\newtheorem{thm}{Theorem}[section]
\newtheorem{cor}[thm]{Corollary}
\newtheorem{lem}[thm]{Lemma}
\newtheorem{hyp}[thm]{Hypothesis}
\newtheorem{prop}[thm]{Proposition}
\theoremstyle{remark}
\newtheorem{remark}[thm]{Remark}
\newcommand\Aut{{\rm Aut}}
\newcommand\Sz{{\rm Sz}}
\newcommand\GAP{\textbf{{\rm GAP}}}
\newcommand\PSL{{\rm PSL}}
\newcommand\SL{{\rm SL}}
\newcommand\GL{{\rm GL}}
\newcommand\PGL{{\rm PGL}}
\newcommand\PSU{{\rm PSU}}
\newcommand\Out{{\rm Out}}
\newcommand\Ree{{\rm Ree}}
\newcommand\Sp{{\rm Sp}}
\newcommand\SU{{\rm SU}}
\newcommand\GO{{\rm GO}}
\newcommand\SO{{\rm SO}}
\newcommand\Q{{\rm Q}}
\newcommand\GU{{\rm GU}}
\newcommand\PSp{{\rm PSp}}
\newcommand\W{{\rm W}}
\newcommand\herm{{\rm H}}
\newcommand\gqthreefive{\mathsf{GQ}(3,5)}
\newcommand\rmO{{\rm O}}
\newcommand\POmega{{\rm P\Omega}}
\DeclareMathOperator{\soc}{soc}
\newcommand\PG{{\rm PG}}
\newcommand\Cos{{\sf Cos}}
\newcommand\C{{\bf C}}
\newcommand\Wr{{\rm \,wr\, }}
\newcommand\calP{{\mathcal P}}
\newcommand\calL{{\mathcal L}}
\newcommand\calQ{{\mathcal Q}}
\newcommand\calC{{\mathcal C}}
\newcommand\calO{{\mathcal O}}
\newcommand\McL{{\rm McL}} 
\newcommand\E{{\rm E}} \newcommand\F{{\rm F}} \newcommand\D{{\rm D}} \newcommand\G{{\rm G}}
\newcommand\Co{{\rm Co}}
\renewcommand\le{\leqslant}
\renewcommand\ge{\geqslant}
\title{A classification of finite antiflag-transitive generalized quadrangles}
\author{John Bamberg}
\thanks{This paper forms part of an Australian Research Council Discovery Project (DP120101336), that supported the third author during his time at The University of Western Australia.
The first author was supported by an Australian Research Council Future Fellowship (FT120100036).}
\email{John.Bamberg@uwa.edu.au}
\author{Cai Heng Li}
\email{Cai.Heng.Li@uwa.edu.au}
\address{ %
Centre for the Mathematics of Symmetry and Computation\\
School of Mathematics and Statistics\\
The University of Western Australia\\
35 Stirling Highway, Crawley, W.A. 6009, Australia.}
\author{Eric Swartz}
\email{easwartz@wm.edu}
\address{Department of Mathematics\\ 
College of William and Mary\\ 
Williamsburg, VA 23185, USA.}
\subjclass[2010]{51E12, 20B05, 20B15,  20B25}
\keywords{generalized quadrangle, antiflag, locally $3$-arc transitive, primitive group}
\begin{document}

\maketitle

\begin{abstract}
A generalized quadrangle is a point-line incidence geometry $\mathcal{Q}$ such that: (i) any two
points lie on at most one line, and (ii) given a line $\ell$ and a point $P$ not incident with
$\ell$, there is a unique point of $\ell$ collinear with $P$. The finite Moufang generalized
quadrangles were classified by Fong and Seitz (1973), and we study a larger class of generalized
quadrangles: the \emph{antiflag-transitive} quadrangles. An antiflag of a generalized quadrangle is
a non-incident point-line pair $(P, \ell)$, and we say that the generalized quadrangle $\mathcal{Q}$
is antiflag-transitive if the group of collineations is transitive on the set of all antiflags. We
prove that if a finite thick generalized quadrangle $\mathcal{Q}$ is antiflag-transitive, then
$\mathcal{Q}$ is either a
classical generalized quadrangle or is the unique generalized quadrangle of order $(3,5)$ or its dual.
\end{abstract}

\section{Introduction}

In this paper, we classify the finite generalized quadrangles having an automorphism group that acts
transitively on \emph{antiflags}, that is, non-incident point-line pairs.

\begin{thm}\label{maintheorem}
Let $\mathcal{Q}$ be a finite thick generalized quadrangle and suppose $G$ is a subgroup of
automorphisms of $\mathcal{Q}$ acting transitively on the antiflags of $\mathcal{Q}$. Then
$\mathcal{Q}$ is isomorphic to a classical generalized quadrangle or to the unique generalized
quadrangle of order $(3,5)$ or its dual.
\end{thm}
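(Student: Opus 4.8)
The plan is to attack this as a permutation-group problem, using the fact that antiflag-transitivity is a very strong condition linking the point-action and line-action of $G$. First I would observe that if $G$ is transitive on antiflags $(P,\ell)$ with $P \notin \ell$, then in particular $G$ is transitive on points, on lines, and on flags, so $\mathcal{Q}$ has an order $(s,t)$ and $G$ acts flag-transitively. More importantly, fixing a point $P$, the stabilizer $G_P$ acts transitively on the lines \emph{not} through $P$; dually, fixing a line $\ell$, the stabilizer $G_\ell$ acts transitively on the points \emph{not} on $\ell$. Combined with the GQ axioms, this forces a strong two-transitivity-like behaviour on the local structures (the $s+1$ points on a line, the $t+1$ lines through a point), and one expects that antiflag-transitivity implies the associated incidence graph is \emph{locally $3$-arc-transitive} — which is exactly the property flagged in the keywords. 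So the first real step is: \emph{prove that antiflag-transitivity of $\mathcal{Q}$ implies the bipartite incidence graph $\Gamma$ of $\mathcal{Q}$ is locally $3$-arc-transitive under $G$.} One checks that a $3$-arc $(\ell, P, \ell', P')$ of $\Gamma$ corresponds to a flag $(P,\ell)$ together with a line $\ell'$ through $P$ and a point $P' \ne P$ on $\ell'$; transitivity on such configurations follows from flag-transitivity plus the transitive action of $G_P$ on lines through $P$ and of $G_{\ell'}$ on points of $\ell'$ — the latter being a consequence of $G_{\ell'}$ being transitive on the antiflags $(Q,\ell')$ forcing (with the GQ axiom) a large action on $\ell'$ itself.

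**Reducing to almost simple primitive groups.**
Once local $3$-arc-transitivity is in hand, I would invoke the structure theory of locally $s$-arc-transitive graphs (Giudici–Li–Praeger and related work): for a locally $3$-arc-transitive bipartite graph, either there is a normal subgroup inducing a nice product/quotient structure, or the group acting on each side is primitive of almost simple or affine type with a $2$-transitive action on the neighbourhood of a vertex. The $2$-transitive local actions are what make this tractable: $G_P^{[P]}$ (the action of $G_P$ on the $t+1$ lines through $P$) and $G_\ell^{[\ell]}$ (the action on the $s+1$ points of $\ell$) are both $2$-transitive groups, and these have been classified (Burnside, Hering, etc.). So the second main step is: \emph{show that the imprimitive/quotient cases and the affine-type cases for the global action either do not arise for a thick GQ or lead only to the listed examples, reducing to the case where $G$ acts primitively on points (and on lines) with almost simple socle.} Here one uses finiteness constraints from GQ theory — the inequalities $s \le t^2$, $t \le s^2$ of Higman, divisibility conditions on $(s+1)st(t+1)$ for the point number $(s+1)(st+1)$ — to kill most parameter families.

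**The almost simple classification.**
The heart of the proof is then a case analysis over the almost simple primitive groups $G$ that could act antiflag-transitively, organised by the socle $T$: alternating, sporadic, classical, and exceptional Lie type. For each family I would use the known maximal-subgroup structure and the classification of $2$-transitive actions to pin down the possible $G_P$ and $G_\ell$ and hence the parameters $(s,t)$; then match against the classical quadrangles $W(q)$, $Q(4,q)$, $Q(5,q)$, $H(3,q^2)$, $H(4,q^2)$ and their parameters, and show any remaining numerical possibility is either spurious or is the sporadic $\mathsf{GQ}(3,5)$ (with $G$ related to a group acting on $27$ points / $45$ lines, i.e.\ tied to $\rmO_6^-(2) \cong \PSU_4(2)$ geometry) or its dual. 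The step I expect to be the \textbf{main obstacle} is precisely this almost-simple case division: ruling out the exceptional groups of Lie type and the trickier classical subgroup configurations requires delicate bookkeeping — one must ensure that a putative antiflag-transitive action is not merely flag-transitive, i.e.\ that $G_P$ genuinely fuses \emph{all} $s^2 t$ lines missing $P$ into one orbit, which is a far more restrictive condition than any standard maximal-subgroup factorization; handling the exceptional groups may need ad hoc arguments or a computer check in \GAP{} for the small-rank cases. A secondary obstacle is verifying that in the surviving classical cases the GQ one recovers is actually the classical one (uniqueness of the GQ given the parameters and the group), which uses the characterization of classical GQs among all GQs with the given automorphism group.
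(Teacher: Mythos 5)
Your outline matches the paper's architecture step for step: the equivalence of antiflag-transitivity with local $3$-arc-transitivity of the incidence graph (the paper's Lemma \ref{anti-f-trans-local-3-a-trans}, proved cleanly via the observation that each antiflag determines a unique $3$-arc -- a cleaner route than the composition of local transitivities you sketch, which as stated does not quite give transitivity on $3$-arcs since you need the stabilizer of a partial arc, not just of a single vertex, to act transitively on extensions); then the reduction via quasiprimitivity to the case of a primitive almost simple group on both points and lines, with the imprimitive case yielding only $\gqthreefive$ (Theorems \ref{thm:3arcNtrans}, \ref{thm:3arcLimprim}, Lemma \ref{lem:3arcQuasi=Prim}, Theorem \ref{thm:characterization}); and finally a case analysis over the socle.

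The one genuinely missing idea is the device that makes your self-identified ``main obstacle'' tractable. You propose to ``use the known maximal-subgroup structure'' of the almost simple candidates, but for classical groups of unbounded rank this is not a finite task as stated. The paper's key observation (Proposition \ref{prop:stabP} and Corollary \ref{cor:largesub}) is that antiflag-transitivity forces $|G_P|\ge st(t+1)$, which combined with the elementary bounds $(t+1)^2<|\mathcal{P}|<(t+1)^3$ yields $|T|<|T_P|^3$: the point stabilizer is a \emph{large} subgroup of the socle in the sense of Alavi and Burness, whose classification of large maximal subgroups of simple groups then reduces the candidates for $(T,T_P)$ to explicit finite lists in each Lie family. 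Everything downstream -- the geometric-subgroup, $\mathcal{C}_9$, novelty, and exceptional-group analyses -- is organized around that list. Without this (or an equivalent quantitative bound on $|T_P|$), your plan for the almost simple case has no termination mechanism; the GQ inequalities and $2$-transitive local actions you invoke are necessary but not sufficient to cut the search down. A second, smaller point: the paper's appeal to the classification of primitive flag-transitive quadrangles already restricts the socle to Lie type, so the alternating and sporadic cases you budget for do not need separate treatment.
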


This result is a generalization of the classification of finite Moufang quadrangles. By a deep result of
Fong and Seitz (1973), the finite generalized polygons satisfying a symmetry property known as the
\emph{Moufang condition} are precisely the known generalized polygons arising from simple groups of
Lie type. The Moufang condition has a group theoretic counterpart known as a \emph{split
$(B,N)$-pair}: a $(B,N)$-pair of rank $2$ such that there exists a normal nilpotent subgroup $U$ of
$B$ with $B=U(B\cap N)$. 
A {\it generalized $n$-gon} ({\it polygon}) is a geometry of points and lines whose
incidence graph has girth $2n$ and diameter $n$. Alternatively, a generalized $n$-gon is an
irreducible spherical building of rank $2$. If we reverse the role of points and lines of a generalized $n$-gon,
then we obtain another generalized $n$-gon; its \emph{dual}.
We say a generalized $n$-gon is \emph{thick} if every
vertex of the incidence graph has valency at least $3$. By a result of Feit and Higman \cite{Feit:1964aa}, a thick
generalized $n$-gon has $n\in\{2,3,4,6,8\}$. For $n=2$, the incidence graph is nothing more than a
complete bipartite graph, and a generalized $3$-gon is precisely a projective plane. In this paper,
we will be primarily concerned with generalized $4$-gons (quadrangles). 

Ostrom and Wagner \cite{Ostrom:1958qv,Ostrom:1959ys} proved that 
if a group $G$ of automorphisms of a projective plane $\Pi$ of order $q$ acts transitively on the antiflags of $\Pi$,
then $\Pi$ is Desarguesian and $\PSL_3(q)\le G$. 
The incidence graph $\Gamma$ of a generalized $n$-gon has diameter $n$, and 
hence for $n>4$, the automorphism group of $\Gamma$ cannot act antiflag-transitively since there are
non-incident lines at distinct distances from a given point. 
So Theorem \ref{maintheorem} gives a complete classification of generalized $n$-gons having 
a group of automorphisms $G$ acting antiflag-transitively on their incidence graphs.

There are two notable generalizations of the result of Fong and Seitz that are due to Buekenhout and
Van Maldeghem \cite{BuekenhoutHvM1994b,BuekenhoutHvM1994}. They classified the generalized polygons
that are \emph{point-distance-transitive} and those that are \emph{geodesic-transitive}. For
generalized quadrangles, these results can be reinterpreted using the language of local symmetry;
their results imply the classification of the finite generalized quadrangles whose incidence graphs are locally
$4$-arc-transitive. An $s$-arc in a simple graph is a path of $s+1$ vertices $(v_0,v_1,\ldots v_s)$
with $v_i\ne v_{i+1}$ for all $0\le i < s$. The graph $\Gamma$ is locally $s$-arc-transitive with
respect to $G\le\Aut(\Gamma)$ if for each vertex $v\in V\Gamma$, the group $G_v$ acts transitively
on the set of $s$-arcs originating at $v$. In this language, a generalized quadrangle is
\emph{Moufang} if it satisfies:
\begin{quote}
For each $2$-path $(v_0,v_1,v_2)$, the group $G_{v_0}^{[1]}\cap G_{v_1}^{[1]}\cap G_{v_2}^{[1]}$
acts transitively on $\Gamma(v_2)\setminus\{v_1\}$.
\end{quote}
(If the reader is unfamiliar with the above notation, then we refer to Section
\ref{section:groupsgraphs} for the necessary background). We will be considering a much weaker
condition -- \emph{local $3$-arc-transitivity} -- which is equivalent to transitivity on antiflags
(Lemma \ref{anti-f-trans-local-3-a-trans}) for generalized quadrangles. It is not difficult to see
that every Moufang generalized quadrangle is locally $3$-arc-transitive; the converse does not hold,
however. A counter-example being the unique generalized quadrangle of order $(3,5)$ and its dual
which arise from the construction $T_2^*(\mathcal{O})$ (see \cite[\S3.1.3]{fgq}) where $\mathcal{O}$
is a regular hyperoval of the projective plane $\PG(2,4)$ of order $4$. Its full automorphism group
is isomorphic to $2^6:(3.A_6.2)$ and it acts locally $3$-arc-transitively.

By the definition of a generalized quadrangle, each antiflag determines a unique flag, and hence
antiflag-transitivity implies flag-transitivity. Kantor \cite{Kantor1991} conjectured that the
finite flag-transitive generalized quadrangles are known, and our result gives some evidence for the
validity of this conjecture. All of the classical generalized quadrangles are flag-transitive, and
there are only two other examples known up to duality: the unique generalized quadrangle of order
$(3,5)$ and a generalized quadrangle of order $(15,17)$, the \emph{Lunelli-Sce quadrangle}, obtained
by the construction $T_2^*(\mathcal{L})$ where $\mathcal{L}$ is the Lunelli-Sce hyperoval of the
Desarguesian projective plane $\PG(2,16)$ of order $16$. The latter example is not
antiflag-transitive, and hence Kantor's Conjecture simplifies: a finite flag-transitive generalized
quadrangle that is not antiflag-transitive is isomorphic to the \emph{Lunelli-Sce quadrangle} or its
dual.

Our main result relies on the Classification of Finite Simple Groups, but does not rely directly on the
earlier results of Fong and Seitz, or Buekenhout and Van Maldeghem. Hence we obtain the following
corollary:

\begin{cor}\leavevmode
\begin{enumerate}[(i)]
\item A finite Moufang thick generalized quadrangle is classical (see Table~\ref{tbl:rank3} for a list).
\item A finite locally $4$-arc-transitive thick generalized quadrangle is classical.
\item A finite antiflag-transitive generalized quadrangle that is not locally $4$-arc-transitive 
is isomorphic to the unique generalized quadrangle of order $(3,5)$ or its dual.
\end{enumerate}
\end{cor}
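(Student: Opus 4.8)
The plan is to deduce all three parts directly from Theorem~\ref{maintheorem} together with the elementary relations among the various symmetry conditions, being careful to invoke only the constructive (forward) properties of the classical quadrangles, so that the argument does not covertly reappeal to the classifications of Fong--Seitz or Buekenhout--Van Maldeghem that the corollary is meant to recover.

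First I would record the implications among the hypotheses. By the remark in the introduction, a Moufang quadrangle is locally $3$-arc-transitive, and by Lemma~\ref{anti-f-trans-local-3-a-trans} local $3$-arc-transitivity is equivalent to antiflag-transitivity. Next, local $4$-arc-transitivity implies local $3$-arc-transitivity: given two $3$-arcs $(v,a,b,c)$ and $(v,a',b',c')$ issuing from a common vertex $v$, extend each to a $4$-arc by choosing $d\in\Gamma(c)\setminus\{b\}$ and $d'\in\Gamma(c')\setminus\{b'\}$, which is possible since $\mathcal{Q}$ is thick and so every vertex of the incidence graph has valency at least $3$; any element of $G_v$ carrying the first $4$-arc to the second restricts to an element carrying the first $3$-arc to the second. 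Hence each of the three hypotheses implies antiflag-transitivity, and Theorem~\ref{maintheorem} applies: in every case $\mathcal{Q}$ is a classical quadrangle, or is $\gqthreefive$, or is its dual.

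It then remains to separate the two conclusions under each hypothesis, for which I would establish two directional facts. Fact (A): every classical quadrangle is locally $4$-arc-transitive. These arise from classical groups carrying a split $(B,N)$-pair of rank $2$ that act strongly transitively on the associated building, and strong transitivity (transitivity on incident chamber--apartment pairs, i.e.\ on edge/$8$-cycle pairs of the incidence graph) yields transitivity on the $4$-arcs issuing from each vertex; this is a constructive property of the classical examples recorded in \cite{fgq}, independent of the classification theorems being recovered. Fact (B): neither $\gqthreefive$ nor its dual is locally $4$-arc-transitive. Here the full automorphism group is $G=2^6{:}(3.A_6.2)$ of order $138240$, and $G$ is transitive on the $96$ lines (it is flag-transitive, being antiflag-transitive), so a line stabilizer has order $|G_\ell|=138240/96=1440$. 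With $(s,t)=(3,5)$, the number of $4$-arcs from a fixed line is $(s+1)\,t\,s\,t=4\cdot5\cdot3\cdot5=300$, and since $300\nmid 1440$ no line stabilizer can be transitive on them; as $G$ is the entire automorphism group, neither $\gqthreefive$ nor (by duality) its dual is locally $4$-arc-transitive.

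Finally I would assemble the three parts. For (i), a Moufang quadrangle is antiflag-transitive, hence classical or $\gqthreefive$ or its dual by Theorem~\ref{maintheorem}; since $\gqthreefive$ and its dual are not Moufang (as noted in the introduction), $\mathcal{Q}$ is classical. For (ii), a locally $4$-arc-transitive quadrangle is antiflag-transitive, hence classical or $\gqthreefive$ or its dual, and Fact (B) excludes the latter two, so $\mathcal{Q}$ is classical. For (iii), an antiflag-transitive quadrangle is classical or $\gqthreefive$ or its dual by Theorem~\ref{maintheorem}, while Fact (A) shows that a classical quadrangle is locally $4$-arc-transitive; thus if $\mathcal{Q}$ is not locally $4$-arc-transitive it must be $\gqthreefive$ or its dual. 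The only substantive work lies in Facts (A) and (B); Fact (B) is a short finite computation, and the genuine subtlety is logical rather than computational, namely ensuring that Fact (A) uses only the forward strong-transitivity property of the classical examples and not the very classification statements this corollary reproves.
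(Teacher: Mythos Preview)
Your proposal is correct and supplies precisely the argument the paper leaves implicit (the corollary is stated without proof). The logical reductions you give---Moufang $\Rightarrow$ locally $3$-arc-transitive, locally $4$-arc-transitive $\Rightarrow$ locally $3$-arc-transitive by extending arcs in a thick graph, and then invoking Theorem~\ref{maintheorem}---are exactly what is intended, and your divisibility check that $300\nmid 1440$ is a clean way to establish Fact~(B), consistent with the paper's remark that $\gqthreefive$ is point-distance-transitive but not line-distance-transitive.
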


The strategy for proving the main result of our paper is summarized as follows. Let $\Gamma$ be the
incidence graph of a finite thick generalized quadrangle $\mathcal{Q}$, and let $G$ be a locally
$3$-arc-transitive group of automorphisms of $\Gamma$.
\begin{itemize}
\item We prove (Theorem \ref{thm:3arcNtrans}) that $G$ acts quasiprimitively on, at least one of,
  the set of points or the set of lines of $\mathcal{Q}$;
\item It is shown (Theorem \ref{thm:3arcLimprim}) that if $G$ acts primitively on points but
  imprimitively on lines, then $\mathcal{Q}$ is isomorphic to the unique generalized quadrangle of
  order $(3,5)$.
\item We then establish that apart from the above case, we have $G$ acting primitively on both
  points and lines of almost simple type (Lemma \ref{lem:3arcQuasi=Prim} and Theorem
  \ref{thm:characterization}).
\item The stabilizer of a point $P$ is then shown to be a \emph{large} subgroup of $G$ (Corollary
  \ref{cor:largesub}), and we use the characterization of large subgroups of simple groups of Lie
  type by Alavi and Burness \cite{largesubs} to determine possibilities for $G$ and $G_P$.
\end{itemize}

\section{Finite generalized quadrangles}

In this section we give the necessary background on finite generalized quadrangles. For a far more
thorough treatment of the combinatorics of finite generalized quadrangles, see \cite{fgq}.
Throughout this paper, we will let $\mathcal{Q}=(\calP,\calL)$ denote a finite generalized
quadrangle with point set $\mathcal{P}$ and line set $\mathcal{L}$.

\subsection{Parameters of generalized quadrangles}

A generalized quadrangle $\mathcal{Q}$ is said to have \textit{order} $(s,t)$ if every line is
incident with $s+1$ points and every point is incident with $t+1$ lines. The following lemmas
summarize some basic results concerning the parameters $s$ and $t$.

Let $\mathcal{Q}=(\calP,\calL)$ be a finite generalized quadrangle of order $(s,t)$, where $s,t >
1$. The following three lemmas give some basic properties about $\calQ$.

\begin{lem}[{\cite[1.2.1, 1.2.2, 1.2.3, 1.2.5]{fgq}}]
\label{lem:GQbasics} The following hold:
\begin{itemize}
 \item[(i)] $|\mathcal{P}| = (s+1)(st + 1)$ and $|\mathcal{L}| = (t+1)(st+1)$;
 \item[(ii)] $s + t$ divides $st(s+1)(t+1)$;
 \item[(iii)] $t \le s^2$ and $s \le t^2$;
 \item[(iv)] if $s < t^2$, then $s \le t^2 - t$, and if $t < s^2$, then $t \le s^2 - s$.
\end{itemize}
\end{lem}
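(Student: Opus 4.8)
The plan is to get all four parts from elementary double counting, together with one spectral (or combinatorial) input for part (iii). Throughout, recall that the incidence graph of $\calQ$ has girth $8$, so $\calQ$ contains no triangle.

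For (i), fix a point $P$. The $t+1$ lines through $P$ carry $s(t+1)$ points other than $P$. If $Q$ is not collinear with $P$, then on each line $\ell$ through $P$ the quadrangle axiom produces a unique point $\pi_\ell(Q)$ of $\ell$ collinear with $Q$; since $P\not\sim Q$ we have $\pi_\ell(Q)\ne P$, and these points are distinct for distinct lines, so $Q$ has exactly $t+1$ common neighbours with $P$, and conversely every common neighbour of $P$ and $Q$ arises this way. On the other side, if $R$ is collinear with $P$ then the no-triangle property shows that the $st$ points lying on the $t$ lines through $R$ other than $PR$ (and distinct from $R$) are precisely the neighbours of $R$ not collinear with $P$. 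Counting the pairs $(Q,R)$ with $R$ a common neighbour of $P$ and $Q$ and $Q$ not collinear with $P$ in two ways gives $(\#\{Q\not\sim P\})\cdot(t+1)=s(t+1)\cdot st$, so there are $s^2t$ points not collinear with $P$ and $|\calP|=1+s(t+1)+s^2t=(s+1)(st+1)$; interchanging the roles of points and lines gives $|\calL|=(t+1)(st+1)$.

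For (ii) and (iv), note that the collinearity graph of $\calQ$ is strongly regular with parameters $(v,k,\lambda,\mu)=((s+1)(st+1),\,s(t+1),\,s-1,\,t+1)$ and restricted eigenvalues $s-1$ and $-(t+1)$. Solving the two linear equations $m_1+m_2=v-1$ and $k+(s-1)m_1-(t+1)m_2=0$ for the eigenvalue multiplicities, the multiplicity of $-(t+1)$ equals $s^2(st+1)/(s+t)$; since it is a non-negative integer, $s+t\mid s^2(st+1)$, and reducing modulo $s+t$ (where $s\equiv -t$) shows this is equivalent to $s+t\mid st(s+1)(t+1)$, giving (ii). For (iv), suppose $s<t^2$ but $s>t^2-t$, say $s=t^2-j$ with $1\le j\le t-1$, so $s+t=t^2+t-j$. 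Reducing the divisibility relation of (ii) modulo $s+t$ and using $t^2\equiv j-t$ yields $s+t\mid j(2t-j)$; but $0<j(2t-j)\le(t-1)(t+1)=t^2-1<t^2+t-j=s+t$, a contradiction, so $s\le t^2-t$. The other implication in (iv) is the dual statement.

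For (iii) — Higman's inequality, the one genuinely non-routine point — I would fix a point $x_0$, let $\Omega$ be the set of the $s^2t$ points not collinear with $x_0$, fix $x\in\Omega$, and study the function $x'\mapsto n(x'):=|\{x_0,x,x'\}^{\perp}|$ on $\Omega$ (here $\{\cdot\}^{\perp}$ is the set of common neighbours). Using the no-triangle property one can evaluate $\sum_{x'\in\Omega}n(x')$ and $\sum_{x'\in\Omega}n(x')^2$ in closed form — the quadratic sum being counted by passing to points on common lines among the pairwise non-collinear set $\{x_0,x\}^{\perp}$ — and a Cauchy--Schwarz (variance) inequality between the two sums forces $t\le s^2$; the dual inequality $s\le t^2$ follows by interchanging points and lines. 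Alternatively one may invoke the Krein conditions for the point graph. The main obstacle is exactly this part: (i), (ii) and (iv) are bookkeeping once the point graph is recognised as strongly regular, whereas (iii) requires either the spectral bound or the careful second-moment count above, the delicate step being the correct evaluation of $\sum_{x'}n(x')^2$, i.e.\ controlling the sizes of the perps of the triads contained in $\{x_0,x\}^{\perp}$.
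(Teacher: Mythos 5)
The paper does not prove this lemma at all --- it cites it verbatim from Payne and Thas --- so your attempt has to be judged against the standard textbook arguments rather than against anything in the text. Your parts (i), (ii) and (iv) are correct and essentially complete: the double count in (i) is the standard one; recognising the collinearity graph as strongly regular with parameters $\bigl((s+1)(st+1),\,s(t+1),\,s-1,\,t+1\bigr)$, extracting the multiplicity $s^2(st+1)/(s+t)$ of the eigenvalue $-(t+1)$, and reducing modulo $s+t$ (where $s\equiv -t$) is exactly how 1.2.2 and 1.2.5 of the cited source go, and your congruence manipulations in (ii) and (iv) check out.

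Part (iii) as you have sketched it, however, would fail, and the failure is concretely in the choice of index set. If you sum $n(x')=|\{x_0,x,x'\}^{\perp}|$ over \emph{all} of $\Omega$ (every point not collinear with $x_0$), the no-triangle counts give $\sum n(x')=(t+1)(st-1)$ and $\sum n(x')^2=(t+1)\bigl((st-1)+t(t-1)\bigr)$ over the $s^2t-1$ points of $\Omega\setminus\{x\}$, and the Cauchy--Schwarz inequality $\bigl(\sum n\bigr)^2\le N\sum n^2$ reduces, after simplification, to $t(s-1)\bigl(t(s^2-s+1)-s\bigr)\ge 0$, which holds for every $s,t\ge 2$; no bound on $t$ results. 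The inequality only bites when you restrict $x'$ to the points collinear with \emph{neither} $x_0$ nor $x$, so that $\{x_0,x,x'\}$ is a genuine triad: then $N=s^2t-1-(s-1)(t+1)$, $\sum n(x')=(t+1)s(t-1)$, $\sum n(x')^2=(t+1)(t-1)(s+t)$, and Cauchy--Schwarz reduces to $t(s-1)(s^2-t)\ge 0$, i.e.\ $t\le s^2$. So the second-moment idea is the right one, but the domain you wrote down makes the argument vacuous; the alternative you mention in passing (the Krein condition for the point graph, with $s\le t^2$ by duality) is a legitimate way to sidestep the computation entirely.
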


\begin{lem}
\label{lem:ratio} Let $G$ be a group that is transitive on both $\mathcal{P}$ and $\mathcal{L}$.
Then, for $P\in\calP$ and $\ell\in\calL$, $$\frac{s+1}{t+1} = \frac{|G_\ell|}{|G_P|}.$$
\end{lem}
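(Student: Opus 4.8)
The idea is a standard double-counting (orbit-counting) argument applied to the flag set of $\calQ$. Let $\calF$ denote the set of flags (incident point-line pairs) of $\calQ$. The plan is to count $|\calF|$ in two different ways using the transitivity hypotheses, and then to relate the two expressions via the orbit-stabilizer theorem.

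First I would observe that, since every line is incident with $s+1$ points and every point is incident with $t+1$ lines, Lemma \ref{lem:GQbasics}(i) gives
\[
|\calF| = (t+1)|\calP| = (s+1)|\calL|.
\]
Next, fix a point $P\in\calP$ and a line $\ell\in\calL$. Since $G$ is transitive on $\calP$, the orbit-stabilizer theorem gives $|\calP| = |G|/|G_P|$, and similarly $|\calL| = |G|/|G_\ell|$ by transitivity on $\calL$. Substituting these into the identity $(t+1)|\calP| = (s+1)|\calL|$ yields
\[
(t+1)\,\frac{|G|}{|G_P|} = (s+1)\,\frac{|G|}{|G_\ell|},
\]
and cancelling $|G|$ and rearranging gives $\dfrac{s+1}{t+1} = \dfrac{|G_\ell|}{|G_P|}$, as required.

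There is no real obstacle here; the only thing to be slightly careful about is that the ratio is well-defined independently of the chosen $P$ and $\ell$, which is immediate from transitivity (all point stabilizers are conjugate, hence of equal order, and likewise for line stabilizers). One could alternatively phrase the argument intrinsically: $G$ acts transitively on $\calF$ is \emph{not} assumed, so the two-way count of $\calF$ must be done at the level of incidences rather than $G$-orbits on $\calF$, but this is exactly what the displayed identity $(t+1)|\calP| = (s+1)|\calL|$ encodes.
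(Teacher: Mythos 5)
Your proof is correct and follows essentially the same route as the paper: both reduce the claim to the identity $|\calP|/|\calL|=(s+1)/(t+1)$ and then apply the orbit--stabilizer theorem to each of the two transitive actions. The only cosmetic difference is that you obtain that identity by double-counting flags, whereas the paper reads it off directly from the formulas $|\calP|=(s+1)(st+1)$ and $|\calL|=(t+1)(st+1)$ in Lemma~\ref{lem:GQbasics}(i).
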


\begin{proof}
Using Lemma \ref{lem:GQbasics} (i), we see that $$\frac{s+1}{t+1} = \frac{(s+1)(st+1)}{(t+1)(st+1)}
= \frac{|\mathcal{P}|}{|\mathcal{L}|} = \frac{|G\colon G_P|}{|G\colon G_\ell|} =
\frac{|G_\ell|}{|G_P|},$$ as desired.
\end{proof}

\begin{lem}
\label{lem:Pbounds} Assuming $s \le t$, the following inequalities hold:
\begin{itemize}
 \item[(i)] $(t+1)^2 < |\mathcal{P}| < (t+1)^3$
 \item[(ii)] $s^2 (t+1) < |\mathcal{P}| < s(t+1)^2$
\end{itemize}
\end{lem}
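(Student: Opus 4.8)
The plan is to obtain all four inequalities directly from the closed formula $|\mathcal{P}| = (s+1)(st+1)$ of Lemma~\ref{lem:GQbasics}(i), combined with the hypothesis $s \le t$, the standing bounds $s,t \ge 2$, and the inequality $t \le s^2$ from Lemma~\ref{lem:GQbasics}(iii). Each of the four claims, after expanding both sides and cancelling common monomials, becomes an elementary polynomial inequality in $s$ and $t$, so the argument is just a short sequence of estimates with no case analysis.

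For the two upper bounds, the hypothesis $s \le t$ does almost all the work. For (i) one has $s+1 \le t+1$ and $st+1 \le t^2+1 < (t+1)^2$, and multiplying these gives $|\mathcal{P}| < (t+1)^3$ at once. For (ii), expanding $(s+1)(st+1)$ and $s(t+1)^2$ and cancelling reduces the claim $|\mathcal{P}| < s(t+1)^2$ to the inequality $s^2 t + 1 < st^2 + st$; since $s \le t$ gives $s^2 t \le st^2$ and since $st \ge 4 > 1$, adding these two yields the strict inequality.

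For the two lower bounds I would again expand and cancel. The difference $|\mathcal{P}| - s^2(t+1)$ simplifies to $st + s + 1 - s^2$, which is at least $s+1 > 0$ because $t \ge s$ forces $st \ge s^2$; this gives the lower bound of (ii) using only $s \le t$. The difference $|\mathcal{P}| - (t+1)^2$ simplifies to $s^2 t + st + s - t^2 - 2t$; here I would invoke $t \le s^2$ to replace $s^2 t$ by something $\ge t^2$, leaving $st + s - 2t = s(t+1) - 2t \ge 2(t+1) - 2t = 2 > 0$, which establishes the lower bound of (i).

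The only mildly delicate point is the lower bound in (i): it is the sole place where one genuinely needs the bound $t \le s^2$ from Lemma~\ref{lem:GQbasics}(iii) rather than just the hypothesis $s \le t$, and one must arrange the bookkeeping so as to extract a strictly positive quantity (here the constant $2$) rather than a merely nonnegative one. Everything else is routine arithmetic.
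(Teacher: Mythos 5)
Your proof is correct and takes essentially the same route as the paper: all four inequalities follow by direct manipulation of $|\mathcal{P}|=(s+1)(st+1)$ using $s\le t$ and, for the lower bound in (i), the bound $t\le s^2$. The only cosmetic difference is in that lower bound, where the paper keeps things factored via $(t^{1/2}+1)(t^{3/2}+1)\le(s+1)(st+1)$ while you expand and compare monomials (using $s\ge 2$ to get strictness); both are valid.
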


\begin{proof}
To show (i), we use Lemma \ref{lem:GQbasics}(i) and (iii): $$(t+1)^2 < t^2 + (t^{\frac{3}{2}} +
t^{\frac{1}{2}}) + 1 = (t^{\frac{1}{2}} + 1)(t^{\frac{3}{2}} + 1) \le (s+1)(st+1) = |\mathcal{P}|
\le (t+1)(t^2 + 1) < (t+1)^3.$$

The proof of (ii) is similar: $$s^2(t+1) = s^2t + s^2 < s^2t + st + s + 1 = |\mathcal{P}| < st^2 +
st + st + s = s(t+1)^2.$$
\end{proof}

\subsection{The known examples}

Here we provide data on the known examples of antiflag-transitive generalized quadrangles which
enable us to quickly identify them. During the course of this paper, we will be considering an
almost simple group $G$ and a maximal subgroup $H$, that will serve as a point stabilizer in a
primitive action. For most of the known examples, the action of $G$ on the right cosets of $H$ has
permutation rank $3$ and the generalized quadrangle can be readily identified. We will elaborate
below.

For a group $G$ and a subgroup $H$ of $G$, the group $G$ acts on $\{Hg\mid g\in G\}$ by right
multiplication and the kernel of the action is the \emph{core} $\cap_{g\in G}H^g$ of $H$ in $G$. Let
$G$ be a group with $A$ and $B$ proper subgroups of $G$. The {\it coset geometry} $\Cos(G;A,B)$ has
point set $\{Ax\mid x\in G\}$ and line set $\{By\mid y\in G\}$ such that a point $P:=Ax$ and a line
$\ell:=By$ are incident if and only if $Ax\cap By\neq \varnothing$. In particular, $G$ is a
flag-transitive group of automorphisms of this geometry and we have the following converse:

\begin{lem}[{\cite[Lemma 1]{HMc61}}]\label{prop:geom-flag-trans}
Let $\mathcal{G}$ be a geometry of points and lines, and let $G\le \Aut(\mathcal{G})$. Then $G$ acts
transitively on the flags of $\mathcal{G}$ if and only if $\mathcal{G}\cong \Cos(G;A,B)$ where $A$
is the stabilizer of a point $P$ and $B$ is the stabilizer of a line $\ell$ incident with $P$.
\end{lem}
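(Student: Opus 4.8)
The plan is to prove the two implications separately: the forward direction by constructing the coset-geometry isomorphism by hand, and the reverse direction by a short direct transitivity argument.

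For the forward direction I would suppose $G$ is transitive on the flags of $\mathcal{G}$, fix a flag $(P,\ell)$, and set $A:=G_P$ and $B:=G_\ell$; these are proper subgroups as soon as $\mathcal{G}$ has more than one point and more than one line, which we may assume (otherwise the statement is vacuous or trivial). First I would note that flag-transitivity forces $G$ to be transitive on points and on lines, since every point lies in some flag and $G$ permutes flags transitively, and similarly for lines. By the orbit--stabilizer theorem the assignments $Q=P^x\mapsto Ax$ and $m=\ell^y\mapsto By$ are then well-defined bijections from the point set of $\mathcal{G}$ onto $\{Ax\mid x\in G\}$ and from the line set of $\mathcal{G}$ onto $\{By\mid y\in G\}$.

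It then remains to check that these bijections carry the incidence of $\mathcal{G}$ to that of $\Cos(G;A,B)$, i.e. that for $Q=P^x$ and $m=\ell^y$ the point $Q$ is incident with $m$ in $\mathcal{G}$ if and only if $Ax\cap By\neq\varnothing$. If $g\in Ax\cap By$, then $gx^{-1}\in A=G_P$ and $gy^{-1}\in B=G_\ell$, so $P^g=P^x=Q$ and $\ell^g=\ell^y=m$; applying $g$ to the flag $(P,\ell)$ shows $(Q,m)$ is a flag. Conversely, if $(Q,m)$ is a flag, flag-transitivity gives $g\in G$ with $(P^g,\ell^g)=(Q,m)=(P^x,\ell^y)$, so $gx^{-1}\in G_P=A$ and $gy^{-1}\in G_\ell=B$, whence $g\in Ax\cap By$. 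This establishes $\mathcal{G}\cong\Cos(G;A,B)$. For the reverse direction, suppose $\mathcal{G}\cong\Cos(G;A,B)$ with $A=G_P$ and $B=G_\ell$ for an incident point--line pair $(P,\ell)$. Since $1\in A\cap B$, the pair $(A,B)$ is a flag of $\Cos(G;A,B)$; and given any flag $(Ax,By)$ of $\Cos(G;A,B)$ we have $Ax\cap By\neq\varnothing$, so picking $g$ in this intersection gives $Ax=Ag$ and $By=Bg$, and under right multiplication $g$ maps the flag $(A,B)$ to $(Ag,Bg)=(Ax,By)$. Hence $G$ is flag-transitive on $\Cos(G;A,B)$, and therefore on $\mathcal{G}$.

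I do not expect a genuine obstacle here; the argument is routine bookkeeping with cosets and stabilizers. The one point that deserves care is the matching of incidence relations, and this is exactly where full flag-transitivity (rather than mere point- and line-transitivity) is used: without it, a point $Q=P^x$ could be incident with a line $m=\ell^y$ in $\mathcal{G}$ while no $g\in G$ satisfies $(P^g,\ell^g)=(Q,m)$, so the coset-intersection condition would fail to detect incidence.
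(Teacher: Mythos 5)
Your proof is correct. The paper offers no proof of this lemma at all --- it is quoted directly from Higman and McLaughlin \cite[Lemma 1]{HMc61} --- but your argument is the standard one for this classical fact: the $G$-equivariant bijections $P^x\mapsto Ax$ and $\ell^y\mapsto By$ carry incidence to non-empty coset intersection precisely because of flag-transitivity, which is exactly the point you isolate at the end. (The only step worth a half-sentence more of care is the final ``and therefore on $\mathcal{G}$'' in the reverse direction, which uses that the isomorphism with $\Cos(G;A,B)$ is the natural equivariant one; alternatively one can note that the orbit of the flag $(P,\ell)$ has length $|G:A\cap B|$, which equals the total number of flags of $\Cos(G;A,B)$.)
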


Hence, if $G$ acts flag-transitively on a generalized quadrangle $\mathcal{Q}$, then we can
recognize the isomorphism type of $\mathcal{Q}$ by knowing the triple $(A,B,A\cap B)$. In the case
that $G$ is a primitive permutation group of rank 3 on the points of $\mathcal{Q}$, we only need to
know the stabilizer of a point or line to establish the type of $\mathcal{Q}$, since the rank $3$
primitive actions of the classical groups were classified by Kantor and Liebler
\cite{KantorLiebler}. Moreover, the finite classical generalized quadrangles are precisely those
admitting a rank $3$ primitive group on their points (see also \cite[p. 252]{Kantor1991}). The
infinite families of finite generalized quadrangles associated with classical groups are known as
the \textit{classical generalized quadrangles.} In each case, the full \textit{collineation group}
-- i.e., automorphisms sending points to points and lines to lines, preserving incidence -- is a
classical group, and the point and line stabilizers are maximal parabolic subgroups of the
appropriate index (the point/line incidence structure corresponds to the incidence of totally
singular subspaces). The classical generalized quadrangles are briefly summarized in Table
\ref{tbl:rank3}. The notation $\soc(G)$ denotes the \emph{socle} of a group $G$; the product of the
minimal normal subgroups of $G$. We write $E_q^{a}$ for the elementary abelian group of order $q^a$
(where $q$ is a prime power), and we write $E_q^{a+b}$ for a special group of order $q^{a+b}$ with
centre of order $q^a$. If we do not need to specify the structure of a subgroup, we will often write
$[n]$ to denote an undetermined subgroup of order $n$.

\begin{table}[ht]
\caption{The classical generalized quadrangles given by certain rank $3$ classical
groups.}\label{tbl:rank3}
\begin{tabular}{l|c|c|l}
\toprule
$\mathcal{Q}$& Order & $\soc(G)$ & Point stabilizer in $\soc(G)$\\
\midrule
$\W(3,q)$, $q$ odd& $(q,q)$ & $\PSp_4(q)$ & $E_q^{1+2}: ( \GL_1(q)\circ \Sp_2(q))$\\
$\W(3,q)$, $q$ even& $(q,q)$ & $\PSp_4(q)$ & $E_q^{3}:\GL_2(q)$\\
$\Q(4,q)$, $q$ odd & $(q,q)$ & $\POmega_5(q)$ & $E_q^{3}:(  (\frac{(q-1)}{2}\times \Omega_3(q)).2)$\\
$\Q^-(5,q)$ & $(q,q^2)$ &$\POmega^-_6(q)$ & $E_q^4: (\frac{q-1}{|Z(\Omega^-_6(q))|}\times \Omega^-_4(q))$\\
$\herm(3,q^2)$ & $(q^2,q)$ &$\PSU_4(q)$ & $E_q^{1+4}:\left(\SU_2(q):\frac{q^2-1}{\gcd(q+1,4)}\right)$\\
$\herm(4,q^2)$ &$(q^2,q^3)$ &$\PSU_5(q)$ & $E_q^{1+6}:\left(\SU_3(q):\frac{q^2-1}{\gcd(q+1,5)}\right)$\\
$\herm(4,q^2)^D$ &$(q^3,q^2)$ &$\PSU_5(q)$ & $E_q^{4+4}:\GL_2(q^2)$\\
\bottomrule
\end{tabular}
\end{table}

The classical generalized quadrangles come in dual pairs: $\W(3,q)$ is isomorphic to the dual of
$\Q(4,q)$, $\Q^-(5,q)$ is isomorphic to the dual of $\herm(3,q^2)$, and $\herm(4,q^2)^D$ denotes the
dual of $\herm(4,q^2)$.  

We will also make use of the following classification of small generalized quadrangles.

\begin{lem}[{\cite[\S6]{fgq}}]\label{lemma:smallGQ}
Let $\mathcal{Q}$ be a finite generalized quadrangle of order $(s,t)$.
\begin{enumerate}[(i)]
\item If $(s,t)=(2,2)$, then $\mathcal{Q}$ is isomorphic to $\W(3,2)$.
\item If $(s,t)=(2,4)$, then $\mathcal{Q}$ is isomorphic to $\Q^-(5,2)$.
\item If $(s,t)=(3,3)$, then $\mathcal{Q}$ is isomorphic to $\W(3,3)$ or $\Q(4,3)$.
\item There is (up to isomorphism) a unique generalized quadrangle of order $(3,5)$.
\end{enumerate}
\end{lem}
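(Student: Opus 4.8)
The plan is to treat each of the four cases by combining the counting formulas of Lemma~\ref{lem:GQbasics}(i) with a combinatorial analysis of the incidence structure, passing where convenient to the \emph{point graph} of $\calQ$, whose vertices are the points, adjacent when collinear. For a generalized quadrangle of order $(s,t)$ this graph is strongly regular with parameters
$$\bigl((s+1)(st+1),\ s(t+1),\ s-1,\ t+1\bigr),$$
the last two entries coming straight from the axioms: two collinear points have $s-1$ further common neighbours on their common line and no others, while two non-collinear points have exactly one common neighbour on each of the $t+1$ lines through either of them. For existence one exhibits the named models -- $\W(3,q)$ as the points and totally isotropic lines of a symplectic polarity of $\PG(3,q)$, $\Q(4,q)$ and $\Q^-(5,q)$ as the points and lines on a non-singular parabolic, respectively elliptic, quadric, and for part~(iv) the quadrangle $T_2^*(\mathcal{O})$ attached to a hyperoval $\mathcal{O}$ of $\PG(2,4)$ -- and checks their orders; the substance of the lemma is uniqueness.

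For part~(i), $|\calP|=|\calL|=15$: fix a point $P$, lying on $3$ lines through $6$ further points, with each of the remaining $8$ points having a unique neighbour on each line through $P$; propagating incidences, every choice is forced up to relabelling and one recovers $\W(3,2)$. Equivalently the point graph is strongly regular with parameters $(15,6,1,3)$, and this graph is unique (the complement of the triangular graph $T(6)$), after which the lines are its maximal cliques. Part~(ii) is similar: $|\calP|=27$, and the point graph is strongly regular with parameters $(27,10,1,5)$, whose complement is the Schl\"afli graph; since the Schl\"afli graph is the unique strongly regular graph with its parameters, the quadrangle is unique, hence $\Q^-(5,2)$.

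For part~(iii), $|\calP|=|\calL|=40$ and the point graph is strongly regular with parameters $(40,12,2,4)$, but there are several such graphs, so one must use the quadrangle structure rather than a graph-classification. The route I would take is to show that in any generalized quadrangle of order $(3,3)$ either every point is regular or, dually, every line is regular -- a finite analysis exploiting the smallness of $s=t=3$, examining the spans $\{x,y\}^{\perp\perp}$ of opposite pairs and the distribution of triads -- and then to invoke the classical fact that a generalized quadrangle of order $(q,q)$ with $q$ odd all of whose points are regular is $\W(3,q)$, while one all of whose lines are regular is $\Q(4,q)$. This leaves exactly $\W(3,3)$ and $\Q(4,3)$.

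Part~(iv) is the main obstacle. Here $|\calP|=64$ and $|\calL|=96$, and there is neither a small graph-classification nor a regularity dichotomy to fall back on. The strategy, following Dixmier and Zara, is a triad-counting argument: one studies triples of pairwise non-collinear points together with their sets of \emph{centres} (points collinear with all three), shows that for order $(3,5)$ the number of centres is so rigidly constrained over all triads that one can single out inside $\calQ$ a subconfiguration recovering a projective plane of order $4$ together with a distinguished hyperoval, and then reconstructs $\calQ$ as $T_2^*(\mathcal{O})$ for that hyperoval -- which, $\PG(2,4)$ carrying a unique hyperoval, is uniquely determined. Keeping the numerous subcases of this reconstruction under control is the genuinely delicate step, and is precisely the content of the treatment cited in \cite[\S6]{fgq}.
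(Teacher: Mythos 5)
The paper does not actually prove this lemma: it is imported wholesale from \cite[\S6]{fgq}, so there is no in-paper argument to compare yours against, and the honest benchmark is the treatment in Payne--Thas. Measured against that, your route for parts (i) and (ii) is correct and essentially self-contained: since $\lambda=s-1=1$ when $s=2$, every edge of the point graph lies in a unique triangle and the lines are recoverable as the triangles, so uniqueness of the strongly regular graphs with parameters $(15,6,1,3)$ and $(27,10,1,5)$ (the complements of $T(6)$ and of the Schl\"afli graph) really does give uniqueness of the quadrangles --- provided you cite those two graph-uniqueness theorems, which are nontrivial classical facts in their own right. This is a legitimately different (and arguably cleaner) path than the direct incidence-propagation in the reference.

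For parts (iii) and (iv), however, what you have written is a roadmap rather than a proof. In (iii) the entire content of the statement is the dichotomy that in a quadrangle of order $(3,3)$ either all points are regular or all lines are regular; you assert that this follows from ``a finite analysis exploiting the smallness of $s=t=3$'' without exhibiting a single step of that analysis, whereas the subsequent appeal to the regularity characterizations of $\W(3,q)$ and its dual $\Q(4,q)$ is the easy half. (Payne--Thas in fact reach $\Q(4,3)$ via antiregularity of points rather than regularity of lines, but that is a cosmetic difference.) In (iv) you explicitly concede that the triad analysis and the reconstruction of $\PG(2,4)$ with its hyperoval ``is precisely the content of the treatment cited in \cite[\S6]{fgq}'' --- that is, you defer the entire proof to the very reference the lemma already cites. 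As an account of where the result comes from, your write-up is accurate; as a blind proof, it is complete only for (i) and (ii).
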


Apart from a single exception, the only known finite generalized quadrangles that are
antiflag-transitive are the classical generalized quadrangles. The lone exception (along with its
dual) is the unique generalized quadrangle of order $(3,5)$, henceforth referred to as
$\gqthreefive$. In this case, the collineation group is isomorphic to $2^6{:}(3.A_6.2)$, the
stabilizer of a point is isomorphic to $3.A_6.2$, and the stabilizer of a line is isomorphic to
$(A_5 \times A_4).2$. It should be noted that $\gqthreefive$ is point-distance-transitive
\cite{BuekenhoutHvM1994}, but it is not line-distance-transitive; indeed, $\gqthreefive$ is the only
known example of a finite generalized quadrangle that is antiflag-transitive but not Moufang.

\section{Permutation group theory and graph symmetry}\label{section:groupsgraphs}

In this section we review the permutation group theory and graph symmetry results necessary for this
paper. For a more complete discussion of these notions, the reader is referred to \cite{localsarc}.
Let the group $G$ act on the set $\Omega,$ and let $\omega \in \Omega.$ We denote the
$G$-\textit{orbit} of $\omega$ by $\omega^G$, and we refer to the \textit{stabilizer} of $\omega$ in
$G$ by $G_{\omega}$. Given a set $\Sigma \subseteq \Omega$, the subgroup of $G$ that fixes $\Sigma$
setwise is denoted by $G_{\Sigma}$ and the subgroup of $G$ that fixes every element of $\Sigma$ is
denoted by $G_{(\Sigma)}$.

An \textit{automorphism} of a graph $\Gamma$ is a permutation of the vertices that preserves
adjacency and non-adjacency. The set of all automorphisms of a graph $\Gamma$ forms a group and is
denoted by $\Aut(\Gamma)$. Let $\alpha$ be a vertex of a graph $\Gamma$, and let $G \le
\Aut(\Gamma)$. We denote the vertices adjacent to $\alpha$ in $\Gamma$ by $\Gamma(\alpha)$, and the
permutation group induced by $G_{\alpha}$ on $\Gamma(\alpha)$ will be denoted by
$G_{\alpha}^{\Gamma(\alpha)}.$ For any two vertices $\alpha, \beta$ of $\Gamma$, we define the
distance function $d(\alpha, \beta)$ to be the length of the shortest path between $\alpha$ and
$\beta$ in $\Gamma$. Given a natural number $i$, we define

$$G_{\alpha}^{[i]} := \{g \in G \mid \beta^g = \beta \text{ for all } \beta \text{ satisfying }
d(\alpha, \beta) \le i \}.$$ Note that $G_{\alpha}^{[1]}$ is a normal subgroup of $G_{\alpha}$, and
$G_{\alpha}^{\Gamma(\alpha)} \cong G_{\alpha}/G_{\alpha}^{[1]}.$

An \textit{s-arc} of a graph $\Gamma$ is a sequence of vertices $(\alpha_0, \alpha_1, ...,
\alpha_s)$ such that $\alpha_{i}$ is adjacent to $\alpha_{i+1}$ and $\alpha_{i-1} \neq \alpha_{i+1}$
for all possible $i$. Note that vertices can be repeated as long as $\alpha_{i-1} \neq \alpha_{i+1}$
for all possible $i$. Given a subgroup $G \le \Aut(\Gamma)$, $\Gamma$ is
$(G,s)$\textit{-arc-transitive} if $\Gamma$ contains an $s$-arc and any $s$-arc in $\Gamma$ can be
mapped to any other $s$-arc in $\Gamma$ via an element of $G$. The graph is \textit{locally}
$(G,s)$\textit{-arc-transitive} if $\Gamma$ contains an $s$-arc and, for any vertex $\alpha$ of
$\Gamma$, any $s$-arc starting at $\alpha$ can be mapped to any other $s$-arc starting at $\alpha$
via an element of $G$. In the cases that such a group $G$ exists, the graph $\Gamma$ is said to be
$s$\textit{-arc-transitive} or \textit{locally} $s$\textit{-arc-transitive}, respectively. Note that
it is possible for a graph to be locally $(G,s)$-arc-transitive but for $G$ to be intransitive on
the set of vertices. (As an example, one could take the complete bipartite graph $K_{2,3}$ with $G$
the full automorphism group.) On the other hand, when $\Gamma$ is locally $(G,s)$-arc-transitive and
every vertex in $\Gamma$ is adjacent to at least two other vertices, $G$ is transitive on the edges
of $\Gamma$. By definition, a locally $(G,3)$-arc-transitive graph is locally
$(G,2)$-arc-transitive. It is easily seen that a graph $\Gamma$ is locally $(G,2)$-arc-transitive if
and only if $G_\alpha^{\Gamma(\alpha)}$ is a 2-transitive permutation group for all vertices
$\alpha$.

Let $\Gamma$ be a graph with a group of automorphisms $G$. If $G$ has a normal subgroup $N$ that
acts intransitively on the vertices of $\Gamma$, define the \textit{(normal) quotient graph}
$\Gamma_N$ to have vertex-set the $N$-orbits of vertices of $\Gamma$, and two $N$-orbits $\Sigma_1$
and $\Sigma_2$ are adjacent in $\Gamma_N$ if and only if there exist vertices $\alpha \in \Sigma_1$
and $\beta \in \Sigma_2$ such that $\alpha$ is adjacent to $\beta$ in $\Gamma.$ Giudici, Li, and
Praeger \cite{localsarc} showed that if $\Gamma$ is a locally $(G,s)$-arc-transitive graph, then
$\Gamma_N$ is a locally $(G/N,s)$-arc-transitive graph unless $\Gamma_N$ is a star.

A transitive group $G$ acting on a set $\Omega$ is called \textit{quasiprimitive} if every
nontrivial normal subgroup $N$ of $G$ is transitive on $\Omega$. Indeed, the nontrivial normal
subgroups of any primitive group are transitive, and hence primitive groups are quasiprimitive.
Locally $s$-arc-transitive graphs with a group of automorphisms acting quasiprimitively on at least
one orbit of vertices have been studied extensively; see \cite{loccompbit, localrees, localsarc,
localstar, localdifferent, localsporadic, localsuz}.

A generalized quadrangle $\calQ=(\calP,\calL)$ is called {\it locally $(G,2)$-transitive} if for
each $P\in\calP$ and each $\ell\in\calL$, the stabilizer $G_P$ is 2-transitive on the lines which
are incident with $P$ and $G_\ell$ is 2-transitive on the points which lie on $\ell$.
Observe that, for an antiflag $(P,\ell)$ in a generalized quadrangle, there is a unique 3-arc
between $P$ and $\ell$, since by the definition of a generalized quadrangle there is a unique point on $\ell$ collinear with $P$. We thus have the following conclusion.

\begin{lem}\label{anti-f-trans-local-3-a-trans}
Let $\Gamma$ be the incidence graph of a generalized quadrangle $\mathcal{G}$, and
$G\leqslant\Aut(\mathcal{G})$. Then $\mathcal{G}$ is $G$-antiflag-transitive if and only if $\Gamma$
is locally $(G,3)$-arc-transitive. In particular, an antiflag-transitive generalized quadrangle is
flag-transitive and locally $2$-transitive.
\end{lem}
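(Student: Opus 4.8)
The plan is to build a $G$-equivariant bijection between the $3$-arcs of $\Gamma$ and the antiflags of $\mathcal{G}$, and then simply transport transitivity statements across it. First I would record the ambient structure: $\Gamma$ is a connected bipartite graph with parts $\calP$ and $\calL$, every vertex has valency at least $2$ (as $\mathcal{G}$ has order $(s,t)$ with $s,t\ge 1$), and $G\le\Aut(\mathcal{G})$ preserves the bipartition because its elements are collineations. A $3$-arc of $\Gamma$ beginning at a point $P$ is a sequence $(P,m,Q,\ell)$ with $m$ a line on $P$, $Q$ a point on $m$ with $Q\ne P$, and $\ell$ a line on $Q$ with $\ell\ne m$; using the axiom that two points lie on at most one line, one checks that $P$ is then not incident with $\ell$, so $(P,\ell)$ is an antiflag. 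Conversely, as observed just before the statement, each antiflag $(P,\ell)$ lies on a \emph{unique} $3$-arc between $P$ and $\ell$ (since $\ell$ carries a unique point collinear with $P$). Hence $(P,m,Q,\ell)\mapsto(P,\ell)$ is a bijection from the $3$-arcs starting at $P$ onto the antiflags with first coordinate $P$; being defined purely in incidence-graph terms, it is $G_P$-equivariant. Reversing $3$-arcs gives the analogous $G_\ell$-equivariant bijection between $3$-arcs starting at a line $\ell$ and antiflags with second coordinate $\ell$.

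For the forward implication, assume $G$ is transitive on antiflags. Let $v$ be a vertex; say $v=P$ is a point, and let the two given $3$-arcs at $P$ correspond to antiflags $(P,m)$ and $(P,m')$. Transitivity gives $g\in G$ with $(P,m)^g=(P,m')$; as $P^g=P$ we get $g\in G_P$, and equivariance of the bijection shows $g$ carries the first $3$-arc to the second. The identical argument with the reversed bijection handles the case where $v$ is a line. Since $\mathcal{G}$ possesses an antiflag, $\Gamma$ contains a $3$-arc, so $\Gamma$ is locally $(G,3)$-arc-transitive.

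For the converse, assume $\Gamma$ is locally $(G,3)$-arc-transitive; it is then locally $(G,1)$-arc-transitive, and since every vertex has valency at least $2$, the remark recalled in Section~\ref{section:groupsgraphs} shows $G$ is transitive on the edges of $\Gamma$; as $G$ preserves the bipartition, it is transitive on $\calP$ and on $\calL$. Given antiflags $(P,\ell)$ and $(P',\ell')$, choose $g_1\in G$ with $P^{g_1}=P'$, so $(P',\ell^{g_1})$ is an antiflag; by local $3$-arc-transitivity at $P'$ and equivariance there is $g_2\in G_{P'}$ with $(P',\ell^{g_1})^{g_2}=(P',\ell')$. Then $g_1g_2$ sends $(P,\ell)$ to $(P',\ell')$, so $G$ is antiflag-transitive.

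Finally, the ``in particular'' clause follows formally: antiflag-transitivity gives local $(G,3)$-arc-transitivity, hence local $(G,1)$- and local $(G,2)$-arc-transitivity. The former, with edge-transitivity as above and the fact that the edges of $\Gamma$ are precisely the flags of $\mathcal{G}$, yields flag-transitivity; the latter is equivalent to $G_\alpha^{\Gamma(\alpha)}$ being $2$-transitive for every vertex $\alpha$, which, since $\Gamma(P)$ is the set of lines on $P$ and $\Gamma(\ell)$ the set of points on $\ell$, is exactly the assertion that $\mathcal{G}$ is locally $(G,2)$-transitive. I do not expect a genuine obstacle here; the one place needing care is the equivariant identification of $3$-arcs with antiflags (and, in the converse, the extraction of point- and line-transitivity of $G$ from local $1$-arc-transitivity, which is the remark quoted in Section~\ref{section:groupsgraphs}).
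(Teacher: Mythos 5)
Your proposal is correct and follows essentially the same route as the paper, which simply records the key observation (each antiflag $(P,\ell)$ determines a unique $3$-arc from $P$ to $\ell$, and conversely) and leaves the equivariant transport of transitivity implicit. Your write-up just makes that bijection and the deduction of edge-, point- and line-transitivity in the converse direction explicit.
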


Let $\Gamma$ be a connected $G$-edge-transitive graph. For vertices
$\alpha_0,\alpha_1,\dots,\alpha_l$, let
\[G_{\alpha_0\alpha_1\dots \alpha_l}^{[1]}=G_{\alpha_0}^{[1]}\cap G_{\alpha_1}^{[1]}\cap\cdots\cap G_{\alpha_l}^{[1]}.\]

The following simple lemma was first obtained in \cite{Li-Song}, showing that some information of $(G_\alpha,G_\beta,G_{\alpha\beta})$
can be obtained from the permutation groups  $G_\alpha^{\Gamma(\alpha)}$ and $G_\beta^{\Gamma(\beta)}$.
For completeness, we give a proof here.

\begin{lem}\label{comp-factor}
Let $\Gamma$ be a connected graph, and let $G\le\Aut(\Gamma)$ be transitive on the edge set.
Let $\{\alpha,\beta\}$ be an edge of $\Gamma$.
Then each composition factor of $G_\alpha$ is a composition factor of $G_\alpha^{\Gamma(\alpha)}$, $G_{\alpha\beta}^{\Gamma(\beta)}$, or $G_{\alpha\beta}^{\Gamma(\alpha)}$.
Moreover, if $|\Gamma(\alpha)|\geqslant|\Gamma(\beta)|$, then $|\Gamma(\alpha)|$ is not smaller than the smallest permutation degree
of any composition factor of $G_\alpha$.
\end{lem}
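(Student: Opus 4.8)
The plan is to analyze the chain of subgroups
\[
G_\alpha \;\geq\; G_{\alpha\beta} \;\geq\; G_{\alpha\beta}^{[1]}
\]
and to identify the three quotients that arise, then invoke the Jordan--Hölder theorem. First I would observe that since $G$ is edge-transitive and $\Gamma$ is connected, $G$ is transitive on the (ordered) pairs of adjacent vertices up to swapping, and in particular the vertex $\beta$ ranges over all of $\Gamma(\alpha)$ as we vary the edge containing $\alpha$; thus $G_\alpha$ acts on $\Gamma(\alpha)$ with $G_{\alpha\beta}$ the stabilizer of the point $\beta$, giving
\[
G_\alpha / G_{\alpha\beta} \;\cong\; \text{(a set of size $|\Gamma(\alpha)|$ as a $G_\alpha$-set)},
\qquad
G_{\alpha\beta}\trianglelefteq?\ \text{no} \,,
\]
so rather than a normal series I would instead use the subnormal-free statement: every composition factor of $G_\alpha$ is a composition factor of $G_{\alpha\beta}$ or of the transitive permutation group $G_\alpha^{\Gamma(\alpha)}$. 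More precisely, $G_\alpha^{[1]}\trianglelefteq G_\alpha$ with quotient $G_\alpha^{\Gamma(\alpha)}$, so the composition factors of $G_\alpha$ are those of $G_\alpha^{\Gamma(\alpha)}$ together with those of $G_\alpha^{[1]}$; and $G_\alpha^{[1]}\leq G_{\alpha\beta}$, so every composition factor of $G_\alpha^{[1]}$ is a composition factor of $G_{\alpha\beta}$.

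Next I would repeat this one level down, now from the vantage point of $\beta$: the group $G_{\alpha\beta}$ is a subgroup of $G_\beta$ fixing the vertex $\alpha\in\Gamma(\beta)$, and the kernel of its action on $\Gamma(\beta)$ is exactly $G_{\alpha\beta}\cap G_\beta^{[1]} = G_\beta^{[1]}\cap G_{\alpha} = G_{\alpha\beta}^{[1]}$ (using that $G_\beta^{[1]}$ fixes $\alpha$ and all of $\Gamma(\beta)$, while intersecting with $G_\alpha$ just records that we also fix $\alpha$). Hence $G_{\alpha\beta}^{[1]}\trianglelefteq G_{\alpha\beta}$ with quotient $G_{\alpha\beta}^{\Gamma(\beta)}$, so the composition factors of $G_{\alpha\beta}$ are those of $G_{\alpha\beta}^{\Gamma(\beta)}$ together with those of $G_{\alpha\beta}^{[1]}$. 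Finally $G_{\alpha\beta}^{[1]} = G_\alpha^{[1]}\cap G_\beta^{[1]}\leq G_\alpha^{[1]}\leq G_\alpha$, and its image in $G_\alpha^{\Gamma(\alpha)}$ is trivial while it fixes $\beta$, so it embeds in $G_{\alpha\beta}^{[1]}$'s image inside the pointwise-respecting part — concretely $G_{\alpha\beta}^{[1]} \leq G_{\alpha\beta}$ maps, via the action of $G_\alpha$ on $\Gamma(\alpha)$ restricted to $G_{\alpha\beta}$, onto a subgroup of $(G_{\alpha\beta})^{\Gamma(\alpha)} = G_{\alpha\beta}^{\Gamma(\alpha)}$ with kernel $G_\alpha^{[1]}\cap G_{\alpha\beta}^{[1]} = G_{\alpha\beta}^{[1]}$... this needs care. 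The clean way is: $G_{\alpha\beta}^{[1]} = G_\alpha^{[1]}\cap G_\beta^{[1]}$ is contained in $G_{\alpha\beta}$, and the quotient $G_{\alpha\beta}/G_{\alpha\beta}^{[1]}$ acts faithfully on $\Gamma(\alpha)\cup\Gamma(\beta)$ fixing $\alpha$ and $\beta$; projecting to $\Gamma(\alpha)$ gives $G_{\alpha\beta}^{\Gamma(\alpha)}$ with some kernel $K/G_{\alpha\beta}^{[1]}$, and $K$ then acts faithfully (mod $G_{\alpha\beta}^{[1]}$) on $\Gamma(\beta)$, so $K/G_{\alpha\beta}^{[1]} \hookrightarrow G_{\alpha\beta}^{\Gamma(\beta)}$. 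Therefore every composition factor of $G_{\alpha\beta}$ is a composition factor of $G_{\alpha\beta}^{\Gamma(\alpha)}$ or of $G_{\alpha\beta}^{\Gamma(\beta)}$.

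Assembling the three displays, every composition factor of $G_\alpha$ is a composition factor of $G_\alpha^{\Gamma(\alpha)}$, of $G_{\alpha\beta}^{\Gamma(\alpha)}$, or of $G_{\alpha\beta}^{\Gamma(\beta)}$, which is the first assertion. For the "moreover" clause, suppose $|\Gamma(\alpha)|\geq|\Gamma(\beta)|$ and let $S$ be a composition factor of $G_\alpha$. By what we proved, $S$ is a composition factor of one of the permutation groups $G_\alpha^{\Gamma(\alpha)}$ (degree $|\Gamma(\alpha)|$), $G_{\alpha\beta}^{\Gamma(\alpha)}$ (degree at most $|\Gamma(\alpha)|$, being a subquotient of the action on $\Gamma(\alpha)$), or $G_{\alpha\beta}^{\Gamma(\beta)}$ (degree at most $|\Gamma(\beta)|\leq|\Gamma(\alpha)|$). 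In every case $S$ is a composition factor of a permutation group of degree at most $|\Gamma(\alpha)|$, and a composition factor of a permutation group of degree $n$ is itself a (quotient of a subgroup of a) transitive group of degree at most $n$ — more to the point, its minimal faithful permutation degree is at most $n$. Hence the minimal permutation degree of $S$ is at most $|\Gamma(\alpha)|$, as claimed.

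The main obstacle is the bookkeeping in the middle paragraph: making precise that $G_{\alpha\beta}^{[1]}$ is exactly the kernel of $G_{\alpha\beta}$ acting on $\Gamma(\beta)$, and that the "leftover" after quotienting by the action on $\Gamma(\alpha)$ injects into $G_{\alpha\beta}^{\Gamma(\beta)}$ — i.e. correctly tracking which stabilizers-of-neighbourhoods kill which pieces. Once the identifications $G_\alpha^{[1]}\cap G_{\alpha\beta} = G_\alpha^{[1]}$ and $G_\beta^{[1]}\cap G_{\alpha\beta} = G_{\alpha\beta}^{[1]}$ are in hand (both immediate from the definitions, using connectedness only to know $\Gamma(\alpha),\Gamma(\beta)\neq\varnothing$ and that the relevant stabilizers are as described), the rest is three applications of Jordan--Hölder.
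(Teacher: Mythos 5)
There is a genuine gap in the first (and main) assertion. Your descent stops after two levels: you correctly account for the quotient $G_\alpha/G_\alpha^{[1]}\cong G_\alpha^{\Gamma(\alpha)}$ and for the quotient $K/G_{\alpha\beta}^{[1]}$ (where $K=G_\alpha^{[1]}$ is the kernel of $G_{\alpha\beta}$ on $\Gamma(\alpha)$), which embeds normally in $G_{\alpha\beta}^{\Gamma(\beta)}$. But what this actually proves is that every composition factor of $G_\alpha$ is a composition factor of $G_\alpha^{\Gamma(\alpha)}$, of $G_{\alpha\beta}^{\Gamma(\alpha)}$, of $G_{\alpha\beta}^{\Gamma(\beta)}$, \emph{or of $G_{\alpha\beta}^{[1]}=G_\alpha^{[1]}\cap G_\beta^{[1]}$}. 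That last group -- the pointwise stabilizer of $\alpha$, $\beta$ and all of their neighbours -- is not trivial in general (for the incidence graph of a classical generalized quadrangle it contains root subgroups, for instance), and nothing in your argument places its composition factors among the three target groups. ``Assembling the three displays'' silently discards this fourth term. (A smaller issue: the inference ``$G_\alpha^{[1]}\le G_{\alpha\beta}$, so every composition factor of $G_\alpha^{[1]}$ is a composition factor of $G_{\alpha\beta}$'' needs normality, not mere containment -- it is saved here because $G_\alpha^{[1]}\lhd G_\alpha$ -- and $G_{\alpha\beta}\cap G_\beta^{[1]}$ equals $G_\alpha\cap G_\beta^{[1]}$, which may properly contain $G_{\alpha\beta}^{[1]}$ as the paper defines it.)

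The missing idea is to iterate the descent until the pointwise stabilizer dies. Since $G$ acts faithfully and $\Gamma$ is connected, there is a path $\alpha_0=\alpha,\alpha_1,\dots,\alpha_l$ with $G_{\alpha_0\alpha_1\dots\alpha_l}^{[1]}=1$, giving the normal series $1=G_{\alpha_0\dots\alpha_l}^{[1]}\lhd\cdots\lhd G_{\alpha_0\alpha_1}^{[1]}\lhd G_{\alpha_0}^{[1]}\lhd G_{\alpha_0}$, whose successive quotients $G_{\alpha_0\dots\alpha_i}^{[1]}/G_{\alpha_0\dots\alpha_{i+1}}^{[1]}$ embed as normal subgroups of $G_{\alpha_i\alpha_{i+1}}^{\Gamma(\alpha_{i+1})}$. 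Edge-transitivity then transports these far-away quotients back to the chosen edge: each $G_{\alpha_i\alpha_{i+1}}^{\Gamma(\alpha_{i+1})}$ is isomorphic to $G_{\alpha\beta}^{\Gamma(\alpha)}$ or $G_{\alpha\beta}^{\Gamma(\beta)}$. It is telling that your argument never uses edge-transitivity or connectedness in any essential way; the lemma fails without connectedness (on a disjoint union, $G_\alpha$ can contain an arbitrary group acting on another component while all three local groups are small), so a correct proof must invoke both hypotheses. Your treatment of the ``moreover'' clause is fine once the first assertion is established.
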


\begin{proof}
Since $G\le\Aut(\Gamma)$ and $\Gamma$ is connected, there exists a path
$\alpha_0=\alpha,\alpha_1,\dots,\alpha_l$ such that $G_{\alpha_0\alpha_1\dots \alpha_l}^{[1]}=1$.
Thus we have
$$1=G_{\alpha_0\alpha_1\dots \alpha_l}^{[1]}\lhd G_{\alpha_0\alpha_1\dots \alpha_{l-1}}^{[1]}
  \lhd\dots\lhd G_{\alpha_0\alpha_1}^{[1]}\lhd G_{\alpha_0}^{[1]}=G_\alpha^{[1]}\lhd G_\alpha.$$
Suppose that $T$ is a composition factor of $G_\alpha$.
Then there exist subgroups $M\lhd N\lhd\lhd G_\alpha$ such that $T\cong N/M$.
If  $T$ is a composition factor of $G_\alpha^{\Gamma(\alpha)}$, then we are done.

We thus assume next that $T$ is not a composition factor of $G_\alpha^{\Gamma(\alpha)}$.
Then $T$ is a composition factor of $G_\alpha^{[1]}$
since $G_\alpha^{\Gamma(\alpha)}\cong G_\alpha/G_\alpha^{[1]}$.
Let $i$ be the least integer such that
$N\le G_{\alpha_0\alpha_1\dots \alpha_i}^{[1]}$.
Then $N\,{\not\le}\,G_{\alpha_0\alpha_1\dots \alpha_{i+1}}^{[1]}$, and
\[NG_{\alpha_0\alpha_1\dots \alpha_{i+1}}^{[1]}/G_{\alpha_0\alpha_1\dots \alpha_{i+1}}^{[1]}
\lhd G_{\alpha_0\alpha_1\dots \alpha_i}^{[1]}/G_{\alpha_0\alpha_1\dots \alpha_{i+1}}^{[1]}.\]
So $T$ is a composition factor of
$G_{\alpha_0\alpha_1\dots \alpha_i}^{[1]}/G_{\alpha_0\alpha_1\dots \alpha_{i+1}}^{[1]}$.
Since
\[
G_{\alpha_0\alpha_1\dots \alpha_i}^{[1]}/G_{\alpha_0\alpha_1\dots \alpha_{i+1}}^{[1]}
\cong (G_{\alpha_0\alpha_1\dots \alpha_i}^{[1]})^{\Gamma(\alpha_{i+1})}
\lhd G_{\alpha_i\alpha_{i+1}}^{\Gamma(\alpha_{i+1})},
\]
$T$ is a composition factor of $G_{\alpha_i\alpha_{i+1}}^{\Gamma(\alpha_{i+1})}$. Further, as $G$ is
edge-transitive on $\Gamma$, we have $G_{\alpha_i\alpha_{i+1}}^{\Gamma(\alpha_{i+1})}$ is isomorphic
to $G_{\alpha\beta}^{\Gamma(\alpha)}$ or $G_{\alpha\beta}^{\Gamma(\beta)}$.

So $T$ is a composition factor of $G_{\alpha\beta}^{\Gamma(\beta)}$ or
$G_{\alpha\beta}^{\Gamma(\alpha)}$. It follows that $T$ has a faithful permutation representation on
a set of size $|\Gamma(\alpha)|$ or $|\Gamma(\beta)|$. Thus an orbit of $T$ is not bigger than
$|\Gamma(\alpha)|$ or $|\Gamma(\beta)|$.
\end{proof}

For locally $2$-arc-transitive graphs, the unsolvable composition factors of the stabilizers are
determined in the following lemma.

\begin{lem}\label{stab}\leavevmode
Let $\Gamma$ be locally $(G,2)$-arc-transitive, and let $\{\alpha,\beta\}$ be an edge. Then the
following statements hold.
\begin{itemize}

\item[(i)] Any composition factor of $G_\alpha$ that is not solvable is one of the following
  groups: $A_n$, $\PSL_n(q)$, $\PSU_3(q)$, $\PSp_n(q)$, $\POmega^\pm_n(2)$, $\Sz(q)$, $\Ree(q)$,
  $\G_2(q)$, $\mathrm{HS}$, $\Co_3$, $\McL$, or $M_k$ where $k\in\{11,12,22,23,24\}$.

\item[(ii)] If $\POmega_{n}^\pm(2)$ with $n=2m\geqslant8$ is a composition factor of
  $G_{\alpha\beta}$, then $\PSp_{n}(2)$ is a composition factor of $G_\alpha$ or $G_\beta$.

\item[(iii)] For $n\geqslant 3$, $\PSU_n(q)$ is not a composition factor of $G_{\alpha\beta}$
  unless $(n,q)=(3,3)$.
\end{itemize}
\end{lem}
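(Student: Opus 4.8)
The plan is to reduce all three parts to the enumeration of the nonabelian composition factors of finite $2$-transitive permutation groups and of their one-point stabilizers, and then to carry out that enumeration with the Classification of Finite Simple Groups (CFSG). Since $\Gamma$ is locally $(G,2)$-arc-transitive, $G_\delta^{\Gamma(\delta)}$ is $2$-transitive for every vertex $\delta$. By Lemma~\ref{comp-factor}, every composition factor of $G_\alpha$ is a composition factor of $G_\alpha^{\Gamma(\alpha)}$ (a $2$-transitive group) or of $G_{\alpha\beta}^{\Gamma(\alpha)}$ or $G_{\alpha\beta}^{\Gamma(\beta)}$ (one-point stabilizers in $2$-transitive groups). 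The same path argument applied with $G_{\alpha\beta}$ in place of $G_\alpha$ --- using that $G_\alpha^{[1]}$ and $G_\beta^{[1]}$ both lie in $G_{\alpha\beta}$ and are normal in it, so that $G_{\alpha\beta}/G_\alpha^{[1]}\cong G_{\alpha\beta}^{\Gamma(\alpha)}$ and $G_{\alpha\beta}/G_\beta^{[1]}\cong G_{\alpha\beta}^{\Gamma(\beta)}$, and that along a path the relevant local stabilizers are $G$-conjugate to one of these by edge-transitivity --- shows that every composition factor of $G_{\alpha\beta}$ is a composition factor of a one-point stabilizer in a $2$-transitive group. Thus all three parts reduce to the enumeration just described.

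For the enumeration, recall Burnside's dichotomy: a finite $2$-transitive group $X$ is affine or almost simple. If $X = V{:}X_0$ is affine ($V$ elementary abelian, $X_0\le\GL(V)$ transitive on $V\setminus\{0\}$), then the nonabelian composition factors of $X$ and of its one-point stabilizers are composition factors of $X_0$, and Hering's theorem (via CFSG) says that $\SL_a(q)$, $\Sp_a(q)$ ($a$ even) or $\G_2(q)'$ ($q$ even) is normal in $X_0$, or $X_0\le\Gamma L_1(|V|)$, or $X_0$ is one of finitely many exceptions --- yielding $\PSL_a(q)$, $\PSp_a(q)$, $\G_2(q)$, and, after discarding solvable possibilities, composition factors among $A_5,A_6,A_7,\PSL_2(7),\PSL_2(11),\PSL_2(13),\PSU_3(3)\ (\cong\G_2(2)'),\PSU_4(2)\ (\cong\PSp_4(3)),\PSp_6(2)$, all present in part~(i). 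If $X$ is almost simple, the CFSG classification of $2$-transitive groups gives the socle and degree: $A_n$ (natural), $\PSL_n(q)$ (projective points), $\PSU_3(q)$ ($q^3+1$ points), $\Sz(q)$, $\Ree(q)$, $\PSp_{2m}(2)$ with $m\ge3$ (two actions), $\PSL_2(11)$ on $11$, $A_7$ on $15$, $M_{11},M_{12},M_{22},M_{23},M_{24}$, $\mathrm{HS}$ on $176$, $\Co_3$ on $276$; in each case the outer automorphism group is solvable, so the only nonabelian composition factor of $X$ itself is its socle. Running through the one-point stabilizers (maximal subgroups), their nonabelian composition factors are $A_{n-1}$; $\PSL_{n-1}(q)$ (the Levi factor; the one-point stabilizers for $\PSU_3(q),\Sz(q),\Ree(q),\PSL_2(q)$ being solvable Borel subgroups); $\POmega^\pm_{2m}(2)$, from $\rmO^\pm_{2m}(2)\le\PSp_{2m}(2)$; $\PSL_2(7),A_6,\PSL_2(11),M_{11},\PSL_3(4),M_{22},M_{23}$ from the sporadic actions; $\PSU_3(5)$ from $\mathrm{HS}$; and $\McL$ from $\Co_3$. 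Comparing with the statement gives part~(i).

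Parts~(ii) and~(iii) follow by tracking which of these occur specifically as composition factors of $G_{\alpha\beta}$, i.e. of a one-point stabilizer in a $2$-transitive group. For~(ii): if $\POmega^\varepsilon_{2m}(2)$ with $2m\ge8$ is a composition factor of $G_{\alpha\beta}$, the only one-point stabilizer of a $2$-transitive group having it as a composition factor is $\rmO^{\varepsilon'}_{2k}(2)\le\PSp_{2k}(2)$, and $\POmega^\varepsilon_{2m}(2)\cong\POmega^{\varepsilon'}_{2k}(2)$ with both parameters at least $8$ forces $(k,\varepsilon')=(m,\varepsilon)$ (the orders being pairwise distinct, and $2k=6$ being impossible since $\POmega_6^+(2)\cong A_8$ and $\POmega_6^-(2)\cong\PSU_4(2)$). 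Hence the ambient $2$-transitive group is $\PSp_{2m}(2)$, which by edge-transitivity equals $G_\alpha^{\Gamma(\alpha)}$ or $G_\beta^{\Gamma(\beta)}$ up to conjugacy, so $\PSp_n(2)$ is a composition factor of $G_\alpha$ or $G_\beta$; the hypothesis $2m\ge8$ is exactly what rules out the coincidences with $A_8$ and $\PSU_4(2)$. For~(iii): a composition factor of $G_{\alpha\beta}$ of the form $\PSU_n(q)$ with $n\ge3$ must arise from a one-point stabilizer in a $2$-transitive group, and --- modulo the exceptional isomorphism $\PSU_4(2)\cong\PSp_4(3)$, under which this group is recorded symplectically --- one checks that the only unitary composition factor of rank at least $3$ left available is $\G_2(2)'\cong\PSU_3(3)$, forcing $(n,q)=(3,3)$.

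The heart of the proof --- and the main obstacle --- is the exhaustive case analysis of the second and third paragraphs: one runs through the maximal subgroups of every finite $2$-transitive group, identifies their nonabelian composition factors exactly, and keeps careful track of the many exceptional isomorphisms among small alternating and classical groups, such as $\POmega_6^+(2)\cong\PSL_4(2)\cong A_8$, $\POmega_6^-(2)\cong\PSU_4(2)\cong\PSp_4(3)\cong\POmega_5(3)$, $\PSL_2(7)\cong\PSL_3(2)$ and $\PSU_3(3)\cong\G_2(2)'$; these coincidences, together with a closer inspection of the sporadic actions (notably $\mathrm{HS}$ on $176$ points), are precisely what pin down the exceptional restrictions $n=2m\ge8$ in~(ii) and $(n,q)=(3,3)$ in~(iii). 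By contrast, the reduction in the first paragraph is essentially formal once Lemma~\ref{comp-factor} is available.
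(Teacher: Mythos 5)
Your overall strategy is the same as the paper's: Lemma~\ref{comp-factor} reduces everything to the nonabelian composition factors of the $2$-transitive groups $G_\alpha^{\Gamma(\alpha)}$, $G_\beta^{\Gamma(\beta)}$ and of their one-point stabilizers, and one then inspects the CFSG classification of finite $2$-transitive groups (the paper simply cites the relevant tables where you carry the inspection out explicitly). Your treatment of parts (i) and (ii) — including the extension of the composition-factor argument from $G_\alpha$ to $G_{\alpha\beta}$, and the order/isomorphism check showing that for $2m\ge 8$ the only $2$-transitive one-point stabilizer with a $\POmega^{\pm}_{2m}(2)$ composition factor sits inside $\PSp_{2m}(2)$ — is sound.

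There is, however, a genuine gap in your proof of part (iii), and it is one your own enumeration makes visible. In your second paragraph you correctly record $\PSU_3(5)$ as a nonabelian composition factor of a one-point stabilizer of a $2$-transitive group, namely the stabilizer $\PSU_3(5){:}2$ of $\mathrm{HS}$ in its $2$-transitive action on $176$ points. In the third paragraph you then assert that ``the only unitary composition factor of rank at least $3$ left available is $\G_2(2)'\cong\PSU_3(3)$,'' which directly contradicts that earlier sentence, and the ``closer inspection of $\mathrm{HS}$ on $176$ points'' you promise in the final paragraph is never carried out. Nothing in the reduction excludes the possibility that $G_\alpha^{\Gamma(\alpha)}\cong\mathrm{HS}$ of degree $176$, in which case $G_{\alpha\beta}^{\Gamma(\alpha)}\cong\PSU_3(5){:}2$ and $\PSU_3(5)$ is a composition factor of $G_{\alpha\beta}$; so (iii) as stated does not follow from the inspection alone, and a complete argument must either rule out this local action or admit $(n,q)=(3,5)$ as a further exception. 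To be fair, the paper's own one-line justification of (iii) (``follows from an inspection of the tables'') elides exactly the same point; but your write-up surfaces the contradiction explicitly and then leaves it unresolved, which is where the proof fails.
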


\begin{proof}
Let $T$ be an unsolvable composition factor of the stabilizer $G_\alpha$. By
Lemma~\ref{comp-factor}, $T$ is a composition factor of $G_\alpha^{\Gamma(\alpha)}$,
$G_{\alpha\beta}^{\Gamma(\alpha)}$, or $G_{\alpha\beta}^{\Gamma(\beta)}$. Since $\Gamma$ is locally
$(G,2)$-arc-transitive, both $G_\alpha^{\Gamma(\alpha)}$ and $G_\beta^{\Gamma(\beta)}$ are
2-transitive. Inspecting the 2-transitive permutation groups and their stabilizers \cite[Table
2.1]{LiSeressSong}, we conclude that $S$ lies in the list in part~(i).

Now suppose that $S=\POmega_{n}^\pm(2)$ is a composition factor of $G_{\alpha\beta}$. Then $T$
appears as a composition factor of the stabilizer of the 2-transitive group
$G_\alpha^{\Gamma(\alpha)}$ or $G_\beta^{\Gamma(\beta)}$ with socle $\PSp_{n}(2)$. This proves
part~(ii).

The statement of part~(iii) follows from an inspection of the Classification of Finite
$2$-Transitive Groups \cite[Tables 7.3 and 7.4]{CameronPerm}.
\end{proof}

\section{A characterization of antiflag-transitive generalized quadrangles}

Throughout this section we will assume that $\mathcal{Q}=(\calP,\calL)$ is a thick finite
generalized quadrangle of order $(s,t)$ with incidence graph $\Gamma$. Abusing notation slightly, we
will also refer to the biparts of the bipartite graph $\Gamma$ as $\mathcal{P}$ and $\mathcal{L}$.
If $S$ is a set of points, then $S^{\perp\perp}$ is the set of all points at distance $0$ or $2$
from every point of $S$ (in the collinearity graph). Recall from Lemma
\ref{anti-f-trans-local-3-a-trans} that a generalized quadrangle $\mathcal{Q}$ having a group of
collineations $G$ that acts transitively on antiflags is equivalent to the incidence graph $\Gamma$
being locally $(G,3)$-arc-transitive. Hence we will use the conditions of $G$-antiflag-transitivity
and local $(G,3)$-arc transitivity interchangeably in this section.

\begin{thm}
\label{thm:3arcNtrans} If $\Gamma$ is locally $(G,3)$-arc-transitive for some group of collineations
$G$, then $G$ acts quasiprimitively on $\mathcal{P}$ or $\mathcal{L}$.
\end{thm}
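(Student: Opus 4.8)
The plan is to suppose, for a contradiction, that $G$ acts neither quasiprimitively on $\calP$ nor on $\calL$. Then there exist a nontrivial normal subgroup $M \lhd G$ intransitive on $\calP$ and a nontrivial normal subgroup $N \lhd G$ intransitive on $\calL$. My first step is to argue that we may in fact take a single $N \lhd G$ that is intransitive on \emph{both} $\calP$ and $\calL$: since $\Gamma$ is connected and bipartite with parts $\calP$ and $\calL$, a normal subgroup of $G$ that is transitive on one part is automatically transitive on the other (edge-transitivity forces orbits on the two sides to be ``aligned''), so $M$ and $N$ are each intransitive on both parts, and I can simply take $N = M$ (or $\langle M, N\rangle$, checking it is still intransitive — actually $M$ alone suffices). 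So assume $N \lhd G$ is nontrivial and intransitive on the vertex set $V\Gamma = \calP \cup \calL$.

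Next I form the normal quotient $\Gamma_N$ and invoke the Giudici--Li--Praeger result quoted in Section \ref{section:groupsgraphs}: since $\Gamma$ is locally $(G,3)$-arc-transitive, either $\Gamma_N$ is a star, or $\Gamma_N$ is locally $(G/N, 3)$-arc-transitive. The key step is to rule out both alternatives. If $\Gamma_N$ is a star, then $N$ is transitive on one of $\calP$, $\calL$ — say on $\calL$ — contradicting that $N$ is intransitive on \emph{both} parts; so the star case cannot occur once we have arranged $N$ to be intransitive on each bipart. Hence $\Gamma_N$ is locally $(G/N,3)$-arc-transitive, and in particular $G/N$ is edge-transitive on $\Gamma_N$, so $\Gamma_N$ has minimum valency at least $2$ — indeed a locally $3$-arc-transitive graph with an edge has no valency-$1$ vertices. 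I then want to contradict this, or contradict the thickness/generalized-quadrangle structure, by showing $\Gamma_N$ is too small. Here I use that $N$ is a normal subgroup acting intransitively with all orbits on $\calP$ of one common size and all orbits on $\calL$ of another common size (normality forces $G$ to permute the $N$-orbits transitively within each part); combining with the parameter arithmetic of Lemma \ref{lem:GQbasics} — $|\calP| = (s+1)(st+1)$, $|\calL| = (t+1)(st+1)$, and the divisibility/bound conditions — I aim to show that the number of $N$-orbits on each side is forced to be $1$ (giving transitivity, a contradiction) or to produce a $\Gamma_N$ that is a single edge or a star, again a contradiction.

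The main obstacle, as I see it, is the last part: turning the combinatorial constraints on $N$-orbit sizes into a genuine contradiction. The clean way is probably to observe that $\Gamma_N$ inherits enough structure to be itself (close to) the incidence graph of a thick generalized quadrangle, or at least a thick locally $3$-arc-transitive bipartite graph, and then to note that the biparts of $\Gamma_N$ have sizes dividing $|\calP|$ and $|\calL|$ respectively with ratio still $\tfrac{s+1}{t+1}$ by Lemma \ref{lem:ratio} applied to $G/N$; a short numerical argument using coprimality of the relevant factors of $(s+1)(st+1)$ and $(t+1)(st+1)$ then pins down that $N$ must be transitive on at least one part after all. If a fully self-contained numerical argument proves awkward, the fallback is to run the dichotomy more carefully: since $G$ is edge-transitive and $\Gamma$ connected, any $N \lhd G$ intransitive on both parts yields $\Gamma_N$ with at least two vertices in each part and valency $\ge 2$ everywhere, and then one shows directly that $\Gamma_N$ being locally $3$-arc-transitive forces $\Gamma$ itself to fail the generalized-quadrangle girth condition — exploiting that a locally $3$-arc-transitive quotient of a girth-$8$ graph cannot have girth $4$ unless the quotient map collapses too much. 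I expect the bipartite connectedness bookkeeping (aligning $N$-orbits across the two sides and verifying the star case is genuinely excluded) to be the delicate routine part, and the parameter divisibility to be the conceptual crux.
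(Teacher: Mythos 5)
There is a genuine gap at the very first step. You claim that, because $\Gamma$ is connected, bipartite and $G$-edge-transitive, a normal subgroup of $G$ that is transitive on one bipart is automatically transitive on the other, and you use this to replace the two hypothesized witnesses $M$ (intransitive on $\mathcal{P}$) and $N$ (intransitive on $\mathcal{L}$) by a single normal subgroup intransitive on \emph{both} parts. This ``alignment'' claim is false, and a counterexample occurs in this very paper: for $\gqthreefive$ with $G=2^6{:}(3.A_6.2)$, the normal subgroup $2^6$ acts regularly on the $64$ points but is necessarily intransitive on the $96$ lines; dualizing gives a normal subgroup intransitive on points yet transitive on lines. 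So failure of quasiprimitivity on each part separately does not hand you a single normal subgroup intransitive on both parts. The paper's proof is structured precisely to avoid this trap: it shows that \emph{every} nontrivial normal subgroup is transitive on at least one of $\mathcal{P}$, $\mathcal{L}$, and then invokes \cite[Lemma 5.4]{localsarc} to upgrade that statement to quasiprimitivity on at least one part (which is itself not immediate, since different normal subgroups could a priori be transitive on different parts). Your contradiction setup skips both of these points.

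Even granting a normal subgroup $N$ intransitive on both parts, the second half of your proposal does not reach a contradiction; it is a list of hoped-for arguments. The paper's route is substantially more delicate than ``parameter divisibility'': it first gets semiregularity of $N$ from \cite[Lemma 5.1]{localsarc}, then uses local $3$-arc-transitivity to show each point meets every $N$-orbit of lines exactly once, so $|N|=st+1$ and the point orbits are ovoids; it then shows all nonidentity elements of $N$ have the same order (via the $4k$-cycles built from unique $3$-arcs), so $|N|=p^d$; it produces regular pairs of points and of lines, forcing $s=t$ by \cite[1.3.6(i)]{fgq} and $s$ odd by \cite[1.8.5]{fgq}; and finally $s^2+1=2^d$ yields $s=1$. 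Nothing in your sketch substitutes for this chain: the orbit-count ratio need not equal $(s+1)/(t+1)$ (Lemma \ref{lem:ratio} applies to a generalized quadrangle, not to an arbitrary quotient $\Gamma_N$), and the girth fallback (``a locally $3$-arc-transitive quotient of a girth-$8$ graph cannot have girth $4$ unless the quotient collapses too much'') is asserted, not proved. As written, the proposal identifies the right dichotomy (star quotient versus locally $3$-arc-transitive quotient) but neither the reduction to it nor the contradiction from it is established.
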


\begin{proof}
Suppose that $G$ is a group of collineations of $\mathcal{Q}$ such that $\Gamma$ is locally
$(G,3)$-arc-transitive and that $G$ contains a normal subgroup $N$ that is intransitive on both
$\mathcal{P}$ and $\mathcal{L}$. By \cite[Lemma 5.1]{localsarc}, $N$ acts semiregularly on both
$\mathcal{P}$ and $\mathcal{L}$.

Let $\ell_1^N$,..., $\ell_k^N$ be the orbits of $N$ on $\mathcal{L}$, where
$\ell_1,\dots,\ell_k\in\calL$. Let $P \in \mathcal{P}$. Since $\Gamma$ is locally
$(G,3)$-arc-transitive, $G_P$ is $2$-transitive on $\Gamma(P)$, and so for each $i$, $P$ has at most
one neighbor in the orbit $\ell_i^N$. Without a loss of generality, we may assume that $P$ is
incident to $\ell \in \ell_1^N$. Let $1 \neq n \in N$. Since $P$ is incident with $\ell$, we have
that $P$ is not incident with $\ell^n$. Since $\Gamma$ is of girth $8$, there exists a unique
$3$-arc beginning at $P$ and terminating at $\ell^n$. On the other hand, since $|\ell_j^N| > 1$
where $1\leqslant j\leqslant k$, for some $m \in \ell_j^N$, there exists a unique $3$-arc beginning
at $P$ and terminating at $m$. Since $\Gamma$ is locally $(G,3)$-arc-transitive, there exists $g \in
G_P$ such that $m^g = \ell^n$. This implies that $P$ is incident with the line $m^{gn^{-1}g^{-1}}
\in \ell_j^N$. Hence $P$ is incident with exactly one line in each orbit, which means that $k = t +
1$ and $|N| = st + 1$. In particular, for any $P \in \mathcal{P}$, $P^N$ is an ovoid\footnote{An
\emph{ovoid} of a generalized quadrangle $\mathcal{Q}$ is a set $S$ of points such that every line
of $\mathcal{Q}$ is incident with exactly one element of $S$.}, and the point set $\calP$ is a union
of ovoids.

Fix a flag $\{P, \ell\}$ and let $(P, \ell, P_1, \ell_1)$ and $(P, \ell, P_2, \ell_2)$ be two
$3$-arcs beginning with the arc $(P, \ell)$. Neither $\ell_1$ nor $\ell_2$ can be incident with $P$
since $\Gamma$ has girth $8$. On the other hand, both $\ell_1$ and $\ell_2$ must be incident with a
point in $P^N$. They cannot be incident with the same point since the girth of $\Gamma$ is $8$.
Since there are exactly $st$ $3$-arcs beginning with the arc $(P, \ell)$ and $st$ points in $P^N
\backslash \{P\}$, for any $n \in N$, there exists a unique $4$-arc beginning with the arc $(P,
\ell)$ and ending at $P^n$. Note further that, if $(P,
\ell, P_0, \ell_0)$ is the unique $3$-arc such that $P^n$ is incident with $\ell_0$, we have that
$(P, \ell, P_0, \ell_0, P^n, \ell^n, ..., \ell_0^{n^{-1}})$ is a cycle of length $4k$, where $k$ is
the order of the element $n$. (Since $N$ acts semiregularly on both $\mathcal{P}$ and $\mathcal{L}$,
$P^{n^i} = P^{n^j}$, $\ell^{n^i} = \ell^{n^j}$, $P_0^{n^i} = P_0^{n^j}$, and $\ell_0^{n^i} =
\ell_0^{n^j}$ all imply that $n^i = n^j$.) This cycle is completely determined by the choice of
$3$-arc $(P, \ell, P_0, \ell_0)$, and, since $\Gamma$ is locally $(G,3)$-arc-transitive, every such
cycle must have the same length. However, this means that every nonidentity element of $N$ has the
same order. Hence $|N| = p^d$ for some prime $p$ and some integer $d$.

Now, fix a point $P$ and another point $P^n$ in the orbit $P^N$. By the arguments in the previous
paragraph, for each line $\ell_i$ incident with $P$, there is a unique point $P_i$ and line
$\ell_i'$ such that $(P, \ell_i, P_i, \ell_i', P^n)$ is a $4$-arc, and, since the girth of $\Gamma$
is 8, the $P_i$ are pairwise distinct. Hence the only points at distance $0$ or $2$ from both $P$ and $P^n$ 
are $\{P_1, P_2,..., P_{t+1} \}$ and hence $(P, P^n)$ is a so-called \emph{regular pair}
(see \cite[\S1]{fgq}). By the dual argument, we also obtain a regular pair of noncurrent
lines. By \cite[1.3.6(i)]{fgq}, if $\mathcal{Q}$ has a regular pair of noncollinear points and $s >
1$, then $t \le s.$ Hence $s = t$. By \cite[1.8.5]{fgq}, since the generalized quadrangle
$\mathcal{Q}$ of order $s$ has a regular pair of noncollinear points and the point set $\mathcal{P}$
can be partitioned into ovoids, $s$ must be odd. Since $st + 1 = s^2 + 1 = |N|=p^d$ and
$s$ is odd, we have that $p=2$ and $s^2 + 1 = 2^d$. On the other hand, if $d \ge 2$, then
we have that $s^2 \equiv 3 \pmod 4$, a contradiction. Hence $d = 1$ and $s = 1$, a final
contradiction. Therefore, if $\Gamma$ is locally $(G,3)$-arc-transitive, then every normal subgroup
$N$ of $G$ is transitive either on $\mathcal{P}$ or on $\mathcal{L}$.

Finally, \cite[Lemma 5.4]{localsarc} gives us immediately that $G$ acts quasiprimitively on at least
one of $\mathcal{P}$ or $\mathcal{L}$.
\end{proof}

\begin{remark}
We remark that the previous result is free of the Classification of Finite Simple Groups. The
following result does depend on the CFSG.
\end{remark}

\begin{thm}\label{thm:3arcLimprim}
Let $G$ be a group such that $\Gamma$ is locally $(G,3)$-arc-transitive. Assume that $G$ acts
quasiprimitively on $\mathcal{P}$ but does not act primitively on $\mathcal{L}$. Then $\mathcal{Q}$
is isomorphic to $\gqthreefive$.
\end{thm}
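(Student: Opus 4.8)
The plan is to use imprimitivity on $\mathcal{L}$ to force $\mathcal{Q}$ to carry a rigid ``spread'' structure, and then to recognise $\mathcal{Q}$ among the few quadrangles admitting such a structure. First I would record the basic transitivity: by Lemma~\ref{anti-f-trans-local-3-a-trans}, $G$ is transitive on $\mathcal{P}$, on $\mathcal{L}$, and on flags, and $G_P$ induces a $2$-transitive group on the $t+1$ lines through $P$. Fix a nontrivial $G$-invariant partition $\mathcal{B}$ of $\mathcal{L}$. The partition of $\Gamma(P)$ induced by $\mathcal{B}$ is $G_P$-invariant, hence by primitivity of $G_P$ on $\Gamma(P)$ it is either trivial or universal; if universal, then using $G$-transitivity on $\mathcal{P}$ together with connectedness of the collinearity graph, a short propagation argument shows that a single block equals $\mathcal{L}$, a contradiction. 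So every block of $\mathcal{B}$ meets $\Gamma(P)$ in at most one line, i.e.\ every block is a partial spread.

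Next I would upgrade ``partial spread'' to ``spread''. Suppose first that $G$ is not quasiprimitive on $\mathcal{L}$, and choose $1\neq M\trianglelefteq G$ intransitive on $\mathcal{L}$; by quasiprimitivity on $\mathcal{P}$, $M$ is transitive on $\mathcal{P}$. Counting the flags $(P,\ell)$ with $\ell$ in a fixed $M$-orbit $\Delta_i$ in two ways, and using that $\Delta_i$ meets each $\Gamma(P)$ in at most one line (previous step) but in at least one (as $M$ is transitive on $\mathcal{P}$), shows that each $\Delta_i$ is a spread; hence $\mathcal{L}$ is partitioned into $t+1$ spreads $S_0,\dots,S_t$, permuted by $G$, with some minimal normal subgroup $N$ of $G$ transitive on $\mathcal{P}$ and fixing each $S_i$ setwise. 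There remains the case where $G$ is quasiprimitive (but imprimitive) on $\mathcal{L}$; I expect this to be the main obstacle, and it must be excluded by matching the O'Nan--Scott description of quasiprimitive imprimitive groups (product action, twisted wreath, compound diagonal, and almost simple with a non-maximal stabiliser) against the constraints that $G_\ell$ and $G_P$ induce $2$-transitive groups on the $s+1$ points of $\ell$ and the $t+1$ lines through $P$, whose composition factors are restricted by Lemma~\ref{stab}.

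Finally I would analyse the spread partition. Since each point lies on a unique line of each $S_i$, $G_P$ is $2$-transitive on $\{S_0,\dots,S_t\}$, and for $P\in\ell\in S_i$ one gets $N_P\le N_\ell\trianglelefteq G_\ell$ with $N_\ell$ transitive on the $s+1$ points of $\ell$. Write $N=T^m$ with $T$ simple. If $T$ is nonabelian, the transitivity of $N$ on $\mathcal{P}$ and on each $S_i$, together with $N_P$ acting trivially on $\Gamma(P)$ and the composition-factor restrictions of Lemmas~\ref{comp-factor} and~\ref{stab}, should yield a contradiction (a secondary technical point). If $T$ is abelian, then $N\cong E_p^{d}$ acts \emph{regularly} on $\mathcal{P}$, so $(s+1)(st+1)=p^d$; moreover $s+1$ and $st+1$ are powers of $p$, and integrality of $t=(st+1-1)/(s+1-1)$ combined with $t\le s^2$ (Lemma~\ref{lem:GQbasics}(iii)) forces $(s,t)=(q-1,q+1)$ with $q=p^a=s+1$ and $d=3a$. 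Identifying $\mathcal{P}$ with $N=\mathbb{F}_p^{3a}$ so that $N$ acts by translation, every line becomes a coset of an $a$-dimensional subspace, the collinearity graph is a Cayley graph, and the $q+2$ lines through the origin are $a$-subspaces meeting pairwise trivially; translating the quadrangle axioms forces any three of them to span $\mathbb{F}_p^{3a}$, which identifies $\mathcal{Q}$ as a quadrangle $T_2^*(\mathcal{O})$ for a hyperoval $\mathcal{O}$ of $\PG(2,q)$ (so $q$ is even). Antiflag-transitivity then forces $G_P$ to induce a $2$-transitive group of degree $q+2$ on $\mathcal{O}$, and by the classification of finite $2$-transitive groups (with known facts on collineation groups of hyperovals) this occurs only for $q=4$. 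Hence $(s,t)=(3,5)$, and by Lemma~\ref{lemma:smallGQ}(iv), $\mathcal{Q}\cong\gqthreefive$.

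The two hard points are the exclusion of the case ``$G$ quasiprimitive but imprimitive on $\mathcal{L}$'' — which is the genuine obstacle and the place where CFSG enters, via the O'Nan--Scott theorem and the $2$-transitive group classification — and, in the abelian case, the geometric recognition of $\mathcal{Q}$ as a $T_2^*(\mathcal{O})$; the nonabelian-socle subcase is comparatively minor.
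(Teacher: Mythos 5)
Your proposal contains a genuine gap, and it is exactly the one you flag yourself: the case where $G$ is quasiprimitive but imprimitive on $\mathcal{L}$ is never actually handled. You propose to ``exclude it by matching the O'Nan--Scott description of quasiprimitive imprimitive groups against the constraints,'' but this is a plan, not an argument, and it cannot be discharged by citing Lemma~\ref{lem:3arcQuasi=Prim} later in the paper, since that lemma's proof invokes Theorem~\ref{thm:3arcLimprim} itself. The reason you are forced into this dead end is that you extract too little from the hypotheses at the first step. From primitivity of $G_P^{\Gamma(P)}$ you only get that each block meets $\Gamma(P)$ in \emph{at most} one line, and you then need an $\mathcal{L}$-intransitive normal subgroup to upgrade ``at most one'' to ``exactly one.'' The paper's key observation is that no normal subgroup is needed: local $(G,3)$-arc-transitivity means $G_P$ is transitive on the antiflags on $P$, so if $\ell\in\Gamma(P)\cap\Sigma_1$ and $\ell_i'\in\Sigma_i$ is a line missing $P$ (such a line exists because blocks have size at least $2$), then the element $g_i\in G_P$ carrying $(P,\ell_1')$ to $(P,\ell_i')$ carries $\Sigma_1$ to $\Sigma_i$ and hence carries $\ell$ to a line of $\Sigma_i$ through $P$. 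Thus \emph{every} block is a spread, $|\mathcal{B}|=t+1$, $|\Sigma_i|=st+1$, and both $G^{\mathcal{B}}$ and $G_{\Sigma_i}^{\Sigma_i}$ are $2$-transitive --- uniformly, with no case split on quasiprimitivity of $G$ on $\mathcal{L}$. From there the paper pins down $G^{\mathcal{P}}$ as affine by a short analysis of the kernel $K=G_{(\mathcal{B})}$ against the classification of finite $2$-transitive groups, and then quotes an external classification of point-primitive affine, line-transitive quadrangles. Without the ``exactly one line per block'' step, your spread structure simply is not available in the branch you cannot close.

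Two secondary points. First, your nonabelian-socle subcase (``the composition-factor restrictions should yield a contradiction'') is likewise asserted rather than proved; the paper does carry out this elimination (showing $M_{(\Sigma_1)}=1$, so that $G^{\mathcal{P}}$ would be almost simple or of type $T\times T$, and then ruling both out against the $2$-transitive group lists), so this is real work that your sketch omits. Second, your endgame in the abelian case --- regular elementary abelian $N$, lines as cosets of order-$(s{+}1)$ subgroups, recognition as $T_2^*(\mathcal{O})$, and then $q=4$ via $2$-transitivity on the hyperoval --- is a plausible direct route and is essentially a re-derivation of the result the paper cites for this step, but as written it leans on unproved ``known facts'': the recognition that the $t+1$ pairwise trivially intersecting subgroups form a hyperoval of $\PG(2,q)$ (they are a priori only $\mathbb{F}_p$-subspaces, not $\mathbb{F}_q$-subspaces), and the classification of hyperovals admitting a $2$-transitive stabilizer. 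These can be repaired by citation, but the missing quasiprimitive-imprimitive case cannot.
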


\begin{proof}
Let $\mathcal{B} = \{\Sigma_1, \Sigma_2, ..., \Sigma_m\}$ be a nontrivial block system of $G$ on
$\mathcal{L}$, and let $K = G_{(\mathcal{B})}$ be the kernel of $G$ acting on $\mathcal{B}$. Let $P
\in \mathcal{P}$. 
Since $G_P$ is primitive on $\Gamma(P)$, $P$ has at most one neighbor in each $\Sigma_i$.
Without a loss of generality, we may assume that $|\Gamma(P) \cap \Sigma_1| = 1$, and, since
$\mathcal{B}$ is a nontrivial block system, for each $i$ there exists $\ell_i' \in \Sigma_i$ such
that $(P, \ell_i')$ is an antiflag. Let $\ell \in \Gamma(P) \cap \Sigma_1$. Since $\Gamma$ is
locally $(G,3)$-arc-transitive, there exists $g_i \in G_P$ such that $(P, \ell_1')^{g_i} = (P,
\ell_i')$. This implies that $\ell^{g_i} \in \Sigma_i$ is incident with $P$, and hence $|\Gamma(P)
\cap \Sigma_i| = 1$ for each $i$ and $|\mathcal{B}| = t+1$. Moreover, $G_P$ is thus $2$-transitive
on $\mathcal{B}$ and $G^{\mathcal{B}}$ is a $2$-transitive permutation group. We let $\Gamma(P) =
\{\ell_1, ..., \ell_{t+1} \}$, where $\ell_i \in \Sigma_i$. Each $\Sigma_i$ necessarily must have
the same size, and so $|\Sigma_i| = |\mathcal{L}|/(t+1) = st+1$. Since $G_P$ is transitive on
$\mathcal{L}\backslash \Gamma(P)$, $(G_P)_{\Sigma_i}$ is transitive on $\Sigma_i \backslash
\{\ell_i\}$. Since $\Gamma(P) \cap \Sigma_i = \{\ell_i\}$, it follows that $(G_P)_{\Sigma_i} =
(G_{\Sigma_i})_P \le (G_{\Sigma_i})_{\ell_i}$. Thus $(G_{\Sigma_i})_{\ell_i}$ is transitive on
$\Sigma_i \backslash \{\ell_i \}$ and the block stabilizer $G_{\Sigma_i}$ is $2$-transitive on the
block $\Sigma_i$. Hence we have shown that the following hold:
\begin{itemize}
 \item[(i)] $|\mathcal{B}| = t+1$, and $G^{\mathcal{B}}$ is $2$-transitive;
 \item[(ii)] $|\Sigma_i| = st + 1$, and $G_{\Sigma_i}^{\Sigma_i}$ is $2$-transitive for $1 \le i
   \le t+1$;
 \item[(iii)] for $P \in \mathcal{P}$, the intersection $\Gamma(P) \cap \Sigma_i$ contains exactly
   one element for $1 \le i \le t+1$;
\end{itemize}

We now show that $G^{\mathcal{P}}$ is an affine group, i.e., $G$ has a unique elementary abelian
minimal normal subgroup that acts regularly on the point set $\mathcal{P}$. Suppose first that $K =
1$. Since $G^{\mathcal{B}} \cong G/K$, this implies that $G$ is a $2$-transitive permutation group.
If $G$ is an affine $2$-transitive group, then we are done. Otherwise, $G$ is an almost simple group
with a faithful $2$-transitive representation on $t+1$ elements, and the stabilizer $G_{\Sigma_1}$
is $2$-transitive on $\Sigma_1$ with degree $st+1$. Looking at the possibilities for $G$ (see
\cite[Table 7.4]{CameronPerm}), the only possibility with $s > 1$ is $\soc(G) \cong A_8$,
$G_{\Sigma} \cong A_7$, and $st + 1 = 15$. However, this implies that $s^2 = 4 < 7 = t$, a
contradiction.

We may thus assume that $K \neq 1$, and assume further that $G^{\mathcal{P}}$ is not affine. Let $M$
be a minimal normal subgroup of $G$ contained in $K$. Then $M$ is nonabelian. Since $M^{\Sigma_i}$
is a (minimal) normal subgroup of the $2$-transitive permutation group $G_{\Sigma_i}^{\Sigma_i}$, we
conclude that $G_{\Sigma_i}^{\Sigma_i}$ is almost simple and that $M^{\Sigma_i}$ is a simple group.
Suppose that $M_{(\Sigma_1)} \neq 1.$ Then $M_{(\Sigma_1)}$ has a nontrivial action on some
$\Sigma_i$ and $M_{(\Sigma_1)}^{\Sigma_i}$ is thus a nontrivial normal subgroup of $M^{\Sigma_i}$,
which implies that $M_{(\Sigma_1)}$ is transitive on $\Sigma_i$. However, for a given $\ell \in
\Sigma_1$, there are exactly $s+1 < st + 1$ lines in $\Sigma_i$ that meet $\ell$ in a point, and
$M_{(\Sigma_1)}$ cannot be transitive on $\Sigma_i$, a contradiction. Thus $M_{(\Sigma_1)} = 1$, and
hence for all $i$, $M^{\Sigma_i}$ is a simple group. That is to say, the quasiprimitive permutation
group $T=G^{\mathcal{P}}$ has a normal subgroup $M^{\mathcal{P}}$ which is nonabelian simple.
Therefore, either
\begin{itemize}
\item[(a)] $\C_{G^{\mathcal{P}}}(T)=1$, and $G^{\mathcal{P}}$ is almost simple, or

\item[(b)] $\C_{G^{\mathcal{P}}}(T)\cong T$, and $\soc(G)=T\times T$.
\end{itemize}

Suppose that $G$ is almost simple. Then $G/M$ is a subgroup of $\Out(M)$ and has a $2$-transitive
permutation representation representation of degree $t+1$, and $G_{\Sigma_1}$ is $2$-transitive on
$\Sigma_1$ of degree $st + 1 \le t^3 + 1$. By the Classification of Finite Simple Groups, this is
not possible.

The latter case thus occurs. Since $G_{\Sigma_1} \rhd K\geqslant T$ and $G_{\Sigma_1}$ is a
$2$-transitive permutation group of $\Sigma_1$ of degree $st+1$, and $G/K$ is a $2$-transitive
permutation group on $\mathcal{B}$ of degree $t+1$, we have that $G_{\Sigma_1}$ and $G/K$ have
isomorphic socle $M$. Inspecting the $2$-transitive permutation groups with nonabelian socles (see
\cite[Table 7.4]{CameronPerm}), this is not possible. Therefore, in any case, $G$ is an affine group
on $\mathcal{P}$, as desired.

Since $G$ is acts primitively of affine type on the points, and acts transitively on the lines of
$\mathcal{Q}$, we can immediately deduce from \cite[Corollary 1.5]{GQtranshyp} that $\mathcal{Q}$ is
isomorphic to one of $\gqthreefive$ or the generalized quadrangle of order $(15,17)$ arising from
the Lunelli--Sce hyperoval. However, the latter is not antiflag-transitive, and therefore
$\mathcal{Q}$ is $\gqthreefive$.
\end{proof}

\begin{lem}
\label{lem:3arcQuasi=Prim} Assume that $G$ is quasiprimitive on both $\mathcal{P}$ and
$\mathcal{L}$. Then $G$ acts primitively on both $\mathcal{P}$ and $\mathcal{L}$.
\end{lem}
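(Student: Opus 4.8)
The plan is to argue by contradiction, assuming $G$ is quasiprimitive but imprimitive on, say, $\mathcal{P}$. Let $\mathcal{B}=\{\Sigma_1,\dots,\Sigma_m\}$ be a nontrivial $G$-invariant partition of $\mathcal{P}$; since $G$ is quasiprimitive but imprimitive on points, every minimal normal subgroup of $G$ is transitive on $\mathcal{P}$ and hence on $\mathcal{B}$, so $G^{\mathcal{B}}$ is a quasiprimitive (in fact primitive, since blocks of $G$ on $\mathcal B$ lift to blocks on $\mathcal{P}$) permutation group of degree $m$. The first step is to extract geometric constraints on how a line meets a block, exactly as in the proof of Theorem \ref{thm:3arcLimprim} but with points and lines interchanged: for a line $\ell$, local $(G,3)$-arc-transitivity forces $G_\ell$ to be $2$-transitive on $\Gamma(\ell)$, so $\ell$ has at most one point in each $\Sigma_i$, and the $3$-arc transitivity argument then shows $\ell$ meets \emph{every} block in exactly one point. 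Hence each block has size $st+1$ (it is an ovoid), $m=s+1$, and $G_\ell$ acts $2$-transitively on the set of blocks; moreover, as in Theorem \ref{thm:3arcLimprim}, each block stabilizer $G_{\Sigma_i}$ is $2$-transitive on $\Sigma_i$ of degree $st+1$.

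Next I would combine this with Lemma \ref{lem:GQbasics}(iii), which gives $s\le t^2$ and $t\le s^2$, to control the two $2$-transitive degrees $s+1$ (for $G^{\mathcal B}$) and $st+1$ (for $G_{\Sigma_i}^{\Sigma_i}$); note $st+1 \le t^3+1$ in terms of $t$, and $st+1\le s^3+1$ in terms of $s$. The heart of the argument is a case division on the O'Nan--Scott type of the quasiprimitive group $G$ on $\mathcal P$, precisely mirroring the structure of the proof of Theorem \ref{thm:3arcLimprim}: let $K$ be the kernel of $G$ on $\mathcal B$ and let $M$ be a minimal normal subgroup of $G$. If $K=1$ then $G=G^{\mathcal B}$ is a $2$-transitive group of degree $s+1$ whose one-point stabilizer $G_{\Sigma_1}$ is $2$-transitive of degree $st+1$; inspecting the finite $2$-transitive groups (\cite[Tables 7.3, 7.4]{CameronPerm}) together with $t\le s^2$ rules this out, as the only affine possibility fails the parameter bound and the almost simple ones have too-small subdegrees (the $A_8,A_7$ case fails just as in Theorem \ref{thm:3arcLimprim}). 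If $K\ne 1$, I split into the affine and nonaffine cases for $G^{\mathcal P}$. In the affine case $G$ has a regular elementary abelian minimal normal subgroup on $\mathcal P$, so $|\mathcal P|=(s+1)(st+1)$ is a prime power $p^d$; but $\gcd(s+1,st+1)=\gcd(s+1,1-s)=\gcd(s+1,2)$ divides $2$, forcing both $s+1$ and $st+1$ to be powers of $2$ with $s$ odd, whence (as in Theorem \ref{thm:3arcNtrans}) $s+1=2$, contradicting thickness. In the nonaffine case, $M$ is nonabelian; the argument that $M_{(\Sigma_1)}=1$ goes through as before (a line meets $\Sigma_i$ in a single point, so there are only $s+1<st+1$ lines through a point of $\Sigma_i$ meeting a fixed point of $\Sigma_1$, obstructing transitivity of $M_{(\Sigma_1)}$ on $\Sigma_i$ after swapping the roles of the relevant incidences), so each $M^{\Sigma_i}$ is nonabelian simple and $G^{\mathcal P}$ is either almost simple or of holomorph/product type with $\soc=T\times T$. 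Both are eliminated exactly as in Theorem \ref{thm:3arcLimprim} by comparing the socles of the two $2$-transitive actions (on $\mathcal B$ of degree $s+1$ and on $\Sigma_1$ of degree $st+1$) against \cite[Table 7.4]{CameronPerm}.

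Finally, the symmetric statement (imprimitivity on $\mathcal L$) is obtained by the dual argument, or simply by noting the whole discussion is symmetric in points and lines. The main obstacle I anticipate is verifying that the parameter inequalities of Lemma \ref{lem:GQbasics} really do close off \emph{every} entry of the $2$-transitive group tables — in particular making sure no sporadic or exceptional $2$-transitive group slips through the gap between the degree $s+1$ and the stabilizer degree $st+1$; this is the one place where a careful, if routine, table inspection is unavoidable, and it is essentially the same inspection already carried out in the proof of Theorem \ref{thm:3arcLimprim}, so I would phrase the argument to reuse that work rather than redo it.
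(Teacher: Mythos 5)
Your overall strategy (redoing the block analysis of Theorem \ref{thm:3arcLimprim} with the roles of $\mathcal{P}$ and $\mathcal{L}$ interchanged) is very different from the paper's proof, which is essentially three lines: if $G$ is imprimitive on $\mathcal{L}$, then the proof of Theorem \ref{thm:3arcLimprim} already shows $G^{\mathcal{P}}$ is affine; by \cite[Theorem 1.2]{localsarc} the quasiprimitive types on the two biparts must match, so $G$ is affine on $\mathcal{L}$ as well; and a quasiprimitive affine group is primitive \cite[2.I]{quasiprim1}, contradicting the assumed imprimitivity. Your route could in principle be made to work, but as written it breaks down exactly at the point where this last fact is needed.

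The error is in your affine subcase. You compute $\gcd(s+1,st+1)=\gcd(s+1,1-s)$, but $st+1\equiv 1-t\pmod{s+1}$ (it is modulo $t+1$ that $st+1\equiv 1-s$), so the correct statement is $\gcd(s+1,st+1)=\gcd(s+1,t-1)$, which need not divide $2$. Consequently the chain ``both factors are powers of $2$ with $s$ odd, whence $s+1=2$'' does not follow: the parameters $(s,t)=(3,5)$ give $|\mathcal{P}|=4\cdot 16=64=2^6$ with $s+1=4$, so no arithmetic contradiction is available here --- and none should be, since $\gqthreefive$ genuinely is an affine, point-imprimitive-on-lines example. The case is nevertheless vacuous for a different reason, the one the paper uses: if $G^{\mathcal{P}}$ is affine and quasiprimitive, it is primitive on $\mathcal{P}$, which directly contradicts your standing assumption that $G$ is imprimitive on $\mathcal{P}$. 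With that substitution (and the minor correction that in the dualized $M_{(\Sigma_1)}=1$ step the relevant count is the $t+1<st+1$ points of $\Sigma_i$ collinear with a fixed point of $\Sigma_1$, not $s+1$), your argument goes through, but it reproves from scratch, including another pass through the $2$-transitive group tables, what the paper obtains by citation of results already in hand.
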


\begin{proof}
Assume that $G$ does not act primitively on $\mathcal{L}$. By Theorem \ref{thm:3arcLimprim}, $G$
must an affine group on $\mathcal{P}$. Since $G$ acts quasiprimitively on both $\mathcal{P}$ and
$\mathcal{L}$ and $\Gamma$ is locally $(G,3)$-arc-transitive, by \cite[Theorem 1.2]{localsarc}, $G$
is affine on $\mathcal{L}$. However, affine groups that act quasiprimitively on a set act
primitively on that set \cite[2.I]{quasiprim1}, a contradiction. Therefore, $G$ must act primitively
on $\mathcal{L}$, and, by duality, $G$ must therefore act primitively on both $\mathcal{P}$ and
$\mathcal{L}$.
\end{proof}

\begin{thm}
\label{thm:characterization} If $G$ is a group of collineations that is transitive on antiflags of
$\mathcal{Q}$, and $\mathcal{Q}$ is not $\gqthreefive$ or its dual, then $G$ is an almost simple
group of Lie type that acts primitively on both points and lines.
\end{thm}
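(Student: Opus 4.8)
The plan is to assemble the previously established structural results and then rule out the remaining quasiprimitive O'Nan--Scott types. By Lemma \ref{anti-f-trans-local-3-a-trans}, the hypothesis that $G$ is antiflag-transitive is equivalent to $\Gamma$ being locally $(G,3)$-arc-transitive. By Theorem \ref{thm:3arcNtrans}, $G$ acts quasiprimitively on at least one of $\mathcal{P}$ or $\mathcal{L}$; say (after possibly passing to the dual) $G$ is quasiprimitive on $\mathcal{P}$. If $G$ were not primitive on $\mathcal{L}$, then Theorem \ref{thm:3arcLimprim} would force $\mathcal{Q}\cong\gqthreefive$, which is excluded; hence $G$ is primitive, and in particular quasiprimitive, on $\mathcal{L}$. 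Now Lemma \ref{lem:3arcQuasi=Prim} upgrades this to: $G$ acts primitively on both $\mathcal{P}$ and $\mathcal{L}$. So it remains only to identify the O'Nan--Scott type of these two primitive actions and show it is almost simple of Lie type.

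First I would eliminate the affine type. If $G$ were of affine type on $\mathcal{P}$, then (since $G$ is transitive on $\mathcal{L}$) by \cite[Corollary 1.5]{GQtranshyp} $\mathcal{Q}$ would be isomorphic to $\gqthreefive$ or the Lunelli--Sce quadrangle of order $(15,17)$; the former is excluded by hypothesis and the latter is not antiflag-transitive, a contradiction. The same argument applied to the dual action shows $G$ is not affine on $\mathcal{L}$ either. Next I would handle the remaining non-almost-simple quasiprimitive types using the composition-factor machinery of Section \ref{section:groupsgraphs}: by Lemma \ref{stab}(i), every nonsolvable composition factor of a point- (or line-) stabilizer $G_P$ lies in a short explicit list of simple groups, while Lemma \ref{comp-factor} bounds the smallest permutation degree of any composition factor of $G_P$ by $\max(s+1,t+1)$, which by Lemma \ref{lem:Pbounds} is comparable to $|\mathcal{P}|^{1/2}$ at worst. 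Types such as holomorph of a simple group, product action, twisted wreath, and diagonal type all force the socle to be $T^k$ with $k\ge 2$, and in each such case either $G_P$ contains a full diagonal subgroup $\cong T$ with $|T|^{k-1}$ a divisor of $|\mathcal{P}|$ (diagonal type) or $G$ embeds into a wreath product $H\Wr S_k$ in product action (product/twisted wreath type); in both situations one derives a contradiction with the quadrangle parameter inequalities of Lemma \ref{lem:GQbasics}(iii),(iv) together with Lemma \ref{lem:ratio} relating $|G_P|$ and $|G_\ell|$ via the ratio $(s+1)/(t+1)$. This leaves only the almost simple type.

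Finally, with $\soc(G)=T$ a nonabelian simple group acting primitively on both $\mathcal{P}$ and $\mathcal{L}$, I would rule out $T$ being an alternating group or a sporadic group. Here again Lemma \ref{stab}(i) plus Lemma \ref{comp-factor} is decisive: the possible unsolvable composition factors of $G_P$ and $G_\ell$ are so restricted, and the degrees $|\mathcal{P}|=(s+1)(st+1)$ and $|\mathcal{L}|=(t+1)(st+1)$ are so tightly constrained relative to each other by $t\le s^2$, $s\le t^2$ (Lemma \ref{lem:GQbasics}(iii)), that the 2-transitivity of $G_P^{\Gamma(P)}$ and $G_\ell^{\Gamma(\ell)}$ on $t+1$ and $s+1$ points respectively cannot be realized inside $S_n$ for small $n$ or inside a sporadic group with two compatible large primitive actions; the sporadic cases can be finished by direct inspection of the list of primitive actions (and their subdegrees), and the alternating case by the classical fact that primitive actions of $A_n$ of the relevant degrees are either natural or have point-stabilizer too small to produce a 2-transitive action on lines of the required degree. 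The main obstacle I expect is precisely this last step -- systematically excluding the sporadic and alternating socles -- since it requires either a careful enumeration or a uniform numerical argument playing the two quadrangle parameters off against each other; everything before it is a fairly mechanical chaining-together of the already-proven theorems.
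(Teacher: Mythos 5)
Your reduction to ``$G$ primitive on both $\calP$ and $\calL$'' is exactly the paper's argument: Lemma \ref{anti-f-trans-local-3-a-trans}, Theorem \ref{thm:3arcNtrans}, Theorem \ref{thm:3arcLimprim} and Lemma \ref{lem:3arcQuasi=Prim} are chained together in the same order, and that part of your proposal is correct. The divergence is in the final step. The paper does not prove within this theorem that a point- and line-primitive antiflag-transitive quadrangle has almost simple socle of Lie type; it invokes \cite[Corollary 1.3]{primGQ}, the prior classification of generalized quadrangles admitting a group primitive on both points and lines. You instead attempt to re-derive that classification, and that is where the genuine gap lies.

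Concretely: your treatment of the affine case is fine (it is the same appeal to \cite[Corollary 1.5]{GQtranshyp} that the paper makes inside the proof of Theorem \ref{thm:3arcLimprim}). But for the remaining O'Nan--Scott types your argument consists of assertions rather than derivations. For diagonal type you claim that $G_P$ contains a full diagonal subgroup, that $|T|^{k-1}$ divides $|\calP|$, and that this contradicts the parameter inequalities of Lemma \ref{lem:GQbasics}; none of these steps is established, and Lemma \ref{lem:ratio} only relates $|G_P|$ to $|G_\ell|$ without bounding either. For product action and twisted wreath type, embedding $G$ into $H\Wr S_k$ yields no immediate contradiction with the quadrangle parameters. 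The elimination of these types, and of alternating and sporadic socles, is the content of an entire separate paper and cannot be dispatched by the brief appeals to Lemma \ref{stab}(i) and Lemma \ref{comp-factor} that you give; note in particular that Lemma \ref{stab}(i) explicitly permits $A_n$ as a composition factor of a stabilizer, so it does not by itself obstruct an alternating socle. You correctly flag this as the main obstacle, but as written the step is missing rather than merely difficult: either cite the point-primitive/line-primitive classification, in which case the proof closes immediately as in the paper, or supply a substantial additional argument.
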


\begin{proof}
Suppose that $G$ is a group of collineations that is transitive on the antiflags of $\mathcal{Q}$.
By Theorem \ref{thm:3arcNtrans}, $G$ must act quasiprimitively on $\mathcal{P}$ or $\mathcal{L}$.
Since $\mathcal{Q}$ is not $\gqthreefive$ or its dual, by Theorem \ref{thm:3arcLimprim}, $G$ must
act quasiprimitively on both $\mathcal{P}$ and $\mathcal{L}$. By Lemma \ref{lem:3arcQuasi=Prim}, $G$
acts primitively on both $\mathcal{P}$ and $\mathcal{L}$. By \cite[Corollary 1.3]{primGQ}, $G$ is an
almost simple group of Lie type.
\end{proof}

The following results give us bounds on the size of a point stabilizer.

\begin{prop}
\label{prop:stabP} Let $G$ be a group that is transitive on the antiflags of a finite thick
generalized quadrangle $\mathcal{Q}$, assume $s \le t$, and let $P \in \mathcal{P}$. If $s < t$,
then $|G| < |G_P|^2$. If $s = t$, then $|G| < |G_P|^{19/9}$.
\end{prop}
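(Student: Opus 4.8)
The plan is to bound $|G|$ by counting antiflags and then converting this count, via transitivity, into a statement about $|G_P|$. Since $G$ is antiflag-transitive, $|G| = N \cdot |G_{(P,\ell)}|$ where $N$ is the number of antiflags and $(P,\ell)$ is a fixed antiflag. The number of antiflags is $|\mathcal{P}| \cdot (\text{number of lines not through a fixed point})$; a point lies on $t+1$ lines, so there are $|\mathcal{L}| - (t+1)$ lines not through it, giving $N = |\mathcal{P}|(|\mathcal{L}| - (t+1))$. On the other hand, each antiflag $(P,\ell)$ determines a unique flag $(P,\ell_0)$ with $\ell_0 \in \Gamma(P)$ meeting $\ell$, so $G_{(P,\ell)} \le G_{(P,\ell_0)}$; I will instead directly relate $|G_P|$ and $|G_{(P,\ell)}|$ by noting $[G_P : G_{(P,\ell)}]$ equals the number of lines in the $G_P$-orbit of $\ell$, which by antiflag-transitivity (equivalently local $(G,3)$-arc-transitivity, Lemma~\ref{anti-f-trans-local-3-a-trans}) is all of $\mathcal{L} \setminus \Gamma(P)$, i.e. $|\mathcal{L}| - (t+1)$. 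Hence $|G| = |\mathcal{P}| \cdot (|\mathcal{L}| - (t+1)) \cdot \frac{|G_P|}{|\mathcal{L}|-(t+1)} = |\mathcal{P}| \cdot |G_P|$. Wait — more carefully, $|G| = |G : G_P| \cdot |G_P| = |\mathcal{P}| \cdot |G_P|$ already since $G$ is transitive on $\mathcal{P}$; so the real content is to bound $|\mathcal{P}| < |G_P|$ when $s < t$, and $|\mathcal{P}| < |G_P|^{10/9}$ when $s = t$.

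So the task reduces to showing $|G_P|$ is large relative to $|\mathcal{P}|$. The key input is that $G_P$ acts $2$-transitively on $\Gamma(P)$ (Lemma~\ref{anti-f-trans-local-3-a-trans}), so $(t+1)t \mid |G_P|$, and moreover $G_P$ acts transitively on $\mathcal{L} \setminus \Gamma(P)$, a set of size $|\mathcal{L}| - (t+1) = (t+1)(st+1) - (t+1) = (t+1)st = st(t+1)$. Therefore $st(t+1) \mid |G_P|$, so $|G_P| \ge st(t+1)$. Now I compare with $|\mathcal{P}| = (s+1)(st+1)$. Using $s \le t$ and Lemma~\ref{lem:Pbounds}, we have $|\mathcal{P}| < s(t+1)^2$. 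When $s < t$, Lemma~\ref{lem:GQbasics}(iv) gives $t \le s^2 - s = s(s-1)$ when $t < s^2$ is false here — rather, since $s \le t$ and $s \ne t$, we have $s < t \le s^2$, so $s^2 \ge t+1$, i.e. $s \ge \sqrt{t+1}$; combined with $|G_P| \ge st(t+1) \ge \sqrt{t+1}\cdot t(t+1) = t(t+1)^{3/2}$ and $|\mathcal{P}| < s(t+1)^2 \le s^2(t+1)\cdot\frac{t+1}{s} $ — this needs care. Cleaner: $|\mathcal{P}|/|G_P| \le (s+1)(st+1)/(st(t+1))$, and I want to show this is $< 1$ for $s < t$ and $< |G_P|^{1/9}$ for $s=t$. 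Since $\frac{(s+1)(st+1)}{st(t+1)} = \frac{s+1}{t+1}\cdot\frac{st+1}{st} < \frac{s+1}{t+1}\cdot 2 \le \frac{2s+2}{t+1}$; for $s < t$ we need more, using $s+1 \le t$ when $s < t$ so $\frac{s+1}{t+1} < 1$ and $\frac{st+1}{st}$ close to $1$ — a short estimate finishes it. For $s = t$: $\frac{(s+1)(s^2+1)}{s^2(s+1)} = \frac{s^2+1}{s^2} = 1 + s^{-2}$, and we need $1 + s^{-2} < |G_P|^{1/9}$, which holds easily since $|G_P| \ge s^2(s+1) \ge s^3$ is large (and the small cases $s \in \{2,3\}$, giving $\mathsf{GQ}$'s of order $(2,2),(3,3)$, are handled by Lemma~\ref{lemma:smallGQ} or direct check, where the collineation groups are large enough).

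The main obstacle I anticipate is getting the exponent $19/9$ exactly right in the equality case rather than a cruder bound; this requires tracking the precise inequality $|\mathcal{P}| = (s+1)(st+1)$ against a sharp lower bound for $|G_P|$. The bound $st(t+1) \mid |G_P|$ from $2$-transitivity on $\Gamma(P)$ plus transitivity on non-incident lines may not by itself be tight enough, and I expect I will need to also exploit that $G_P^{\Gamma(P)}$ being $2$-transitive forces $|G_P^{\Gamma(P)}| = (t+1)|G_{P\ell}^{\Gamma(P)}|$ where the latter has size divisible by $t$, together with a lower bound on the kernel $G_P^{[1]}$ coming from transitivity on the $st$ three-arcs through $(P,\ell)$: indeed $G_{P\ell}$ is transitive on the $st$ such $3$-arcs, so $st \mid |G_{P\ell}|$, whence $|G_P| = (t+1)|G_{P\ell}| \ge (t+1) \cdot st \cdot (\text{more})$. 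Combining these divisibility constraints to push $|G_P|$ above $|\mathcal{P}|^{9/19}$ when $s=t$ is the delicate computation, but it is elementary once the right products are assembled. Finally, $|G| = |\mathcal{P}|\cdot|G_P|$, so $|\mathcal{P}| < |G_P|$ yields $|G| < |G_P|^2$ and $|\mathcal{P}| < |G_P|^{10/9}$ yields $|G| < |G_P|^{19/9}$, completing the proof.
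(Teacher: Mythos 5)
Your proof is correct and is essentially the paper's own argument: the paper likewise reduces everything to the single bound $|G_P|\ge st(t+1)$ (obtained there by counting the $st(t+1)$ three-arcs starting at $P$, which is the same count as your orbit of non-incident lines) and then compares with $|\mathcal{P}|=(s+1)(st+1)$ using exactly the inequality $s^{1/3}>1+1/s^2$ for $s\ge 2$ in the $s=t$ case. The ``anticipated obstacle'' in your last paragraph does not materialize --- the divisor $st(t+1)$ alone already suffices, with no need for extra divisibility from $G_P^{[1]}$ or special treatment of small $s$.
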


\begin{proof}
Note first that since $G$ is antiflag-transitive, this means that $G_P$ is transitive on $3$-arcs
that begin at $P$ in the incidence graph, i.e., $|G_P| \ge st(t+1)$. Assuming that $s < t$, we have:
$$|G_P|^2 \ge |G_P|st(t+1) \ge |G_P|s(s+1)(t+1) > |G_P|(s+1)(st+1) = |G|.$$ On the other hand, if $s
= t$, we note first that since $s \ge 2$, $s^{1/3} > 1 + 1/s^2$. Hence we have: $$|G_P|^{19/9} \ge
|G_P| s^2(s+1) \left( s^2(s+1) \right)^{\frac{1}{9}} > |G_P| s^2(s+1) s^{\frac{1}{3}} >
|G_P|s^2(s+1)\left(1 + \frac{1}{s^2}\right) = |G|,$$ as desired.
\end{proof}

\begin{cor}
\label{cor:largesub} Let $G$ be an almost simple group of Lie type that acts transitively on the
antiflags of a finite generalized quadrangle $\mathcal{Q}$ with $s \le t$, and let $T = \soc(G)$. If
$P \in \mathcal{P}$, then $|T| < |T_P|^3$.
\end{cor}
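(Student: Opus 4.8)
The plan is to pass from the inequality for $G$ in Proposition~\ref{prop:stabP} to the analogous inequality for the socle $T = \soc(G)$, by controlling the index $|G:T|$. Since $G$ is almost simple with socle $T$, we have $G/T \le \Out(T)$, and the antiflag-transitivity forces a convenient relationship between $|G_P|$ and $|T_P|$. First I would record that $|G| = |T| \cdot |G:T|$ and $|G_P| = |T_P| \cdot |G_P : T_P|$, and note that $G_P T / T \le G/T$ gives $|G_P : G_P \cap T| = |G_P T : T| \le |G : T|$, i.e. $|G_P : T_P| \le |G : T|$ (using $T_P = T \cap G_P$). Writing $c = |G:T|$ and $c_P = |G_P : T_P| \le c$, we have $|T| = |G|/c$ and $|T_P| = |G_P|/c_P \ge |G_P|/c$.

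The heart of the argument is then to feed these into the two cases of Proposition~\ref{prop:stabP}. In the case $s < t$ we have $|G| < |G_P|^2$, hence $|T| = |G|/c < |G_P|^2 / c = (c_P |T_P|)^2 / c \le c_P^2 |T_P|^2 / c \le c |T_P|^2$. So it remains to absorb the factor $c = |G:T|$ into one more power of $|T_P|$, i.e. to show $c \cdot |T_P|^2 \le |T_P|^3$, equivalently $|G:T| \le |T_P|$. In the case $s = t$ we have $|G| < |G_P|^{19/9}$, and the same substitution gives $|T| < c \, |T_P|^{19/9}$ (the factors of $c_P$ and $c$ combine to at most $c^{19/9 - 1} = c^{10/9} \le c^2$, but in fact one just needs $|T| < c^{19/9}|T_P|^{19/9}/c$; being a little careful, $|T| = |G|/c < |G_P|^{19/9}/c = c_P^{19/9}|T_P|^{19/9}/c \le c^{10/9}|T_P|^{19/9}$), so it suffices to establish $c^{10/9} |T_P|^{19/9} \le |T_P|^3$, i.e. $|G:T|^{10/9} \le |T_P|^{8/9}$, i.e. $|G:T|^{5} \le |T_P|^{4}$.

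Thus the whole proof reduces to a bound of the shape $|G:T|$ is small compared with $|T_P|$. For this I would use that $T_P$ is a maximal subgroup of $T$ (by Theorem~\ref{thm:characterization} and Lemma~\ref{lem:3arcQuasi=Prim}, $G$, and hence $T$, acts primitively on $\mathcal{P}$), that it contains a Sylow $p$-subgroup-sized chunk coming from the $st(t+1)$ lower bound of Proposition~\ref{prop:stabP} combined with Lemma~\ref{lem:Pbounds}, and the crude but universally valid fact $|\Out(T)| $ is tiny — for $T$ of Lie type over $\GF(q^a)$ of untwisted rank $r$ one has $|\Out(T)| \le d \cdot a \cdot g$ where $d$ (diagonal) is bounded by $r+1$, and $g$ (graph) by $6$, so $|\Out(T)|$ is polylogarithmic in $|T|$. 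Meanwhile $|T_P| \ge st(t+1)/|G:T| \ge |T|^{1/3}/|G:T|$ roughly, using $|\mathcal{P}| = |T:T_P|\cdot$(something) together with Lemma~\ref{lem:Pbounds}; more directly, from $|G_P| \ge st(t+1)$ and $|\mathcal{P}| = (s+1)(st+1) < |G_P|^{?}$ one gets $|T_P|$ is a fixed power of $|T|$ up to the $\Out$ factor. The main obstacle I anticipate is being quantitatively honest here: one must check that $|\Out(T)|^{5} < |T_P|^{4}$ (and the weaker $|\Out(T)| < |T_P|$) really does hold for \emph{every} almost simple group of Lie type that could arise, which boils down to showing $T_P$ is not too small — and for that one leans on $|G_P| \ge st(t+1)$ together with $|\mathcal P|=(s+1)(st+1)$ and the parameter bounds of Lemma~\ref{lem:GQbasics}, Lemma~\ref{lem:Pbounds}, perhaps after dispatching a bounded list of small $q$ and small rank cases by hand. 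Once that quantitative comparison is in place, the chain of inequalities above closes and yields $|T| < |T_P|^3$.
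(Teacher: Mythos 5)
Your proposal is correct and follows essentially the same route as the paper: reduce to showing $|G:T|^{5/4} \le |T_P|$ via the exponent arithmetic from Proposition~\ref{prop:stabP}, then control $|G:T|$ by the outer automorphism group. The paper closes the quantitative step you flag as the ``main obstacle'' exactly as you anticipate: it uses the explicit bound $|\Out(T)| < 2\log_2|T|$ to get $|A|^{5/4} < |T_P|$ whenever $|T| \ge 1006796$, and disposes of the finitely many smaller socles by a computer check, which leaves only $(\PSU_3(4), C_{13}{:}C_3)$, ruled out because that stabilizer has no $2$-transitive action.
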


\begin{proof}
Let $G = T.A$, where $A$ is the group of outer automorphisms of $T$. By the Classification of Finite
Simple Groups, we know that asymptotically $|A|$ is $O(C \log(|T|))$ for some constant $C$, and in
fact we have that $|A| < 2 \log_2(|T|)$ \cite{AutTBd}. Note first that when $|T| \ge 1006796$ that
$$|A|^{\frac{15}{4}} < \left(2 \log_2(|T|) \right)^{\frac{15}{4}} < |T|,$$ which implies that
$$|A|^{\frac{19}{4}} < |G| < |G_P|^{\frac{19}{9}}$$ by Proposition \ref{prop:stabP}, and hence
$|A|^\frac{9}{4} < |G_P|$ and $|A|^{\frac{5}{4}} < |T_P|$. Again using Proposition \ref{prop:stabP},
we have: $$|T| < |T_P|^{\frac{19}{9}}|A|^{\frac{10}{9}} < |T_P|^{\frac{19}{9}}|T_P|^{\frac{8}{9}} =
|T_P|^3,$$ as desired.

This implies that if $G$ acts transitively on the antiflags of a generalized quadrangle and $|T_P|^3
\le |T|,$ then $|T| \le 1006795$. By Proposition \ref{prop:stabP}, this means that $$|T_P|^3 \le |T|
< |T_P|^{2.5} |A|^{1.5}.$$ Using \GAP\ \cite{GAP4}, we find the only possibility for $(T, T_P)$ is
$(\PSU_3(4), C_{13}{:}C_3)$. However, this choice of $T_P$ does not have a $2$-transitive action,
and therefore $|T| < |T_P|^3$.
\end{proof}

Following Alavi and Burness \cite{largesubs}, a proper subgroup $H$ of a group $G$ is said to be
\emph{large} if the order of $H$ satisfies the bound $|H|^3 < |G|$. Corollary \ref{cor:largesub}
shows that the point stabilizer $T_P$ is a large subgroup of $T$, where $T$ is the socle of an
almost simple group of Lie type acting transitively on the antiflags of a generalized quadrangle. In
the forthcoming sections, we will utilize the work of \cite{largesubs} that determined all the large
maximal subgroups of finite simple groups.

%%%%%%%%%%%%%%%%%%%%%%%%%%%%%%%%%%%%%%%

\section{Geometric maximal subgroups of classical groups}

In view of Theorem \ref{thm:characterization}, in order to classify the finite thick
antiflag-transitive generalized quadrangles, we need to show that all antiflag-transitive
generalized quadrangles arising from almost simple groups of Lie type with large point stabilizer
are classical or dual classical. Corollary \ref{cor:largesub} gives an immediate restriction on the
possibilities for the point stabilizer, and throughout the following sections we will assume the
following:

\begin{hyp}
\label{hyp:3arcAlmostSimple} Let $G$ be an almost simple group of Lie type with $T := \soc(G)$
acting primitively on the points and primitively on the lines of a finite generalized quadrangle
$\mathcal{Q}$ of order $(s,t)$, $s \le t$, where, for any point $P$ and line $\ell$, $T_P$ acts
$2$-transitively on lines incident with $P$, $T_\ell$ acts $2$-transitively on points incident with
$\ell$, and $|T| < |T_P|^3$.
\end{hyp}

The following lemma will be important when considering the indices of parabolic subgroups of
classical groups.

\begin{lem}[{\cite[Lemma 9.4.1]{DRG}}]
\label{lem:isotropics} Let $k$ be a natural number, and let $e$ be $1,1,0,2,\frac{3}{2},
\frac{1}{2}$ in the respective cases of $\Sp_{2d}(q), \Omega_{2d+1}(q), \Omega_{2d}^+(q),
\Omega_{2d}^-(q), U_{2d+1}(q),$ and $U_{2d}(q)$ acting on the natural vector space $V$ equipped with
the appropriate quadratic or bilinear form. Then the number of totally singular $k$-spaces in $V$ is
${d \brack k} \prod_{i=1}^k (r^{d+e - i} + 1)$, where, in the first four cases $r = q$ and in the
last two cases $r = q^2$ and ${d \brack k}$ is the $q$-ary Gaussian binomial coefficient.
\end{lem}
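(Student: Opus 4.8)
Since the statement is a classical enumeration of the totally singular subspaces of a finite classical polar space, the plan is to reconstruct its standard proof by induction on $k$. The base case $k=1$ amounts to counting the totally singular points of $V$, i.e. the points of the associated polar space; the inductive step is a flag‑counting double count that reduces a $k$-dimensional count to a $(k-1)$-dimensional one by passing to the residue $U^{\perp}/U$ of a totally singular subspace $U$.

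For the base case I would count the nonzero totally singular vectors directly from a Witt decomposition $V = H_1 \perp \cdots \perp H_d \perp V_0$, where each $H_i$ is a hyperbolic line and $V_0$ is the anisotropic part, of dimension $0,1,0,2,1,0$ in the six respective cases — this is exactly the data encoded by the parameter $e$, together with the convention $r=q$ in the first four cases and $r=q^2$ (scalars $\GF(q^2)$) in the unitary cases. Splitting off one hyperbolic line at a time and using that a vector is totally singular iff each of its components is, one obtains a short recursion on $d$ whose solution is that a polar space of rank $m$ and parameter $e$ has exactly $\dfrac{(r^{m}-1)(r^{m+e-1}+1)}{r-1}={m\brack 1}(r^{m+e-1}+1)$ totally singular points, where ${m\brack 1}$ is the Gaussian binomial to base $r$ (which is the $q$-ary one in the first four cases). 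This is precisely the $k=1$ instance of the asserted formula.

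For the inductive step, fix $1\le k\le d$ and double count the set of incident pairs $(U,W)$ with $U$ a totally singular $(k-1)$-space, $W$ a totally singular $k$-space, and $U<W$. Each such $W$ contains exactly ${k\brack k-1}={k\brack 1}$ subspaces of dimension $k-1$ over the base field. Conversely, for a fixed totally singular $(k-1)$-space $U$, the totally singular $k$-spaces containing $U$ are in bijection with the totally singular points of $U^{\perp}/U$; the structural input here is the standard consequence of Witt's theorem that $U^{\perp}/U$ carries a nondegenerate form of the same type as $V$, with the same parameter $e$ and with Witt index $d-(k-1)$. By the base case this quotient has $\dfrac{(r^{d-k+1}-1)(r^{d+e-k}+1)}{r-1}$ totally singular points, so, writing $N_j$ for the number of totally singular $j$-spaces,
\[
N_k\cdot{k\brack 1}=N_{k-1}\cdot\frac{(r^{d-k+1}-1)(r^{d+e-k}+1)}{r-1}.
\]
Substituting the inductive hypothesis $N_{k-1}={d\brack k-1}\prod_{i=1}^{k-1}(r^{d+e-i}+1)$ together with the Gaussian‑binomial identity ${d\brack k-1}\cdot\dfrac{r^{d-k+1}-1}{r-1}={d\brack k}\cdot{k\brack 1}$ yields $N_k={d\brack k}\prod_{i=1}^{k}(r^{d+e-i}+1)$, completing the induction.

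The work lies in the uniform bookkeeping rather than in any single hard step: one must check case by case that the Witt decomposition has the advertised anisotropic part, that the value of $e$ is as listed, and that $U^{\perp}/U$ is again a classical polar space of the same type; one must also run all of the linear algebra over the correct base field — $\GF(q^2)$ in the unitary cases — so that the vector counts and the Gaussian binomials are the natural ones there. I would treat the six families together wherever possible, reading the non‑integer ``values'' $e=\tfrac32,\tfrac12$ purely as a device that makes the exponents $r^{d+e-i}$ come out to the intended powers of $q$.
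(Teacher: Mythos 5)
A preliminary remark on the comparison you were asked to make: the paper does not prove this lemma at all --- it is imported verbatim from \cite{DRG} and used as a black box --- so the only thing to measure your argument against is the standard proof in the literature, which is essentially what you have reconstructed. Your inductive step is correct and complete in outline: the double count of flags $U<W$, the identification of the totally singular $k$-spaces through a totally singular $(k-1)$-space $U$ with the singular points of the residue $U^{\perp}/U$ (a polar space of the same type and parameter $e$, of rank $d-k+1$), and the identity ${d\brack k-1}\cdot\frac{r^{d-k+1}-1}{r-1}={d\brack k}\cdot{k\brack 1}$ all work exactly as you say, and you are right that the vector counts and the Gaussian binomials must both be taken over $\GF(r)$, i.e.\ over $\GF(q^2)$ in the unitary cases. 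Your instinct to ``check case by case that the Witt decomposition has the advertised anisotropic part'' is also well placed: carrying out that check shows that a decomposition $H_1\perp\cdots\perp H_d\perp V_0$ with $\dim V_0=2$ lives in dimension $2d+2$, so what your argument actually proves in the minus-type case is the statement for $\Omega^-_{2d+2}(q)$ (which is how the source states it); as printed here, with $\Omega^-_{2d}(q)$ set against the factor ${d\brack k}$ and exponents $d+2-i$, the lemma is off by one in that case.

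The genuine gap is in your base case. The assertion that ``a vector is totally singular iff each of its components is'' is false except in the symplectic case, where every vector is isotropic. On an orthogonal decomposition one has $Q(v_1+v_2)=Q(v_1)+Q(v_2)$ (and similarly for Hermitian forms), so singularity is the condition $Q(v_1)=-Q(v_2)$, not $Q(v_1)=Q(v_2)=0$; for instance $e_1+f_1+e_2-f_2$ is a singular vector of an $\rmO_4^+(q)$-space whose two hyperbolic-line components are both non-singular. The recursion you describe would multiply together the $2q-1$ singular vectors of each hyperbolic line and produce a count of order $(2q-1)^d$, which is not the stated formula. The repair is standard and local: writing $f(W)$ for the number of singular vectors of $W$, and using that a hyperbolic line has $2q-1$ vectors on which the form vanishes and $q-1$ on which it takes any prescribed nonzero value, one obtains $f(H\perp W)=q\,f(W)+(q-1)q^{\dim W}$ in the symplectic and orthogonal cases (with the analogous fibre counts over $\GF(q^2)$ in the unitary cases), and this recursion, combined with the values of the form on the anisotropic part $V_0$, does yield the closed form $\frac{(r^m-1)(r^{m+e-1}+1)}{r-1}$ that you quote. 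Alternatively you may simply cite the point count of a rank-$m$ polar space as known. With the base case so repaired, your induction is a complete and correct proof.
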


In each case below, we will use the notation afforded by Aschbacher's classification of the
subgroups of classical groups. That is, a maximal subgroup of a classical group will fall into one
of nine classes $\mathcal{C}_i$ ($i=1,\ldots,9$) in the standard way (see \cite[2.1.2,
2.1.3]{BrayHoltRoney-Dougal}). The first eight classes are the \emph{geometric subgroups}, whereas
$\mathcal{C}_9$ contains the `nearly simple subgroups' that do not fall into the other classes.

\subsection{Linear groups}

\begin{prop}
\label{prop:linearmaxlgeom} Assume Hypothesis {\rm \ref{hyp:3arcAlmostSimple}}. If $T$ is isomorphic
to $\PSL_n(q)$ for some $n,q \ge 2$, then the point stabilizer $T_P$ cannot be a maximal geometric
subgroup.
\end{prop}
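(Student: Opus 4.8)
The plan is to argue by exhausting Aschbacher's geometric classes $\mathcal{C}_1,\dots,\mathcal{C}_8$ for $T=\PSL_n(q)$ and show none of them can serve as a point stabilizer $T_P$ satisfying Hypothesis~\ref{hyp:3arcAlmostSimple}. The principal tool is the combination of two facts: first, Corollary~\ref{cor:largesub} tells us $T_P$ is a \emph{large} subgroup of $T$, so we may appeal directly to the classification of large maximal subgroups of $\PSL_n(q)$ in Alavi--Burness \cite{largesubs} to produce a short explicit list of candidates; second, by Hypothesis~\ref{hyp:3arcAlmostSimple} both $T_P$ and $T_\ell$ must act $2$-transitively on the $t+1$ lines through $P$, resp.\ the $s+1$ points on $\ell$, and, crucially, by flag-transitivity $T_P\cap T_\ell = T_{\{P,\ell\}}$ has index $t+1$ in $T_P$ and index $s+1$ in $T_\ell$, with the ratio $|T_\ell|/|T_P| = (s+1)/(t+1)$ controlled by Lemma~\ref{lem:ratio}. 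So the strategy is: for each large maximal geometric subgroup $H$ of $\PSL_n(q)$, take $T_P = H$, read off $|T_P|$ and the constraints $|\mathcal{P}| = [T:T_P] = (s+1)(st+1)$, $t\le s^2$, $s\le t$, and the $2$-transitivity of $T_P$ on a set of size $t+1 \mid |T_P|$, and derive a contradiction.

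The steps, in order, are as follows. (1) Handle the parabolic (stabilizers of totally singular subspaces, i.e.\ $\mathcal{C}_1$ subspace stabilizers): these are exactly the point/line stabilizers of the projective geometry $\PG(n-1,q)$, and the key point is that although $T_P$ \emph{is} a maximal parabolic, the induced $2$-transitive action forces $\mathcal{Q}$ to be built on the geometry of singular subspaces with the right incidence, and a counting/parameter check (using Lemma~\ref{lem:GQbasics} and Lemma~\ref{lem:Pbounds}) rules out all cases where $n\ge 2$ — essentially because a generalized quadrangle has girth $8$ whereas the point--line incidence graph of $\PG(n-1,q)$ has girth $6$, so one must show no parabolic index $(s+1)(st+1)$ can occur; I expect to eliminate these by matching $[T:T_P]$ against the $q$-binomial expressions from Lemma~\ref{lem:isotropics} and checking the divisibility/quadratic constraints $t\le s^2$, $s+t \mid st(s+1)(t+1)$. (2) Reducible non-parabolic $\mathcal{C}_1$ (stabilizers of a non-degenerate or non-incident subspace pair $\GL_k\times\GL_{n-k}$ type): here $|T_P|$ is too small relative to $|T|$ unless $k$ is close to $0$ or $n$, and the large-subgroup list of \cite{largesubs} already trims this to a handful; check $2$-transitivity of $T_P$ on $t+1$ points fails since such $T_P$ has no $2$-transitive quotient of the required degree. (3) Imprimitive $\mathcal{C}_2$ ($\GL_{n/k}\Wr S_k$), field-extension $\mathcal{C}_3$, subfield $\mathcal{C}_5$, and tensor/symplectic-type classes $\mathcal{C}_4,\mathcal{C}_6,\mathcal{C}_7$: for each, use the \cite{largesubs} tables to see which survive the bound $|T_P|^3>|T|$, then eliminate the survivors via the $2$-transitivity requirement combined with the numerical GQ-parameter constraints, exactly as in the introduction's strategy. (4) The classical-subgroup class $\mathcal{C}_8$ ($\Sp_n$, $\SU_n$, $\SO_n^\pm$ inside $\SL_n$): these are large only for small $n$, and one checks directly that the indices $(s+1)(st+1)$ and the $2$-transitive actions do not materialize.

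The main obstacle will be the parabolic case, step (1): since a maximal parabolic of $\PSL_n(q)$ genuinely \emph{is} a natural candidate for a point stabilizer in a point-line geometry on which $\PSL_n(q)$ acts $2$-transitively on lines through a point, one cannot dismiss it by a crude order bound — one must really use the quadrangle axioms. I anticipate the cleanest route is: suppose $T_P$ is the stabilizer of a totally singular (here: arbitrary) $k$-subspace; then the $2$-transitive action of $T_P$ on lines through $P$ forces $t+1$ to equal one of the small primitive permutation degrees of $\PSL_k(q)\times\PSL_{n-k}(q)$ (a Levi quotient), typically $\frac{q^a-1}{q-1}$ for small $a$, and then the constraint $|T:T_P|=(s+1)(st+1)$ with $s\le t\le s^2$ becomes a Diophantine condition in $q, n, k, a$ that has no solutions for $n\ge 2$; the delicate part is bounding $a$ and ruling out sporadic small-$q$ coincidences, where I would invoke \GAP\ \cite{GAP4} as in Corollary~\ref{cor:largesub}. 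The other classes should fall quickly to the large-subgroup classification plus a $2$-transitivity check, since a proper geometric subgroup of $\PSL_n(q)$ outside class $\mathcal{C}_1$ rarely admits a faithful $2$-transitive action of degree dividing its order and compatible with the GQ order bounds.
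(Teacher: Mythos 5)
Your overall strategy is the same as the paper's: invoke Corollary \ref{cor:largesub} to reduce to the large maximal geometric subgroups of $\PSL_n(q)$ classified by Alavi and Burness, then eliminate each Aschbacher class by combining the local $2$-transitivity of $T_P$ on $\Gamma(P)$ with the parameter constraints $|\mathcal{P}|=(s+1)(st+1)$, $s\le t\le s^2$ and Lemma \ref{lem:Pbounds}. For the classes $\mathcal{C}_2,\mathcal{C}_3,\mathcal{C}_5,\mathcal{C}_6,\mathcal{C}_8$ this is exactly what the paper does, and your plan would go through, modulo the substantial case-by-case arithmetic.

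The genuine gap is in your step (1), the parabolic case, which you rightly flag as the hard one but for which your proposed mechanism is not strong enough. Once local $2$-transitivity forces $t+1=(q^{n-i}-1)/(q-1)$ and the bound $|\mathcal{P}|={n\brack i}<(t+1)^3$ forces $i=2$, you are left, for \emph{all} $n\ge4$ and all $q$, with $|\mathcal{P}|=\frac{(q^n-1)(q^{n-1}-1)}{(q^2-1)(q-1)}$ and $t+1=\frac{q^{n-2}-1}{q-1}$; the ``Diophantine condition with no solutions'' you hope for amounts to showing the discriminant $(t+1)^2+4t(|\mathcal{P}|-1)$ of the quadratic in $s$ is never a square, and there is no evident uniform proof of that in $n$ and $q$ (nor can \GAP\ dispose of infinitely many cases). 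The missing idea --- whose ingredients appear in your opening paragraph but are never deployed here --- is to determine the \emph{line} stabilizer: $G_\ell$ is a maximal subgroup containing the flag stabilizer $G_{P,\ell}=[q^{2(n-2)+(n-3)}]\colon(\cdots)$, whose shape forces $G_\ell$ to be a parabolic of type $P_2$ or $P_3$; each option pins down $s$ exactly ($s=t$, respectively $s+1=q^2+q+1$ via $|\mathcal{L}|={n\brack 3}$), after which a congruence modulo $q^{n-2}$, respectively the inequality $s^2(t+1)<|\mathcal{P}|$, yields a clean contradiction. Two smaller points: Lemma \ref{lem:isotropics} does not apply to $\PSL_n(q)$ (there is no form; the subspace count is just the Gaussian binomial ${n\brack i}$), and the girth-$6$-versus-girth-$8$ heuristic plays no role in the actual argument --- what rules out $i=1$ is that a point-$2$-transitive group cannot act on a thick generalized quadrangle.
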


\begin{proof}
Recall that $s\leqslant t$ by our assumption. By Corollary \ref{cor:largesub}, the stabilizer of a
point $T_P$ satisfies $|T| < |T_P|^3$. The geometric subgroups satisfying this condition are listed
in \cite[Proposition 4.7]{largesubs}, and we proceed down the list.

\begin{description}[leftmargin=0pt, style=unboxed, itemsep=1ex]
\item[Case 1] We first consider subgroups of type $\mathcal{C}_1$. We consider first the parabolic
  subgroups $P_i$ of $T$, where $P_i$ is the stabilizer of an $i$-dimensional subspace. Without a
  loss of generality, we may assume that $i \le n - i$. Then the stabilizer $G_P$ satisfies
\[T_P = q^{i(n-i)}\colon {1\over \gcd(n,q-1)}(\GL_i(q)\circ \GL_{n-i}(q)),\]
and the order of the point set is
\[|\mathcal{P}|={n\brack i}={(q^n-1)\cdots(q^{n-i+1}-1)\over(q^i-1)\cdots(q-1)}.\]
Noting that $G$ has rank $3$ in its action on $\mathcal{P}$ and $T$ is $2$-transitive on $1$-dimensional subspaces, we have $i\ge2$, and then
in particular, $n\geqslant4$.

Since $q^{i(n-i)}$ is the unique minimal normal subgroup of $G_P$
and $G_P^{\Gamma(P)}$ is 2-transitive, it follows from the Classification of Finite (affine) $2$-Transitive
Groups that $G_P^{\Gamma(P)}$ is almost simple.
By Lemma~\ref{stab}, we conclude that $G_P^{\Gamma(P)}\rhd\PSL_{n-i}(q)$. Note that we may also conclude that $t+1=|\Gamma(P)|=(q^{n-i}-1)/(q-1)$, since the other possible values of $t+1$ are ruled out by Table \ref{tbl:Pi}.
By Lemma~\ref{lem:Pbounds},
 \begin{align*}
 q^{i(n-i-1)} &= \frac{q^{n-1} \cdots q^{n-i}}{q^i \cdots q^1} < \frac{(q^n-1)\cdots(q^{n-i+1}-1)}{(q^i-1)\cdots(q-1)}= |\mathcal{P}|\\
 &< (t+1)^3=\frac{(q^{n-i} - 1)^3}{(q-1)^3}< q^{3n - 3i},
\end{align*}
which implies that $i(n-i -1) < 3n - 3i$.  Since $i \le n - i$, we have $2i \le n < i(i-2)/(i-3)$ if $i > 3$, which implies that $i < 4$, a contradiction.  If $i = 3$, then we have
$$\frac{(q^n-1)(q^{n-1}-1)(q^{n-2}-1)}{(q^3-1)(q^2-1)(q-1)} < \frac{(q^{n-1}-1)^3}{(q-1)^3},$$ which implies that
$$q^6 < \frac{(q^n-1)(q^{n-1}-1)(q^{n-2}-1)}{(q^{n-3} - 1)^3} < \frac{(q^3-1)(q^2-1)(q-1)}{(q-1)^3} = (q+1)(q^2+q+1),$$
a contradiction for $q \ge 2$.  This leads to $i=2$;
in particular, $n\geqslant 2i=4$.
Thus $T_P \cong P_2=q^{2(n-2)}\colon {1\over\gcd(n,q-1)}(\GL_2(q)\circ\GL_{n-2}(q))$,
$T_P^{\Gamma(P)}=\PGL_{n-2}(q)$, and $|\mathcal{P}| ={(q^n-1)(q^{n-1}-1)\over(q^2-1)(q-1)}$.

\begin{center}
\begin{table}
\caption{Ruling out examples for Case 1 of the proof of Proposition \ref{prop:linearmaxlgeom}.}\label{tbl:Pi}
\begin{tabular}{l|l|c|c|c|p{4cm}l}
\toprule
$(n,q)$ & $(n-i,q)$ & $t+1$ & $\mathcal{P}$ & $|\mathcal{P}| < (t+1)^3$? & Integral solution to $(s+1)(st+1) = |\mathcal{P}|$?\\
\midrule
$(4,4)$ &  $(2,4)$ & $6$ &   $357$ & false & \\
$(4,5)$ & $(2,5)$ & $5$ &   $806$ & false & \\
$(4,7)$ &  $(2,7)$ &  $7$ &   $2850$ & false & \\
$(4,8)$ & $(2,8)$ & $28$ & $4745$ & true & false\\
$(4,9)$ &  $(2,9)$ & $6$ & $7462$ & false & \\
$(4,11)$ & $(2,11)$ & $11$ & $16226$ & false & \\
$(5,2)$ & $(3,2)$ & $8$ & $155$ & true & false\\
$(6,2)$ & $(3,2)$ & $8$ & $1395$ & false & \\
$(6,2)$ & $(4,2)$ & $8$ & $651$ & false & \\
$(7,2)$ & $(4,2)$ & $8$ & $11811$ & false & \\
$(8,2)$ & $(4,2)$ & $8$ & $200787$ & false & \\
\bottomrule
\end{tabular}
\end{table}
\end{center}

Note that in this case we have $t+1 = (q^{n-2} - 1)/(q-1)$ since exceptions are ruled out by Table \ref{tbl:Pi}.  Furthermore, the flag stabilizer is $G_{P,\ell}=[q^{2(n-2)+(n-3)}]\colon ({1\over(n,q-1)}(\GL_2(q)\circ(\GL_1(q)\times\GL_{n-3}(q)))$.
The line stabilizer $G_\ell$ is a maximal subgroup of $G$ which contains $G_{P,\ell}$.
It follows that $G_\ell$ is a parabolic subgroup of type $P_2$ or $P_3$.

If $G_\ell$ is parabolic of type $P_3$, then we have $$|\mathcal{L}| = {n \brack 3} = \frac{(q^n-1)(q^{n-1}-1)(q^{n-2}-1)}{(q^3-1)(q^2-1)(q-1)}.$$  Noting that $|\mathcal{L}| = (t+1)(st+1)$ and $t+1 = (q^{n-2} - 1)/(q-1)$, this implies that $$st + 1 = \frac{(q^n-1)(q^{n-1}-1)}{(q^3-1)(q^2-1)},$$ and since $|\mathcal{P}| = (s+1)(st+1)$, we have $s+1 = q^2 + q + 1$.  On the other hand, this means that $$\frac{(q^n-1)(q^{n-1}-1)}{(q^3-1)(q^2-1)} = st + 1 = (q^2 + q)\left(\frac{q^{n-2} - q}{q-1}\right) + 1,$$ which in turn implies that $$(q^2 + q)(q^3 - 1)(q+1)(q^{n-2}-q) + (q^3 - 1)(q^2-1) = (q^n - 1)(q^{n-1} - 1).$$  From here, we see that $$(q^2 + q)(q^3 - 1)(q+1)(-q) + (q^3 - 1)(q^2-1) \equiv 1 \pmod {q^{n-2}}.$$  Simpliflying, we see that $q^3(1+q-2q^3-q^4) \equiv 0 \pmod {q^{n-2}}$.  Since $G_{\ell}$ has type $P_3$, $n \ge 6$; however, this means that $q^4$ divides $q^3(1+q-2q^3-q^4)$ and $q$ divides $1+q-2q^3-q^4$, a contradiction.  Hence $G_\ell$ must be of type $P_2$.

If $G_\ell$ is parabolic of type $P_2$, then we have $|\mathcal{L}| = |\mathcal{P}|$, and so $s+1 = t+1 = (q^{n-2}-1)/(q-1)$. Note first that since there are $P_2$ subgroups, $n \ge 4$.
By Lemma \ref{lem:Pbounds} (ii), $s^2(t+1) < |\mathcal{P}|$,
which here means that
$$\left( \frac{q^{n-2}-q}{q-1}\right)^2\frac{q^{n-2}-1}{q-1} < \frac{(q^n-1)(q^{n-1}-1)}{(q^2-1)(q-1)}.$$
Simplifying, this means that
$$q^2(q^{n-3}-1)^2 < (q^n-1)\frac{q^{n-1}-1}{q^{n-2}-1} < (q^n-1)(q +1),$$
which in turn implies that
$$q^{n-5} - 1 < q^{n-5} - \frac{1}{q^2} < \frac{1}{q^4}\left( \frac{q^n-1}{q^{n-3}-1} \right)(q+1) < \frac{1}{q^4}(q^3+1)(q+1) = (1 + \frac{1}{q^3})(1+\frac{1}{q}) < 2.$$
If $n > 5$, this last equation only holds if $n=6$ and $q=2$;
however, going back to our original inequality of $s^2(t+1) < |\mathcal{P}|$, this implies that
$$2940 =  \left(\frac{2^4 - 2}{2-1}\right)^2 \frac{2^4-1}{2-1} < \frac{(2^6-1)(2^5-1)}{(2^2-1)(2-1)} = 651,$$
a contradiction.
We may now examine the cases of $n=4$ and $n=5$ individually by examining the exact value of $|\mathcal{P}|$.  On the one hand, we know that $|\mathcal{P}| ={(q^5-1)(q^{4}-1)\over(q^2-1)(q-1)}$;
on the other hand, $|\mathcal{P}| = (s+1)(st+1),$ and so
$$\frac{(q^5-1)(q^4-1)}{(q^2-1)(q-1)} = \left(\frac{q^3-1}{q-1}\right)\left(\left( \frac{q^3-q}{q-1}\right)^2 +1\right),$$
a contradiction since a primitive prime divisor of $q^3 -1$ does not divide the left hand side, and so $n \neq 5$.  Finally, if $n = 4$, $s+1 = t+1 = q+1$, and proceeding as in the $n=5$ case we find that $$(q^2 + 1)(q^2+q+1) = \frac{(q^4-1)(q^3-1)}{(q^2-1)(q-1)} = |\mathcal{P}| = (q+1)(q^2+1),$$ which implies that $q = 0$, a contradiction.  Therefore, $T_P$ cannot be a parabolic subgroup.

\item[Case 2] Suppose $T_P$ is a $\mathcal{C}_2$-subgroup of type $\GL_{n/t}(q) \Wr S_t$.  We will first rule out $t = 3$.  By \cite[Proposition 4.7]{largesubs}, we have that either $q \in \{5,8,9\}$ and $(n, q-1) = 1$ or $(n,q) = (3,11)$.  Note when $q = 9$ we have $\Out(T) \cong C_2 \times C_2$, and otherwise $\Out(T) \cong C_2$.  If $(n,q) = (3,11)$, then $T_P \cong C_{10}^2{:}S_3$ and $G_P = C_{10}^2{:}S_3.2$.  This implies that $t+1 \le 3$, but then $(t+1)^3 < |T{:}T_P|,$ a contradiction to Lemma \ref{lem:Pbounds}(i).  Otherwise, by \cite[Proposition 4.2.9]{KleidmanLiebeck},
$$T_P = [(q-1)^2].\PSL_{n/3}(q)^3.S_3.$$  If $n \ge 6$, then $G_P$ is not solvable, and so $\PSL_{n/3}(q)^3$ is in the kernel of the action of $T_P$ on $\Gamma(P)$.  However, this means that $t+1 \le (q-1)^2$ and thus, using \cite[Corollary 4.3]{largesubs}. $$(t+1)^3 < (q-1)^6 < \frac{1}{6}q^{\frac{2n^2}{3} - 1} < |T{:}T_P| = |\mathcal{P}|,$$ a contradiction to Lemma \ref{lem:Pbounds}(i).  Hence we may assume that $n=3$, in which case $T_P \cong (q-1)^2{:}S_3$.  This implies that $t+1 \le q-1$, but then $(t+1)^3 < q^3 < |T{:}T_P| =|\mathcal{P}|$, a contradiction to Lemma \ref{lem:Pbounds}(i), and hence $t \neq 3$.

Hence we have that $T_P$ is of type $\GL_{n/2}(q) \Wr S_2$.
Suppose that $G_P$ is not solvable.
Then $\SL_{n/2}(q) \circ \SL_{n/2}(q)$ must be in the kernel of
the action of $T_P$ on $\Gamma(P)$, and $G_P^{\Gamma(P)}$ is a factor group of
$(C_{q-1}\times C_{q-1}).[2f]$, where $q = p^f$ for some prime $p$ and integer $f$.
Since $G_P^{\Gamma(P)}$ is 2-transitive, it implies that $G_P^{\Gamma(P)}\cong C_{t+1}\colon C_t$
where $t+1$ divides $q-1$ and $t$ divides $2f$.
Thus the valency $|\Gamma(P)|=t+1<\min\{q-1,2f + 1\}$, which is smaller than the smallest permutation degree for
$\PSL_{n/2}(q)$ (namely $(q^{n/2}-1)/(q-1)$), a contradiction.
Thus $G_P$ is solvable, and either $n=2$, or $n=4$ and $q=2$ or $3$.

We first examine the case $n=2$.  In this case, $T_P \cong D_{q-1}$ if $q$ is odd and $T_P \cong D_{2(q-1)}$ if $q$ is even.  This implies (in either case) that $|\mathcal{P}| = q(q+1)/2$.  Furthermore, if $q = p^f$, then $G_P$ is a subgroup of $D_{2(q-1)}.C_f$, and $G_P$ contains all of $T_P$ and the full group of outer automorphisms in $G$.  On the other hand, since $G_P$ acts $2$-transitively on $\Gamma(P)$, by the Classification of the Finite (affine) $2$-Transitive Groups, we have $G_P^{\Gamma(P)} \cong C_{t+1}\colon C_t$, which implies that $t \le 2f$ (and $t+1 \le q-1$).  Hence by Lemma \ref{lem:Pbounds}(i), we have $$\frac{p^f(p^f+1)}{2} = |\mathcal{P}| < (2f + 1)^3.$$  This implies that the only possible values of $p$ and $f$ are: $p=2$ and $f \le 6$; $p = 3$ and $f \le 2$; $p=5$ and $f = 1$.  If $f=1$, then $t+1 \le 3$, which implies that $\mathcal{Q}$ is the unique generalized quadrangle of order $(2,2)$ (see Lemma \ref{lemma:smallGQ}); however, $G_P$ is a subgroup of $D_{2(q-1)}.C_f$ and the generalized quadrangle of order $(2,2)$ has point stabilizer isomorphic to $C_2 \times S_4$, ruling this case out.   If $f = 2$, then $p=2$ implies that $t+1 \le q-1 \le 3$, a contradiction as in the case $f=1$, and $p=3$ implies that $G_P$ is isomorphic to $D_{16}.2$, which has no $2$-transitive representations on more than $2$ elements, a contradiction.  Hence $p=2$ and $3 \le f \le 6$.  If $f=3$, then $G_P$ is a subgroup of $D_{14}.C_3$ containing $D_{14}$.  The only possibility here is $G_P^{\Gamma(P)} \cong C_7\colon C_6$, which implies that $t+1 = 7$.  However, this means that $$(t+1)^2 = 49 > 36 = |\mathcal{P}|,$$ a contradiction to Lemma \ref{lem:Pbounds}(i).  If $f=4$, then $G_P$ is a subgroup of $D_{30}.C_4$.  The only possibilities here are: $G_P^{\Gamma(P)} \cong C_3\colon C_2$, which is a contradiction as in the $f=1$ case; or $G_P^{\Gamma(P)} \cong C_5\colon C_4$, which implies that $t+1 = 5$ and $$(t+1)^3 = 125 < 136 = |\mathcal{P}|,$$ a contradiction to Lemma \ref{lem:Pbounds}(i).  If $f = 5$, then $G_P$ is a subgroup of $D_{62}.C_5$, which has no subgroups with $2$-transitive representations on more than $2$ elements, a contradiction.  Finally, if $f=6$, then $G_P$ is a subgroup of $D_{126}.C_6$.  The only possibility here is that $G_P^{\Gamma(P)} \cong C_7\colon C_6$, which implies that $t+1 = 7$, a contradiction since then $(t+1)^3 < |\mathcal{P}|$ as in the $f = 4$ case.  Therefore, $n \neq 2$.

Finally, we examine the cases of $n=4$ and $q=2$ or $q=3$.  If $q=2$, then by \cite[Table 3.5.H]{KleidmanLiebeck}, we have that a $\mathcal{C}_2$-subgroup of type $\GL_2(2) \Wr S_2$ is contained in the $\mathcal{C}_8$-subgroup $\Sp_4(2)$, a contradiction to maximality.  If $q = 3$, then $T_P \cong (\SL(2,3)\circ \SL(2,3)).2^2$, which implies that $|\mathcal{P}| = |T:T_P| = 10530.$  However, $G_P$ is then a subgroup of $(\SL(2,3)\circ \SL(2,3)).2^2$, which implies that $t+1 \le 4$ and $$(t+1)^3 \le 64 < 10530 = |\mathcal{P}|,$$ a contradiction.  Therefore, $T_P$ cannot be a maximal subgroup of type $\mathcal{C}_2$.

\item[Case 3]
Assume that $T_P$ is a $\mathcal{C}_3$-subgroup of type $\GL_{n/k}(q^k)$, where $k>1$.
Suppose first that $k<n$.
Then (by the Classification of Finite $2$-Transitive Groups \cite[Tables 7.3 and 7.4]{CameronPerm}) either
\begin{itemize}
\item[(a)] the valency $t+1=|\Gamma(P)|={q^n-1\over q^k-1}$, or

\item[(b)] $(n/k,q^k,t+1)=(2,9,6)$, or $(2,8,28)$, so that $(n,q)=(4,2)$, $(4,3)$, or $(6,2)$.
\end{itemize}

In the first case, $t={q^n-1\over q^k-1}=q^k{q^{n-k}-1\over q^k-1}$,
which implies that $st+1$ is coprime to $q$.
Therefore,  $s+1$ is divisible by the $q$-part of $(s+1)(st+1)=|\mathcal{P}|=|T:T_P|$, which is equal to ${q^{n(n-1)/2}/(q^k)^{{n\over k}({n\over k}-1)/2}}=q^{(k-1)n^2\over2k}$.
It implies that $s>t$, not possible.

In the exceptional case, $(n,q)$ is equal to $(4,2)$, $(4,3)$, or $(6,2)$.
 If $(n,q) = (4,3)$ and $t+1 \neq (q^n - 1)/(q^2 - 1)$, then $t+1 = 6$ and
 $$|\mathcal{P}| = |T\colon T_P| = 8424 > 216 = (t+1)^3,$$
 a contradiction to Lemma \ref{lem:Pbounds}(i).
If $(n,q)=(4,2)$ and $t+1 \neq (q^n - 1)/(q^2 - 1)$, then $t+1 = 6$ and
\[
|\mathcal{P}| = |T\colon T_P| = 56,
\]
which implies that
\[
(s+1)(5s + 1) = 56.
\]
This equation has no integral solution for $s$, a contradiction. Likewise, if $(n,q)=(6,2)$, then
$|T_P| = 10584$, $|T| = 20158709760$, and $t+1 = 28$, which implies that $$(s+1)(27s + 1) =
|\mathcal{P}| = |T{:}T_P| = 1904640.$$ This equation has no integral solution for $s$, a
contradiction.

 We thus have $k=n$. Note that by \cite[Proposition 4.7]{largesubs}, either $n = k = 2$ or $n = k =
 3$. Suppose that $n = k = 3$. By Proposition \ref{prop:stabP}, we have $q = 5,8,9,$ or $11$.
 However, in each of the cases $q = 5,8,9,11$, by \cite[Table 8.3]{BrayHoltRoney-Dougal} we have
 that $T_P = (q^2 + q + 1): 3$, which implies that $t+1 \le q^2 + q + 1$. This means that $$(t+1)^3
 \le (q^2 + q + 1)^3 < q^7 < |\PSL_3(q)|$$ by \cite[Corollary 4.3]{largesubs}, a contradiction to
 Lemma \ref{lem:Pbounds}(i).

We therefore have $n=2$. Then $T_P$ is isomorphic to $D_{q+1}$ if $q$ is odd and is isomorphic to
$D_{2(q+1)}$ if $q$ is even. So $G_P$ is a subgroup of $D_{2(q+1)}.C_f$, where $q=p^f$, and we
proceed as in the $\mathcal{C}_2$ case above. Immediately, this implies that $G_P^{\Gamma(P)} \cong
C_{t+1}\colon C_t$, and so $t \le 2f$ and $t+1 \le 2f+1$. Moreover, $|\mathcal{P}| = |T\colon T_P| =
p^f(p^f - 1)/2$, and so by Lemma \ref{lem:Pbounds}(i) we have that $$\frac{p^f(p^f-1)}{2} =
|\mathcal{P}| < (2f+1)^3.$$ This implies that the only possible values of $p$ and $f$ are: $p=2$ and
$f \le 6$; $p = 3$ and $f \le 2$; $p=5$ and $f = 1$; $p=7$ and $f=1$. If $f=1$, then $t+1 \le 3$,
which implies that $\mathcal{Q}$ is the unique generalized quadrangle of order $(2,2)$ (see Lemma
\ref{lemma:smallGQ}); however, $G_P$ is a subgroup of $D_{2(q+1)}.C_f$ and the generalized
quadrangle of order $(2,2)$ has point stabilizer isomorphic to $C_2 \times S_4$, ruling this case
out. If $f=2$, then $p=2$ implies that $G_P$ is a subgroup of $D_{10}.2$, which would imply that
$G_P^{\Gamma(P)} \cong C_5\colon C_4$ and $t = 4$. However, this means that $(s+1)(4s+1) =
|\mathcal{P}| = 2^2(2^2 - 1)/2 = 6,$ a contradiction to $s$ being an integer. If $f=2$, then $p = 3$
implies that $G_P$ is a subgroup of $D_{20}.2$, and so this again implies that $G_P^{\Gamma(P)}
\cong C_5\colon C_4$ and $t = 4$. However, this means that $(s+1)(4s+1) = |\mathcal{P}| = 36$, which
has no integral solutions for $s$, a contradiction. Hence $p=2$ and $2 \le f \le 6$. If $f = 3$ or
$f=4$, then $G_P$ is a subgroup of one of $D_{18}.C_3$ or $D_{34}.C_4$. The only possibility here is
that $G_P^{\Gamma(P)} \cong C_3\colon C_2$, but then $t+1 = 3$, which is a contradiction as above.
If $f = 5$, then $G_P$ is isomorphic to a subgroup of $D_{66}.C_5$, which means that
$G_P^{\Gamma(P)} \cong C_3\colon C_2$ (ruled out as above) or $G_P^{\Gamma(P)} \cong C_{11}\colon
C_{10}$. This means that $t = 10$ and that $(s+1)(10s+1) = |\mathcal{P}| = 496$, which has no
integral solutions for $s$, a contradiction. Finally, if $f=6$ and $p=2$, we have that $G_P$ is a
subgroup of $D_{130}.C_6$. Here, $|\mathcal{P}| = 2016$. The possibilities for $G_P^{\Gamma(P)}$ are
$C_5\colon C_4$, which is impossible since then $(t+1)^3 = 125 < 2016 = |\mathcal{P}|$, a
contradiction to Lemma \ref{lem:Pbounds}(i), or $C_{13}\colon C_{12}.$ However, this means that $t =
12$, and so $(s+1)(12s+1) = |\mathcal{P}| = 2016$, which has no integral solutions for $s$, a
contradiction. Therefore, $G_P$ cannot be a $\mathcal{C}_3$-subgroup.

\item[Case 4] Suppose now that $T_P$ is a $\mathcal{C}_5$ subgroup of type
  $\GL_n(q^{\frac{1}{k}})$, where $k>1$. Then either the valency $t+1={q^{n\over k}-1\over q^{1\over
  k}-1}$, or one of the exceptional cases in Table~\ref{PSL-C_5} occurs. For the former,
  $t=q^{1\over k}{q^{(n-1)\over k}-1\over q^{1\over k}-1}$. Thus $st+1$ is coprime to $q$. The
  number of points $(s+1)(st+1)=|\mathcal{P}|=|T:T_P|$ is divisible by
\[{q^{n(n-1)/2}\over (q^{1/k})^{n(n-1)/2}} = (q^{1-{1\over k}})^{n(n-1)/2},\]
and so is $s+1$.
Since $s\leqslant t$, we conclude that $k=n=2$, and $t=q^{1\over2}$ and $s=q^{1\over2}-1$.
In this case, $T=\PSL_2(q)$ and $T_P=\PSL_2(q^{1\over2})$.
Therefore, $q^{1\over2}(q-q^{1\over2}+1)=(s+1)(st+1)=|\mathcal{P}|=|T:T_P|=q^{1\over2}(q+1)$, a contradiction.

The exceptional cases are ruled out as shown in Table~\ref{PSL-C_5}.

Hence $T_P$ must be solvable. So we must have $n=2$ and $q = 4,9$.  If $q=4$, then $G_P \le S_4$.  We rule out $t+1 \le 3$ as above, and conclude that $t+1 = 4$.  However, in this case $$(s+1)(3s+1) = |\mathcal{P}| = 5,$$ a contradiction to $s$ being an integer.  Finally, if $q=9$, then $G_P \le (C_3^2\colon C_8)\colon C_2$ and $|\mathcal{P}| = 10$.  Again, $(s+1)(st+1) = 10$ forces $s+1 \le 2$, a contradiction.  Therefore, $T_P$ cannot be a $\mathcal{C}_5$ subgroup.

\begin{center}
\begin{table}[ht]
\caption{Ruling out examples for Case 4 of the proof of Proposition \ref{prop:linearmaxlgeom}.}\label{tbl:C5}\label{PSL-C_5}
\begin{tabular}{l|l|l|c|c|p{5cm}}
\toprule
$(n,q)$ & $(n,q^{\frac{1}{2}})$ &$t+1$ & $\mathcal{P}$ & $(t+1)^2 < |\mathcal{P}| < (t+1)^3$? & Positive integral solution to $(s+1)(st+1) = |\mathcal{P}|$?\\
\midrule
$(2,16)$ &  $(2,4)$ & $6$ &   $68$ & true & false\\
$(2,25)$ & $(2,5)$ & $5$ &   $65$ & true & false\\
$(2,49)$ &  $(2,7)$ &  $7$ &   $175$ & true & false\\
$(2,64)$ & $(2,8)$ & $28$ & $520$ & false & \\
$(2,81)$ &  $(2,9)$ & $6$ & $369$ & false & \\
$(2,121)$ & $(2,11)$ & $11$ & $671$ & true & false\\
$(3,4)$ & $(3,2)$ & $8$ & $120$ & true & false\\
$(4,4)$ & $(4,2)$ & $8$ & $48960$ & false & \\
\bottomrule
\end{tabular}
\end{table}
\end{center}

\item[Case 5]
Suppose that $T_P$ is a $\mathcal{C}_6$-subgroup.  The possibilities for $(T,T_P)$ are $(\PSL_4(5), 2^4.A_6)$, $(\PSL_3(4),3^2.Q_8)$, $(\PSL_2(23),S_4)$, $(\PSL_2(17), S_4)$, $(\PSL_2(13), A_4)$, $(\PSL_2(11), A_4)$, $(\PSL_2(7), S_4)$, and $(\PSL_2(5), A_4)$.  If $T \cong \PSL_4(5)$, then $|\mathcal{P}| = 1259375$ and $t+1 \le 6$, a contradiction to Lemma \ref{lem:Pbounds}(i).  If $T \cong \PSL_3(4)$, then $|\mathcal{P}| = 280$ and $t+1 = 9$ (since as above $t + 1 \le 3$).  However, $(s+1)(8s+1) = 280$ has no integral solutions, a contradiction.  If $T \cong \PSL_2(q)$ for $q = 13, 17, 23$, then $t+1 \le 4$ and $(t+1)^3 = 64 < |\mathcal{P}|$, a contradiction to Lemma \ref{lem:Pbounds}(i).  If $T \cong \PSL_2(11)$, then $t + 1 = 4$ and $|\mathcal{P}| = 55$.  However, $(s+1)(3s+1) = 55$ has no integral solutions, a contradiction.  Finally, if $T \cong \PSL_2(q)$ for $q = 5,7$, then $|\mathcal{P}| = 5, 7,$ respectively, a contradiction to $(s+1)(st+1) = |\mathcal{P}|$.  Hence $T_P$ cannot be a $\mathcal{C}_6$-subgroup.

\item[Case 6] Suppose that $T_P$ is a $\calC_8$-subgroup.  Assume first that $T_P$ is not solvable.  By Lemma \ref{stab}(i), we have that $T_P \cong \PSU_3(q^{1/2})$ or $T_P \cong \Sp_n(2)$.  Suppose first that $T_P \cong \PSU_3(q^{1/2})$.  We note that $T_P \cong \PSU_3(q^{1/2})$ cannot be in the kernel of the action of $G_P$ on $\Gamma(P)$, since this would then imply that $$(t+1) \le q-1 < q^{\frac{3}{2}}+1,$$ a contradiction to Lemma \ref{stab}(iv).  Hence we have that $t+1 = q^{3/2} + 1$.  However, this implies that $$(s+1)(sq^{\frac{3}{2}}+1) = |\mathcal{P}| = \frac{|T|}{|T_P|} = \frac{|\PSL_3(q)|}{|\PSU_3(q^{1/2})|} = q^{\frac{3}{2}}(q+1)(q^{\frac{3}{2}} - 1).$$  This implies that $q^{\frac{3}{2}}$ divides $s+1$, and, since $s+1 \le t+1$, this means that $s+1 = q^{\frac{3}{2}}$.  Plugging in this value of $s$, we find that there are no integral solutions, a contradiction.

Now suppose that $T_P \cong \Sp_n(2)$.  First, if $n = 4$, then $T_P \cong S_6$, $G_P \le S_6 \times C_2$, and $$(s+1)(st+1) = |\mathcal{P}| = |T\colon T_P| = 28.$$  Since $s+1 > 2$, we must have that $s+1 = 4$ and $st+1 = 7$.  Thus $s=3$, $3t+1 = 7$, and $t=2$, a contradiction to $s\le t$ (and to the fact that there is no generalized quadrangle of order $(2,3)$ or $(3,2)$ \cite[\S1.2.2,\S1.2.3]{fgq}).  Suppose that $n \ge 6$.  We note that $T_P \cong \Sp_n(2)$ cannot be in the kernel of the action of $G_P$ on $\Gamma(P)$, since this would then imply that $t+1 = 1,$ a contradiction to Lemma \ref{stab}(iv).  Hence we have that $t+ 1 = 2^{n-1} \pm 2^{n/2 - 1}$.  By  \cite[Lemma 4.2, Corollary 4.3]{largesubs}, must have that $$\frac{2^{n^2 - 2}}{\left(\frac{45}{64} \cdot 2^{\frac{n(n+1)}{2}}\right)} < |T\colon T_P| = |\mathcal{P}|< (t+1)^3 \le (2^{n-1} + 2^{\frac{n}{2} - 1})^3,$$ which implies that $n = 6$.  When $n = 6$, we have that $|\mathcal{P}| = 13888$ and $t+1$ is either $28$ or $36$.  This means that either $(s+1)(27s+1) = 13888$ or $(s+1)(35+1) = 13888$, which is a contradiction since neither of these equations has an integral root.  Hence $T_P$ must be solvable.

If $T_P$ is solvable, then either $n=2$ and $T_P$ is of type $\rmO^{\pm}_2(q) \cong D_{2(q\mp 1)}$ or $n=3$, $q = 2$, and $T_P \cong S_4$.  However, $T_P$ cannot be a dihedral maximal subgroup of $T = \PSL_2(q)$ by the arguments used above in the $\mathcal{C}_2$ and $\mathcal{C}_3$ cases.  If $T_P \cong S_4$, then $(s+1)(st+1) = |\mathcal{P}| = 7$, which has no solutions with $s,t$ positive integers.  Hence $T_P$ cannot be a $\mathcal{C}_8$-subgroup.
\end{description}

This exhausts all possibilities for $T_P$, and therefore $T_P$ cannot be a maximal geometric subgroup of $T = \PSL_n(q)$, as desired.

\end{proof}

\subsection{Unitary groups}

For unitary groups, the following proposition shows that $\calQ$ is a classical generalized quadrangle $\herm(4,q^2)$ or  $\Q^-(5,q)$.

\begin{prop}
\label{prop:unitarymaxlgeom}

Assume Hypothesis \ref{hyp:3arcAlmostSimple}.
Assume that $T=\soc(G)=\PSU_n(q)$ for any $n \ge 3, q \ge 2$, and the point stabilizer $T_P$ is a maximal geometric subgroup.
Then either
\begin{itemize}
\item[(i)] $n=4$, $T_P=P_1=q.q^4\colon {1\over\gcd(4,q+1)}(\GL_1(q^2)\circ\GU_2(q))$,
$T_\ell=P_2=q^4\colon {1\over\gcd(4,q+1)}\GL_2(q^2)$, $(s,t)=(q,q^2)$, and $\calQ=\Q^-(5,q)$, or

\item[(ii)] $n=5$, $T_P=P_1=q.q^6\colon {1\over\gcd(5,q+1)}(\GL_1(q^2)\circ\GU_3(q))$,
$T_\ell=P_2=q^4.q^4\colon {1\over\gcd(5,q+1)}(\GL_2(q^2)\circ\GU_1(q))$, $(s,t)=(q^2,q^3)$, and $\calQ=\herm(4,q^2)$.
\end{itemize}
\end{prop}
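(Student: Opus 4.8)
The plan is to follow the template of the proof of Proposition~\ref{prop:linearmaxlgeom}. By Corollary~\ref{cor:largesub} the point stabilizer $T_P$ is a \emph{large} subgroup of $T=\PSU_n(q)$, so $T_P$ occurs on the list of large geometric maximal subgroups of the unitary groups in \cite{largesubs}, and we run through the Aschbacher classes $\mathcal{C}_1,\dots,\mathcal{C}_8$ in turn. For every candidate we combine three ingredients: the exact value of the index $|T:T_P|=|\mathcal{P}|$ (for parabolic subgroups this comes from Lemma~\ref{lem:isotropics}); the fact that $G_P^{\Gamma(P)}$ is a $2$-transitive group of degree $t+1=|\Gamma(P)|$, whose composition factors, hence whose degree, are restricted by Lemmas~\ref{comp-factor} and~\ref{stab}; and the parameter inequalities $(t+1)^2<|\mathcal{P}|<(t+1)^3$ and $s^2(t+1)<|\mathcal{P}|<s(t+1)^2$ of Lemma~\ref{lem:Pbounds}, together with $|\mathcal{P}|=(s+1)(st+1)$ and the ratio $|T_\ell|/|T_P|=(s+1)/(t+1)$ of Lemmas~\ref{lem:GQbasics} and~\ref{lem:ratio}. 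The outcome is that only two families of parabolic subgroups survive, and these produce the two configurations in the statement.

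For the non-parabolic candidates the argument is a transcription of the linear case. Suppose $T_P$ is a $\mathcal{C}_1$-subgroup of type $\GU_k(q)\perp\GU_{n-k}(q)$ with $1\le k\le n/2$, or a $\mathcal{C}_i$-subgroup with $2\le i\le 8$ (for $\PSU_n(q)$ these comprise the imprimitive and field-extension subgroups of $\mathcal{C}_2$ and $\mathcal{C}_3$, the tensor-product subgroups of $\mathcal{C}_4$, the subfield and form subgroups $\PSU_n(q_0)$, $\PSp_n(q)$ and $\POmega_n^\pm(q)$ of $\mathcal{C}_5$, and the small extraspecial-type and tensor-induced subgroups of $\mathcal{C}_6$ and $\mathcal{C}_7$). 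Such a $T_P$ does not contain a Sylow $p$-subgroup of $G$, so no parabolic structure is available; instead let $S$ be the largest unsolvable composition factor of $T_P$, which exists outside a handful of small cases by \cite{largesubs}. By Lemma~\ref{comp-factor} either $S$ is a composition factor of $G_P^{\Gamma(P)}$, forcing $t+1$ to be at least the smallest faithful permutation degree of $S$, or $S$ lies in the kernel $G_P^{[1]}$, in which case $G_P^{\Gamma(P)}$ is a quotient of the much smaller remaining part of $T_P$ and $t+1$ is a small power of $p$, bounded by the order of a torus. Inserting this estimate into Lemma~\ref{lem:Pbounds} and comparing with the known value of $|\mathcal{P}|$, using the order bounds of \cite[Corollary~4.3]{largesubs} when $S$ is classical, eliminates all but finitely many triples $(n,k,q)$; the survivors are closed individually, almost always because $(s+1)(st+1)=|\mathcal{P}|$ has no admissible integral solution for $s$, or because the forced parameters contradict Lemma~\ref{lem:GQbasics}, exactly as in Tables~\ref{tbl:Pi} and~\ref{PSL-C_5}.

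It remains to treat the parabolic subgroups $T_P=P_k$, the stabilizer of a totally singular $k$-space. Because $T$ is already $2$-transitive on isotropic points, $k=1$ would make $G$ $2$-transitive on $\mathcal{P}$, which is impossible since collinearity is a nontrivial $G$-invariant relation; hence $k\ge2$ and $n\ge4$. The Levi factor of $P_k$ has $\PSL_k(q^2)$ and $\PSU_{n-2k}(q)$ among its composition factors, so by Lemma~\ref{comp-factor} each of these is a composition factor of $G_P^{\Gamma(P)}$, $G_{P\ell}^{\Gamma(P)}$ or $G_{P\ell}^{\Gamma(\ell)}$; since $\PSU_m(q)$ with $m\ge4$ never appears as a composition factor of a $2$-transitive group or of a point stabilizer therein, and $\PSU_3(q)$ is barred from flag stabilizers by Lemma~\ref{stab}(iii) unless $q=3$, we deduce $n-2k\le 2$, the residual possibility $n-2k=3$ being handled separately through the forced valency $t+1=q^3+1$. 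If $k\ge3$, the section $\PSL_k(q^2)$ of the Levi either forces $t+1\ge (q^{2k}-1)/(q^2-1)$ or, if $\SL_k(q^2)$ lies in the kernel, bounds $t+1$ by the order of the Levi torus; either way Lemma~\ref{lem:Pbounds} together with the exact value of $|T:P_k|$ and \cite[Corollary~4.3]{largesubs} gives a contradiction apart from a short list of small $(n,q)$ cleared by hand. This leaves $k=2$ with $n\in\{4,5\}$. In each of these, $G_{P\ell}=G_P\cap G_\ell$ contains a Sylow $p$-subgroup of $G$ (as $P_k$ is parabolic), so $G_\ell$ does too and is therefore parabolic; comparing the two indices via Lemma~\ref{lem:ratio} and imposing Lemma~\ref{lem:GQbasics}, while matching the smaller index with $\mathcal{P}$ so that $s\le t$, pins down $T_\ell$ and gives $(s,t)=(q,q^2)$ when $n=4$ and $(s,t)=(q^2,q^3)$ when $n=5$. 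In both cases $G$ acts as a rank $3$ primitive group on $\mathcal{P}$, so the Kantor--Liebler classification of the rank $3$ primitive actions of the classical groups \cite{KantorLiebler} (see also \cite[p.~252]{Kantor1991}) identifies $\mathcal{Q}$ as $\Q^-(5,q)$, respectively $\herm(4,q^2)$, with the stabilizers and parameters recorded in parts (i) and (ii).

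I expect the main obstacle to be the elimination of the $\mathcal{C}_1$ non-degenerate reducible subgroups of type $\GU_k(q)\perp\GU_{n-k}(q)$ and of the parabolics $P_k$ with $k\ge3$: for both families the generic inequalities leave a genuine, though bounded, range of $(n,k,q)$, and each survivor must be closed by an exact computation of $|\mathcal{P}|=|T:T_P|$, a determination of the admissible valencies $t+1$ from the $2$-transitivity of $G_P^{\Gamma(P)}$, and a verification that $(s+1)(st+1)=|\mathcal{P}|$ has no solution compatible with Lemma~\ref{lem:GQbasics}; organising this into tables of the kind already used for $\PSL_n(q)$ is the bulk of the work. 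A secondary, purely bookkeeping, difficulty is tracking the point/line duality, so that the parabolic named the point stabilizer is always the one of index $|\mathcal{P}|\le|\mathcal{L}|$; this is precisely why the labelling in case~(i), with $n=4$, is the reverse of that in case~(ii).
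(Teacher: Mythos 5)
Your overall architecture (largeness via Corollary~\ref{cor:largesub}, the list from \cite{largesubs}, the interplay of Lemmas~\ref{comp-factor}, \ref{stab}, \ref{lem:Pbounds}, \ref{lem:ratio} and the equation $(s+1)(st+1)=|\mathcal{P}|$) matches the paper, and your treatment of the non-parabolic classes is a reasonable sketch of what the paper does. But there is a fatal error in the parabolic case, which is precisely where the two surviving examples live. You dismiss $k=1$ on the grounds that ``$T$ is already $2$-transitive on isotropic points, so $k=1$ would make $G$ $2$-transitive on $\mathcal{P}$.'' That is true for $\PSL_n(q)$ acting on $1$-spaces (and the paper uses it in Proposition~\ref{prop:linearmaxlgeom}), but it is false for $\PSU_n(q)$ with $n\ge 4$: the action on isotropic $1$-spaces has rank $3$, since pairs of distinct isotropic points split into perpendicular and non-perpendicular pairs. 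Indeed this rank-$3$ action \emph{is} the classical generalized quadrangle structure, and the conclusion of the proposition has $T_P=P_1$ in both cases. By excluding $k=1$ you discard the actual answer: in particular the case $n=5$, $T_P=P_1$, $\mathcal{Q}=\herm(4,q^2)$ arises exactly from $k=1$ with $n-2k=3$ (where $\PSU_3(q)$ acts on $\Gamma(P)$ with $t+1=q^3+1$), and once you have forced $k\ge 2$ your ``residual'' $n-2k=3$ branch only reaches $n\ge 7$, which is eliminated. Your argument would therefore prove that $\herm(4,q^2)$ does not occur, contradicting the statement you are trying to prove.

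A secondary consequence of the same error is that your surviving list ``$k=2$ with $n\in\{4,5\}$'' does not match the true one: the genuine parabolic survivors are $(n,k)=(5,1)$ (Subcase $n-2k=3$) and $(n,k)=(4,2)$ (Subcase $n-2k=0$, where the paper finds $T_P=P_2$, $T_\ell=P_1$ and then dualizes to the labelling in the statement); the case $(n,k)=(5,2)$, i.e.\ $n-2k=1$, must be eliminated, not retained. The correct replacement for your $k=1$ exclusion is the paper's observation that $k=1$ and $n=3$ would make $G$ $2$-transitive on $\mathcal{P}$ (so $n=3$ parabolic is impossible), while for $n\ge4$ one must analyse $P_1$ honestly via the composition factor $\GU_{n-2}(q)$ of its Levi and Lemma~\ref{stab}. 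Your closing remark about point/line duality and index matching is well taken, but it does not repair the missing $k=1$ analysis.
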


\begin{proof}
By Corollary \ref{cor:largesub}, the stabilizer of a point $T_P$ satisfies $|T| < |T_P|^3$.  The geometric subgroups satisfying this condition are listed in \cite[Proposition 4.17]{largesubs}, and we proceed down the list.

\begin{description}[leftmargin=0pt, style=unboxed, itemsep=1ex]
\item[Case 1] We suppose first that $T_P$ is a maximal parabolic subgroup $P_k$, namely,
\[T_P=P_k=q^{k^2}.q^{2k(n-2k)}\colon {1\over\gcd(n,q+1)}(\GL_k(q^2)\circ\GU_{n-2k}(q)),\]
where $2\leqslant 2k\leqslant n$.
By Lemma~\ref{stab}, $n-2k\leqslant 3$.

\vskip0.1in
\noindent{\bf Subcase 1.1.} Assume first that $n-2k=3$.
Suppose that $T_P^{\Gamma(P)}\rhd\PSU_3(q)$.
Then $|\Gamma(P)|=q^3+1$.
Suppose further that $k\geqslant2$.
Then $\PSL_k(q^2)$ is a composition factor of $G_{P,\ell}^{\Gamma(\ell)}$, and
by Lemma~\ref{stab}, one of the following appears:
\begin{itemize}
\item[(a)] $T_\ell^{\Gamma(\ell)}\rhd p^m\colon \SL_k(q^2)$ with $p^m=q^{2k}$, or $\PSL_{k+1}(q^2)$, or

\item[(b)] $(k,q^2)=(2,4)$, and $T_\ell^{\Gamma(\ell)}\cong A_6$, with $s+1=6$, or

\item[(c)] $(k,q^2)=(2,9)$, and $T_\ell^{\Gamma(\ell)}\cong A_6$ or $2^4\colon A_6$, with $s+1=6$ or $16$, respectively.
\end{itemize}

For case~(a), the valency $s+1=|\Gamma(\ell)|$ equals $q^{2k}$ or ${(q^2)^{k+1}-1\over q^2-1}$,
which is bigger than $q^3=t+1$, a contradiction.
For case~(b), $n=2k+3=7$, and $q=2$.
Thus $t=2^3=8$, and $s=5$.
Therefore, $6.41=(s+1)(st+1)=|T:T_P|$, which is not possible.
For the third case, $n=7$, and $q=3$.
Hence $t=3^3=27$, and $s=5$ or $15$, which do not satisfy the equality $(s+1)(st+1)=|T:T_P|$.

We therefore have $k=1$, and $n=2k+3=5$.
In this case, the point stabilizer
\[T_P=q.q^6\colon {1\over \gcd(5,q+1)}(\GL_1(q^2)\circ\GU_3(q)),\]
and  thus $|\mathcal{P}| = (q^2 + 1)(q^5 + 1)$.
Since $t = q^3$ and $|\mathcal{P}|=(s+1)(st+1)$, we conclude that $s+1=q^2+1$, and $s = q^2$,
and $|\mathcal{L}|=(t+1)(st+1)=(q^3+1)(q^5+1)$.
It implies that the line stabilizer
\[T_\ell=P_2=q^4.q^4.{1\over\gcd(5,q+1)}(\GL_2(q^2)\circ\GU_1(q)).\]
This gives rise to the only possibility that $\mathcal{Q}$ is the classical generalized quadrangle $\herm(4,q^2)$,
of order $(q^2,q^3)$.

Now assume that $\soc(G_P^{\Gamma(P)})\not=\PSU_3(q)$.
Then $\PSU_3(q)$ is a composition factor of $G_{P,\ell}^{\Gamma(\ell)}$.
By Lemma~\ref{stab}, we have $q=3$, and $G_\ell^{\Gamma(\ell)}=2^6\colon \PSU_3(3)$.
Then either
\begin{itemize}
\item[(i)] $k=1$, $G_P^{\Gamma(P)}$ is solvable, $T=\PSU_5(3)$, and $T_P=3.3^6\colon (\GL_1(3^2)\circ\GU_3(3))$,
or

\item[(ii)] $k\geqslant2$, and $G_P^{\Gamma(P)}\rhd \PSL_k(3^2)$.
Then $t+1=|\Gamma(P)|={9^k-1\over 9-1}$, and $s+1=2^6$.
\end{itemize}

To rule out case~(i), we note that by \cite[Tables 8.20, 8.21]{BrayHoltRoney-Dougal}, there is no such subgroup $G_\ell$.  To rule out case~(ii), since $t \le s^2$, we have ${9^k-9\over 9-1} \le 63^2$, which implies that $k \le 4$.  This means that $n = 7,9, 11$, and by \cite[Tables 8.37, 8.38, 8.56, 8.57, 8.72, 8.73]{BrayHoltRoney-Dougal}, no such subgroup $G_\ell$ exists in any of these cases, either.  Hence $n - 2k \neq 3$.

\vskip0.1in
\noindent{\bf Subcase 1.2.}  Assume now that $n-2k=2$.  By assumption, $T_P = q^{k^2}.q^{4k}\colon \frac{1}{\gcd(n,q+1)}(\GL_k(q^2) \circ \GL_2(q))$.
Suppose that $G_P^{\Gamma(P)}$ is solvable. If $k \ge 2$, then both $\PSL_2(q)$ and $\PSL_k(q^2)$ are composition factors of $G_{P,\ell}^{\Gamma(\ell)}$, which is impossible by the Classification of Finite $2$-Transitive Groups \cite[Tables 7.3, 7.4]{CameronPerm}.  Hence $k = 1$, which implies that $n = 4$, $T_P = q.q^{4}\colon \frac{1}{\gcd(4,q+1)}(\GL_1(q^2) \circ \GL_2(q))$, and $|\mathcal{P}| = (q^2+1)(q^3+1)$.  Let $q = p^f$, where $p$ is a prime and $f$ is a positive integer.  Note further that apart from the exceptional cases when $q = 4,5,7,11$, since $\PSL_2(q) \le G_{P,\ell}^{\Gamma(\ell)}$, $t+1 \ge s+1 \ge q^2$.  However, this implies that $$(q^2 + 1)(q^3 + 1) = |\mathcal{P}| = (s+1)(st+1) \ge q^2((q^2-1)^2 + 1),$$ which is false for $q > 2$.  When $q=2$, we have $|\mathcal{P}| = 45 = (s+1)(st+1)$, and we know that $s, t \ge 2^2 - 1 = 3$.  However, this implies that $st + 1 \ge 10$ is a divisor of $45$, i.e., $st + 1 \ge 15$ and $s+1 \le 3 \le s$, a contradiction.  Hence we need only examine the cases $q = 4,5,7,11$, which is done in Table \ref{PSU-1.2}.

\begin{center}
\begin{table}[ht]
\caption{Ruling out examples for Subcase 1.2 of the proof of Proposition \ref{prop:unitarymaxlgeom}.}\label{PSU-1.2}
\begin{tabular}{l|l|l|p{5cm}|l}
\toprule
$q$ & $|\mathcal{P}|$ &$s+1$ & Positive integral solution to $(s+1)(st+1) = |\mathcal{P}|$? & $t \le s^2$ ?\\
\midrule
$4$ &  $1105$ & $11$ & false & \\
$5$ & $3276$ &   $11$ & false & \\
$7$ &  $17200$ &  $15$ & false & \\
$1$ & $162504$ & $12$ & true & false\\
\bottomrule
\end{tabular}
\label{tbl:PSU1.2}
\end{table}
\end{center}

Next we assume that $G_P^{\Gamma(P)}$ is unsolvable.
Then either $G_P^{\Gamma(P)}\rhd\PSU_2(q)\cong\PSL_2(q)$, or
$G_P^{\Gamma(P)}\rhd\PSL_k(q^2)$.

For the former, if $k\geqslant 2$, then $\PSL_k(q^2)$ is a composition factor of $G_{P,\ell}^{\Gamma(\ell)}$;
it follows that $s>t$, a contradiction.
Thus $k=1$, and $n=2k+2=4$.
In this case,
\[T_P=q.q^4\colon {1\over\gcd(4,q+1)}(\GL_1(q^2)\circ\GU_2(q)),\]
and hence $(s+1)(st+1)=|\mathcal{P}| = (q^2+1)(q^3 + 1)$.
If $t+1=|\Gamma(P)|\leqslant q+1$, it implies that $s>t$, which is a contradiction.
Thus one of the exceptional cases occurs: $q=4$ or $q=8$.  If $q = 4$, then $(s+1)(5s+1) = |\mathcal{P}| = 1105$, which has no integral solutions for $s$, and, if $q=8$, then $(s+1)(27s+1) = 33345$, which also has no integral solutions for $s$; both are contradictions.

Assume now that $G_P^{\Gamma(P)} \rhd \PSL_k(q^2)$, $\PSL_k(q^2)$ not solvable.  Then $t+1 \le \frac{q^{2k}-1}{q^2-1} \le q^{2k-1}$, and, using Lemmas \ref{lem:Pbounds}(i) and \ref{lem:isotropics}, we have:
\[
q^{6k-3} \ge (t+1)^3 \ge |\mathcal{P}| = \frac{q^{2(k+1)}-1}{q^2 - 1}\prod_{j=1}^k(q^{2j+1}+1) \ge q^{2k}\cdot q^{(k+1)^2 - 1} = q^{k^2 + 4k}.
\]

Hence $6k - 3 \ge k^2 + 4k$, which implies that $-2 \ge (k-1)^2$, a contradiction. Therefore $n - 2k
\neq 2$.

\vskip0.1in \noindent{\bf Subcase 1.3.} Next, suppose that $n-2k=1$. Then
\[T_P=q^{k^2}.q^{2k}\colon {1\over\gcd(2k+1,q+1)}(\GL_k(q^2)\circ \GU_1(q)).\] 
Note also that $|\mathcal{P}| = \prod_{j=1}^k (q^{2j+1}+1)$ by Lemma \ref{lem:isotropics}.  Suppose first that $G_P^{\Gamma(P)}$ is solvable.  Let $q = p^f$, and assume $k \ge 2$.  We know by Lemma \ref{lem:Pbounds}(i) that $(t+1)^3 \ge |\mathcal{P}| > q^{(k+1)^2 - 1}$, and hence $$t+1 > q^{\frac{k(k+2)}{3}} = p^{\frac{k(k+2)f}{3}}.$$  On the other hand, since $G_P^{\Gamma(P)}$ is solvable, $G_P^{\Gamma(P)} \cong E_{t+1}\colon C_t,$ where $E_{t+1}$ is an elementary abelian group of order $t+1$.  By the structure of $T_P$ (and hence $G_P$), we know that $t \le \frac{1}{\gcd(2k+1,q+1)}(q^2 -1)\cdot 2(2k+1,q+1)f = 2f(p^{2f}-1).$  Hence $$2f p^{2f} > 2f(p^{2f}-1) + 1 \ge t+1 > p^{\frac{k(k+2)f}{3}},$$ and $p^{\frac{((k+1)^2 -3)f}{3}} < 2f.$  If $k \ge 3$, this means that $2^{\frac{13f}{3}} \le p^{\frac{((k+1)^2 -3)f}{3}} < 2f$, which no real value of $f$ satisfies, a contradiction.  Hence $k=2$, which means that $p^{\frac{2f}{3}} < 2f$.  The only possible solutions are $p=2$ and $f<5$.  This means that $|\mathcal{P}| = (2^{3f} + 1)(2^{5f} + 1)$ and $G_P = 2^{4f}.2^{4f}\colon GL_2(2^{2f}).f$, which means that $t \le f(2^{2f} - 1)$ and $t+1 \le f\cdot 2^{2f}$.  Since $t+1 > 2^{\frac{8f}{3}}$ as above, we have $2^\frac{8f}{3} < f \cdot 2^{2f}$, which implies that $2^\frac{2f}{3} < f$, an equation that has no real solutions.  Hence $k = 1$.  However, this means that $n = 3$, and $G$ would be $2$-transitive on $\mathcal{P}$, a contradiction.  Thus $G_P^{\Gamma(P)}$ cannot be solvable and $k \ge 2$.

Since $k \ge 2$ and $G_P^{\Gamma(P)}$ is not solvable, we have that $\PSL_k(q^2)$ is a composition factor of $G_P^{\Gamma(P)}$ or $G_{P,\ell}^{\Gamma(\ell)}$.  Noting that $T_P = q^{k^2}.q^{2k}\colon \frac{1}{\gcd(2k+1,q+1)}\GL_k(q^2)$, by the Classification of Finite $2$-Transitive Groups the most that $t+1$ can be in this case is $q^{2k}$, and by Lemmas \ref{lem:Pbounds}(i) and \ref{lem:isotropics}, we have

$$q^{6k} \ge (t+1)^3 > |\mathcal{P}| =  \prod_{j=1}^k (q^{2j+1}+1) > q^{(k+1)^2 - 1}.$$  This means that $6k > k^2 + 2k$, and so $k = 2,3$.

Assume first that $k = 2$.  In this case, $|\mathcal{P}| = (q^5 + 1)(q^3 + 1)$.  By the structure of $T_P$ and the Classification of Finite $2$-Transitive Groups, there are exactly three possibilities: $t+1 = q^4$, $t+1 = q^2 + 1$, or $t+1 = 6$ and $q = 3$.  If $t+1 = q^4$, then $$(q^5+1)(q^3 + 1) = |\mathcal{P}| = (s+1)(s(q^4-1) + 1) = s^2(q^4-1) + sq^4 + 1.$$  Since $s^2 \ge t$, $s^2 \ge q^4 - 1$; in fact, $s^2 \ge q^4$ since $s^2$ is a positive integer.  Thus $$q^8 + q^5+ q^3 = |\mathcal{P}| - 1 \ge q^4(q^4 - 1) + q^6 + 1,$$ which implies that $q^2 + 1 \ge q^3 - q$, which is false for $q \ge 2$, a contradiction.  If $t+1 = q^2 + 1$, then $$(q^3 + 1)(q^5 + 1) = |\mathcal{P}| = (s+1)(sq^2 + 1),$$ which implies that $s = q^3 > t$, a contradiction.  Finally, if $t+1 = 6$ and $q = 3$, we have $$(3^3 + 1)(3^5 + 1) = |\mathcal{P}| = (s+1)(5s+1),$$ which has no integral roots, a contradiction.

Assume now that $k = 3$.  This means that $T_P = q^9.q^6\colon \frac{1}{\gcd(7,q+1)}\GL_3(q^2)$.  Note that $t+1 = q^6$ or $t+1 = q^4 + q^2 +1$, since $T_P$ is $2$-transitive on $\Gamma(P)$.  If $t+1 = q^6$, then by Lemma \ref{lem:ratio} $s+1 = q^6|T_\ell|/|T_P|$.  Since $s+1$ is a positive integer, this implies that $q^{15}$ divides $|T_\ell|.$ Looking at the possibilities for $T_\ell$ \cite[Tables 8.37, 8.38]{BrayHoltRoney-Dougal}, and noting that $|T_\ell| \le |T_P|$ since $$|T\colon T_P| = |\mathcal{P}| =(s+1)(st+1) \le (t+1)(st+1) = |\mathcal{L}| = |T\colon T_\ell|,$$ we see that $T_\ell \cong T_P$ or $T_\ell \cong \frac{1}{\gcd(q+1,7)}\GU_6(q)$.  Since $\PSU_6(q)$ cannot be the composition factor of a $2$-transitive group, we get that $T_P \cong T_\ell$, which implies that $s = t = q^6 -1$ by Lemma \ref{lem:ratio}.  However, this implies that $$(q^7 + 1)(q^5 + 1)(q^3 + 1) = |\mathcal{P}| = q^6((q^6-1)^2 + 1),$$ a contradiction since the left hand side is coprime to $q$.  Hence $t+1 = q^4 +q^2 + 1$.  We apply Lemma \ref{lem:ratio} as above to conclude that $q^{21}$ divides $T_\ell$.  Looking at the possibilities for $T_\ell$ with $|T_\ell| \le |T_P|$ \cite[Tables 8.37, 8.38]{BrayHoltRoney-Dougal}, we see that $T_\ell \cong T_P$, which implies that $s+1 = t+1 = q^4 + q^2 + 1$, and so $$(q^7+1)(q^5+1)(q^3+1) = |\mathcal{P}| = (q^4+q^2 + 1)((q^4+q^2)^2 + 1),$$ which has no real roots for $q \ge 2$, a contradiction.  Hence $n - 2k \neq 1$.

\vskip0.1in
\noindent{\bf Subcase 1.4.}  Finally, assume that $n-2k=0$, namely, $n=2k$.
Then $k\geqslant 2$, $T_P=q^{k^2}\colon {1\over\gcd(2k,q+1)}\GL_{k}(q^2)$, and the number of points
\[|\mathcal{P}|=(q^{2k-1}+1)(q^{2k-3}+1)\cdots(q+1) > q^{(2k-1)+(2k-3)+\cdots+1}=q^{k^2}.\]

Suppose that $k\geqslant3$.
Since $G_P^{\Gamma(P)}$ is 2-transitive, either $G_P^{\Gamma(P)}$ is affine of degree $t+1=q^{2k}$,
or almost simple with socle $\PSL_k(q^2)$ and degree $t+1={q^{2k}-1\over q^2-1}$.
By Lemma \ref{lem:Pbounds}(i), $q^{k^2}<|\mathcal{P}| < (t+1)^3\leqslant (q^{2k})^3$,
and so $k=3,4$ or $5$.  Moreover, by Lemma \ref{lem:ratio}, we have that $q^{k^2 + k(k-1)/2}$ divides $(t+1)|T_\ell|$.  By the Classification of Finite $2$-Transitive Groups and the structure of $T_P$, we see that either $t+1 = q^{2k}$ or $t+1 = (q^{2k}-1)/(q-1)$.  If $t+1 = (q^{2k} - 1)/(q-1)$, then $q^{k^2 + k(k-1)/2}$ divides $|T_\ell|$, and since $|T_\ell| \le |T_P|$ and $G_\ell^{\Gamma(\ell)}$ is a $2$-transitive group (i.e., it has no projective special unitary group of degree $4$ or more as a composition factor), by \cite[Tables 8.26, 8.27, 8.46, 8.47, 8.62, 8.63]{BrayHoltRoney-Dougal}, we must have $T_\ell \cong T_P$, which by Lemma \ref{lem:ratio} implies that $s=t = (q^{2k} - q^2)/(q^2 - 1)$.  This means that 

$$\frac{q^{2k}-1}{q^2-1}\left(\left(\frac{q^{2k}-q^2}{q^2-1}\right)^2 + 1\right) = |\mathcal{P}|=(q^{2k-1}+1)(q^{2k-3}+1)\cdots(q+1).$$  This equation has no real roots for $k = 3,4,5$, so we must have $t+1 = q^{2k}$.  When $k = 3$, we have $$(q^5+1)(q^3+1)(q+1) = |\mathcal{P}| = (s+1)(s(q^6-1) + 1) \ge (q^3 + 1)(q^9 - q^3 + 1),$$ since $s^2 \ge t$ and $s^2$ is a positive integer.  However, this is false for $q \ge 2$, a contradiction.

Now assume that $k=4$, which means that $n = 8$.  By Lemma \ref{lem:ratio}, we have that $q^{22}$ divides $(t+1)|T_\ell| = q^8 \cdot |T_\ell|$, and so $q^{14}$ divides $|T_\ell|$.  By \cite[Tables 8.46, 8.47]{BrayHoltRoney-Dougal} and noting that $|T_\ell| \le |T_P|$ and by Lemma \ref{stab} (i) that $\PSU_n(q)$ is not a composition factor of either $G_P$ or $G_\ell$ for $n \ge 4$, we are left with only two possibilities: $T_\ell \cong T_P$, or $T_\ell \cong \frac{1}{\gcd(q+1,8)}\Sp_8(q).[\gcd(q+1,4)]$ and $q=2$ (hence $T_\ell \cong \Sp_8(2)$).  If $T_\ell \cong T_P$, then $s = t$, and so $s+1 = q^8$ divides $|\mathcal{P}|$, a contradiction since $|\mathcal{P}|$ is coprime to $q$.  If $T_\ell \cong \Sp_8(2)$, then $s+1 = 2^7 + 2^3 = 136$ or $s+1 = 2^7 - 2^3 = 120$ by the Classification of Finite $2$-Transitive Groups.  In particular, $s+1$ is even, while $|\mathcal{P}|$ is odd, a contradiction.

Assume next that $k=5$, which means that $n=10$.  We proceed as in the $k=4$ case.  By Lemma \ref{lem:ratio}, we have that $q^{35}$ divides $(t+1)|T_\ell| = q^{10} \cdot |T_\ell|$, and so $q^{25}$ divides $|T_\ell|$.  By \cite[Tables 8.62, 8.63]{BrayHoltRoney-Dougal} and noting that $|T_\ell| \le |T_P|$ and by Lemma \ref{stab} (i) that $\PSU_n(q)$ is not a composition factor of either $G_P$ or $G_\ell$ for $n \ge 4$, we are left with only two possibilities: $T_\ell \cong T_P$, or $T_\ell \cong \frac{1}{\gcd(q+1,10)}(\gcd(q+1,5) \times \Sp_{10}(q))$ and $q=2$ (hence $T_\ell \cong \Sp_{10}(2)$).  If $T_\ell \cong T_P$, then $s = t$, and so $s+1 = q^{10}$ divides $|\mathcal{P}|$, a contradiction since $|\mathcal{P}|$ is coprime to $q$.  If $T_\ell \cong \Sp_{10}(2)$, then $s+1 = 2^9 + 2^4 = 528$ or $s+1 = 2^9 - 2^4 = 496$ by the Classification of Finite $2$-Transitive Groups.  In particular, $s+1$ is even, while $|\mathcal{P}|$ is odd, a contradiction.

We therefore conclude that $k=2$ and $n=2k=4$.
Hence $T_P=q^4.{1\over\gcd(4,q+1)}\GL_2(q^2)$, and $(s+1)(st+1)=|T:T_P|=(q+1)(q^3+1)$.
Suppose that  $t+1\not=q^2+1$.  Then either $t+1 = 6$ and $q=2$ or $q=3$, or $t+1 = q^4$.  Assume first that $t+1 = q^4$.  Then by Lemma \ref{lem:Pbounds} (ii) $$q^4 + q^3 + q +1 = (q+1)(q^3 + 1) = |\mathcal{P}| > s^2\cdot q^4,$$ which implies that $s^2 < 1 + 1/q + 1/q^3 + 1/q^4 < 2$, a contradiction.  If $t+1 = 6$ and $q=2$ or $q=3$, then $$(s+1)(5s+1) = |\mathcal{P}| = (q+1)(q^3 + 1),$$ which has no positive integral solutions in either case.  Thus $t+1=q^2+1$.
The equation $$(s+1)(sq^2+1) = |\mathcal{P}| = (q+1)(q^3+1)$$ implies that $s=q$, and $T_\ell = P_1=q.q^4\colon {1\over\gcd(4,q+1)}(\GL_1(q^2)\circ\GU_2(q))$ by Lemma \ref{lem:ratio} and \cite[Tables 8.10, 8.11]{BrayHoltRoney-Dougal}.  Since $T_P \cong P_2$ and $T_\ell \cong P_1$, we may identify the points and lines with totally singular subspaces, and the values of $s$ and $t$ imply that $\mathcal{Q} = \Q^-(5,q)$.

\item[Case 2]
Suppose now that $T_P$ is a $\mathcal{C}_1$-subgroup of type $\GU_m(q) \times \GU_{n-m}(q)$, 
the stabilizer of a non-singular $m$-space, where $m < n/2$.
By Lemma~\ref{stab}, we have $m,n-m\leqslant3$, and it implies that $m=1$ or 2.
Thus $n\leqslant5$, and $n=3,4$ or 5.

Suppose that $n=3$.
Then $T=\PSU_3(q)$, and $T_P={1\over\gcd(3,q+1)}\GU_2(q)$.
Hence $t+1=|\Gamma(P)|=q+1$ unless $q= 4,5,7,8,9$. If $t+1 \le q+1$, then $(s+1)(st+1)=|\mathcal{P}|=|T:T_P|=q^2(q^3-1)$.
It implies that $s>t$, a contradiction.  This leaves only the cases $q=4$ and $t+1 = 6$ or $q=8$ and $t+1 = 28$.  In each case, we see that $(s+1)(st+1) = q^2(q^3-1)$ has no integral solutions for $s$, a contradiction.

For the case where $n=4$, the stabilizer $T_P={1\over\gcd(4,q+1)}\GU_3(q)$.
Thus $t+1=|\Gamma(P)|=q^3+1$, and $(s+1)(st+1)=|\mathcal{P}|=|T:T_P|=q^3(q^2+1)(q-1)$.
It implies that $q^3$ divides $s+1$ and $st+1$ divides $(q^2+1)(q-1)$, which is not possible.

We therefore have $n=5$, and $T_P={1\over\gcd(5,q+1)}(\SU_3(q)\times\SU_2(q))\colon (q+1)$ (note that $T_P \neq \frac{1}{\gcd(5,q+1)}\GU_4(q)$ by Lemma \ref{stab} (i)).
Then $T_P^{\Gamma(P)}=\PSU_3(q)$, and $t+1=|\Gamma(P)|=q^3+1$, and 
$(s+1)(st+1)=|\mathcal{P}|=|T:T_P|=q^6(q^5+1)(q^2+1)/(q+1)$.
It follows that $q^6$ divides $s+1$, which is a contradiction since $s\leqslant t=q^3$.

\item[Case 3]  Let $T_P$ be a $\calC_2$-subgroup.

Suppose first that $T_P$ is a $\mathcal{C}_2$-subgroup of type $\GU_{n/k}(q) \Wr S_k$.  By Lemma \ref{stab} (i), $n/k \le 3$.  By \cite[Proposition 4.17]{largesubs}, if $k \ge 4$, then $4 \le n = k \le 11$ and either $q = 2$ or $(n,q) \in \{ (6,3), (5,3), (4,3), (4,4), (4,5) \}$.  First assume that $q=2$.  By \cite[Proposition 4.2.9]{KleidmanLiebeck}, $T_P \cong \frac{1}{(3,n)}3^{n-1}\colon S_n$, and in these respective cases we have $|\mathcal{P}| = |T\colon T_P| = 40,$ $1408,$ $157868,$ $61997056,$ $84315996160,$ $410113005322240,$ 
  $7160244982522052608,$ and $455256187165096674328576$.  However, in each case, by the Classification of Finite $2$-Transitive Groups, we know that the normal subgroup $\frac{1}{\gcd(3,n)}3^{n-1}$ is in the kernel of the action, which implies that $T_P^{\Gamma(P)}$ is a section of $S_n$.  This implies that the value of $t+1$ is: $4$ when $n = 4$, which is ruled out since $\mathcal{Q}$ is not the unique generalized quadrangle of order $(3,3)$ (see Lemma \ref{lemma:smallGQ}); and too small in all other cases, since $(t+1)^3 < |\mathcal{P}|$ for all possible values of $n$, a contradiction to Lemma \ref{lem:Pbounds}(i).  

Next, assume that $k=3$.  By \cite[Proposition 4.17]{largesubs}, we have $q = 2,3,4, 5, 7, 9 , 13$ or $16$.  Since $n/3 \le 3$, $n = 3,6,$ or $9$.  By \cite[Proposition 4.2.9]{KleidmanLiebeck}, $$T_P \cong \left[\frac{(q+1)^2\gcd(q+1,n/3)}{\gcd(q+1,n)}\right].\PSU_{n/3}(q)^3.[\gcd(q+1,n/3)^2].S_3,$$ which means that $$G_P \cong \left[\frac{(q+1)^2\gcd(q+1,n/3)}{\gcd(q+1,n)}\right].\PSU_{n/3}(q)^3.[\gcd(q+1,n/3)^2].S_3.[2\gcd(q+1,n)f],$$ where $f=1$ unless $q=9$ or $q=16$, in which cases $f=2$ and $f=4$, respectively.  These cases are ruled out by calculation in each case by examining the possible values of $t+1$ and noting that $(s+1)(st+1) = |\mathcal{P}| = |T\colon T_P|$.

If $k = 2$, then we first note that $n = k = 2$ has already been ruled out in the proof of Proposition \ref{prop:linearmaxlgeom}.  Since $n/k \le 3$, we have that $n = 4$ or $n = 6$, and $T_P \cong \left[\frac{(q+1)\gcd(q+1,n/2)}{\gcd(q+1,n)}\right].\PSU_{n/2}(q)^2.[\gcd(q+1,n/2)].S_2$.  Suppose first that $n = 6$.  Then, unless $q=2$, $G_P$ is not solvable.  However, $\PSU_3(q)^2$ must be in the kernel of the action by the Classification of Finite $2$-Transitive Groups, and since $T_P$ is transitive on $\Gamma(P)$, $t+1 \le \frac{(q+1)\gcd(q+1,3)}{\gcd(q+1,6)} \cdot \gcd(q+1,3) \cdot 2 \le 6(q+1)$.  However, by Lemma \ref{stab}(iv), $t+1 \ge q^3 + 1$, a contradiction.  If $q=2$, then $T_P \cong 3^{1+4}\colon (Q_8 \times Q_8)\colon S_3$, $G_P \le 3^{1+4}\colon (Q_8 \times Q_8)\colon S_3.S_3$ and $|\mathcal{P}| = 98560$.  Since $(t+1)^3 > |\mathcal{P}|$ by Lemma \ref{lem:Pbounds}(i), $t+1 \ge 47$; however, $G_P$ has no $2$-transitive action on $47$ or more elements, a contradiction.  Thus $n=4$, and unless $q=2$ or $q=3$, $G_P$ is not solvable.  This means that $\PSL_2(q)^2$ must be in the kernel of the action.  Since $T_P$ is transitive on $\Gamma(P)$, this means that $t + 1 \le 4(q+1)$. Hence $$\frac{q^4(q^2 - q + 1)(q^2 + 1)}{2} = |T\colon T_P| = |\mathcal{P}| = (s+1)(st+1) \le (4q+4)((4q+3)^2 + 1),$$ which implies that $q = 2,3$.  These final cases are ruled out by calculation by examining the possible values of $t+1$ and noting that $|\mathcal{P}| = (s+1)(st+1)$.

Assume now that $T_P$ is a $\mathcal{C}_2$-subgroup of type $\GL_{n/2}(q^2).2$.
Suppose that $t+1\not={q^n-1\over q^2-1}$.
Then by the Classification of Finite $2$-Transitive Groups, $(n,q)=(4,2)$ or $(4,3)$ and $t+1 = 6$.  However, these cases are ruled out by examining $(s+1)(5s+1) = |\mathcal{P}| = |T\colon T_P|$.
Thus $t+1 = {q^n - 1\over q^2 - 1}$, and so $t=q^2{q^{n-2}-1\over q^2-1}$.  
Then $(s+1)(st+1)=|\mathcal{P}|=|T:T_P|$ is divisible by $q^{n^2/4}$.
It implies that $s+1$ is divisible by $q^{n^2/4}$ since $\gcd(st+1,q)=1$.
Thus $s>t$, a contradiction.

\item[Case 4] 
Assume that $T_P$ is a $\mathcal{C}_3$-subgroup, an extension field subgroup.
By \cite[Proposition~4.17]{largesubs} and Lemma~\ref{stab}, we have
$T_P={1\over\gcd(n,q+1)}\GU_{n/3}(q).3$ with $n/3=3$ and $q=27$.
Thus $t+1=|\Gamma(P)|=27^3+1$, and $(s+1)(st+1)=|\mathcal{P}|=|T|/|T_P|$ forces $s>t$, which is a contradiction.

\item[Case 5] 
Suppose that $T_P$ is a $\mathcal{C}_5$-subgroup.
There are several candidates to be considered.
If $T_P$ is a $\mathcal{C}_5$ subgroup of type $\GU_n(q_0)$, where $q = q_0^3$, then $n = 3$ and $t = q_0^3 = q$.  
Now $(s+1)(st+1)=|\mathcal{P}|=|T:T_P|$ is divisible by ${q^3/ q_0^3}=q^2$.
It follows that $s+1$ is divisible by $q^2$, which contradicts $s\leqslant t$.

For other candidates, $T_P$ is a symplectic group or an orthogonal group of dimension $n$.
By Lemma~\ref{stab}, noticing that $\POmega_3(q)\cong\PSL_2(q)$, $\POmega_4^-(q)\cong\PSL_2(q^2)$, and 
$\POmega_6^+(q)\cong\PSL_4(q)$, we conclude that 
\[\mbox{$T_p=\PSp_n(2)$, $\POmega_3(q)$, $\POmega_4^-(q).[d]$ with $q$ odd, or $\POmega_6^+(q)$,}\]
where $d=2$ or 4.

For the first case where $T_P=\PSp_n(2)$, we have $t+1=|\Gamma(P)|=2^{2m-1}\pm 2^{m-1}$, where $2m=n$.
The number of points has the form
\[\begin{array}{lll}
(s+1)(st+1)=|\mathcal{P}|=|T:T_P|&=&2^{m(m-1)}(2^{2m-1}+1)(2^{2m-3}+1)\cdots(2+1)\\
 &>&2^{m(m-1)+m^2}=2^{2m^2-m}.\\
\end{array}\]
Since $|\mathcal{P}|<(t+1)^3$, it implies that $m\leqslant3$.
If $m=3$, then a more precise calculation shows that $|\mathcal{P}|>(t+1)^3$, which is not possible.
For $m=2$, we have $T=\PSU_4(2)$, and $T_P=\PSp_4(2)\cong S_6$. This implies that $|\mathcal{P}| = |T\colon T_P| = 36$ and $t+1 = 6$ or $t+1 = 10$. Neither $(s+1)(5s+1) = 36$ nor $(s+1)(9s+1) = 36$ have integer solutions for $s$, a contradiction.

For the second case where $T=\PSU_3(q)$ and $T_P=\POmega_3(q)\cong\PSL_2(q)$, 
we have $t+1=|\mathcal{P}|=q+1$, and $(s+1)(st+1)=|\mathcal{P}|=|T:T_P|$ is divisible by $q^2$.
Then $t=q$, and it implies that $s+1$ is divisible by $q^2$, not possible.

Suppose that $T=\PSU_4(q)$ and $T_P=\POmega_4^-(q).[d]\cong\PSL_2(q^2).[d]$,
where $q$ is odd and $d=(4,q+1)$.
Then $t+1=|\Gamma(P)|=q^2+1$, and $(s+1)(st+1)=|\mathcal{P}|$ is divisible by $q^4$.
It implies that $s+1$ is divisible by $q^4$, which is not possible.

Assume that $T=\PSU_6(q)$ and $T_P=\POmega_6^+(q).2\cong\PSL_4(q).2$.
Then $t+1=|\Gamma(P)|=(q^2+1)(q+1)$, and $(s+1)(st+1)=|\mathcal{P}|={1\over2}q^9(q^5+1)(q^3+1)^2>(t+1)^3$, a contradiction.

\item[Case 6] 
Finally, suppose that $T_P$ is a $\calC_6$-subgroup.
Then 
\[\mbox{$(T,T_P)=(\PSU_4(7),2^4.\Sp_4(2))$, $(\PSU_4(3),2^4.A_6)$, or $(\PSU_3(5),3^2\colon \Q_8)$,}\]
by  \cite[Proposition 4.17(vii)]{largesubs}.
A simple calculation shows that there is no proper integral solution to $(s+1)(st+1) = |T|/|T_P|$,
which is a contradiction.
\end{description}

Therefore, the result holds for $T_P$ any maximal geometric subgroup, as desired. 

\end{proof}

\subsection{Symplectic groups}

Symplectic groups give rise to two families of generalized quadrangles and a sporadic example.

\begin{prop}
\label{prop:symplecticmaxlgeom}
Assume Hypothesis \ref{hyp:3arcAlmostSimple}.
Assume that $T=\soc(G)=\PSp_{2n}(q)$ for any $n \ge 2, q \ge 2$ and
the point stabilizer $T_P$ is a maximal geometric subgroup.
Then one of the following holds:
\begin{itemize}
\item[(i)] $\mathcal{Q}$ is the classical generalized quadrangle $\W(3,q)$, or 

\item[(ii)] $\mathcal{Q}$ is the classical generalized quadrangle $\Q(4,q)$, or

\item[(iii)] $\mathcal{Q}$ is the classical generalized quadrangle $\Q^-(5,2)$, $T=\PSp_4(3)\cong\PSU_4(2)$, $T_P=2^4.A_5$, $T_\ell= 2.(A_4 \times A_4).2$. 
\end{itemize}
\end{prop}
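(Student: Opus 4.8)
The strategy mirrors the proofs of Propositions~\ref{prop:linearmaxlgeom} and~\ref{prop:unitarymaxlgeom}. By Corollary~\ref{cor:largesub} the point stabilizer $T_P$ is a large subgroup of $T=\PSp_{2n}(q)$, hence occurs in the classification of large geometric maximal subgroups of symplectic groups in~\cite{largesubs}, and we work down the Aschbacher classes $\mathcal{C}_1,\dots,\mathcal{C}_8$ (together with the relevant subtypes of $\mathcal{C}_1$, $\mathcal{C}_2$ and $\mathcal{C}_3$). The recurring tools are: Lemmas~\ref{comp-factor} and~\ref{stab}, which force every unsolvable composition factor of $G_P$ --- in particular the classical factors $\PSL_k(q)$ and $\PSp_{2(n-k)}(q)$ occurring in a Levi subgroup --- to be a section of a $2$-transitive group of degree at most $\max\{s+1,t+1\}$; Lemma~\ref{lem:isotropics}, which evaluates $|\mathcal{P}|=|T:T_P|$ when $T_P$ is a parabolic or a subspace stabilizer; the size bounds of Lemma~\ref{lem:Pbounds}; Lemma~\ref{lem:ratio}, which gives $s+1=(t+1)|T_\ell|/|T_P|$ and is paired with the maximal-subgroup tables of~\cite{BrayHoltRoney-Dougal}; and the divisibility observation that, since $\gcd(st+1,q)=1$ whenever $t$ is a power of $q$, the whole $q$-part of $(s+1)(st+1)=|\mathcal{P}|$ must divide $s+1$, which almost always forces $s>t$, a contradiction.

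For the $\mathcal{C}_1$ parabolic subgroups $T_P=P_k$ (stabilizer of a totally isotropic $k$-space), the Levi quotient has composition factors $\PSL_k(q)$ and $\PSp_{2(n-k)}(q)$, and $G_P^{\Gamma(P)}$ is a $2$-transitive quotient of $P_k$. If the socle of $G_P^{\Gamma(P)}$ is $\PSp_{2(n-k)}(q)$, then this group is $2$-transitive on $\Gamma(P)$, which forces $n-k\le1$; otherwise the affine/almost simple dichotomy combined with Lemmas~\ref{lem:isotropics} and~\ref{lem:Pbounds} and the $q$-part argument leaves only finitely many small $(n,k,q)$, all eliminated by hand or with \GAP~\cite{GAP4} against $(s+1)(st+1)=|\mathcal{P}|$ and $t\le s^2$. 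This reduces us to $n=2$ with $T_P=P_1$ or $P_2$, where $(s+1)(st+1)=|\mathcal{P}|=(q+1)(q^2+1)$ together with $s\le t$ force $s=t=q$; identifying the line stabilizer via Lemma~\ref{lem:ratio} and~\cite{BrayHoltRoney-Dougal}, we recognise $\mathcal{Q}$ as $\W(3,q)$ when $T_P=P_1$ and as $\Q(4,q)$ (the dual of $\W(3,q)$, via $\PSp_4(q)\cong\POmega_5(q)$) when $T_P=P_2$; these coincide when $q$ is even. For the remaining $\mathcal{C}_1$ subgroups of type $\Sp_{2k}(q)\perp\Sp_{2(n-k)}(q)$, the $\mathcal{C}_2$-subgroups of types $\Sp_{2m}(q)\Wr S_{n/m}$ and $\GL_n(q).2$, the $\mathcal{C}_3$ extension-field subgroups (types $\Sp_{2n/r}(q^r)$ and $\GU_n(q)$), the $\mathcal{C}_5$ subfield subgroups $\Sp_{2n}(q_0)$, and the $\mathcal{C}_8$-subgroups $\rmO_{2n}^\pm(q)$ with $q$ even, the same combination of composition-factor restrictions, $q$-part divisibility and the inequalities of Lemma~\ref{lem:Pbounds} eliminates every case, the finitely many exceptions to the asymptotic estimates being checked directly.

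The two genuinely delicate points are the class $\mathcal{C}_6$ and the exact identification of the line stabilizers. The large $\mathcal{C}_6$-subgroups $2^{1+2n}.\Omega_{2n}^\pm(2)$ of $\PSp_{2n}(p)$ ($p$ an odd prime) occur only for very small $n$ and $p$; evaluating $|\mathcal{P}|=|T:T_P|$ and solving $(s+1)(st+1)=|\mathcal{P}|$ subject to $t\le s^2$, the only surviving case is $T=\PSp_4(3)\cong\PSU_4(2)$, $T_P=2^4.A_5$, with $|\mathcal{P}|=27$ and $(s,t)=(2,4)$. By Lemma~\ref{lemma:smallGQ}(ii) the generalized quadrangle of order $(2,4)$ is unique and equals $\Q^-(5,2)$, and from $|T_\ell|=|T|/|\mathcal{L}|=576$ together with the list of maximal subgroups of $\PSp_4(3)$ in~\cite{BrayHoltRoney-Dougal} we obtain $T_\ell=2.(A_4\times A_4).2$. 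The main obstacle I expect is the $\mathcal{C}_1$ parabolic analysis: one must cleanly rule out $P_k$ for $k\ge2$ or $n\ge3$, and then correctly match $P_1$ and $P_2$ of $\PSp_4(q)$ to $\W(3,q)$ and $\Q(4,q)$ across the $q$ even\,/\,$q$ odd divide, where the shape of the unipotent radical (e.g.\ $E_q^{1+2}$ versus $E_q^{3}$, cf.\ Table~\ref{tbl:rank3}) and of the Levi subgroup changes with the parity of $q$.
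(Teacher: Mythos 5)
Your proposal follows essentially the same route as the paper: the same reduction via Corollary~\ref{cor:largesub} and the large-subgroup classification of \cite{largesubs}, the same case division by Aschbacher class using Lemmas~\ref{comp-factor}, \ref{stab}, \ref{lem:isotropics}, \ref{lem:Pbounds}, \ref{lem:ratio} and the $q$-part divisibility trick, and the same surviving configurations ($P_1$/$P_2$ in $\PSp_4(q)$ yielding $\W(3,q)$ and $\Q(4,q)$, and the $\mathcal{C}_6$-subgroup $2^4.A_5$ of $\PSp_4(3)$ yielding $\Q^-(5,2)$ with $T_\ell=2.(A_4\times A_4).2$). The one caveat is that a $2$-transitive local action with socle $\PSp_{2(n-k)}(q)$ does not force $n-k\le 1$ when $q=2$: the paper must separately eliminate the infinite family $q=2$, $k=2$, $n$ arbitrary (its case~(e)) by showing a certain discriminant is never a perfect square, a step that your ``only finitely many small $(n,k,q)$ remain'' claim glosses over.
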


\begin{proof}
By Corollary \ref{cor:largesub}, the stabilizer of a point $T_P$ satisfies $|T| < |T_P|^3$.  The geometric subgroups satisfying this condition are listed in \cite[Proposition 4.22]{largesubs}, and we proceed down the list. 

\begin{description}[leftmargin=0pt, style=unboxed, itemsep=1ex]

\item[Case 1] 
We first consider the case when $T_P$ is a parabolic subgroup of type $P_k$, where $0 < k \le k + m = n$.  
Then 
$$T_P=P_k= [q^{2nk - \frac{3k^2 - k}{2}}]\colon (\GL_k(q) \circ \Sp_{2n-2k}(q)),$$
and by Lemma \ref{lem:isotropics},
$$|\mathcal{P}| = {n \brack k}_q \prod_{i=1}^k (q^{n+1 - i} + 1).$$

In particular, $(s+1)(st+1)=|\mathcal{P}|=|T:T_P|$ is coprime to $q$.  Note that 
\begin{align*}
|\mathcal{P}| &= {n \brack k}_q \prod_{i=1}^k (q^{n+1 - i} + 1)\\
&> \frac{q^{n-1} \cdot q^{n-2} \cdot \dots \cdot q^{n-k}}{q^k \cdot q^{k-1} \cdot \dots \cdot q^1} \cdot q^{n} \cdot \dots \cdot q^{n - (k-1)}\\
&= q^{kn - k(k+1)/2 - k(k+1)/2 + kn - k(k-1)/2}\\
&=q^{2nk - k(3k+1)/2}.
\end{align*}
Moreover, using the Classification of Finite $2$-Transitive Groups, we see that one of the following must hold:

\begin{itemize}
\item[(a)] $T_P^{\Gamma(P)}$ is solvable;

\item[(b)]  $T_P^{\Gamma(P)}\rhd q^{2m}\colon \Sp_{2m}(q)$, and $t + 1 = q^{2m}$;

\item[(c)] $T_P^{\Gamma(P)} \rhd q^k\colon \GL_k(q)$, and $t+1 = q^k$; 

\item[(d)] $\soc(T_P^{\Gamma(P)}) \cong \PSL_k(q)$, and $t+1 = (q^k - 1)/(q-1)$; 

\item[(e)] $\soc(T_P^{\Gamma(P)}) \cong \Sp_{2m}(2)$, and $t+1 = 2^{2m-1} \pm 2^{m-1}$.  
\end{itemize}

We first examine case~(a) and assume that $T_P^{\Gamma(P)}$ is solvable.  Suppose further that $k, 2n-2k \ge 2$.  By Lemma \ref{comp-factor}, both $\PSL_2(q)$ and $\PSp_{2n-2k}(q)$ are composition factors of $G_{P,\ell}^{\Gamma(\ell)}$, which is impossible by the Classification of Finite $2$-Transitive Groups.  Thus we have either that $k=1$ or $n=k$.  If $n=k$, then $T_P = q^{n(n+1)/2}\colon \GL_n(q)$ and by Lemma \ref{lem:Pbounds} (ii) we have 
$$(t+1)^3 > |\mathcal{P}| = \prod_{i=1}^n(q^i+1) > q^{n(n+1)/2},$$ 
and so $t+1 > q^{n(n+1)/6}$.  On the other hand, since $n \ge 2$, $\PSL_n(q)$ lies in $T_P^{[1]}$, and looking at the structure of $T_P$ (and hence $G_P$), we see that $t < df(p^f - 1)\gcd(p^f-1,n)$, where $q = p^f$ and $d = 2$ unless $q$ is even and $n \ge 3$.  Thus $p^{fn(n+1)/6} < 2nf(p^f - 1) < 2nfp^f.$  If $n > 2$, then we must be in one of the following cases, which are each ruled out via inspection: $n=4$, $p=2$, $f=1$; $n=3$ and either $p=2$ and $f \le 4$, $p=3$ and $f \le 2$, or $p=5$ and $f=1$.  Hence $n=2$, $|\mathcal{P}| = (q+1)(q^2+1)$, and $T_P = q^3\colon \frac{1}{\gcd(2,q-1)}\GL_2(q)$.  However, this still means that $s \ge q+1$ since $\PSL_2(q)$ is a section of $G_{P,\ell}^{\Gamma(\ell)}$ (unless $q = 5$, $q=7$, or $q=9$, which are ruled out by inspection), this means by Lemma \ref{lem:Pbounds}(ii) that $$(q+1)(q^2+1) = |\mathcal{P}| > (q+2)^3,$$ a contradiction.  

Hence we have $k=1$ and $|\mathcal{P}| = (q^{2n}-1)/(q-1)$.  Assume first that $q>2$.  If $n-1 \ge 2$, then $\PSp_{2n-2}(q)$ is a composition factor of $G_{P,\ell}^{\Gamma(\ell)}$ and $s \ge (q^{2n-2}-1)/(q-1)$.  However, by Lemma \ref{lem:Pbounds}(ii), 
$$\frac{q^{2n}-1}{q-1} = |\mathcal{P}| > s^2(t+1) > \left(\frac{q^{2n-2}-1}{q-1} \right)^3,$$ 
which implies that $$q^{2n+2} > (q-1)^2(q^{2n}-1) > (q^{2n-2}-1)^3 > q^{3(2n-3)},$$ which only holds when $n \le 2$.  If $n = 2$, then $|\mathcal{P}| = (q^4-1)/(q-1)$ and $\PSL_2(q)$ is a composition factor of $G_{P,\ell}^{\Gamma(\ell)}$, which implies that $s \ge q+1$ (unless $q= 5,7,9$, which are ruled out by inspection).  However, this means that $$q^3 + q^2 + q + 1 = |\mathcal{P}| = (s+1)(st+1) \ge (q+2)((q+1)^2 + 1),$$ a contradiction.  Finally, we assume that $q = 2$.  If $n - 1 \ge 3$, then proceeding as above we have $$2^{2n} > 2^{2n} - 1 = |\mathcal{P}| > s^2(t+1) > 2^{3n-6}(2^{n-1} - 1)^3 \ge 2^{6n-12},$$ and so $n < 3$, a contradiction.  If $n=3$, then $(s+1)(st+1) = |\mathcal{P}| = 63$, which has no solutions in integers satisfying $s \le t \le s^2$.  Finally, if $n=2$, then $T = \Sp_4(2) \cong S_6$, and $|\mathcal{P}| = 15$, which implies that $s=t=2$, and $\mathcal{Q} = \W(3,2)$, the unique generalized quadrangle of order $(2,2)$ (see Lemma \ref{lemma:smallGQ}).  

For case~(b), $t+1 = q^{2m}$, and $(t+1)^3 = q^{6(n-k)}$.
Since $|\mathcal{P}|<(t+1)^3$ by Lemma \ref{lem:Pbounds}(i),  this implies that $2nk - k(3k+1)/2 < 6(n-k)$.  If $k-2 > 0$, then $$n < \frac{3k^2 - 11k}{6(k-2)} < \frac{3k^2 - 11k + 10}{6(k-2)} = \frac{3k-5}{6} < k,$$ a contradiction.  Hence $k \le 2$.  However, if $k \le 2$, then $s^2(t+1) < |\mathcal{P}|$ forces $s^2 < t$, a contradiction.

Assume that case~(c) occurs.
Then $t+1 = q^k$, and $(t+1)^3 = q^{3k}$ and so $2nk - k(3k+1)/2 < 3k$, which implies that $$ k \le n < \frac{3k+7}{4},$$ 
which holds only if $k \le n < 7$.  
However, as above, in this case we have $(q-1).(\PGL_5(q)\times \PSp_{2n-10}(q)) \le T_{P, \ell}$, 
where here $\ell$ is a line incident with $P$.  
Unless $k = 2$ and $q = 3$ and $T_{\ell} \cong 2.(\PSp_2(3) \times \PSp_{2n-2}(3))$, we have that $T_{\ell}$ is a subgroup of a group isomorphic to $T_P$ and $q$ divides $s+1$, a contradiction since $|\mathcal{P}| = (s+1)(st+1)$ is coprime to $q$.  
If $k=2$ and $q=3$, then the full subgroup $[3^{2n-5}] \le T_P^{\Gamma(P)}$, which has socle isomorphic to $3^4$.  This means that $n \le 4$, and the remaining options are ruled out via inspection of the specific groups.

In case~(d), $t+1 = (q^k - 1)/(q - 1) < q^k$, and we know immediately as in cases above that $k \le n < 7$.  
In fact, using the slightly better bound of $(t+1)^3 < q^{3k-1}$, we obtain that $n < (3k^2 + 7k - 2)/(4k)$, 
which implies that $2 \le k = n \le 6$.  
If $k = n \ge 5$, then $s(t+1)^2 > |\mathcal{P}|$, a contradiction to Lemma \ref{lem:Pbounds}(ii).  
If $n = k =2$, then $t+1 = q+1$ and $|\mathcal{P}| = (q+1)(q^2+1)$, which forces $s = q$.  Examining the possible maximal subgroups of $\PSp_4(q)$ \cite{BrayHoltRoney-Dougal}, we see that the only possibility is the classical generalized quadrangle $\W(3,q)$ or its dual, the classical generalized quadrangle $\Q(4,q)$.  If $k = n =3$, then $t+1 = q^2 + q+ 1$ and $|\mathcal{P}| = (q+1)(q^2+1)(q^3+1)$.  By Lemma \ref{lem:Pbounds}(ii), $s(t+1)^2 > |\mathcal{P}|$, which implies that $s > q^2 - q$.  Moreover, $(s+1)(st+1) = |\mathcal{P}|$ implies that $q$ divides $s$.  Since $s \le t = q^2 + q$, this implies that either $s = q^2$ or $s = t = q^2 + q$.  If $s = q^2$, then $(q^2+1)(q^4 + q^3 + 1) = (s+1)(st+1) = |\mathcal{P}| =  (q^2+1)(q^4 +q^3 +q + 1)$, a contradiction.  If $s = t$, then $(q+1)(q^2+1)(q^3 + 1) = |\mathcal{P}| = (t+1)(t^2 + 1) \equiv 1 \pmod {q+1}$, a contradiction.  Finally, if $k=n= 4$, then $t+1 = (q^4 - 1)/(q-1)$ and $|\mathcal{P}| = (q+1)(q^2+1)(q^3+1)(q^4+1)$.  By Lemma \ref{lem:Pbounds}(ii), $s(t+1)^2 > |\mathcal{P}|$, which implies that $t \ge s > q^4 - q^3 + q - 1$.  This only holds when $q = 2$, and examination of the specific group $\Sp_8(2)$ rules it out.

Finally, for case~(e),  $q=2$ and $t+1 = 2^{2(n-k) - 1} \pm 2^{n-k-1}$, then $(t+1)^3 < 2^{6(n-k)}$, and $2nk - k - k(k-1) - k(k+1)/2 < 6(n-k)$ implies that $k = 1,2$.  If $k = 1$, then $2^{2n} > 2^{2n}-1 = |\mathcal{P}| > (t+1)^2 > 2^{4n-7}$, which implies that $n < 3.5$.  However, this implies that $m = n-1 = 2$, and we must have $m \ge 3$ for this $2$-transitive action, a contradiction.  Thus $k=2$, $t+1 = 2^{2n-5} \pm 2^{n-3}$, and $|\mathcal{P}| = \frac{1}{3}(2^{2n} - 1)(2^{2n-2} - 1)$.  We know that $ts^2 + (t+1)s + 1 = |\mathcal{P}|$ and that $s$ is an integer; hence the discriminant $(t+1)^2 -4t(1 - |\mathcal{P}|)$ must be a perfect square.  When $n \ge 5$, the discriminant divided by $2^{13}$ is $1 \pm 2^{n-3} + 307 \cdot 2^{2 n - 9} \pm 77 \cdot 2^{3n-10} - 1181 \cdot 2^{4n-13} \pm 2^{5 n - 6} + 4^{3n-4}$, so it is not a square for $n \ge 5$.  For $n = 2,3,4$, none of the values of the discriminant are perfect squares, a contradiction.  Hence $T_P$ is a maximal parabolic subgroup only when $\mathcal{Q}$ is classical. 

\item[Case 2]
Next, have that $T_P$ is a $\mathcal{C}_1$-subgroup isomorphic to $\gcd(2,q-1).(\PSp_k(q) \times \PSp_{2n-k}(q))$, where $k$ is even.  Since $T_P$ has a $2$-transitive action, this implies that either $q=2$, $k=2$, or $2n - k = 2$.  Note that $k \le n$ and $2n - k = 2$ imply that $k=2$, so whenever $q \neq 2$ we may assume that $k=2$.  Except for the specific exception $n=2$ and $q=3$, by Lemma \ref{stab} (iv) we have that $t+1 \ge (q^{2n-2} - 1)/(q-1)$.  On the other hand, except for the specific exceptions when $q = 4,5,8,9,11$, we also have $t+1 = q+1$, which implies that $2n-2 \le 2$, i.e., $n = k = 2$.  If $n = k = 2$ and $t = q$, then $|T| = |T_P|(s+1)(st+1)$ implies that $q^2$ divides $s+1$, a contradiction to $s \le t$, and the exceptions above are ruled out by inspection.  If $q=2$, then 
 $T_P \cong \Sp_k(2) \times \Sp_{2n-k}(2)$, leaving the following possibilities for $t+1$: $2^{k-1} \pm 2^{k/2 - 1}$ and $2^{2n -k -1} \pm 2^{n - k/2 - 1}$.  In any case, $t+1 < 2^{2n-k}$, and, by \cite[Corollary 4.3]{largesubs}, $|\mathcal{P}| = |T\colon T_P| > 2^{2nk-k^2 -1}$.  Thus Lemma \ref{lem:Pbounds} (ii) implies that $2nk - k^2 - 1 < 6n - 3k$.  If $k > 3$, then this implies that $$k \le n < \frac{k^2 - 3k +1}{2k-6} = \frac{k}{2} + \frac{1}{2k-6} < \frac{k}{2} + 1,$$ a contradiction.  Since $k \le 3$ is even, we have $k=2$.  We note, however, in this situation that $T_{\ell} > T_{P,\ell} \ge \Sp_2(2) \times \POmega^{\epsilon}_{2n-2}(2)$.  This implies that $T_{\ell} \cong T_P$ and $s=t$; however, then $s^2(t+1) > |\mathcal{P}|,$ a contradiction to Lemma \ref{lem:GQbasics}.

\item[Case 3]
Suppose that $T_P$ is a $\mathcal{C}_2$-subgroup of type $\Sp_{2n/k}(q) \Wr S_k$, where $2n/k$ is even and $k \ge 2$.  In particular, this means that $\Sp_{2n/k}(q)$ is not solvable.  Note that $T_P$ has $k$ composition factors isomorphic to $\PSp_{2n/k}(q)$, and, since $\Sp_{2n/k}(q)^k$ is a normal subgroup of $T_P$, either all are composition factors of $G_P^{\Gamma(P)}$ or all are composition factors of $G_{P,\ell}^{\Gamma(\ell)}$.  In either case, there is no such $2$-transitive group, a contradiction.

We next assume that $T_P$ is a $\mathcal{C}_2$-subgroup isomorphic to $(q-1)/2.\PGL_n(q).2$, where $q$ is odd.  In this case, after ruling out the few sporadic possibilities for $(n,q)$ via inspection, we have that $t+1 = (q^n - 1)/(q-1) < q^n$.  By \cite[Lemma 4.2, Corollary 4.3]{largesubs}, we have that $|\mathcal{P}| > q^{n^2 + n - 3}$.  This implies that $n^2 + n - 3 < 3n$, a contradiction for $n \ge 3$.  If $n = 2$, then $t+1 = q+1$; however, this is a contradiction to $(t+1)^3 > |\mathcal{P}| = |T\colon T_P|$.

\item[Case 4] 
Suppose that $T_P$ is a $\calC_3$-subgroup.
By \cite[Proposition 4.22]{largesubs} and Lemma~\ref{stab}, we have
\[\mbox{$(T,T_P)=(\PSp_4(q),\PSp_2(q^2))$, $(\PSp_6(q),\PSp_2(q^3))$, or $(\PSU_6(q),\PSU_3(q))$.}\]

In the case where $T_P=\PSp_2(q^2) \cong \PSL_2(q^2)$, the number of points $(s+1)(st+1)=|\mathcal{P}|=|T:T_P|=q^2(q^2-1)$,
and either  the valency $t+1=|\Gamma(P)|=q^2+1$, or $q=2$  or 3, and $t+1=6$.
For the former,  $st+1=sq^2+1$ is coprime to $q$, and hence $s+1$ is divisible by $q^2$.
Since $s\leqslant t$, we have $s=q^2$ or $q^2-1$.
It implies that $(s+1)(st+1)<q^2(q^2-1)=|\mathcal{P}|$, which is a contradiction.
Therefore, $q=2$ or 3,  $t=5$, and $T=\PSp_4(2)$ or $\PSp_4(3)$. If $q=2$, then $$|\mathcal{P}| = |T\colon T_P| = 12 < 25 = (t+1)^2,$$ a contradiction to Lemma \ref{lem:Pbounds} (ii).  If $q=3$, then $$(s+1)(5s+1) = |\mathcal{P}| = 72,$$ which has no integral solutions. 

For the second case, $(s+1)(st+1)=|T:T_P|$ is divisible by $q^{9-3}=q^6$, and either
$t+1=q^3+1$, or $q=2$, and $t+1=28$. 
For the former, $\gcd(st+1,q)=1$, and so $s+1$ is divisible by $q^6$, which contradicts the assumption $s\leqslant t$.
Thus $q=2$, and $T=\PSp_6(2)$. This means that $$(s+1)(27s+1) = |\mathcal{P}| = 2880,$$ which has no integral solutions.

In the third case where $T_P=\PSU_3(q)$, we have $t+1=q^3+1$, and $t=q^3$.
Now $(s+1)(st+1)=|T:T_P|$ is divisible by $q^{15-3}=q^{12}$.
It implies that $s+1$ is divisible by $q^{12}$, which is a contradiction since $s\leqslant t$.

\item[Case 5] 
Let $T_P$ be a $\calC_5$-subgroup.
By Lemma~\ref{stab}, the only possibility is $T_P=\Sp_{2m}(2)$, with $q=2^f$.
The valency is $t+1=|\Gamma(P)|=2^{2m-1}\pm2^{m-1}$, and the number of points
$(s+1)(st+1)=|\mathcal{P}|=|T:T_P|$ is divisible by $q^{m^2}/2^{m^2}=(2^{f-1})^{m^2}$.
Since $|\mathcal{P}|<(t+1)^3$, we conclude that $f=2$ and $m=2$, namely, $T=\PSp_4(4)$ and $T_p=\PSp_4(2)$.
In this case, $t+1=6$ or 10, and $|\mathcal{P}|=|T:T_P|=1360>10^3$ by the \textsc{Atlas} \cite{atlas}, which is a contradiction.

\item[Case 6] 
Let $T_P$ be a $\calC_6$-subgroup.
By \cite[Proposition 4.22]{largesubs} and noticing that $\Omega_4^-(2)\cong\SL_2(4)\cong A_5$, 
since $T_P$ has a 2-transitive permutation representation,
\[\mbox{$(T,T_P)=(\PSp_4(7), 2^4. S_5)$, $(\PSp_4(5),2^4.A_5)$, or $(\PSp_4(3), 2^4.A_5)$.}\]
In each of these three cases, the valency $t+1=|\Gamma(\alpha)|$ equals 5, 6 or 16.

Suppose that $T=\PSp_4(7)$.
Then $(s+1)(st+1)=|\mathcal{P}|=|T:T_P|$ is larger than $16^3$, and so $|\mathcal{P}|>(t+1)^3$, which is contradiction to Lemma \ref{lem:Pbounds}.

In the case where $T=\PSp_4(5)$, we have $(s+1)(st+1)=|\mathcal{P}|=|T:T_P|=3\cdot5^3\cdot13$.
Since $|\mathcal{P}|<(t+1)^3$, it implies that $t+1=16$, and $t=15$.
Thus $st+1$ is coprime to 15, and so $s+1$ is divisible by $3.5^3$, which contradicts the assumption $s\leqslant t$.

We therefore have $T=\PSp_4(3) \cong \PSU_4(2)$.  Furthermore, $(s+1)(st+1)=|\mathcal{P}|=|T:T_P|=27$, which implies that $s+1=3$ and $st+1=9$, so $s=2$ and $t=4$.  By Proposition \ref{prop:unitarymaxlgeom}, we conclude that $\mathcal{Q}$ is $\Q^-(5,2)$, $T_P=2^4.A_5$, and $T_\ell= 2.(A_4 \times A_4).2$.

\item[Case 7] 
Let $T_P$ be a $\mathcal{C}_8$-subgroup. Hence $q$ is even, and, since $G_P^{\Gamma(P)}$ is $2$-transitive, $n = 2$ and either $T_P = \Omega_4^+(q) \cong \SL_2(q) \circ \SL_2(q)$ or $T_P = \Omega_4^-(q) \cong \PSL_2(q^2)$ or $n=3$ and $T_P = \POmega_6^+(q) \cong \PSL_4(q)$.  If $T_P = \Omega_4^+(q)$, then unless $q = 4,8$, we have $t+1 = q+1$.  On the other hand, $\SL_2(q)$ must be a composition factor of $G_{P,\ell}^{\Gamma(\ell)}$, which implies that $s+1 \ge q^2 > t+1$, a contradiction.  The cases $q=4,8$ are ruled out by inspection.  If $T_P = \Omega_4^-(q)$, then unless $q=2$ (which is ruled out by inspection), $t+1 = q^2 + 1$.  On the other hand, $|T| = |T_P|(s+1)(st+1)$ implies that $q^2$ divides $s+1 \le t+1 =  q^2 + 1$, which implies that $s+1 = q^2$.  However, $|\Sp_4(q)| \neq q^4(q^4-1)(q^4-q^2 +1)$, a contradiction.  Finally, if $T_P = \POmega_6^+(q)$, then $t+1 = (q^4-1)/(q-1)$ unless $q=2$ (which is ruled out by inspection).  On the other hand, $|T| = |T_P|(s+1)(st+1)$ implies that $q^3$ divides $s+1 \le t+1 = q^3 + q^2 + q + 1$, which implies that $s+1 = q^3$, which is a contradiction to $|\Sp_6(q)| = |T_P|(s+1)(st+1)$ as above.
\end{description}

Therefore, if $T_P$ is a maximal geometric subgroup, then $\mathcal{Q}$ must be classical, as desired.
\end{proof}

\subsection{Orthogonal groups}

We first recall some isomorphism relations between some small dimensional groups (see \cite[Prop. 2.9.1]{KleidmanLiebeck}:
\begin{itemize}
\item[] For $q$ odd, $\Omega_3(q)\cong\PSL_2(q)$, 

\item[] $\rmO_2^{\pm}(q)\cong\D_{2(q\mp1)}$, 

\item[] $\Omega_4^+(q)\cong\SL_2(q)\circ\SL_2(q)$, 

\item[]$\Omega_4^-(q)\cong\PSL_2(q^2)$,

\item[] $\POmega_6^+(q)\cong\PSL_4(q)$.

\end{itemize}

\begin{prop}
\label{prop:orthogonalmaxlgeom}
Assume Hypothesis \ref{hyp:3arcAlmostSimple}.
Assume that $T=\soc(G)$ is a simple orthogonal group of dimension at least $7$.
Then the point stabilizer $T_P$ cannot be a maximal geometric subgroup.
\end{prop}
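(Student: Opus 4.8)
The plan is to reuse, essentially verbatim, the strategy of Propositions \ref{prop:linearmaxlgeom}, \ref{prop:unitarymaxlgeom} and \ref{prop:symplecticmaxlgeom}. By Corollary \ref{cor:largesub} the point stabilizer $T_P$ is a \emph{large} subgroup of $T=\POmega_n^\epsilon(q)$, $n\geq 7$, so it occurs in the list of large geometric maximal subgroups in the relevant proposition of \cite{largesubs}, and I would eliminate the candidates class by class. The three recurring tools are: (1) Lemma \ref{stab}, which --- since $G_P^{\Gamma(P)}$ and $G_\ell^{\Gamma(\ell)}$ are $2$-transitive --- restricts the unsolvable composition factors of $G_P$ and of $G_{P,\ell}$; here it is decisive that no classical group of orthogonal type and dimension $\geq 7$ is the socle of a $2$-transitive group (the only near-misses being the small-dimensional isomorphisms $\POmega_3\cong\PSL_2$, $\POmega_4^+\cong\SL_2\circ\SL_2$, $\POmega_4^-\cong\PSL_2(q^2)$, $\POmega_5\cong\PSp_4$, $\POmega_6^+\cong\PSL_4$, together with $\Omega_{2d+1}(2)\cong\Sp_{2d}(2)$), while $\POmega_6^-\cong\PSU_4$ is excluded from the composition-factor list of Lemma \ref{stab}(i) outright; (2) the identities $|\mathcal{P}|=(s+1)(st+1)=|T:T_P|$, $|\mathcal{L}|=(t+1)(st+1)=|T:T_\ell|$, $t+1=|\Gamma(P)|$, $s+1=|\Gamma(\ell)|$, combined with the inequalities of Lemmas \ref{lem:GQbasics} and \ref{lem:Pbounds} and the ratio formula of Lemma \ref{lem:ratio}; and (3) the coprimality observation that, for $T_P$ parabolic, Lemma \ref{lem:isotropics} makes $|\mathcal{P}|=|T:T_P|$ coprime to $q$, so whenever $t+1$ carries a power of $p=\mathrm{char}\,\mathbb{F}_q$ the relation $\gcd(st+1,q)=1$ forces $q\mid s+1$, hence $s>t$, contradicting Hypothesis \ref{hyp:3arcAlmostSimple}.

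For the totally singular parabolic ($\mathcal{C}_1$) case, $T_P=P_k$ has Levi factor $\GL_k(q)\times\Omega_{n-2k}^{\epsilon'}(q)$ up to scalars and $|\mathcal{P}|$ is given by Lemma \ref{lem:isotropics}. Since an orthogonal group of dimension $\geq 7$ is never a $2$-transitive socle, the orthogonal Levi factor $\Omega_{n-2k}^{\epsilon'}(q)$ must lie in the kernel of $G_P$ on $\Gamma(P)$ unless $n-2k\leq 6$ and an exceptional isomorphism renders it a $\PSL$- or $\PSp$-type $2$-transitive section; in the kernel case the $\GL_k(q)$-factor forces $t+1$ to be $q^k$ or $(q^k-1)/(q-1)$. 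Either way $k$ and $n-2k$ are small; the $q$-coprimality argument disposes of most configurations, and for the survivors (where $\PSL_k(q)$ is honestly a $2$-transitive section) one combines the exact point count of Lemma \ref{lem:isotropics} with Lemma \ref{lem:ratio} --- which forces a large power of $q$ to divide $|T_\ell|$, so that the tables of \cite{BrayHoltRoney-Dougal} force $T_\ell\cong T_P$ and hence $s=t$, contradicting the exact value of $|\mathcal{P}|$. I would also keep track of the two families of maximal totally singular subspaces in type $D_{n/2}$ (i.e.\ $\POmega_n^+(q)$ with $n$ even), where the stabilizer is $P_{n/2}$ or $P_{n/2-1}$ with Levi $\GL_{n/2}(q)$ and the point count of Lemma \ref{lem:isotropics} must be halved.

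The non-parabolic classes go more quickly. A $\mathcal{C}_1$ subgroup $\GO_m^\eta(q)\times\GO_{n-m}^{\eta'}(q)$ (stabilizer of a non-degenerate $m$-space) again forces $m,n-m$ small by Lemma \ref{stab}, leaving with $n\geq 7$ only shapes such as one factor being $\POmega_6^+(q)\cong\PSL_4(q)$; these are killed by Lemma \ref{lem:Pbounds} and divisibility of $|T:T_P|$ except in borderline low-dimensional cases ($n=7$), which need the same machinery as the parabolic case. A $\mathcal{C}_2$ imprimitive subgroup $\GO_{n/k}^\eta(q)\Wr S_k$ carries $k$ pairwise-isomorphic classical composition factors (or, for $\GO_1(q)\Wr S_n$, an $A_n$), all of which would have to lie in $G_P^{\Gamma(P)}$ or all in $G_{P,\ell}^{\Gamma(\ell)}$, which no $2$-transitive group permits when there is more than one, while the $A_n$ possibility is killed by $(t+1)^3<|\mathcal{P}|$, since $|\mathcal{P}|$ is of order roughly $q^{n(n-1)/2}$ and $n\geq 7$; a $\mathcal{C}_2$ subgroup $\GL_{n/2}(q).2$ (present only for $\rmO_n^+$) is eliminated because then $t+1\leq(q^{n/2}-1)/(q-1)$ while $|\mathcal{P}|$ is of order roughly $q^{n^2/4}>(t+1)^3$, contradicting Lemma \ref{lem:Pbounds}(i). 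The $\mathcal{C}_3$ extension-field subgroups $\GO_{n/r}^\eta(q^r).r$ and $\GU_{n/2}(q).2$ are reduced by Lemma \ref{stab} to a short list in which the relevant classical section ($\POmega_{n/r}^\eta(q^r)$, or $\PSU_{n/2}(q)$ with $n/2\geq 4$) is never a $2$-transitive socle, hence lies in the kernel, forcing $t+1$ tiny and contradicting $(t+1)^2<|\mathcal{P}|$; the tensor-type classes $\mathcal{C}_4,\mathcal{C}_7$ either fail to be large or violate Lemma \ref{stab}. A $\mathcal{C}_5$ subfield subgroup $\GO_n^\epsilon(q_0)$, $q=q_0^r$, is large only for $r=2$, whereupon $\POmega_n^\epsilon(q_0)$ with $n\geq 7$ is a forbidden composition factor unless $q_0=2$; but $\POmega_n^\pm(2)$ is not a $2$-transitive socle for $n\geq 7$, so it must be in the kernel, again forcing $t$ tiny. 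Finally $\mathcal{C}_6$ (extraspecial normalizers) gives only a short explicit list of pairs $(T,T_P)$, each eliminated by checking that $(s+1)(st+1)=|T:T_P|$ has no admissible solution (or that $(t+1)^3<|\mathcal{P}|$).

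The step I expect to be the genuine obstacle is the totally singular parabolic case with $n-2k\in\{0,1,2\}$ (and, to a lesser extent, the non-degenerate $\mathcal{C}_1$ case in dimension $7$): there $\PSL_k(q)$ is honestly a $2$-transitive section, $t+1$ may equal $q^k$ or $(q^k-1)/(q-1)$, the crude inequalities of Lemma \ref{lem:Pbounds} no longer suffice, and one must run the full argument --- exact point count from Lemma \ref{lem:isotropics}, $q$-coprimality of $|\mathcal{P}|$, and Lemma \ref{lem:ratio} forcing $T_\ell\cong T_P$ via \cite{BrayHoltRoney-Dougal} --- exactly as in Subcases 1.3--1.4 of Proposition \ref{prop:unitarymaxlgeom} and case (d) of Proposition \ref{prop:symplecticmaxlgeom}. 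Assembling all the sub-cases shows that no geometric maximal subgroup can be a point stabilizer, which is the assertion of the proposition.
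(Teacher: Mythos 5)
Your proposal follows essentially the same route as the paper's proof: the same reduction to the large geometric maximal subgroups of \cite{largesubs}, the same class-by-class elimination using Lemma \ref{stab} on composition factors, the counting identities with Lemmas \ref{lem:GQbasics}, \ref{lem:Pbounds} and \ref{lem:ratio}, the $q$-coprimality of $|T:T_P|$ for parabolic stabilizers forcing $q\mid s+1$ and hence $s>t$, and the use of Lemma \ref{lem:isotropics} together with the tables of \cite{BrayHoltRoney-Dougal} to pin down $T_\ell$ in the surviving totally singular parabolic subcases. You also correctly identify where the real work lies (the $P_k$ cases with small $n-2k$ and the dimension-$7$ non-degenerate case), so the plan matches the paper's argument in both structure and substance.
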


\begin{proof}
Let $\calQ$ be of order $(s,t)$.
Without loss of generality, assume that $s\leqslant t$.
Let $T=\POmega^{\epsilon}_{n}(q)$, where $n \geqslant 7$.
By Corollary \ref{cor:largesub}, the stabilizer of a point $T_P$ satisfies $|T| < |T_P|^3$.  
The geometric subgroups satisfying this condition are listed in \cite[Proposition 4.23]{largesubs}, and we proceed down the list.

\begin{description}[leftmargin=0pt, style=unboxed, itemsep=1ex]

\item[Case 1]  Assume first that $T_P$ is a stabilizer of a non-singular subspace of dimension $2k$, where $k \le n/2$.
Since $G_P$ has a 2-transitive representation, we have
\begin{itemize}
\item[(a)] if $n=2m+1$, then either 
\begin{itemize}
\item[] $n=7$, and $T_P=\rmO_4^-(q)\times\rmO_3(q)$, or $\rmO_4^+(q)\times\rmO_3(q)$, or  $\rmO_6(q)$, or 

\item[] $n=9$, and $T_P=\rmO_6^+(q)\times\rmO_3(q)$;
\end{itemize}

\item[(b)] if $n=2m$ and $T=\POmega_{2m}^-(q)$, then either
\begin{itemize}
\item[] $n=8$, and $T_P=\rmO_4^+(q)\times\rmO_4^-(q)$, or $\rmO_6^+(q)\times\rmO_2^-(q)$, or

\item[] $n=10$, and $T_P=\rmO_6^+(q)\times\rmO_4^-(q)$.  
\end{itemize}

\item[(c)] if $n=2m$ and $T=\POmega_{2m}^+(q)$, then either 

\begin{itemize}
\item[] $n=8$, and $T_P=\rmO_4^-(q)\times\rmO_4^-(q)$, or

\item[] $n=12$, and $T_P=\rmO_6^+(q)\times\rmO_6^+(q)$.
\end{itemize}

\end{itemize}

\begin{center}
\begin{table}[ht]
\caption{Ruling out examples for Case 1 of Proposition \ref{prop:orthogonalmaxlgeom}.}\label{O1}
\begin{tabular}{l|l|l|l|l|l}
\toprule
$n$ & $T_P$ & $t+1$ & $|T|_q$ & $|T_P|_q$ & $(s+1)_q$ \\
\midrule
$7$ & $\rmO_4^-(q)\times\rmO_3(q)$ & $q^2 + 1$ & $q^9$ & $q^3$ & $q^6$ \\
$7$ & $\rmO_4^+(q)\times\rmO_3(q)$ & $q + 1$ & $q^9$ & $q^3$ & $q^6$ \\
$7$ & $\rmO_6^+(q)$ & $q^3 +q^2 +q + 1$ & $q^9$ & $q^6$ & $q^3$ \\
$8$ & $\rmO_4^-(q)\times\rmO_4^+(q)$ & $q^2 + 1$ & $q^{12}$ & $q^{4}$ & $q^8$ \\
$8$ & $\rmO_6^+(q)\times\rmO_2^-(q)$ & $q^3+q^2+ q + 1$ & $q^{12}$ & $q^6$ & $q^6$ \\
$8$ & $\rmO_4^-(q)\times\rmO_4^-(q)$ & $q^2 + 1$ & $q^{12}$ & $q^4$ & $q^8$ \\
$9$ & $\rmO_6^+(q)\times\rmO_3(q)$ & $q^3 + q^2 + q + 1$ & $q^{16}$ & $q^7$ & $q^9$ \\
$10$ & $\rmO_6^+(q)\times\rmO_4^-(q)$ & $q^3+ q^2 +q + 1$ & $q^{20}$ & $q^8$ & $q^{12}$ \\ %
$12$ & $\rmO_6^+(q)\times\rmO_6^+(q)$ & $q^3+q^2 +q+ 1$ & $q^{30}$ & $q^{12}$ & $q^{18}$ \\
\bottomrule
\end{tabular}
\label{tbl:O1}
\end{table}
\end{center}

We proceed uniformly in each case.  Except for the finite number of cases when the action of $T_P^{\Gamma(P)}$ is not the natural action of $\PSL_n(q)$ on $1$-dimensional projective subspaces (each of which is ruled out by inspection), we have $t+1$ as in Table \ref{tbl:O1}.  However, this means that $q$ divides $t$, and since $|T| = |T_P|(s+1)(st+1)$, we have $(s+1)_q = |T|_q/|T_P|_q$ as in Table \ref{tbl:O1}.  In each case except $n = 7$ and $T_P =\rmO_6^+(q)$, this implies that $s > t$, a contradiction.  When $n = 7$ and $T_P = \rmO_6^+(q)$, since $s \le t$, we have $s + 1 = q^3$.  However, this implies that $$|\POmega_7(q)| = |\Omega_6(q)|\cdot q^3((q^3-1)(q^3+q^2+q) + 1),$$ a contradiction. 

\item[Case 2] Let $T_P$ be the stabilizer $P_k$ of a totally singular $k$-space.  Consider first the case $n = 2d+1$.  Noticing the isomorphisms listed at the beginning of this subsection, the possible candidates are as follows:
\begin{itemize}
\item[] $k = d$ and $T_P=q^{d(d-1)/2}.q^d.\GL_d(q)$, or 

\item[] $k = d-1$ and $T_P=q^{(d-1)(d-2)/2}.q^{3(d-1)}.(\GL_{d-1}(q)\times\rmO_3(q))$.
\end{itemize}

By Lemma \ref{lem:isotropics}, we have:
$$|\mathcal{P}| = {d \brack k}_q \prod_{i=1}^k (q^{d+1 - i} + 1) = \frac{(q^{2d}-1) \cdots (q^{2(d-k+1)}-1)}{(q^k-1) \cdots (q-1)} > \frac{q^{2d-1} \cdots q^{2d -2k +1}}{q^k \cdots q} = q^{2dk - \frac{k(3k-1)}{2}}.$$  On the other hand, examining the structure of $T_P$ we note that $t+1 \le q^k$.  Hence by Lemma \ref{lem:Pbounds}(ii) we have
$$2dk - \frac{k(3k-1)}{2} < 3k,$$ which implies that $$4d < 3k +5.$$  Since $k \le d$, this implies that $d < 5$ and hence $d = 3, 4$.  Substituting these values back into $4d < 3k + 5$ also yields $k = d$, and hence $T_P = q^{d(d-1)/2}.q^d.\GL_d(q)$ for $3 \le k = d \le 4$.

Suppose first that $k = d = 3$.  Note that $t+1 = q^3$ or $q^2+q+1$ and $$(s+1)(st+1) = |\mathcal{P}| = (q^3+1)(q^2+1)(q+1).$$  By Lemma \ref{lem:ratio}, since $s+1$ is coprime to $q$, it follows that 
$q^6$ must divide $|T_\ell|$, which implies that $T_\ell$ is a $\mathcal{C}_1$-subgroup.  Note that $T_\ell \neq \Omega_6^{\pm}(q)$ since we have a contradiction to Lemma \ref{stab} in the ``$-$'' case and $s>t$ in the ``$+$'' case.  This implies that $T_\ell$ is also a parabolic subgroup.  Since $s \le t$, $|T_P| \ge |T_\ell|$, and so $T_\ell = P_2$ or $P_3$.  If $T_\ell \cong T_P$, then Lemma \ref{lem:ratio} implies that $s +1 = t+1 = q^2 + q + 1$ since $s+1$ is coprime to $q$.  On the other hand, this implies that $$(q^3+1)(q^2+1)(q+1) = |\mathcal{P}| = (q^2+q+1)((q^2+q)^2 + 1),$$ which has no real roots for $q \ge 3$.  Hence $T_\ell = P_2$.  This implies that $|\mathcal{L}|= (q^6 - 1)(q^2+1)/(q-1)$ is coprime to $q$ by Lemma \ref{lem:isotropics}, and so $t+1 = q^2 + q + 1$.  Hence $$(q^2 + q + 1)(s(q^2 + q) + 1) = |\mathcal{L}| = (q^6 - 1)(q^2+1)/(q-1),$$ which implies that $s = (q^4 + q^2 + q)/(q+ 1) > q^2 + q = t$, a final contradiction.

Suppose now that $k = d = 4$.  Thus $t+1 = q^4$ or $(q^4-1)/(q-1)$ and $$|\mathcal{P}| = (q^4+1)(q^3+1)(q^2+1)(q+1)$$ by Lemma \ref{lem:isotropics}.  If $t+1 = (q^4 - 1)/(q-1)$, then $(t+1)^3 < |\mathcal{P}|$, a contradiction to Lemma \ref{lem:Pbounds} (ii).  Hence $t+1 = q^4$. In this case, Lemma \ref{lem:GQbasics}(ii) implies that $(s+ q^4 - 1)$ divides $s(q^4-1)q^4(s+1)$.  Now, $s(q^4-1)q^4(s+1) - (s+q^4 - 1)(sq^4(q^4-1) + q^4(q^4-1)) = -2q^4(q^4-1)$, so $s + q^4 - 1$ must divide $2q^4(q^4 - 1)$.  Examining $(s+1)(st + 1) = |\mathcal{P}|$, we see that $q$ divides $s^2$, and so $q$ is coprime to $s + q^4 - 1$, and $s+ q^4 - 1$ must divide $2(q^4-1)$.  Thus for some positive integer $m$, $m(s + q^4 - 1) = 2(q^4 - 1)$ and $ms = (2 -m )(q^4 - 1)$.  This implies that $m = 1$ and $s = t$, but then $s^2(t+1) > |\mathcal{P}|$, a contradiction to Lemma \ref{lem:Pbounds}(ii).  

Next we consider the case $n=2d$ and $T=\POmega_{2d}^-(q)$.  Upon consideration of the isomorphisms listed at the beginning of this subsection, the possible candidates are as follows:

\begin{itemize}
\item[] $k = d-1$ and $T_P=q^{(d-1)(d-2)/2}.q^{2(d-1)}.(\GL_{d-1}(q)\times\rmO_2^-(q))$, or

\item[] $k = d-2$ and $T_P=q^{(d-2)(d-3)/2}.q^{4(d-2)}.(\GL_{d-2}(q)\times\rmO_4^-(q))$.
\end{itemize}

We start as in the $n = 2d+1$ case and find by Lemma \ref{lem:isotropics} that
$$|\mathcal{P}| > q^{2dk - \frac{(3k-2)(k+1)}{2}}.$$  Noting again that $t+1 \le q^k$, by Lemma \ref{lem:Pbounds} (ii) we have $4d < 3k + 7$.  Since $k \le d-1$, this means that $4d < 3(d-1) + 7$ and $d \le 3$, a contradiction to $n \ge 7$.

Finally, we consider the case $n = 2d$ and $T=\POmega_{2d}^+(q)$.  Noticing the isomorphisms listed at the beginning of this subsection, the possible candidates are as follows:

\begin{itemize}
\item[] $k=d$ and $T_P=q^{d(d-1)/2}.\frac{1}{q-1}\GL_{d}(q)$, or

\item[] $k = d-2$ and $T_P=q^{(d-2)(d-3)/2}.q^{4(d-2)}.(\GL_{d-2}(q)\times\rmO_4^+(q))$, or

\item[] $k =d -3$ and $T_P=q^{(d-3)(d-4)/2}.q^{6(d-3)}.(\GL_{d-3}(q)\times\rmO_6^+(q))$.
\end{itemize}

We start again as in the $n = 2d+1$ case and find by Lemma \ref{lem:isotropics} that
$$|\mathcal{P}| > q^{2dk - \frac{3k(k+1)}{2}}.$$  Noting again that $t+1 \le q^k$, by Lemma \ref{lem:Pbounds} (ii) we have $4d < 3k + 9$.  If $k \le d-2$, then $4d < 3d + 3$, a contradiction to $d \ge 4$.  Hence we have $k = d$ and $4 \le k = d \le 8$.  In each case, $t+1 = q^d$ or $t+1 = (q^d-1)/(q-1)$ and $$|\mathcal{P}| = 2(q^d+1)(q^{d-1}+1) \cdots (q+1).$$  We rule out $k = d = 7$ and $k = d = 8$ by noting that $$(t+1)^3 \le q^{3d} < |\mathcal{P}|,$$ a contradiction to Lemma \ref{lem:Pbounds} (ii).  

Suppose that $k = d = 4$.  If $t+1 = q^4$, then $(t+1)^2 > |\mathcal{P}|$, a contradiction to Lemma \ref{lem:Pbounds} (ii).  Hence $t+1 = q^3 + q^2 + q + 1$, and by Lemma \ref{lem:ratio}, $|T_\ell|_q = q^{12}$ and $T_\ell$ is a parabolic subgroup.  Since $s \le t$, $|T_P| \ge |T_\ell|$, and $k \neq 3$, we have $T_P \cong T_\ell$ or $T_\ell = P_2$.  If $T_\ell = P_2$, since $s^2 \ge t$, we see that $s +1 = q^2$, which is a contradiction since $|\mathcal{P}| = (s+1)(st+1)$ is coprime to $q$.  If $T_P \cong T_\ell$, we see that $s +1 = t+ 1$ by Lemma \ref{lem:ratio}, a contradiction to $|\mathcal{P}| = (s+1)(st+1)$ since $|\mathcal{P}| \equiv 2 \pmod q$ but $(s+1)(st+1) \equiv 1 \pmod q$.  

Suppose that $k = d = 5$.  If $t+1 = q^5$, then since $|\mathcal{P}|$ is coprime to $q$ by Lemma \ref{lem:ratio}, we see that $|T_\ell|_q = q^{15}$.  However, this and Lemma \ref{stab} rule out all possible maximal subgroups (see \cite[Tables 8.66, 8.67]{BrayHoltRoney-Dougal}).  Hence $t+1 = (q^5-1)/(q-1)$.  We reach a contradiction by precisely the same reasons as the case $k = d = 4$.

Finally, suppose that $k = d = 6$.  First, if $t+1 = (q^6 - 1)/(q-1)$, then $(t+1)^3 < |\mathcal{P}|$, a contradiction to Lemma \ref{lem:Pbounds} (ii).  Second, if $t+1 = q^6$, then since $|\mathcal{P}|$ is coprime to $q$ by Lemma \ref{lem:ratio}, $|T_\ell|_q = q^{24}$.  However, this and Lemma \ref{stab} rule out all possible maximal subgroups (see \cite[Tables 8.82, 8.83]{BrayHoltRoney-Dougal}).

\item[Case 3] Suppose that $T_P$ is a $\calC_2$-subgroup.
Then 
\begin{itemize}
\item[(a)] $T=\POmega_7(q)$, and $T_P=\rmO_3(q)\Wr S_2$;

\item[(b)] $T=\POmega_9(q)$, and $T_P=\rmO_4^\pm(q)\Wr S_2$;

\item[(c)] $T=\POmega_{13}(q)$, and $T_P=\rmO_6^+(q)\Wr S_2$;

\item[(d)] $T=\POmega_8^-(q)$, and $T_P=\rmO_4^\pm(q)\Wr S_2$;

\item[(d)] $T=\POmega_8^+(q)$, and $T_P=\rmO_4^\pm(q)\Wr S_2$;

\item[(d)] $T=\POmega_{12}^\pm(q)$, and $T_P=\rmO_6^+(q)\Wr S_2$.

\end{itemize}

In particular, in each case  $\rmO^{\epsilon}_{k}(q)$ is not solvable.  Note that $T_P$ has two composition factors isomorphic to $\POmega^{\epsilon}_{k}(q)$, and, since $\rmO^{\epsilon}_{k}(q) \times \rmO^{\epsilon}_{k}(q)$ is a normal subgroup of $T_P$, either all are composition factors of $G_P^{\Gamma(P)}$ or all are composition factors of $G_{P,\ell}^{\Gamma(\ell)}$.  In either case, there is no such $2$-transitive group, a contradiction.

\item[Case 4]  Let $T_P$ is a $\calC_3$-subgroup.
By \cite[Proposition 4.23]{largesubs}, we have the candidates
\begin{itemize}
\item[(a)] $T=\POmega_8^+(q)$, and $T_P=\rmO_4^+(q^2)$;

\item[(b)] $T=\POmega_8^-(q)$, and $T_P=\rmO_4^-(q^2)$;

\item[(c)] $T=\POmega_{12}^+(q)$, and $T_P=\rmO_6^+(q^2)$.

\end{itemize}

\begin{center}
\begin{table}[ht]
\caption{Ruling out examples for Case 4 of Proposition \ref{prop:orthogonalmaxlgeom}.}\label{O2}
\begin{tabular}{l|l|l|l|l|l}
\toprule
$n$ & $T_P$ & $t+1$ & $|T|_q$ & $|T_P|_q$ & $(s+1)_q$ \\
\midrule
$8$ & $\rmO_4^+(q^2)$ & $q^2 + 1$ & $q^{12}$ & $q^4$ & $q^{8}$ \\
$8$ & $\rmO_4^-(q^2)$ & $q^4+1$ & $q^{12}$ & $q^4$ & $q^8$ \\
$12$ & $\rmO_6^+(q^2)$ & $(q^8-1)/(q^2-1)$ & $q^{30}$ & $q^{12}$ & $q^{18}$ \\
\bottomrule
\end{tabular}
\label{tbl:O2}
\end{table}
\end{center}

We proceed uniformly in each case.  We have $t+1$ as in Table \ref{tbl:O2} except possibly for the cases $q=2,3$ when $n=8$ and $T_P = \rmO_4^+(q^2)$, and these are ruled out by inspection.  However, this means that $q$ divides $t$, and since $|T| = |T_P|(s+1)(st+1)$, we have $(s+1)_q = |T|_q/|T_P|_q$ as in Table \ref{tbl:O2}.  In each case, this implies that $s > t$, a contradiction.  

\item[Case 5]  If $T_P$ is a $\calC_4$-subgroup, then $n = 8$ and $T_P=\Sp_4(2)\times\Sp_2(2)$ or $n = 12$ and $\Sp_6(2)\times\Sp_2(2)$.  In each case, we have $t+1 = 2^{n/2 -1} \pm 2^{n/4 - 1}$ and $(s+1)(st+1) = |\mathcal{P}| = |\Sp_n(2)\colon \Sp_{n/2}\times\Sp_2(2)|$ implies $s > t$, a contradiction.

\item[Case 6]  The subgroup $T_P$ is not a $\calC_5$-subgroup since $G_P$ has a 2-transitive permutation representation.

\item[Case 7]  For the case that we have a $\calC_6$-subgroup, we have $T=\POmega_8^+(3)$, and $T_P=2^6.\Omega_6^+(2)$.  However, $(t+1)^3 \le 2^{18} < |T\colon T_P| = |\mathcal{P}|$, a contradiction to Lemma \ref{lem:Pbounds} (ii). 

\item[Case 8]  Finally, for $\calC_7$-subgroups, we have $T=\POmega_8^+(q)$, and $T_P=\Sp_2(q) \Wr  S_3$ with $q\leqslant 2^7$ even.  However, this implies that either $G_P^{\Gamma(P)}$ or $G_{P,\ell}^{\Gamma(\ell)}$ has three composition factors isomorphic to $\PSp_2(q)$, a contradiction.
\end{description}
\end{proof}

\section{$\mathcal{C}_9$-subgroups of classical groups}

\begin{prop}
\label{prop:notC9}
Assume Hypothesis \ref{hyp:3arcAlmostSimple}.
Then the point stabilizer $T_P$ cannot be a $C_9$-subgroup.
\end{prop}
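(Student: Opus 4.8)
The plan is to work through the list of large $\mathcal{C}_9$-subgroups of the finite simple classical groups, supplied by Alavi and Burness \cite{largesubs}, and eliminate each candidate. Since the class $\mathcal{C}_9$ only occurs for classical $T$, we may assume $T$ is a classical simple group; by Corollary \ref{cor:largesub} the point stabilizer $T_P$ is a large subgroup of $T$, so the pairs $(T,T_P)$ with $T_P$ a maximal $\mathcal{C}_9$-subgroup are precisely those recorded in \cite{largesubs} (together with their novelties, as tabulated in \cite{BrayHoltRoney-Dougal}). The first reduction is structural: a $\mathcal{C}_9$-subgroup $T_P$ is nearly simple, so its unique nonabelian composition factor is $S:=\soc(T_P)$, and the same is true of $G_P$ because $G_P/T_P$ embeds into the solvable group $\Out(T)$. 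As $\Gamma$ is locally $(G,2)$-arc-transitive (Lemma \ref{anti-f-trans-local-3-a-trans}), Lemma \ref{stab}(i) forces $S$ to lie in the short list $A_n$, $\PSL_n(q)$, $\PSU_3(q)$, $\PSp_n(q)$, $\POmega_n^\pm(2)$, $\Sz(q)$, $\Ree(q)$, $\G_2(q)$, $\mathrm{HS}$, $\Co_3$, $\McL$, $M_{11}$, $M_{12}$, $M_{22}$, $M_{23}$, $M_{24}$, which already discards most of the $\mathcal{C}_9$-list.

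Next I would pin down the valency. Here $G_P^{\Gamma(P)}$ is a $2$-transitive group of degree $t+1$ which is a quotient of $G_P$, and $G_P$ has $S$ as its only nonabelian composition factor. Hence either $G_P^{\Gamma(P)}$ is solvable — in which case $S\le G_P^{[1]}$, the degree $t+1$ is a prime power, and $t+1\le|G_P:S|\le|\Out(S)|\,|\Out(T)|$ is tiny — or $G_P^{\Gamma(P)}$ is non-solvable, so $S$ is a composition factor of it, and by the classification of $2$-transitive groups (affine and almost simple) the valency $t+1$ belongs to the very restricted set of $2$-transitive degrees associated with $S$ (the natural-module degrees, or one of the finitely many sporadic $2$-transitive actions of $A_5,A_6,A_7,A_8,\PSL_2(q)$, $\PSU_3(q)$, the Mathieu groups, etc.). For each candidate pair $(T,T_P)$ the number of points is $|\mathcal{P}|=|T:T_P|$, and Lemma \ref{lem:Pbounds}(i) requires $(t+1)^2<|\mathcal{P}|<(t+1)^3$; comparing $|T:T_P|$ with the cube of the largest admissible $t+1$ eliminates the overwhelming majority of cases outright, and in most of the remainder the unique numerically admissible valency is not the degree of an actual $2$-transitive action of $S$.

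For the small number of surviving pairs — all in low-dimensional classical groups over small fields, where $S$ has several $2$-transitive representations — I would, for each admissible $t+1$, solve $(s+1)(st+1)=|\mathcal{P}|$ over the positive integers and test the resulting $(s,t)$ against $s\le t\le s^2$ and Lemma \ref{lem:GQbasics}(iv), the divisibility $s+t\mid st(s+1)(t+1)$ of Lemma \ref{lem:GQbasics}(ii), Lemma \ref{lem:Pbounds}(ii), and Lemma \ref{lemma:smallGQ} for the smallest orders. When $(s,t)$ survives these, Lemma \ref{lem:ratio} forces $|\mathcal{L}|=(t+1)(st+1)$ to be the index in $T$ of a maximal subgroup $T_\ell$ carrying a $2$-transitive action of degree $s+1$ on the points of a line; inspecting the maximal-subgroup tables of \cite{BrayHoltRoney-Dougal} (and computing directly in \GAP\ \cite{GAP4}, using the \textsc{Atlas} \cite{atlas} for the sporadic-related cases) shows no such $T_\ell$ exists. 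In every case one reaches a contradiction, so $T_P$ cannot be a $\mathcal{C}_9$-subgroup.

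The main obstacle is precisely this residual finite check: the $\mathcal{C}_9$-list of \cite{largesubs} is long, and after the crude order estimate a non-trivial handful of low-dimensional candidates survive and must be handled individually, which requires matching the possible $2$-transitive degrees of $S$ against the factorisation of $|\mathcal{P}|$ and ruling out any compatible line stabilizer. A secondary technical point is the treatment of novelty maximal subgroups of $G$ when $G\neq T$, where $G_P^{\Gamma(P)}$ may acquire field or graph automorphisms of $S$ and hence a slightly larger set of admissible valencies; this is controlled by the bound $|\Out(T)|<2\log_2|T|$ \cite{AutTBd} and does not affect the conclusion.
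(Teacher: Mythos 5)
Your proposal is correct and follows essentially the same route as the paper: take the large maximal $\mathcal{C}_9$-subgroups from \cite[Proposition 4.28]{largesubs}, read off the admissible valencies $t+1$ from the $2$-transitive actions of $T_P$, discard most pairs via the bound $(t+1)^2<|T:T_P|<(t+1)^3$ of Lemma \ref{lem:Pbounds}(i), and finish the survivors (including the infinite family $(\PSp_4(q)',{}^2B_2(q))$) by checking that the quadratic $(s+1)(st+1)=|T:T_P|$ has no integer solution, which the paper does via the discriminant. Your extra fallback steps (Lemma \ref{lem:ratio} and line-stabilizer inspection) turn out not to be needed, since the discriminant test already kills every remaining case.
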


\begin{proof}
From \cite[Proposition 4.28]{largesubs}, we list the possibilities in Table \ref{tbl:classical_not_novelty} and apply
the simple inequality from Lemma \ref{lem:Pbounds}(i). We then look at the left-over cases, and we can solve for $s$, since $|T:T_P|=(s+1)(st+1)$. In most cases, the discriminant $\Delta := (t+1)^2 - 4 t (1-|T:T_P|)$ is not a square, and the remaining case, $(T,t+1)=(\PSL_2(19),6)$, yields no solution for $s$. Therefore, there are no examples in this case.

\begin{center}
\begin{table}[ht]
\caption{Possibilities for $(T,T_P)$ arising from \cite[Proposition 4.28]{largesubs}.}\label{tbl:classical_not_novelty}
\begin{tabular}{l|c|c|c}
\toprule
$T$ & $T_P$ &  $t+1$ & $(t+1)^2<|T:T_P|<(t+1)^3$? \\
\midrule
$\POmega_{14}^+(2)$ & $A_{16}$  & 16 & false \\
$\PSp_{12}(2)$ & $S_{14}$  & 14 & false \\
$\POmega_{11}^-(2)$ & $A_{13}$  & 13 & false\\
$\POmega_{10}^-(2)$ & $A_{12}$  & 12 & false\\
$\PSp_8(2)$ & $S_{10}$   & 10 & false\\
$\POmega_8^+(2)$ & $A_9$  & 9 & false\\
$\POmega_7(3)$ & $\PSp_6(2)$  & 28, 36 & true\\
$\POmega_7(3)$ & $S_9$  & 9 & false\\
$\PSU_6(2)$ & $M_{22}$  & 22 & false\\
$\PSp_6(2)$ & $\PSU_3(3).2$  &  28 & false\\
$\PSL_4(2)$ & $A_7$  & 7 & false \\
$\PSU_4(3)$ & $A_7$  & 7 & false \\
$\PSU_4(3)$ & $\PSL_3(4)$  & 21 & false \\
$\PSp_4(2)'$ & $A_5$  & 5, 6 & false\\
$\PSL_3(4)$ & $A_6$  & 6 & true\\
$\PSL_3(4)$ & $A_6$  & 10 & false\\
$\PSU_3(5)$ & $A_7$  & 7 & true\\
$\PSU_3(5)$ & $M_{10}$  & 10 & true \\
$\PSU_3(3)$ & $\PSL_2(7)$  & 7, 8 & false\\
$\PSL_2(9)$ & $A_5$  & 5, 6 & false\\
$\PSL_2(11)$ & $A_5$  & 5, 6 & false \\
$\PSL_2(19)$ & $A_5$  & 5, 6 & true\\
$\PSp_4(q)'$ & $\,^2B_2(q)$, $q=2^f$, $f$ odd & $q^2+1$ & true\\
\bottomrule
\end{tabular}
\end{table}
\end{center}

\begin{center}
\begin{table}[ht]
\caption{The remaining possibilities for $(T,T_P)$ from Table \ref{tbl:classical_not_novelty}, after consideration of the discriminant.}
\begin{tabular}{l|c|c|c}
\toprule
$T$ & $T_P$ & $t+1$ & $\Delta$ \\
\midrule
$\Omega_7(3)$ & $\Sp_6(2)$  & 28, 36 & 341848\\
$\PSL_3(4)$ & $A_6$  & 6 & 443416\\
$\PSU_3(5)$ & $A_7$ &  7 & 1136\\
$\PSU_3(5)$ & $M_{10}$ & 10 & 6364 \\
$\PSL_2(19)$ & $A_5$ &  5 & 921\\
$\PSL_2(19)$ & $A_5$ &  6 & 1156\\
$\PSp_4(q)'$ & $\,^2B_2(q)$, $q=2^f$, $f$ odd  & $q^2+1$ & $(q-1) (q+1)^2 (4q^4+q-1)$\\
\bottomrule
\end{tabular}
\end{table}
\end{center}
\end{proof}

\section{Novelty maximal subgroups of classical groups}

Let $S$ be a finite nonabelian simple group and let $A$ be an almost simple group with socle $S$.
If $M$ is a maximal subgroup of $A$, then $M\cap S$ is not necessarily a maximal subgroup of $S$.
In the case that $M\cap S$ is \textbf{not} maximal in $S$, we say that $M$ is a \emph{novelty}.
This section deals with the case that $G_P$
is a novelty subgroup of $G$.

First we look at the case that $G_P$ is a $\mathcal{C}_1$-subgroup. Let $n$ be the dimension of the natural module
for $G$.

\begin{prop}\label{propn:NoveltyC1L}
Suppose $T \cong \PSL_n(q)$. Then
 $G_P$ cannot be a novelty $\mathcal{C}_1$-subgroup of $G$. 
\end{prop}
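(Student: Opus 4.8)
The plan is to combine the classification of maximal subgroups of almost simple groups with socle $\PSL_n(q)$ with the numerical constraints of Lemma~\ref{lem:Pbounds} and the restrictions on $2$-transitive sections given by Lemma~\ref{stab}. A novelty $\mathcal{C}_1$-subgroup can occur only when $G$ contains a graph (duality) automorphism, and then, apart from finitely many small-dimensional coincidences recorded in \cite{BrayHoltRoney-Dougal,KleidmanLiebeck}, $T_P$ is the stabilizer $P_{k,n-k}$ in $T$ of an incident pair of subspaces $U_k<W_{n-k}$ of $V=\mathbb{F}_q^n$ with $1\le k<n/2$. This is a parabolic subgroup, so $|\calP|=|T:T_P|={n\brack k}_q{n-k\brack k}_q$ is coprime to $q$ and satisfies $|\calP|>q^{k(2n-3k)}$. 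Applying Proposition~\ref{prop:linearmaxlgeom} and Proposition~\ref{prop:notC9} to the dual quadrangle $\calQ^D$, the line stabilizer $G_\ell$ is also a novelty subgroup of $G$; this will be used in the last step.

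First I would pin down the valency $t+1=|\Gamma(P)|$. The Levi factor of $T_P$ has composition factors $\PSL_k(q)$ (with multiplicity two, the two copies being interchanged by the graph element of $G_P\setminus T_P$) and $\PSL_{n-2k}(q)$ (when $n-2k\ge 2$), together with soluble groups. Since $G_P^{\Gamma(P)}$ is $2$-transitive, Lemma~\ref{stab} applies; moreover a $2$-transitive group has no normal subgroup isomorphic to $\PSL_k(q)\times\PSL_k(q)$ for $k\ge 2$, and the relevant tensor modules $A\otimes B^*$ for the Levi are irreducible, so the Levi is transitive on their nonzero vectors only if $\min(k,n-2k)=1$. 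These facts force $t+1=(q^{n-2k}-1)/(q-1)$ when $k\ge 2$ (or one of boundedly many exceptional degrees), and $t+1\in\{(q^{n-2k}-1)/(q-1),\,q^{n-2k}\}$ when $k=1$; the case $n-2k=0$ does not arise, and $n=2k+1$ is eliminated at once by comparing $|\calP|>q^{k(k+2)}$ with $(t+1)^3<q^{3k}$. For $k\ge 2$ one then applies Lemma~\ref{lem:Pbounds}(i): since $(t+1)^3<q^{3(n-2k)}$ we must have $k(2n-3k)<3(n-2k)$, which is impossible for $k\ge 2$, and the boundedly many exceptional-degree cases leave only finitely many $(n,q)$, each cleared by checking $(s+1)(st+1)=|\calP|$ or by Lemma~\ref{lemma:smallGQ}.

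This leaves $k=1$, i.e. $T_P=P_{1,n-1}$ is the stabilizer of an incident point--hyperplane pair $(x,H)$ in $\PG(n-1,q)$, with $|\calP|=\tfrac{q^n-1}{q-1}\cdot\tfrac{q^{n-1}-1}{q-1}$ and $t+1\in\{(q^{n-2}-1)/(q-1),\,q^{n-2}\}$. By Lemma~\ref{prop:geom-flag-trans}, $\calQ\cong\Cos(G;G_P,G_\ell)$, and the flag stabilizer $G_{P,\ell}$ is the stabilizer of a configuration refining $(x,H)$ by a $2$-space $Y$ through $x$: with $Y\subseteq H$ in the first subcase (so $T_{P,\ell}=P_{1,2,n-1}$, a parabolic of $T$), and with $Y\cap H=x$ in the affine subcase (so $T_{P,\ell}$ has $q$-part $q^{(n^2-3n+4)/2}<|T|_q$). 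In the first subcase $T_\ell\ge T_{P,\ell}$ contains a Borel of $T$, so $T_\ell$ is a parabolic; being a novelty, $T_\ell=P_{k',n-k'}$ with $\{k',n-k'\}\subseteq\{1,2,n-1\}$, which for $n\ge 5$ forces $T_\ell=P_{1,n-1}$, and as $(x,H)$ is the only $T_{P,\ell}$-invariant incident $(1,n-1)$-flag we get $G_\ell=G_P$, hence $s=t$ by Lemma~\ref{lem:ratio}; but then a residue computation modulo $q^{n-2}-1$ gives $\tfrac{(q^n-1)(q^{n-1}-1)}{q-1}\equiv q^2-1$, so $s+1=(q^{n-2}-1)/(q-1)$ does not divide $|\calP|$, a contradiction. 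In the affine subcase, coprimality of $s+1$ to $q$ forces $|T_\ell|_q=q^{(n^2-3n+4)/2}$, which for $n\ge 5$ exceeds the $q$-part of every non-parabolic maximal subgroup of $\PSL_n(q)$ containing a section with composition factor $\PSL_{n-2}(q)$, again a contradiction. The finitely many remaining cases ($n\le 4$, or small $q$) are checked directly; for instance when $n=4$ and $T_\ell=P_2$, Lemma~\ref{lem:ratio} gives $(s+1)/(t+1)\ge(q+1)/|\Out(T)|>1$, contradicting $s\le t$ except for boundedly many $q$.

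The step I expect to be the main obstacle is the case $k=1$: the crude index inequalities of Lemma~\ref{lem:Pbounds} do not suffice there, and one must argue rather carefully — using the novelty classification together with the precise structure of the flag stabilizer, and tracking $q$-parts in the affine subcase — that $G_\ell$ cannot differ from $G_P$ before the final arithmetic contradiction becomes available. Disposing exhaustively of the small-dimensional novelties flagged by the Bray--Holt--Roney-Dougal tables, and of the boundedly many exceptional $2$-transitive valencies, is routine but must be done with care.
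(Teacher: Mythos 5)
Your reduction of the novelty $\mathcal{C}_1$-subgroups to the flag stabilizers $P_{k,n-k}$ is where the proof breaks. When $G$ contains a graph automorphism, the $\mathcal{C}_1$-novelties for socle $\PSL_n(q)$ comprise \emph{two} infinite families: the stabilizers $P_{k,n-k}$ of incident pairs $U_k<W_{n-k}$, \emph{and} the stabilizers of direct-sum decompositions $V=U_m\oplus W_{n-m}$ with $m<n/2$, of type $\GL_m(q)\oplus\GL_{n-m}(q)$ (see \cite[Table 3.5.H]{KleidmanLiebeck}). The latter exist for every $n\ge 3$ and every $q$; they are not ``finitely many small-dimensional coincidences,'' and your argument never touches them. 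They also behave quite differently from the parabolic case: here $|\mathcal{P}|=|T:T_P|=q^{m(n-m)}{n\brack m}_q$ is divisible by a large power of $q$, and the paper's treatment of this family rests on exactly that fact --- since $st+1$ is coprime to $q$, one gets $q^{m(n-m)}\mid s+1$, which collides with the bound $s^2(t+1)<|\mathcal{P}|$ of Lemma \ref{lem:Pbounds}(ii) after first forcing $m\in\{1,2\}$ (with the single residual case $(n,q)=(5,2)$ eliminated by checking $(s+1)(st+1)=9920$ directly). So an entire case of the proposition is missing from your proof.

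A secondary problem: you assert that $G_\ell$ must also be a novelty subgroup ``by applying Propositions \ref{prop:linearmaxlgeom} and \ref{prop:notC9} to the dual quadrangle.'' Those propositions are proved under Hypothesis \ref{hyp:3arcAlmostSimple}, which includes $s\le t$; the dual quadrangle has order $(t,s)$ with $t\ge s$, so they do not apply to it unless $s=t$. The paper avoids this by constraining $T_\ell$ directly inside each case: in the $P_{1,n-1}$ subcase it locates the subgroup $[q^{2n-3}]{:}\PSL_{n-3}(q)$ inside $T_{P,\ell}\le T_\ell$, deduces that $T_\ell$ is a $\mathcal{C}_1$-subgroup with $m\le 3$, and checks that every candidate forces $s+1>\sqrt{2}\,q^{n/2}$, contradicting the bound $s<\sqrt{2}\,q^{n/2}$ obtained from $s^2(t+1)<|\mathcal{P}|$. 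Your elimination of $k\ge 2$ and the general shape of your $k=1$ analysis are consistent with the paper, but both gaps above need to be repaired before the argument is complete.
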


\begin{proof}
Suppose that $T \cong \PSL_n(q)$, where $n\ge 3$.
We have three cases:
\begin{enumerate}[(i)]
\item \emph{$T_P$ is a $P_k$ type subgroup}.
We may assume without loss of generality that $k\le n/2$ (since we can apply the standard 
duality/graph automorphism to the case $k>n/2$). We know that $t+1 \le (q^{k} - 1)/(q-1)$.  By Lemma \ref{lem:Pbounds}(i) we must have 
\[
\frac{(q^{n/2}-1)^3}{(q-1)^3}\ge \frac{(q^{k} - 1)^3}{(q-1)^3} \ge (t+1)^3 > |\mathcal{P}| \ge {n \brack k}_q,
\] 
and this immediately leads to a contradiction.

\item \emph{$T_P$ is a $\GL_m(q) \oplus \GL_{n-m}(q)$ type subgroup}. Here $m<n/2$. We know that
  $n-m > m$, so $t+1 \le (q^{n-m} - 1)/(q-1)$. By Lemma \ref{lem:Pbounds}(i) we must have
\[
(q^{n-m} - 1)^3/(q-1)^3 \ge (t+1)^3 > |\mathcal{P}| > {n \brack m}_q,
\] and this immediately implies that $m = 1,2$.  Suppose that $m = 1$.  
Then we can be sure about the value of $t+1$; it is $(q^{n-1} - 1)/(q-1)$. Now $|\mathcal{P}| =
q^{n-1}(q^n - 1)/(q-1)$, and by Lemma \ref{lem:Pbounds}(ii), $s^2(t+1) < |\mathcal{P}|$ implies that
 \[
 s^2< q^{n-1}\frac{q^n-1}{q^{n-1}-1}
 \]
 and hence $s \le q^{n/2} + 1$. On the other hand, $(s+1)(st+1) = |\mathcal{P}|$, and $st+1$ is
 coprime to $q$, which implies that $q^{n-1}$ divides $s+1$, a contradiction. Now suppose $m=2$.
 Necessarily $n\ge 5$ and $t+1=(q^{n-2} - 1)/(q-1)$. Then
\[
|\mathcal{P}| = q^{2(n-2)}\frac{(q^n - 1)(q^{n-1}-1)}{(q-1)(q^2-1)}\le q^{2(n-2)} \cdot (q^{n-2})^2,
\]
and by Lemma \ref{lem:Pbounds}(ii), $s^2(t+1) < |\mathcal{P}|$ implies that
\[
s^2 \le q^{2(n-2)}\frac{(q^n - 1)(q^{n-1}-1)}{(q^2-1)(q^{n-2}-1)}\le q^{3n-4}.
\]
On the other hand, $(s+1)(st+1) = |\mathcal{P}|$, and $st+1$ is coprime to $q$, which implies that
$q^{2(n-2)}$ divides $s+1$. Note that $s\ne 2$, since otherwise we would have $q^{2(n-2)}=3$. For
$s\ge 3$, we have $(s+1)^2\le 2s^2$ and hence
\[
q^{4(n-2)}\le (s+1)^2 \le 2 s^2 \le 2 q^{3n-4}.
\]
The only solution is $q=2$ and $n= 5$. Then $|\mathcal{P}|=9920$, but there do not exist $s$ and $t$
with $\sqrt{s}\le t\le s^2$ such that $(s+1)(st+1)=9920$. Therefore, $T_P$ cannot be a $\GL_m(q)
\oplus \GL_{n-m}(q)$ type subgroup.

\item \emph{$T_P$ is a $P_{m,n-m}$ type subgroup}. We now assume that $T_P$ is a $P_{m,n-m}$ type
  subgroup in $\PSL_n(q)$. Since $T_P$ must have a $2$-transitive action on $t+1$ points, it follows
  that $t+1$ is either $(q^m - 1)/(q-1)$ or $(q^{n-2m} - 1)/(q-1)$. If $t+1 = (q^m - 1)/(q-1)$, then
  again using Lemma \ref{lem:Pbounds}(i), we see that ${n \brack m}_q < |\mathcal{P}| < (q^m -
  1)^3/(q-1)^3 < q^{3n/2}$. This implies that $m = 1$, a contradiction to $t \ge 2$. If $t+1 = (q^{n
  - 2m} - 1)/(q-1)$, then a similar argument immediately implies that $m = 1,2$. When $m = 2$,
  $|\mathcal{P}| = (q^n-1)(q^{n-1} - 1)(q^{n-2} - 1)(q^{n-3} - 1)/((q^2-1)(q^2-1)(q-1)(q-1))$ and
  $t+1 = (q^{n-4} - 1)/(q-1)$. However, $(t+1)^3 < |\mathcal{P}|$ in this case, a contradiction.
  Hence we assume that $m = 1$ and $t+1 = (q^{n-2} - 1)/(q-1)$. We note that $|\mathcal{P}| =
  (q^n-1)(q^{n-1}-1)/(q-1)^2$, and by Lemma \ref{lem:Pbounds}(ii) we find that $s <
  \sqrt{2}q^{n/2}$. We also note that $T_P \cong [q^{2n-3}]:[a_{1,1,n-1}^+/\gcd(q-1,n)].(\PSL_1(q)^2
  \times \PSL_{n-2}(q)).[b_{1,1,n-2}^+]$ (see \cite[Proposition 4.1.22]{KleidmanLiebeck} for
  details). Note that $t+1 = (q^{n-2} - 1)/(q-1)$ implies that the kernel of the action of $T_P$ on
  its neighbors contains the full subgroup $[q^{2n-3}]:\PSL_{n-3}(q)$, which is a subgroup of
  $T_{\ell}$. Note that this implies that $T_{\ell} > [q^{2n-3}]:\PSL_{n-3}(q)$, and we deduce that
  $T_{\ell}$ is a $\mathcal{C}_1$-subgroup of $T$ with $m \le 3$. However, in all cases this forces
  $s+1$ to be larger than $\sqrt{2}q^{n/2}$, a contradiction. Thus $T_P$ cannot be a $P_{m,n-m}$
  type subgroup in $\PSL_n(q)$.
\end{enumerate}

\end{proof}

\begin{prop}\label{propn:NoveltyC1Oplus}
Suppose $T \cong \POmega^+_n(q)$. Then $T_P$ cannot be a novelty $\mathcal{C}_1$-subgroup of
$T$, unless $T_P$ is a $P_{n/2}$ type subgroup and $n\le 12$.
\end{prop}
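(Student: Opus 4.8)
The plan is to run the same machine used for Proposition~\ref{propn:NoveltyC1L} (the linear case) and for Case~1 and Case~2 of Proposition~\ref{prop:orthogonalmaxlgeom} (the maximal geometric case). By Corollary~\ref{cor:largesub} we have $|T|<|T_P|^3$, so $G_P$ is a large maximal subgroup of $G$; the novelty $\mathcal{C}_1$-subgroups that occur — reducible subgroups $G_P$ of $G$ with $G_P\cap T$ not maximal in $T=\POmega_n^+(q)$ — can be read off, with their precise structure, from \cite{KleidmanLiebeck} and \cite{BrayHoltRoney-Dougal}. Writing $n=2m$, up to the graph automorphism interchanging the two families of maximal totally singular subspaces (and, for $n=8$, up to triality) these are: stabilizers of totally singular subspaces of dimension $m-1$ or $m$ (the latter being the $P_{n/2}$-type subgroups), a small number of stabilizers of non-degenerate subspaces, and the extra flag-type novelties that appear only when $n=8$. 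I would treat each type in turn.

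For the totally singular cases the first step is to pin down the valency $t+1=|\Gamma(P)|$. Since $G_P^{\Gamma(P)}$ is $2$-transitive, the unipotent radical of $T_P$ lies in its kernel, and the composition factors are constrained by Lemma~\ref{stab}; inspecting the Levi complement (of shape $\GL_k(q)$ times a small orthogonal factor) together with the classification of $2$-transitive groups gives $t+1\in\{q^k,\ (q^k-1)/(q-1)\}$, a small dihedral degree, or a symplectic degree, and in every case $t+1\le q^k$ with $k\le m$. The second step is to estimate $|\mathcal{P}|=|T:T_P|$ via Lemma~\ref{lem:isotropics}: it is a $q$-ary Gaussian binomial times a product of factors $q^j+1$, yielding a clean lower bound $|\mathcal{P}|>q^{c(m,k)}$ for an explicit quadratic $c$ (namely $c(m,m)=m(m-1)/2$ and $c(m,m-1)=(m-1)(m+2)/2$). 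Feeding this and $(t+1)^3\le q^{3k}$ into Lemma~\ref{lem:Pbounds}(i) gives $3k>c(m,k)$; with $k\le m$ this forces $k=m$ and $m\le 6$, i.e.\ $n\le 12$ — which is exactly the exception in the statement — while the case $k=m-1$ is eliminated outright, since there $3(m-1)>c(m,m-1)$ fails once $n\ge 8$ (the finitely many sporadic $2$-transitive actions of the Levi are dealt with separately, as they only make $t+1$ smaller).

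It remains to kill the non-degenerate subspace novelties and the $n=8$ flag novelties, and here I would argue as in Case~1 of Proposition~\ref{prop:orthogonalmaxlgeom}. In each surviving case the valency $t+1$ is coprime to $p$ (it is $q+1$, $q^2+1$, $q^3+1$, or a Gaussian binomial), so $p\mid t$ and hence $st+1$ is coprime to $q$; since $|\mathcal{P}|=(s+1)(st+1)$, the entire $q$-part of $|T:T_P|$ divides $s+1$, giving $(s+1)_q=|T|_q/|T_P|_q$, which exceeds $t$ and contradicts $s\le t$. The handful of genuinely small candidates are disposed of by checking that $(s+1)(st+1)=|T:T_P|$ has no solution with $\sqrt{s}\le t\le s^2$, using Lemma~\ref{lem:GQbasics}.

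The main obstacle is precisely what the statement anticipates: the inequalities of Lemma~\ref{lem:Pbounds} are not sharp enough to eliminate the $P_{n/2}$-type subgroups, so these survive with $n\in\{8,10,12\}$ and are recorded as the exception, to be handled separately in the subsequent low-dimensional analysis. A secondary nuisance is the bookkeeping forced by triality when $n=8$: extra novelty flag stabilizers appear there that have no analogue for larger $n$, and each of these, together with the precise $q$-part of the order of the corresponding candidate line stabilizer $T_\ell$ from \cite{BrayHoltRoney-Dougal}, must be checked by hand.
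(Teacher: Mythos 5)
Your proposal runs along essentially the same lines as the paper's proof: the same case division (totally singular subspace stabilizers, non-degenerate subspace stabilizers, the exceptional even-characteristic case), the same use of $2$-transitivity of $G_P^{\Gamma(P)}$ together with Lemma \ref{stab} to bound $t+1$, the same lower bound on $|\mathcal{P}|$ via Lemma \ref{lem:isotropics}, and the same application of Lemma \ref{lem:Pbounds} to force $k=n/2$ and $n\le 12$ while killing $k\le n/2-1$. Your exponent bookkeeping ($3k>c(m,k)$ with $c(m,m)=m(m-1)/2$, $c(m,m-1)=(m-1)(m+2)/2$) matches the paper's inequality and reaches the same conclusion. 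For the non-degenerate subspace stabilizers you substitute the $q$-part divisibility argument ($st+1$ coprime to $q$, hence $(s+1)_q=|T|_q/|T_P|_q>t$) for the paper's direct order comparison; that argument is sound here and is the same device the paper itself uses in Case 1 of Proposition \ref{prop:orthogonalmaxlgeom}, so this is a cosmetic rather than substantive difference.

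The one concrete lapse is in your third paragraph: the blanket claim that ``in each surviving case the valency $t+1$ is coprime to $p$'' is false for the stabilizer of a nonsingular point in even characteristic, i.e. $T_P$ of type $\Sp_{n-2}(q)$ with $q$ even, which the paper treats as its case (iii). There the only available $2$-transitive degrees are the symplectic ones $2^{2a-1}\pm 2^{a-1}$ (forcing $q=2$), and these are even, so the coprimality/divisibility trick does not apply. The case is still easily disposed of — $|T:T_P|=2^{m-1}(2^m-1)$ is far smaller than $(t+1)^2$ for $m\ge 3$, contradicting Lemma \ref{lem:Pbounds}(i), and the paper finishes the residual $n=10$ possibility by checking that $(s+1)(st+1)=496$ has no admissible integer solutions — but as written your paragraph-three argument would not cover it, and it is not obviously one of the ``genuinely small candidates'' you defer to a direct check. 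You should carve this case out explicitly and handle it with the $(t+1)^2<|\mathcal{P}|$ inequality rather than the $q$-part argument.
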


\begin{proof}
We have the following cases:
\begin{enumerate}[(i)]
\item \emph{$T_P$ is a $P_k$ type subgroup, $k\le n/2$}. Since $T_P$ must have a $2$-transitive
  action on $t+1$ points, it follows that $t+1$ is $(q^k - 1)/(q-1)$. We also know from Lemma
  \ref{lem:isotropics} that $|\mathcal{P}|$ is
\[
{n/2 \brack k}_q\prod_{i=1}^k(q^{n/2-i}+1).
\]
Using Lemma \ref{lem:Pbounds}(i), we see that
\[
{n/2 \brack k}_q\prod_{i=1}^k(q^{n/2-i}+1) < (q^k - 1)^3/(q-1)^3
\]
which implies\footnote{$|\mathcal{P}| \ge {n/2 \brack n/2} \prod_{i=1}^{n/2}(q^{n/2-i} + 1) >
q^{3n/2} \ge (t+1)^3$ when $n \ge 13$ } that $k=n/2$ and $n\le 12$.

\item \emph{$T_P$ is a $\GO_k^{\epsilon'}(q)\perp \GO_{n-k}^{\epsilon'}(q)$ type subgroup, $k\le
  n/2$}. In order for $T_P$ to have a $2$-transitive action, we must have $\epsilon'=-$ and either
  $k=4$ or $n-k=4$. Hence $t+1=q^2+1$. Now $|T_P|$ is at least
\[
|\Omega_4^-(q)|^2 = \gcd(2,q-1)^2 q^4(q^4-1)^2
\]
and $|T_P|< (t+1)^3$ (by Lemma \ref{lem:Pbounds}(i)). However, $q^4(q^4-1)^2\le (q^2+1)^3$ if and
only if $q^2\le 1$; a contradiction. Therefore, this case does not arise.

\item \emph{$q$ is even and $T_P$ is a $\Sp_{n-2}(q)$ type subgroup}. In order for $T_P$ to have a
  $2$-transitive action, we must have $q=2$, $n-2\ge 6$ and $t+1=2^{n-3}\pm 2^{n/2-3}$. Now $|T_P|$
  is
\[
|\Sp_{n-2}(2)| = 2^{(n-2)^2/4}\prod_{i=1}^{(n-2)/2}(2^{2i}-1)
\]
which in turn is at least $2^{9}(2^2-1)(2^4-1)(2^6-1)=1451520$ (by taking $n-2=6$). The bound
$|T_P|\le (t+1)^3$ (Lemma \ref{lem:Pbounds}(i)) immediately implies that $n\le 10$. The bound
$(t+1)^2\le |T_P|$ (Lemma \ref{lem:Pbounds}(i)) implies that $n\ge 10$, and hence $n=10$. Therefore,
$|T:T_P|=|\mathrm{P\Omega}^+_{10}(2):\Sp_8(2)|=496$ and so we have
\[
(s+1)(st+1) = 496
\]
where $t=124$ or $t=132$. We have two quadratic equations in $s$, both of which have no integer
solutions. So this case does not arise.

\end{enumerate}

\end{proof}

\begin{prop}\label{prop:dimatmost12notC1}
For $n\le 12$, $G_P$ cannot be a novelty $\mathcal{C}_1$-subgroup of $G$.
\end{prop}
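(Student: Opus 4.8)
The plan is to run the same machinery used for the geometric subgroups in Propositions~\ref{prop:unitarymaxlgeom}--\ref{prop:orthogonalmaxlgeom}, but with the list of candidate pairs $(T,T_P)$ now drawn from the explicit enumeration of \emph{novelty} maximal $\mathcal{C}_1$-subgroups of the classical groups of dimension at most $12$ in \cite{BrayHoltRoney-Dougal}. By Propositions~\ref{propn:NoveltyC1L} and~\ref{propn:NoveltyC1Oplus}, the only novelty $\mathcal{C}_1$-subgroups still to be excluded are: (a) those of $T\cong\POmega^+_n(q)$ of type $P_{n/2}$ with $n\in\{8,10,12\}$; and (b) those of $T$ equal to one of $\PSU_n(q)$, $\PSp_n(q)$, $\Omega_n(q)$ ($n$ odd) or $\POmega^-_n(q)$ with $3\le n\le 12$. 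First I would compile, type by type, the finite list of such $T_P$ that also satisfy the large-subgroup bound $|T|<|T_P|^3$ of Corollary~\ref{cor:largesub}; using \cite[Propositions~4.17, 4.22, 4.23]{largesubs} together with the tables of \cite{BrayHoltRoney-Dougal} this list is short, and in many of the non-orthogonal lines it is in fact empty.

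For each surviving pair I would impose the standard constraints. Since $\Gamma$ is locally $(G,2)$-arc-transitive, $T_P$ acts $2$-transitively on the $t+1$ lines through $P$; by Lemmas~\ref{comp-factor} and~\ref{stab} this pins $t+1$ to one of a handful of expressions in $q$ (typically $(q^a-1)/(q^b-1)$, a prime power $q^a$, or a small sporadic value) read off from the composition factors of $T_P$. Combining $\sqrt s\le t\le s^2$ (Lemma~\ref{lem:GQbasics}(iii)), the inequalities $(t+1)^2<|\mathcal{P}|<(t+1)^3$ and $s^2(t+1)<|\mathcal{P}|<s(t+1)^2$ of Lemma~\ref{lem:Pbounds}, and the identity $|\mathcal{P}|=|T\colon T_P|=(s+1)(st+1)$, leaves only finitely many $q$. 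For those one solves the quadratic $ts^2+(t+1)s+(1-|T\colon T_P|)=0$ in $s$ and checks that its discriminant $(t+1)^2-4t(1-|T\colon T_P|)$ is not a perfect square, or that the root fails $\sqrt s\le t\le s^2$. When the $q$-part of $|\mathcal{P}|$ is trivial but $T_P$ still has large $q$-part, one instead uses Lemma~\ref{lem:ratio} to force a high power of $q$ to divide $|T_\ell|$, then consults the maximal-subgroup tables of \cite{BrayHoltRoney-Dougal} (discarding composition factors $\PSU_m(q)$ with $m\ge4$ via Lemma~\ref{stab}(i)) to deduce $T_\ell\cong T_P$, whence $s=t$, contradicting $s^2(t+1)<|\mathcal{P}|$; this is precisely the pattern of Case~2 of Proposition~\ref{prop:orthogonalmaxlgeom}.

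I expect the orthogonal case (a) to be the main obstacle: here $|\mathcal{P}|$ is a product of factors $q^{a_i}+1$ and hence coprime to $q$, so no divisibility argument applies directly to $T_P$, and one must route through $T_\ell$ as above, carefully determining which parabolic (or $\Sp_n(q)$-type) overgroups $T_\ell$ can occur for each of $\POmega^+_8(q)$, $\POmega^+_{10}(q)$ and $\POmega^+_{12}(q)$, and disposing of the residual $q=2$ possibilities $T_\ell\cong\Sp_n(2)$ by the parity clash that $|\mathcal{P}|$ is odd while $s+1$ would be even. The remaining types in (b) should contribute only routine cube-bound and quadratic-root checks once the short candidate list is in hand, so after verifying that every line of every case yields a contradiction we conclude that $G_P$ cannot be a novelty $\mathcal{C}_1$-subgroup for $n\le 12$, as claimed.
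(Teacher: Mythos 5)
Your overall methodology — reduce via Propositions~\ref{propn:NoveltyC1L} and~\ref{propn:NoveltyC1Oplus}, consult the tables of \cite{BrayHoltRoney-Dougal} filtered by the large-subgroup bound, then apply the $2$-transitivity and counting constraints — is exactly what the paper does, and your numerical toolkit (discriminant of $ts^2+(t+1)s+(1-|T\colon T_P|)$, the bounds of Lemma~\ref{lem:Pbounds}, routing through $T_\ell$ when $|\mathcal{P}|$ is coprime to $q$) is the right one. However, there is a concrete gap in your enumeration of what survives the reduction. Your residual list consists of (a) $P_{n/2}$-type novelties of $\POmega^+_n(q)$ with $n\in\{8,10,12\}$ and (b) novelty $\mathcal{C}_1$-subgroups of the unitary, symplectic, odd-dimensional and minus-type orthogonal groups. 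But once the actual tables are consulted, the two cases that genuinely remain are $(\bar T,\bar T_P)=(\Omega_3(9),D_{10})$ — which your class (b) does catch, since $\Omega_3(9)\cong\PSL_2(9)$ — and the triality novelty $(\Omega^+_8(q),\tfrac{1}{\gcd(q-1,2)}\times \G_2(q))$, which falls through your taxonomy entirely: it is a novelty of a plus-type orthogonal group that is \emph{not} of $P_{n/2}$ type, and it is not addressed by the case analysis in Proposition~\ref{propn:NoveltyC1Oplus} (which only treats $P_k$, $\GO_k^{\epsilon'}\perp\GO_{n-k}^{\epsilon'}$ and $\Sp_{n-2}(q)$ types). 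So if you take the statement of Proposition~\ref{propn:NoveltyC1Oplus} at face value, as you do, you would never examine the $\G_2(q)$ candidate at all. In fact it is dispatched easily — $2$-transitivity forces $q=2$, whence $t+1=28$, $|T\colon T_P|=14400$, and $(s+1)(27s+1)=14400$ has no integer root — but it must appear in the list.

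A secondary point: the case you flag as ``the main obstacle,'' the $P_{n/2}$ parabolic novelty in $\POmega^+_n(q)$ for $n\le 12$ with the elaborate detour through $T_\ell$ and the parity clash against $\Sp_n(2)$, does not actually arise; after the table lookup no such novelty maximal subgroup survives, so the heavy machinery you budget for it is not needed. The paper's proof of this proposition is correspondingly short: a table lookup leaving only the two pairs above, each killed by the non-integrality of $s$. Your plan would eventually reach the same conclusion for class (b), but as written it both over-prepares for a case that is empty and under-specifies the one nontrivial family that must be checked.
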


\begin{proof}
For a classical simple group $T$, we will write $\bar{T}$ for its preimage in the associated matrix
group. Likewise, for a subgroup $H$ of $T$, the notation $\bar{H}$ will denote the preimage of $H$
in $\bar{T}$. By looking through the tables in Bray, Holt, Roney-Dougal \cite{BrayHoltRoney-Dougal},
and excluding the examples that do not satisfy the conclusions of Propositions
\ref{propn:NoveltyC1L} and \ref{propn:NoveltyC1Oplus}, we see that $(\bar{T}, \bar{T}_P)$ can only
be the following: (i) $(\Omega_{8}^+(q),\frac{1}{\gcd(q-1,2)}\times \G_2(q))$ or (ii)
$(\Omega_3(9),D_{10})$.

For the case $(T,T_P,t+1)=(\Omega_3(9) , D_{10}, 5)$, we have $(s+1)(4s+1)=36$ which has no integer
solutions for $s$. In the case that $T_P= \frac{1}{\gcd(q-1,2)}\times \G_2(q)$, since $T_P$ must have a
$2$-transitive action, we must have $q=2$. Then $t+1=28$ and $|T:T_P|= 14400$. So
$14400=(s+1)(27s+1)$ which has no integer solutions in $s$.
\end{proof}

\begin{lem}
Let $\bar{G}$ be a nearly simple subgroup of $\GL_n(q)$, and let $G$ be the image of $\bar{G}$ upon
factoring out by the scalars $Z$ of $\GL_n(q)$. Let $T$ be the socle of $G$, let $U$ be a subgroup
of $T$, and let $\bar{T}$ and $\bar{U}$ be the full preimages of $T$ and $U$. If $|T|<|U|^2\cdot
|\Out(T)|$ then
\[
|\bar{T}|< |\bar{U}|^2 \cdot |\Out(T)| \cdot   (q-1)
\]
\end{lem}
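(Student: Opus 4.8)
The plan is to deduce the matrix-group inequality directly from the projective one, with the factor $(q-1)$ doing nothing more than absorbing the scalar kernel; there is essentially no content here beyond a comparison of orders, so the ``proof'' will be a short chain of estimates.

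First I would fix notation for the projection. Write $\pi\colon \bar G \to G$ for the homomorphism induced by the quotient map $\GL_n(q) \to \GL_n(q)/Z$, so that $\ker\pi = \bar G \cap Z =: Z_0$. Since $Z$ is cyclic of order $q-1$, we have $1 \le |Z_0| \le q-1$. By hypothesis $\bar T$ and $\bar U$ are the \emph{full} preimages of $T$ and $U$ under $\pi$ (equivalently, the intersections of $\bar G$ with the preimages of $T$ and $U$ in $\GL_n(q)$); hence $\pi$ restricts to surjections $\bar T \to T$ and $\bar U \to U$, each with kernel $Z_0$. In particular $|\bar T| = |Z_0|\cdot|T| \le (q-1)\,|T|$ and $|\bar U| = |Z_0|\cdot|U| \ge |U|$.

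Now I would simply chain these estimates with the hypothesis $|T| < |U|^2\,|\Out(T)|$:
\[
|\bar T| \;=\; |Z_0|\,|T| \;\le\; (q-1)\,|T| \;<\; (q-1)\,|U|^2\,|\Out(T)| \;\le\; (q-1)\,|\bar U|^2\,|\Out(T)|,
\]
which is precisely the asserted bound, the inequality being strict because the middle step is. The only point that requires a moment's care --- and it is the sole ``obstacle'' --- is to make sure that the phrase ``full preimage'' is interpreted consistently, so that $\bar T$ and $\bar U$ genuinely share the same scalar kernel $Z_0 \le Z$ and the estimates $|\bar T| \le (q-1)|T|$ and $|\bar U| \ge |U|$ both hold; nothing about the nearly simple structure of $\bar G$, nor about $T$ being the socle, is actually used beyond $\bar G \le \GL_n(q)$.
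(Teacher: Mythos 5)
Your proposal is correct and follows essentially the same argument as the paper: write $|\bar T| = |\bar T\cap Z|\cdot|T|$, apply the hypothesis, and absorb the scalar kernel using $|\bar T\cap Z|\le q-1$ and $|U|\le|\bar U|$. The only cosmetic difference is that you observe up front that $\bar T$ and $\bar U$ share the same kernel $Z_0=\bar G\cap Z$, whereas the paper handles the two intersections $\bar T\cap Z$ and $\bar U\cap Z$ separately; the estimates are identical.
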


\begin{proof}
Suppose $|T|<|U|^2\cdot |\Out(T)|$. Then
\begin{align*}
|\bar{T}|&=|T| |\bar{T}\cap Z|<|U|^2\cdot |\Out(T)||\bar{T}\cap Z|\\
&=|\bar{U}:\bar{U}\cap Z|^2 \cdot |\Out(T)||\bar{T}\cap Z|\le |\bar{U}|^2 \cdot |\Out(T)| \cdot   (q-1)
\end{align*}
\end{proof}

\begin{prop}\label{prop:leftover2}
For $n\le 12$, $G_P$ cannot be a novelty subgroup of $G$. 
\end{prop}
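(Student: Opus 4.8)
The plan is to finish off the novelty analysis by combining the large-subgroup bound of Corollary~\ref{cor:largesub} with a direct inspection of the tables of maximal subgroups of the low-dimensional classical groups in \cite{BrayHoltRoney-Dougal}. Since the $\mathcal{C}_1$-type novelties have already been eliminated in Proposition~\ref{prop:dimatmost12notC1}, it remains to handle novelty maximal subgroups $G_P$ of $G$ lying in one of the Aschbacher classes $\mathcal{C}_2,\dots,\mathcal{C}_8$ or in the class of nearly simple subgroups, where $T=\soc(G)$ has natural module of dimension $n\le 12$. For each such $T$ and each novelty $T_P:=G_P\cap T$ recorded in \cite{BrayHoltRoney-Dougal}, the first test is the inequality $|T|<|T_P|^3$ of Corollary~\ref{cor:largesub}: novelties are typically small, so this leaves only a short list of pairs $(T,T_P)$ to examine. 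Where it is more convenient to work with the matrix preimages $\bar T$ and $\bar T_P$ rather than in the projective groups, the preceding lemma lets us transfer the relevant order estimates.

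For the surviving pairs we apply local $(G,2)$-arc-transitivity: $G_P$ must act $2$-transitively on the $t+1$ lines through $P$, so by Lemma~\ref{stab} every unsolvable composition factor of $T_P$ must appear in the list given there, and the $2$-transitive action must be consistent with the structure of $T_P$. By the Classification of Finite $2$-Transitive Groups this determines the finitely many possible values of $t+1$, all satisfying $t+1\ge 3$ since $\mathcal{Q}$ is thick, and Lemma~\ref{lem:Pbounds} forces $|\mathcal{P}|=|T:T_P|<(t+1)^3$, which rules out most candidates at once. With $|\mathcal{P}|$ and $t+1$ both known, the relation $|\mathcal{P}|=(s+1)(st+1)$ becomes a quadratic in $s$; we need a positive integer root with $\sqrt{s}\le t\le s^2$, which forces the discriminant $(t+1)^2-4t(1-|\mathcal{P}|)$ to be a perfect square yielding such an $s$ (and when $s$ and $t$ are small we also appeal to Lemma~\ref{lemma:smallGQ}). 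Running through the remaining cases, assisted by \GAP\ \cite{GAP4}, no admissible solution arises, so no such $G_P$ exists.

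The only real obstacle is the bookkeeping: one must list every novelty maximal subgroup of each classical simple group of dimension at most $12$ from \cite{BrayHoltRoney-Dougal}, keep those satisfying $|T|<|T_P|^3$, and in each remaining case correctly read off the possible $2$-transitive quotients $G_P^{\Gamma(P)}$ and hence the candidate values of $t+1$. The large-subgroup bound keeps this list short, and Lemma~\ref{stab} together with the arithmetic of $(s+1)(st+1)=|T:T_P|$ disposes of every surviving case, which completes the proof that for $n\le 12$ the point stabilizer cannot be a novelty subgroup.
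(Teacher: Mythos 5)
Your proposal follows essentially the same route as the paper: enumerate the non-$\mathcal{C}_1$ novelty maximal subgroups of the classical groups of dimension at most $12$ from \cite{BrayHoltRoney-Dougal}, filter by a large-subgroup order bound, then by $2$-transitivity of $G_P^{\Gamma(P)}$ together with $(t+1)^2<|T:T_P|<(t+1)^3$, and finish the survivors with the discriminant of the quadratic $(s+1)(st+1)=|T:T_P|$ in $s$. The only cosmetic difference is that the paper's first filter is the sharper inequality $|\bar T|<|\bar T_P|^2\,|\Out(T)|\,(q-1)$ obtained from the preceding preimage lemma, rather than the cube bound of Corollary~\ref{cor:largesub}; this merely shortens the list handed to the later filters and does not change the outcome.
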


\begin{proof}
By Proposition \ref{prop:dimatmost12notC1}, $G_P$ is not a $\mathcal{C}_1$-subgroup.
In Table \ref{table:novelties1}, we summarise the novelties outlined in the tables in Bray, Holt, Roney-Dougal \cite{BrayHoltRoney-Dougal}, apart from $\mathcal{C}_1$-subgroups. 
Let $\delta=| \bar{T}\cap Z(\GL_n(q))|$.
We first apply the restriction that $|\bar{T}|< |\bar{T}_P|^2 |\Out(T)|  (q-1)$:

\begin{center}
\begin{longtable}{p{1.6cm}|l|p{6cm}|c}
\caption{Possibilities for $(\bar{T},\bar{T}_P)$ upon consideration of the bound
$|\bar{T}|< |\bar{T}_P|^2 |\Out(T)|  (q-1)$ in the proof of Proposition \ref{prop:leftover2}.}\label{table:novelties1}\\
\toprule
$\bar{T}$ & $\bar{T}_P$ & Conditions & Bound? \\ %$|\bar{T}|< |\bar{T}_P|^2 |\Out(T)|  (q-1)$?\\
\midrule
$\Omega_{12}^-(q)$ & $\PSL_3(3)$ & $q=p\equiv \pm 2,\pm 5,\pm 6\pmod{13}$,  $p\equiv \pm 5 \pmod{13}$& false\\ % 
$\Omega_{12}^+(q)$ & $2\times \PSL_3(3)$ & $q=p\equiv \pm 1,\pm 3,\pm 4\pmod{13}$, $q\ne 3$ & false\\ %
 & $2^.M_{12}$ & $q=p\equiv \pm 5,\pm 7,\pm 11\pmod{24}$ & false\\ %
 & $\Omega^+_2(5)^6.2^{10}.S_6$ & $q=5$ & false\\ %
 & $\Omega^-_2(3)^6.2^{10}.S_6$ & $q=3$ & false\\ %
 & $\Omega^+_4(q)\times \SO_3(q)$ & $q\ge 5$, $q$ odd & false\\ %
$\SU_{12}(q)$& $\delta\circ 6^.A_6$ & $q=p\equiv 11,14\pmod{15}$ & false\\ %
$\SL_{12}(q)$&  $\delta\circ 6^.A_6$ & $q=p\equiv 1,4\pmod{15}$ & false\\ %
 & $12\circ 6^.A_6$ & $q=p^2$, $p\equiv 2,3\pmod{5}$, \hspace{2cm }$p\ne 2,3$ & false\\ %
$\SL_{11}(2)$&  $\PSL_2(23)$ & -- & false\\ %
$\Omega_{10}^-(q)$ & $2\times A_6$ & $q=p\equiv 7\pmod{11}$ & false\\ %
 & $2\times A_6.2_1$ & $q=p\equiv 7\pmod{11}$, $q\ne 7$ & false\\ %
 & $\delta\times \PSL_2(11)$ & $q=p\equiv 2,6,7,8,10 \pmod{11}$, $q\ne 2,7$, $d=1,2$ & false\\ %
 & $2^.\PSL_3(4)$ & $q=7$ & false\\ %
 & $M_{12}$ & $q=2$ & false\\ %
  & $\Omega^-_2(3)^5.2^8.S_5$ & $q=3$ & false\\ %
$\Omega_{10}^+(q)$ &  $2\times A_6$ & $q=p\equiv 5\pmod{12}$ & false\\ %
 & $2\times A_6.2_1$ & $q=p\equiv 1\pmod{12}$ & false\\ %
 & $d\times \PSL_2(11)$ & $q=p\equiv 1,3,4,5,9\pmod{11}$, $q\ne 3$, $d=1,2$ & false\\ %
 & $\Omega^+_2(5)^5.2^{8}.S_5$ & $q=5$ & false\\ %
 & $\SL_5(q).\frac{q-1}{\delta}$ & -- & true\\ %
$\SU_{10}(q)$& $\delta\circ 2^.\PSL_3(4)$ & $q=p\equiv 5,13,17\pmod{28}$ & false\\ %
 &  $\delta\circ 2^.\PSL_3(4).2_2$ & $q=p\equiv 3,19,27\pmod{28}$, \hspace{1cm} $q\ne 3$ & false\\ %
$\SL_{10}(q)$&  $\delta\circ 2^.\PSL_3(4)$ & $q=p\equiv 11,15,23\pmod{28}$ & false\\ %
 & $\delta\circ 2^.\PSL_3(4).2_2$ & $q=p\equiv 1,9,25\pmod{28}$ & false\\ %
$\SU_{9}(2)$& $3\times \PSL_2(19)$ & -- & false\\ %
$\Omega_{8}^+(q)$ &  $2^7:A_8$ & $q=p\equiv \pm 3 \pmod{8}$ & true: $q=3$\\ %
&  $2^4.2^6.\PSL_3(2)$ & $q=p$ odd & false \\ % 
&  $\Omega_2^+(5)^4.4^3.S_4$ & $q=5$ & false\\ %
&  $\Omega_2^-(3)^4.4^3.S_4$ & $q=3$ & false\\ %
& $(D_{\tfrac{2(q^2+1)}{\delta}})^2.\delta.S_2$ & -- & false\\ %
$\Omega_{6}^-(q)$ & $\frac{\gcd(q+1,4)}{2}\times \PSL_2(7)$ & $q=p\equiv 3,5,6\pmod{7}$, $q\ne 3$ & false\\ %
  & $\frac{1}{4}\GO^-_2(3)^3.S_3$ & $q=3$ & true\\ %
$\Omega_{6}^+(q)$ & $\frac{\gcd(q-1,4)}{2}\times \PSL_2(7)$ & $q=p\equiv 1,2,4\pmod{7}$, $q\ne 2$ & false\\ %
  & $\frac{1}{4}\GO^+_2(5)^3.S_3$ & $q=5$ & false\\ %
  & $\frac{1}{\delta}\GL_3(q)$ & -- & true\\%
$\Sp_6(q)$ & $2^.A_5$ & $q=p\equiv \pm 11,\pm 19 \pmod{40}$ & false\\%
 & $2^.A_7$ & $q=9$ & false\\%
 & $2\times U_3(3)$ & $q=p\equiv \pm 19, \pm 29 \pmod{40}$ & false\\%
$\SU_{6}(q)$& $2\times 3^.A_6$ & $q=p\equiv 11,17 \pmod{24}$ & false \\%
 & $2\times 3^.A_6.2_3$ & $q=p\equiv 5,23 \pmod{24}$ & false\\%
 & $6^.A_6$ & $q=p\equiv 17,23,41,47 \pmod{48}$ & false\\%
  & $6^.\PSL_3(4)$ & $q=p\equiv 11,17 \pmod{24}$ & false\\%
  & $3^5.S_6$ & $q=2$ & true \\
    & $\SU_2(2^3).3.2$ & $q=2$ & false \\ %
$\SL_{6}(q)$& $2\times 3^.A_6$ & $q=p\equiv 7,13 \pmod{24}$ & false \\ %
 & $2\times 3^.A_6.2_3$ & $q=p\equiv 1,19 \pmod{24}$ & false\\ %
 & $6^.A_6$ & $q=p\equiv 1,7,25,31 \pmod{48}$ & false\\ %
 & $6^.A_6$ & $q=p^2\equiv 5,11,13,19 \pmod{24}$ & false\\ %
  & $6^.\PSL_3(4)$ & $q=p\equiv 7,13 \pmod{24}$ & false\\ %
$\Omega_5(7)$ & $\PSL_2(7)$ & --  & false\\ %
$\SL_5(3)$ & $\PSL_2(11)$ & -- & false \\ %
$\Omega^-_4(3)$ & $D_{10}$ & -- & true\\
$\Sp_4(2)$ & $ 5:4$ & -- & true \\
 & $ D_8\times 2 $ & -- & false\\ %
$\Sp_4(q)$, $q>2$ even & $[q^4]:C_{q-1}^2$ & -- & true\\%
& $C_{q-1}^2:D_8$ & $q\ne 4$ & false \\ %
& $C_{q+1}^2:D_8$ & -- & false\\%
& $C_{q^2+1}:4$ & -- & false \\%
$\Sp_4(7)$ & $\SL_2(7)$ & -- & false \\%
$\SU_4(q)$ &  $\gcd(q+1,4)\circ 2^.\PSL_2(7)$ & $q=p\equiv 3,5,6 \pmod{7}$, $q\ne 3$ & false\\%
 &  $4^3.S_4$ & $q=3$ & true\\ %
 &  $\SL_2(9).2$ & $q=3$ & true\\ %
$\SL_4(q)$ &  $\gcd(q-1,4)\circ 2^.\PSL_2(7)$ & $q=p\equiv 1,2,4 \pmod{7}$, $q\ne 2$ & false\\%
 &  $4^3.S_4$ & $q=5$ & false\\%
 &  $\SL_2(3)^2:2.2$ & $q=3$ & true\\%
$\Omega_3(q)$ & $A_4$ & $q=p\equiv \pm 11, \pm 19 \pmod{40}$ & true: $q=11,19$\\ %
$\SU_3(5)$ & $3\times \PSL_2(7)$ & -- & true\\%
 & $6^2:S_3$ & -- & true\\%
 & $21:3$ & -- & false\\%
 & $3_+^{1+2}:Q_8.3$ & -- & true\\%
$\SL_3(4)$ & $21:3$ & -- & true\\
 \bottomrule
\end{longtable}
\end{center}
Hence the cases left-over are those given by Table \ref{tbl:leftover1}.
\begin{center}
\begin{table}[ht]
\caption{Left-over cases in the proof of Proposition \ref{prop:leftover2}.}\label{tbl:leftover1}
\begin{tabular}{l|l||l|l}
\toprule
$T$ & $\bar{T}_P$ &$T$ & $\bar{T}_P$ \\
\midrule
$\Omega_{10}^+(q)$ &  $\SL_5(q).\frac{q-1}{\gcd(q-1,2)}$ &$\SU_4(3)$ &   $4^3.S_4$ \\
$\Omega_{6}^+(q)$ & $\frac{1}{\gcd(q-1,2)}\GL_3(q)$ &  &   $\SL_2(9).2$ \\
$\Omega_{8}^+(3)$ &  $2^7:A_8$ &  $\SL_4(3)$ &   $\SL_2(3)^2:2.2$ \\
$\Omega_{6}^-(3)$ & $\frac{1}{4}\GO^-_2(3)^3.S_3$ & $\Omega_3(11)$ & $A_4$ \\
$\SU_{6}(2)$&  $3^5.S_6$ & $\Omega_3(19)$ & $A_4$ \\
$\Omega^-_4(3)$ & $D_{10}$ & $\SU_3(5)$ & $3\times \PSL_2(7)$ \\
$\Sp_4(q)$ & $[q^4]:C_{q-1}^2$, $q>2$ even &  & $6^2:S_3$ \\
$\Sp_4(2)$ & $ 5:4$ &  & $3_+^{1+2}:Q_8.3$\\
&& $\SL_3(4)$ & $21:3$\\%
\bottomrule
\end{tabular}
\end{table}
\end{center}
We now apply the restriction $(t+1)^2<|T:T_P|<(t+1)^3$, together with the fact that $G_P$ induces a
$2$-transitive subgroup of degree $t+1$. Note, $t+1>2$, so the group of order $2$ is ruled out as a
possible $2$-transitive permutation group. The cases $\Omega^-_4(3)$ and $\SL_3(4)$ are ruled out as
the groups $D_{10}$ and $21:3$ do not act $2$-transitively on a set of size greater than $2$.

\begin{center}
\begin{table}[ht]\caption{Left-over cases in the proof of Proposition \ref{prop:leftover2},
after consideration of the bound $(t+1)^2<|T:T_P|<(t+1)^3$.}\label{tbl:leftover2}
\begin{tabular}{l|p{2.7cm}|c|c|c}
\toprule
$\bar{T}$ & $\bar{T}_P$ & $t+1$ &$|\bar{T}:\bar{T}_P|$ &  Bound?\\ %$(t+1)^2<|T:T_P|<(t+1)^3$ \\
\midrule
$\Omega_{10}^+(q)$ &  $\SL_5(q).\frac{q-1}{\gcd(q-1,2)}$ & $(q^5-1)/(q-1)$ & $q^{10} (q+1) (q^2+1)(q^3+1) (q^4+1)$ & false\\
$\Omega_{6}^+(q)$ & $\frac{1}{\gcd(q-1,2)}\GL_3(q)$ & $(q^3-1)/(q-1)$  & $q^3 (q+1) (q^2+1)$  & false\\
$\Omega_{8}^+(3)$ &  $2^7:A_8$ & $8$ & $3838185$ & false\\
$\Omega_{6}^-(3)$ & $\frac{1}{4}\GO^-_2(3)^3.S_3$ & $3$, $4$ & $8505$ &  false\\
$\SU_{6}(2)$&  $3^5.S_6$ & $6$ & $157696$& false\\
$\Sp_4(2)$ & $ 5:4$  & $5$ & $36$ & true\\
$\Sp_4(2^f)$ & $[2^{4f}]:C_{2^f-1}^2$ & $\le 2^{4f}$ & $ (2^f+1)^2 (2^{2f}+1) $& false \\
$\SU_4(3)$ &   $4^3.S_4$ & $3$, $4$ & $8505$ & false \\
 &   $\SL_2(9).2$ & $10$ & $36288$ & false \\
 $\SL_4(3)$ &   $\SL_2(3)^2:2.2$ & $3$ & $5265$ & false \\
$\Omega_3(11)$ & $A_4$ & $4$ & $55$ &  true \\
 $\Omega_3(19)$ & $A_4$ & $4$ &$285$ & false\\
$\SU_3(5)$ & $3\times \PSL_2(7)$ & $7$, $8$ & $750$ & false \\
 & $3_+^{1+2}:Q_8.3$ & $9$ & $1750$ & false \\
 & $6^2:S_3$ & $3$, $4$ & $1750$ & false  \\
\bottomrule
\end{tabular}
\end{table}
\end{center}

So the only cases (from Table \ref{tbl:leftover2}) we are left with are $(T,T_P)=(\Omega_3(11), A_4)
$ where the degree is $55$ and $t+1=4$, and $(T,T_P)=(\Sp_4(2), 5:4) $ where the degree is $36$ and
$t+1=5$. For both of these cases, we solve for $s$, since $(s+1)(st+1)$ is equal to the degree $d$.
The discriminant of this quadratic in $s$ is
\[
\Delta_{t,d} := (t-1)^2  + 4 d t 
\]
\begin{itemize}
\item $(T,T_P)=(\Omega_3(11), A_4) $ where $d=55$ and $t=3$. Here, $\Delta_{t,d}= 664$ which is not a square.
So this case does not arise.
\item  $(T,T_P)=(\Sp_4(2), 5:4) $ where $d=36$ and $t=4$. Here, $\Delta_{t,d}= 585$ which is not a square.
So this case does not arise either.
\end{itemize}
\end{proof}

\begin{prop}
\label{prop:noveltydimatleast13}
For $n\ge 13$,  $G_P$ cannot be a novelty subgroup of $G$. 
\end{prop}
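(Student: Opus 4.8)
The plan is to reduce everything to the classification of maximal subgroups of the classical groups of dimension $n \ge 13$ due to Kleidman and Liebeck \cite{KleidmanLiebeck} (the dimension $\le 12$ case having already been handled in Proposition \ref{prop:leftover2} via \cite{BrayHoltRoney-Dougal}), and to combine it with the numerical machinery already developed. Since $G_P$ is assumed to be a novelty, $T_P = G_P \cap T$ is non-maximal in $T = \soc(G)$, so $T_P$ lies properly inside some maximal subgroup $N$ of $T$; by Corollary \ref{cor:largesub} we have $|T| < |T_P|^3 \le |N|^3$, so $N$ is a \emph{large} maximal subgroup of $T$ in the sense of \cite{largesubs}. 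First I would use the largeness condition, via the order bounds in \cite{largesubs}, to rule out $T_P$ being of Aschbacher class $\mathcal{C}_4$, $\mathcal{C}_6$, $\mathcal{C}_7$ or $\mathcal{C}_9$ (a nearly simple, tensor, tensor-induced or extraspecial-normalizer subgroup of a classical group of dimension $n \ge 13$ is too small to satisfy $|T| < |T_P|^3$), and to restrict the $\mathcal{C}_2$-, $\mathcal{C}_3$-, $\mathcal{C}_5$- and $\mathcal{C}_8$-type possibilities to a short explicit list of pairs $(T,T_P)$ read off from the tables of \cite{KleidmanLiebeck} together with \cite{largesubs}.

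The bulk of the surviving candidates are $\mathcal{C}_1$-novelties. Such novelties arise in dimension $n \ge 13$ only when $T \cong \PSL_n(q)$ (where the graph automorphism acts on the classes of reducible subgroups, fusing $P_k$ with $P_{n-k}$) or when $T \cong \POmega_{2m}^+(q)$ (where the graph automorphism interchanges the two families of maximal totally singular subspaces), since for the remaining classical types the outer automorphism group induces no fusion among $\mathcal{C}_1$-classes. But these two cases are precisely the content of Propositions \ref{propn:NoveltyC1L} and \ref{propn:NoveltyC1Oplus}: the former shows that no novelty $\mathcal{C}_1$-subgroup of $\PSL_n(q)$ can be a point stabilizer, and the latter shows that a novelty $\mathcal{C}_1$-subgroup of $\POmega_n^+(q)$ which is a point stabilizer must be of $P_{n/2}$-type with $n \le 12$, which is impossible here. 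This disposes of every $\mathcal{C}_1$-novelty.

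For each of the finitely many non-$\mathcal{C}_1$ candidates that remain, I would run the standard battery of tests used in the treatment of geometric maximal subgroups in Propositions \ref{prop:linearmaxlgeom}--\ref{prop:orthogonalmaxlgeom}: (i) $G_P^{\Gamma(P)}$ must be a $2$-transitive group of degree $t+1$ on $\Gamma(P)$ (Lemma \ref{anti-f-trans-local-3-a-trans}), so by Lemma \ref{comp-factor} and the classification of finite $2$-transitive groups the composition factors of $T_P$ force $t+1$ into a very short list of values (typically $t+1$ is either bounded by a constant times the order of the ``outer'' part of $T_P$, or equals the index of a maximal parabolic of a composition factor of $T_P$); (ii) then $(s+1)(st+1) = |\mathcal{P}| = |T:T_P|$ together with $(t+1)^2 < |\mathcal{P}| < (t+1)^3$ (Lemma \ref{lem:Pbounds}(i)) and $s \le t \le s^2$ (Lemma \ref{lem:GQbasics}(iii)) leaves at most a handful of numerical possibilities, each eliminated by checking that the quadratic $ts^2 + (t+1)s + (1-|\mathcal{P}|) = 0$ has no admissible integer root, exactly as in the proof of Proposition \ref{prop:notC9}; and (iii) whenever $T_P$ contains a large unipotent normal subgroup, Lemma \ref{lem:ratio} gives a quicker contradiction, since then $s+1$ is coprime to the defining characteristic while $|\mathcal{P}|$ is divisible by the requisite power of it only if $s > t$.

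The main obstacle is the bookkeeping in the last step: one must extract from \cite{KleidmanLiebeck} the complete list of subgroups that are maximal in some almost simple extension of a classical group of dimension $\ge 13$ yet fail to meet $T$ in a maximal subgroup, confirm that outside class $\mathcal{C}_1$ the largeness requirement $|T| < |T_P|^3$ leaves essentially nothing (and in particular that any stray $\mathcal{C}_3$-type novelty of the shape $\GL_{n/2}(q^2).2$ or its symplectic, unitary or orthogonal analogue dies at once, because there $t+1 \le 2(q^2-1)$ or $t+1$ is the index of a parabolic of $\PSL_{n/2}(q^2)$, and in either case $(t+1)^3$ is far smaller than $|T:T_P| \approx q^{n^2/2}$), and then verify that every $\mathcal{C}_1$-novelty really is covered by Propositions \ref{propn:NoveltyC1L} and \ref{propn:NoveltyC1Oplus}. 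Once this enumeration is complete, every branch ends in a contradiction, so $G_P$ cannot be a novelty subgroup of $G$ when $n \ge 13$.
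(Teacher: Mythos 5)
Your proposal follows essentially the same route as the paper: enumerate the novelty maximal subgroups in dimension $n\ge 13$ from the Kleidman--Liebeck tables, filter by the largeness condition coming from Corollary \ref{cor:largesub}, and eliminate the survivors via the $2$-transitivity constraints on $t+1$, the bounds $(t+1)^2 < |\mathcal{P}| < (t+1)^3$, and the integrality/divisibility conditions on $s$. The only cosmetic difference is that you dispose of the $\mathcal{C}_1$-novelties by citing Propositions \ref{propn:NoveltyC1L} and \ref{propn:NoveltyC1Oplus}, whereas the paper repeats those same arguments verbatim inside this proof.
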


\begin{proof}
We consider first the subgroups listed in \cite[Table 3.5.I]{KleidmanLiebeck}. In the first two
cases, we have $\Sp_2(q) \Wr S_r < \POmega_{2^r}^+(q)$, where $q \ge 4$ and even. Assuming that $q =
2^k$ and using \cite[Lemma 4.3]{largesubs} and Corollary \ref{cor:largesub}, we see that
$$2^{k2^{r-1}(2^r-1) - 3} < |\POmega_{2^r}(2^k)| < |\Sp_2(q) \Wr S_r|^3 < 2^{r^2 + 3kr},$$ which
implies that $k < 3(r^2+1)/(2^{r-1}(2^r - 1) - 9r) \le 17/28$, a contradiction to $4 \le q = 2^k$.
The final case in \cite[Table 3.5.I]{KleidmanLiebeck}, where $\Sp_4(q) \Wr S_r <
\POmega_{4^r}^+(q)$, is ruled out in precisely the same manner.

We now consider the subgroups listed in \cite[Table 3.5.H]{KleidmanLiebeck}. We rule out nearly all
cases in a similar manner: we bound $t+1$ from above by examining the possible $2$-transitive
actions of our putative $T_P$ and show that in fact $(t+1)^3 \le |T:T_P| = |\mathcal{P}|$, a
contradiction to Lemma \ref{lem:Pbounds}. For instance, we consider the case when $T_P$ is a
subgroup of type $\GL_1(2) \Wr S_n$. This means that $t+1 \le n$, and so by Lemma \ref{lem:Pbounds}
and \cite[Lemma 4.3]{largesubs} we have:
\[
n^3 \ge (t+1)^3 > |\mathcal{P}| = |T:T_P| > \frac{2^{n^2-2}}{n!}.
\]
This implies that $2^{n^2 -2} < n^{n+3}$, which does not hold for $n \ge 13.$

We will now consider the possible $T_P$ listed in \cite[Table 3.5.H]{KleidmanLiebeck} such that the
group has a $2$-transitive action on more than $n$ points. First, we suppose that $T \cong
\PSL_n(q)$ and $T_P$ is a $\GL_m(q) \oplus \GL_{n-m}(q)$ type subgroup. We know that $n-m \ge m$, so
$t+1 \le (q^{n-m} - 1)/(q-1)$. By Lemma \ref{lem:Pbounds}(i) we must have $(q^{n-m} - 1)^3/(q-1)^3
\ge (t+1)^3 > |\mathcal{P}| > {n \brack m}_q$, and this immediately implies that $m = 1,2$. Suppose
that $m = 1$. Then $|\mathcal{P}| = q^{n-1}(q^n - 1)/(q-1)$, and by Lemma \ref{lem:Pbounds}(ii),
$s^2(t+1) < |\mathcal{P}|$ implies that $s \le q^{n/2} + 1$. On the other hand, $(s+1)(st+1) =
|\mathcal{P}|$, and $st+1$ is coprime to $q$, which implies that $q^{n-1}$ divides $s+1$, a
contradiction. A similar argument also rules out $m=2$, and so $T_P$ cannot be a $\GL_m(q) \oplus
\GL_{n-m}(q)$ type subgroup.

We now assume that $T_P$ is a $P_{m,n-m}$ type subgroup in $\PSL_n(q)$. Since $T_P$ must have a
$2$-transitive action on $t+1$ points, it follows that $t+1$ is either $(q^m - 1)/(q-1)$ or
$(q^{n-2m} - 1)/(q-1)$. If $t+1 = (q^m - 1)/(q-1)$, then again using Lemma \ref{lem:Pbounds}(i), we
see that ${n \brack m}_q < |\mathcal{P}| < (q^m - 1)^3/(q-1)^3 < q^{3n/2}$. This implies that $m =
1$, a contradiction to $t \ge 2$. If $t+1 = (q^{n - 2m} - 1)/(q-1)$, then a similar argument
immediately implies that $m = 1,2$. When $m = 2$, $|\mathcal{P}| = (q^n-1)(q^{n-1} - 1)(q^{n-2} -
1)(q^{n-3} - 1)/((q^2-1)(q^2-1)(q-1)(q-1))$ and $t+1 = (q^{n-4} - 1)/(q-1)$. However, $(t+1)^3 <
|\mathcal{P}|$ in this case, a contradiction. Hence we assume that $m = 1$ and $t+1 = (q^{n-2} -
1)/(q-1)$. We note that $|\mathcal{P}| = (q^n-1)(q^{n-1}-1)/(q-1)^2$, and by Lemma
\ref{lem:Pbounds}(ii) we find that $s < \sqrt{2}q^{n/2}$. We also note that $T_P \cong
[q^{2n-3}]:[a_{1,1,n-1}^+/\gcd(q-1,n)].(\PSL_1(q)^2 \times \PSL_{n-2}(q)).[b_{1,1,n-2}^+]$ (see
\cite[Proposition 4.1.22]{KleidmanLiebeck} for details). Note that $t+1 = (q^{n-2} - 1)/(q-1)$
implies that the kernel of the action of $T_P$ on its neighbors contains the full subgroup
$[q^{2n-3}]:\PSL_{n-3}(q)$, which is a subgroup of $T_{\ell}$. Note that this implies that $T_{\ell}
> [q^{2n-3}]:\PSL_{n-3}(q)$, and we deduce that $T_{\ell}$ is a $\mathcal{C}_1$-subgroup of $T$ with
$m \le 3$. However, in all cases this forces $s+1$ to be larger than $\sqrt{2}q^{n/2}$, a
contradiction. Thus $T_P$ cannot be a $P_{m,n-m}$ type subgroup in $\PSL_n(q)$.

We now assume that $T_P$ is a $P_{n/2 - 1}$ subgroup in $\POmega_n^+(q)$. This forces $t+1$ to be
$(q^{n/2-1} - 1)/(q-1)$ or $q^{n/2 -1}$. However, by Lemma \ref{lem:isotropics}, we see that
$|\mathcal{P}| = {n/2 \brack {n/2-1}}_q \prod_{i=1}^{n/2 -1}(q^{n/2 - i} + 1) > q^{3n/2 - 3} \ge
(t+1)^3$, a contradiction.

Now assume that $T_P$ is a $\GL_{n/2}(q).2$ type subgroup of $\POmega_n^+(q)$, where $n/2$ is odd.
However, this implies that $t+1 = (q^{n/2} - 1)/(q-1)$ and $|\mathcal{P}| \ge {n/2 \brack n/2}_q
\prod_{i=1}^{n/2}(q^{n/2-i} + 1) > q^{3n/2} \ge (t+1)^3$ when $n \ge 13$, a contradiction to Lemma
\ref{lem:Pbounds}(i).

Next, we assume that $T_P$ is a subgroup of type $\Sp_2(2) \times \Sp_{n/2}(2)$ in $\POmega_n^+(2)$.
This implies that $t+1 \le 2^{n/4 -1}(2^{n/4} + 1) < 2^{n/2}$. Again, we use \cite[Lemma
4.3]{largesubs} to see that $$|\mathcal{P}| = |T:T_P| > 2^{\frac{3n^2}{4} - \frac{3n}{2} - 6}.$$ By
Lemma \ref{lem:Pbounds}, this means that $$2^{\frac{3n^2}{4} - \frac{3n}{2} - 6} < |\mathcal{P}| <
(t+1)^3 < 2^{\frac{3n}{2}},$$ which does not hold for $n \ge 13$.

We now consider the case that $T_P$ is a subgroup of type $\rmO_4^+(q) \otimes
\rmO_{n/4}^{\epsilon}(q)$, where $q$ is odd. By Lemma \ref{stab}, since $n \ge 13$, $n/4$ must be
$4$ or $5$. However, if $n = 20$, then $\PSp_4(q)$ with $q$ odd is a composition factor, which does
not happen in a $2$-transitive action that is not affine. Thus $n = 16$ and $T_P$ is of type
$\rmO_4^+(q) \otimes \rmO_{n/4}^{-}(q)$. Thus $t+1 \le q^2+1$, but by \cite[Lemma 4.3]{largesubs}
$|\mathcal{P}| \ge (q^{120}/4)/((q^4)^2 \cdot q^8) > (t+1)^3$, a contradiction to Lemma
\ref{lem:Pbounds}.

Finally, for all other novelty subgroups listed in \cite[Table 3.5.H]{KleidmanLiebeck} with a
$2$-transitive action and no composition factor that is not allowed by Lemma \ref{stab}, we find
that $t+1 \le n$ when the degree of $T$ is at least $n$. These groups are all ruled out in similar
fashion to those considered above, and we conclude that $G_P$ cannot be a novelty subgroup of $G$
when $n \ge 13.$
\end{proof}

 \section{Exceptional groups of Lie type}

The purpose of this section is to rule out exceptional groups from acting primitively on both the
points and lines of a finite antiflag-transitive generalized quadrangle. As for the classical
groups, we will assume Hypothesis \ref{hyp:3arcAlmostSimple}.

\begin{prop}\label{excep-Lie}
Assume Hypothesis \ref{hyp:3arcAlmostSimple}. Then $T$ cannot be an exceptional group of Lie type.
\end{prop}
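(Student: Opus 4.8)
The plan is to follow the same pattern as in the previous sections: first cut the list of possibilities with the large-subgroup restriction, then eliminate the survivors by composition-factor and parameter arguments. By Corollary~\ref{cor:largesub}, the point stabilizer satisfies $|T|<|T_P|^3$, and since $G_P$ is maximal in $G$, the group $T_P$ lies in a maximal subgroup $M$ of $T$ with $|M|^3\ge|T_P|^3>|T|$; hence $M$ is a \emph{large} maximal subgroup of the exceptional group $T$. I would then invoke the classification of large maximal subgroups of exceptional groups of Lie type from \cite{largesubs} to get a short list of candidate pairs $(T,M)$, which also controls the possibilities for $T_P\le M$ (covering the novelty case, where $T_P$ is properly contained in $M$ and the bounds only become sharper). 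The Tits group $^2\F_4(2)'$ is a single finite group whose maximal subgroups are known \cite{atlas}, and it is dispatched by the parameter conditions below, so we may assume $T$ is a genuine exceptional group.

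Most of the list is removed using Lemma~\ref{stab}(i): every nonabelian composition factor of $G_P$, and in particular of $T_P$, must lie in the list $A_n$, $\PSL_n(q)$, $\PSU_3(q)$, $\PSp_n(q)$, $\POmega^{\pm}_n(2)$, $\Sz(q)$, $\Ree(q)$, $\G_2(q)$, or one of the named sporadic groups. A large maximal subgroup $M$ of $T$ almost always has a composition factor outside this list --- a subsystem or subfield subgroup involving $\F_4(q)$, $^3\D_4(q)$, $^2\F_4(q)$, $\E_6(q)$, $^2\E_6(q)$ or $\E_7(q)$, or a Levi or classical factor of the form $\POmega^{\pm}_n(q)$ with $n\ge 7$ and $q>2$, or $\PSU_n(q)$ with $n\ge 4$ --- and since $|T_P|^3>|T|$ forces $T_P$ to contain the socle of the dominant factor of $M$ (by comparing orders), such a factor is forced on $T_P$, a contradiction. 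After this sieve only finitely many families survive, all with $T$ of small rank, namely $T\in\{\Sz(q),\,\Ree(q),\,{}^3\D_4(q),\,\G_2(q),\,{}^2\F_4(q)\}$, and with $T_P$ either a parabolic subgroup or a subgroup whose only nonabelian composition factor is of the form $\PSL_2$, $\PSL_3$ or $\PSU_3$.

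For the remaining candidates I would argue numerically. In each case $|\mathcal{P}|=|T:T_P|$ and $t+1=|\Gamma(P)|$ is the degree of the $2$-transitive group $G_P^{\Gamma(P)}$; by Lemmas~\ref{comp-factor} and~\ref{stab} together with the list of $2$-transitive groups \cite{CameronPerm}, this degree is at most a small power of $q$ (of the shape $q^d$ or $(q^d-1)/(q-1)$ with $d$ the Lie rank of the relevant composition factor, or a bounded constant when $G_P^{\Gamma(P)}$ is solvable). Comparing with the (large) index $|T:T_P|$ via Lemma~\ref{lem:Pbounds}(i) kills most cases outright, since then $|T:T_P|>(t+1)^3$. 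For the genuinely tight cases --- those where $T$ already has a $2$-transitive or small-degree action, principally the rank-one group $\Sz(q)$ and some parabolic actions --- one instead solves $(s+1)(st+1)=|T:T_P|$ for $s$: the discriminant $(t+1)^2-4t(1-|T:T_P|)$ must be a perfect square and one must have $s\le t\le s^2$ by Lemma~\ref{lem:GQbasics}(iii), while Lemma~\ref{lem:ratio} forces $s+1=(t+1)|T_\ell|/|T_P|$ to be a positive integer, pinning strong divisibility constraints on $|T_\ell|$ that are incompatible with the list of large subgroups. Each remaining case fails one of these tests, some checks being carried out with \GAP\ \cite{GAP4}.

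The hard part will be the last step for the small-rank groups $\Sz(q)$, $\Ree(q)$, ${}^3\D_4(q)$, $\G_2(q)$ and ${}^2\F_4(q)$. For these, several large maximal subgroups (parabolics, and subgroups of type $\PSL_3(q).2$, $\PSU_3(q).2$, $(\SL_2(q)\times\SL_2(q)).2$, $\PGL_2(q)$, subfield subgroups, and so on) have composition factors allowed by Lemma~\ref{stab}, so they survive the crude sieve and must be defeated by arithmetic: this requires determining the precise possible valencies $t+1$ from the $2$-transitive action of $G_P^{\Gamma(P)}$ --- in particular handling parabolics whose unipotent radical is large enough that $t+1$ could \emph{a priori} be a high power of $q$ --- and then checking non-existence of an admissible integral solution of $(s+1)(st+1)=|T:T_P|$. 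Since these groups are small, the resulting Diophantine checks are mechanical, and one can confirm them by direct computation where needed.
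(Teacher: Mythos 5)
Your overall strategy --- restrict to large maximal subgroups via \cite{largesubs}, sieve by composition factors using Lemma \ref{stab}, then finish with index-versus-valency arithmetic --- is the same as the paper's, and your toolkit (Lemma \ref{lem:Pbounds}, Lemma \ref{lem:ratio}, discriminant tests) is the right one. The gap is in your intermediate claim that the sieve leaves only $T\in\{\Sz(q),\Ree(q),{}^3\D_4(q),\G_2(q),{}^2\F_4(q)\}$. That is false: the maximal parabolic subgroups of $\E_6(q)$, $\E_7(q)$, $\E_8(q)$, ${}^2\E_6(q)$ and $\F_4(q)$ whose Levi factors are products of groups of type $A$ (for example $\PSL_8(q)$ and $\PSL_4(q)\times\PSL_5(q)$ inside $\E_8(q)$, or $\PSL_7(q)$ inside $\E_7(q)$), as well as the non-parabolic candidates $T_P\rhd\PSL_8(q)$ in $\E_7(q)$ and $\Sp_8(2)$, $\PSL_4(3)$ in $\F_4(2)$, have all of their composition factors on the permitted list of Lemma \ref{stab}(i), so they survive your sieve. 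These high-rank cases must still be eliminated by comparing $|T:T_P|$ with $(t+1)^3$; your method would do this had you not discarded them, but as written the proof simply omits them.

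More seriously, one surviving high-rank case genuinely resists the crude bound: for $T=\F_4(q)$ with $q$ even and $T_P$ the maximal parabolic with Levi factor $\Sp_6(q)$, one has $t+1=q^6$ and $|T:T_P|=(q^{12}-1)(q^8-1)/((q^4-1)(q-1))$, which is roughly $q^{15}$ and so lies strictly between $(t+1)^2=q^{12}$ and $(t+1)^3=q^{18}$; Lemma \ref{lem:Pbounds}(i) is silent here. The paper has to show that $T_\ell$ is itself a large subgroup (using that $q^9$ divides $|T_\ell|$ because the unipotent piece $q^{1+8}$ lies in the kernel of the local action, and that $(q^2-1)^2(q^6-1)^2$ divides $|T_\ell|$ because $|\mathcal{L}|$ is coprime to $q^6-1$), rerun the large-subgroup classification for $T_\ell$ to force it to be parabolic, and only then check that no admissible $s$ exists. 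You reserve this kind of line-stabilizer analysis for the small-rank groups, so this $\F_4(q)$ case would go unhandled. Restoring the high-rank parabolics to your list of survivors and treating the Levi-$\Sp_6(q)$ case with the $T_\ell$ argument would close the gap.
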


\begin{proof}
Let $G$ be an almost simple exceptional group of Lie type, and let $T=\soc(G)$. Let $P$ be a point
of $\mathcal{Q}$. By \cite[Theorem~7]{largesubs}, one of the following holds:
\begin{enumerate}[(A)]
\item the pair $(T, T_P)$ lies in \cite[Table~3]{largesubs},
\item $G_P$ is a novelty maximal subgroup of $G$, where $T_P$ is a non-maximal parabolic subgroup
  of $T$,
\item the pair $(T,T_P)$ lies in \cite[Table~2]{largesubs}, or
\item $G_P$ is a maximal parabolic subgroup of $G$.
\end{enumerate}

\begin{description}[leftmargin=12pt, style=unboxed, itemsep=1ex]
\item[Case (A)] We consider first the possibility that the pair $(T, T_P)$ lies in
  \cite[Table~3]{largesubs}. By Lemma \ref{stab} (and noting that the local action is not affine),
  the only possibility is $T = \F_4(2)$ and $T_P = S_6 \Wr S_2$. However, this implies that $t+1 \le
  10$, and by Lemma \ref{lem:Pbounds}(i), that $|T:T_P| < 1000$; a contradiction.

\item[Case (B)] Next, we consider the possibility that $T_P$ is a non-maximal parabolic subgroup
  of $T$. Any such subgroup $T_P$ arises from a graph automorphism, and the only possibilities are:
\begin{enumerate}[(i)]
\item $T = \E_6(q)$ and $T_P = [q^{24}].\frac{1}{d}(\rmO_8^+(q).C_{q-1})$, where $d =
  \gcd(3,q-1)$;
\item $T = \E_6(q)$ and $T_P = [q^{31}].\frac{1}{d}((\SL_3(q) \times \SL_2(q) \times
  \SL_2(q)).C_{q-1}^2)$, where $d = \gcd(3,q-1)$;
\item $T= \F_4(q)$ and $T_P = [q^{20}].\Sp_4(q).C_{q-1}^2$, where $q = 2^n$;
\item $T = \F_4(q)$ and $T_P = [q^{22}].(\SL_2(q) \times \SL_2(q)).C_{q-1}^2$, where $q = 2^n$;
\item $T= \mathrm{G}_2(q)$ and $T_P = [q^6]:C_{q-1}^2$, where $q = 3^n$.
\end{enumerate}

The first case (i) is ruled out by Lemma \ref{stab}. In the second case (ii), by the Classification
of Finite $2$-Transitive Groups we have that $t+1 \le q^3$. However, $|T:T_P| > q^9$, a
contradiction to Lemma \ref{lem:Pbounds}(i). In the third and fourth cases where $T= \F_4(q)$ and
$T_P = [q^{20}].\Sp_4(q).C_{q-1}^2$ or $[q^{22}].(\SL_2(q) \times \SL_2(q)).C_{q-1}^2$, by the
Classification of Finite $2$-Transitive Groups we have that $t+1 \le q^4$. However, $|T:T_P| >
q^{12}$, a contradiction to Lemma \ref{lem:Pbounds}(i). Finally, if $T= \mathrm{G}_2(q)$ and $T_P =
[q^6]:C_{q-1}^2$, then by the Classification of Finite $2$-Transitive Groups, we have that $t+1 \le
q$ and $|T:T_P| > q^3$, a contradiction to Lemma \ref{lem:Pbounds}(i). Therefore, $T_P$ is a maximal
subgroup of $T$.

\item[Case (C)] We now look at the candidates from Table~2 of \cite{largesubs}.

\begin{description}[leftmargin=12pt, style=unboxed, itemsep=1ex]

\item[(C1): $T=\soc(G)=\E_8(q)$, $\E_7(q)$, $\E_6(q)$, or $^2{}\E_6(q)$]

By Lemmas~\ref{comp-factor} and \ref{stab}, these candidates are excluded except for $T=\E_7(q)$ and
$T_P\rhd\PSL_8(q)$. In this case, $t+1=|\Gamma(P)|={q^8-1\over q-1}$, and
\begin{align*}
|\mathcal{P}|=|T:T_P| &= {q^{63}(q^{18}-1)(q^{14}-1)(q^{12}-1)(q^{10}-1)(q^8-1)(q^6-1)(q^2-1)\over
 q^{28}(q^8-1)(q^7-1)(q^6-1)(q^5-1)(q^4-1)(q^3-1)(q^2-1)}\\
  &>\left({q^8-1\over q-1}\right)^3=(t+1)^3,
 \end{align*}
a contradiction to Lemma \ref{lem:Pbounds}.

\item[(C2): $T=\F_4(q)$] The only possibilities are when $q=2$ and $T_P=\Sp_8(2)$ or $\PSL_4(3)$.
  For the former, $t+1=|\Gamma(P)|=2^7\pm2^3$, and
  \[|\mathcal{P}|=|T:T_P|={2^{24}(2^{12}-1)(2^8-1)(2^6-1)(2^2-1)\over
  2^{16}(2^8-1)(2^6-1)(2^4-1)(2^2-1)}>(2^7\pm2^3)^3,\] which is not possible. Similarly, the latter
  candidate is not possible.

\item[(C3): $T={}^2\F_4(q)$] Then either $T_P=\Sz(q)\times\Sz(q)$, or $q=2$ and $T_P = \Sp_4(2)$.
  For the former, $t+1=|\Gamma(P)|=q^2+1$, and
  \[|\mathcal{P}|=|T:T_P|={q^{12}(q^6+1)(q^6-1)(q^3+1)(q-1)\over q^4(q^2+1)^2(q-1)^2}>(q^2+1)^3,\]
  which is not possible. For the latter, $T_P=\Sp_4(2)\cong S_6$, and $t+1=|\Gamma(P)|=6$ or 10. A
  simple calculation shows that $|\mathcal{P}|=|T:T_P|>10^3$, which is a contradiction.

\item[(C4): $T={}^3\D_4(q)$] In this case, by Lemma \ref{stab} we need only consider the cases
  when $T_P$ is one of $2.(\PSL_2(q^3)\times\PSL_2(q)).2$, $\SL_3(q).C_{q^2+q+1}.2$,
  $\SU_3(q).C_{q^2-q+1}.2$, or $7^2:\SL_2(3)$ when $q=2$. If $T_P =
  2.(\PSL_2(q^3)\times\PSL_2(q)).2$, then $t+1 = q^3 + 1$ by Lemma \ref{comp-factor}. If $T_P =
  \SL_3(q).C_{q^2+q+1}.2$, then $t+1 \le q^2 + q + 1$ (unless $q = 2$, in which case $t+1 \le 8$).
  If $T_P = \SU_3(q).C_{q^2-q+1}.2$, then $t+1 \le q^3 + 1$. If $T_P = 7^2:\SL_2(3)$, then $t+1 \le
  49$. In any case, we have that $$|\mathcal{P}| = |T:T_P| > (t+1)^3,$$ a contradiction to Lemma
  \ref{lem:Pbounds}(i).

\item[(C5): $T=\G_2(q)$, $q>2$] Again using Lemma \ref{stab} to narrow down the possibilities and
  noting that $T_P^{\Gamma(P)}$ is not affine, we have that $T_P = \SL_3(q):2$, $\SU_3(q):2$,
  $\PSL_2(q) \times \PSL_2(q)$ if $q = 2^n$, $2.(\PSL_2(q) \times \PSL_2(q)):2$ if $q$ is odd,
  $\Ree(q)$ if $q = 3^{2n+1}$, $\PSU_3(3):2$ if $q = 5,7$, $\PSL_2(13)$ if $q = 3,4$, or
  $2^3:\SL_3(2)$ if $q = 3$.

If $T_P = \SL_3(q):2$, then $t+1 = q^2 + q + 1$ and $$|\mathcal{P}| = \frac{q^3(q^3 + 1)}{2} =
(s+1)(s(q^2+q) + 1).$$ This implies that $q^3$ (or $q^3/2$ if $q$ is even) divides $s+1$, a
contradiction since $s \le t$. If $T_P = \SU_3(q):2$, then $t+1 = q^3 + 1$ and $$|\mathcal{P}| =
\frac{q^3(q^3 - 1)}{2} = (s+1)(sq^3 + 1).$$ This implies that $q^3$(or $q^3/2$ if $q$ is even)
divides $s+1$, a contradiction since $s^2(t+1) < |\mathcal{P}|$ by Lemma \ref{lem:Pbounds} (ii). If
$T_P = \PSL_2(q) \times \PSL_2(q)$ or $2.(\PSL_2(q) \times \PSL_2(q)):2$, then $|\mathcal{P}| =
q^4(q^4+q^2+1) > (t+1)^3$ (we have $t+1 \le q+ 1$ unless $(q, t+1) = (4, 6)$, $(8, 28)$), a contradiction to
Lemma \ref{lem:Pbounds}(i). If $T_P = \Ree(q)$, then $t+1 = q^3+1$ and $$|\mathcal{P}| = q^3(q^3 -
1)(q+1) = (s+1)(sq^3 + 1).$$ However, this implies that $q^3$ divides $s+1$, which in turn implies
that $s+1 = q^3$ since $s \le t$. Substituting this value of $s$ back in yields $q^3(q^3 - 1)(q+ 1)
= q^3(q^3(q^3 - 1) + 1)$, which has no positive integral solutions. Finally, in the remaining cases,
we have: $t+1 = 28$ if $T_P = \PSU_3(3):2$, $t+1 = 14$ if $T_P = \PSL_2(13)$, and $t+1 = 8$ if $T_P
= 2^3:\SL_3(2)$. In each of these cases, $|T:T_P| > (t+1)^3$, a contradiction to Lemma
\ref{lem:Pbounds}(i).

\item[(C6): $T=\Sz(q)$] In this case, we get $q = 8$ and $T_P = C_{13}:C_4$. However, this means
  that $t+1 = 13$, and $(s+1)(12s+1) = |T:T_P| = 560$ has no integral roots, a contradiction.

\item[(C7): $T=\Ree(q)$] In this case, we have $T_P = C_2 \times \PSL_2(q)$, where $q = 3^{2n+1}$.
  This implies that $t + 1 = q + 1$, and so $|T:T_P|=q^2(q^2-q+1)$. Thus
  $(s+1)sq+(s+1)=(s+1)(st+1)=|\mathcal{P}|=q^2(q^2-q+1)$. This implies that $q$ divides $s+1$, and
  then $q^2$ divides $s+1$, which is not possible since $s\leqslant t =q$.
\end{description}

Therefore, we have ruled out the candidates from Table~2 of \cite{largesubs}.

\item[Case (D)] Now suppose that $G_P$ is a maximal parabolic subgroup of $G$. Then $G_P$ is an
  extension of a $p$-group by the Chevalley group determined by a maximal subdiagram of the Dynkin
  diagram of $G$.

\begin{description}[leftmargin=12pt, style=unboxed, itemsep=1ex]

\item[(D1): $T=\E_8(q)$]

Since a composition factor of $G_P$ is not $\E_7(q)$, $\E_6(q)$, $\POmega^+_{10}(q)$, or
$\POmega^+_{14}(q)$ by Lemma~\ref{comp-factor}, we conclude that $G_P=R.K.\calO$, where $R$ is the
solvable radical of $G_P$, $\calO\leqslant\Out(K)$, and $K$ is one of the following groups:
\[\PSL_4(q)\times\PSL_5(q),\ \PSL_2(q)\times\PSL_3(q)\times\PSL_5(q),\ \PSL_8(q),\
\PSL_2(q)\times\PSL_7(q).\] By Lemma~\ref{comp-factor}, the corresponding value of $t+1$ is equal to
\[q^6,\ {q^5-1\over q-1},\ q^5,\ {q^5-1\over q-1},\ q^8,\ {q^8-1\over q-1},\ q^7,\ {q^7-1\over
q-1}.\] Calculation shows that index $|T:T_P|$ is larger than $q^{30}-1$, which is not possible
since $|T:T_P|$ should be less than $(t+1)^3$.

\item[(D2): $T=\E_7(q)$]

By Lemma~\ref{comp-factor}, a composition factor of $G_P$ is not $\E_6(q)$, $\POmega^+(10,q)$, or
$\POmega^+(12,q)$, we conclude that $G_P=R.K.\calO$, where $R$ is the solvable radical of $G_P$,
$\calO\leqslant\Out(K)$, and $K$ is one of the following groups: \[\PSL_3(q)\times\PSL_5(q),\
\PSL_2(q)\times\PSL_3(q)\times\PSL_4(q),\ \PSL_7(q),\ \PSL_2(q)\times\PSL_6(q).\] By
Lemma~\ref{comp-factor}, the corresponding value of $t+1$ is equal to \[q^5,\ {q^5-1\over q-1},\
q^4,\ {q^4-1\over q-1}, \ 8 \mbox{ (with $q=2$)},\ q^7,\ {q^7-1\over q-1},\ q^6,\ {q^6-1\over
q-1}.\] Calculation shows that index $|T:T_P|$ is larger than $(q^{18}-1)(q^{14}-1)$, which is not
possible since $|T:T_P|$ should be less than $(t+1)^3$.

\item[(D3): $T=\E_6(q)$]

A parabolic subgroup $G_P=R.K.\calO$, where $R$ is the solvable radical of $G_P$,
$\calO\leqslant\Out(K)$, and $K$ is isomorphic to one of the following groups: \[\POmega^+_{10}(q),\
\PSL_2(q)\times\PSL_5(q),\ \PSL_2(q)\times\PSL_3(q)\times\PSL_3(q),\ \PSL_6(q).\] By
Lemma~\ref{comp-factor}, first candidate is not possible, and the other three have values of $t+1$
as below: \[q^5,\ {q^5-1\over q-1},\ q^3,\ {q^3-1\over q-1}, \ 8 \mbox{ (with $q=2$)},\ q^6,\
{q^6-1\over q-1}.\] Calculation shows that index $|T:T_P|$ is larger than
$(q^{12}-1)(q^6+q^3+1)(q^4+1)$, which is larger than $(t+1)^3$, not possible.

\item[(D4): $T=\F_4(q)$]

By Lemma~\ref{comp-factor}, the candidates for $T_P$ are $R.\Sp_6(q)$ with $q$ even, $R.\Sp_6(2)$
with $q=2$, and $R.(\PSL_2(q)\times\PSL_3(q)).\calO$, where $R$ is the solvable radical of $T_P$ and
$\calO\leqslant\Out(\PSL_2(q)\times\PSL_3(q))$.

If the third case occurs, then $q=2$ and $t+1=q^3, q^2+q+1,$ or $8$, and
$|T:T_P|>(q^{12}-1)(q^8-1)$, which is greater than $(t+1)^3$, not possible. If we are in the second
case, then the valency $t+1=2^6$, $2^5-2^2$ or $2^5+2^2$, and the index
\[(s+1)(st+1)=|T:T_P|=(2^{12}-1)(2^4+1).\] Suppose that $t+1=2^6$. Then
$s^2t+st+s+1=2^{16}+2^{12}-2^4-1$. It implies that $2^6$ divides $2^4+2+s$. Since $s\leqslant
t=2^6-1$, we conclude that $2^6=2^4+2+s$, namely, $s=2^6-2^4-2$. Inserting this value of $s$ into
$(s+1)(st+1)=(2^{12}-1)(2^4-1)$ leads to a contradiction. If $t+1=2^5-2^2$ or $2^5+2^2$, then
$|\mathcal{P}|=(2^{12}-1)(2^4+1)>(2^5+2^2)^3\geqslant(t+1)^3$, which is a contradiction.

Finally, if we are in the first case, then $t+1 = q^6$ and by \cite{fsg} we have $T_P = (q^6 \times
q^{1+8}):\Sp_6(q).(q-1)$. This means that $$|\mathcal{P}| = |T:T_P| =
\frac{(q^{12}-1)(q^8-1)}{(q^4-1)(q-1)} = (s+1)(s(q^6 -1) + 1).$$ Furthermore, from the structure of
$T_P$ and $T_P^{\Gamma(P)}$, we know that $q^9$ divides $|T_\ell|$, since the $q^{1+8}$ must be in
the kernel of the local action. Moreover, $$|\mathcal{L}| = q^6((q^6 - 1)s + 1)$$ is coprime to
$(q^6 - 1)$, and since $|\mathcal{L}| = |T:T_\ell|,$ we have also that $(q^2 - 1)^2(q^6 - 1)^2$
divides $|T_\ell|$. This implies that $|T| < |T_\ell|^3$, and so $T_\ell$ is a large maximal
subgroup of $T$. Reasoning as above for the non-parabolic maximal subgroups of $\F_4(q)$, we
conclude that $q=2$, which was ruled out above. This leaves only parabolic subgroups. Since $s \ge
\sqrt{q^6 - 1}$ by Lemma \ref{lem:GQbasics} (iii), examining the possibilities for $T_\ell$ we see
that this implies that $s+1 = q^3, q^4,$ or $q^6$. However, none of these values of $s$ gives a real
solution to $(s+1)((q^6 - 1)s + 1) = (q^{12} - 1)(q^4+1)/(q-1)$, a final contradiction.

\item[(D5): $T={}^2\E_6(q)$] By Lemma~\ref{comp-factor}, the candidate for $T_P$ are
  $R.(\SL_3(q^2)\times\SL_2(q))$ or $R.(\SL_2(q^2)\times\SL_3(q))$, refer to \cite[p.101]{GLS3}.
  Then $t+1\leqslant q^6$, and $|\mathcal{P}|=|T:T_P|\ge (q^9+1)(q^5+1)(q^4+1)$, which is not
  possible by Lemma~\ref{lem:Pbounds}.

\item[(D6): $T={}^2\F_4(q)$, where $q=2^{2n+1}$]

By \cite{Malle}, a maximal parabolic subgroup of $T$ is isomorphic to $[q^{11}]\colon
(\PSL_2(q)\times C_{q-1})$ or $[q^{10}]\colon (\Sz(q)\times C_{q-1})$. For the former case, the
valency $t+1=|\Gamma(P)|$ equals $q^2$, or $q+1$, or $q$ with $q=5$, $7$ or $11$, or $6$ with $q=9$,
and the number of points $|T:T_P|=(q^6+1)(q^4-1)(q^3+1)/(q-1)$. For the second case,
$t+1=|\mathcal{P}|=q^2+1$, and $|T:T_P|=(q^6+1)(q^2-1)(q^3+1)(q+1)$. In each case, we have
$|\mathcal{P}|=|T:T_P|>(t+1)^3$, which is a contradiction.

\item[(D7): $T = {}^3\D_4(q)$] In this case, $T_P = q^{1+8}\colon \SL_2(q^3).C_{q-1}$ or
  $q^{2+3+6}\colon \SL_2(q).C_{q^3-1}$. If $T_P = q^{2+3+6}\colon \SL_2(q).C_{q^3-1}$, then by the
  Classification of Finite $2$-Transitive Groups, we have that $t+1 \le q^3$. However, in this case
  $|T:T_P| = (q^8 + q^4 + 1)(q^3 + 1) > (t+1)^3$, a contradiction to Lemma \ref{lem:Pbounds}(i).
  Hence we have $T_P = q^{1+8}\colon \SL_2(q^3).C_{q-1}$. In this case, $t+1=q^6$ or $q^3+1$ (note
  that $t+1 \neq q$ by Lemma \ref{comp-factor}), and so $t=q^6-1$ or $q^3$. The cardinality
  $|\mathcal{P}|=(s+1)(st+1)=q^8+q^4+1$. If $t=q^6-1$, then $s\geqslant q^3$, and thus
  \[q^8+q^4+1=(s+1)(st+1)>q^3(q^3(q^6-1)+1),\] which is not possible. For $t=q^3$, we have
  \[q^8+q^4+1=|\mathcal{P}|=(s+1)(sq^3+1)=s^2q^3+sq^3+s+1,\] so $q^8+q^4=s^2q^3+sq^3+s$. It implies
  that $s$ is divisible by $q^3$, and hence $s=q^3=t$. Therefore,
  $q^8+q^4+1=|\mathcal{P}|=(q^3+1)(q^6+1)$, which is a contradiction.

\item[(D8): $T = \G_2(q)$]

In this case, $T_P = [q^5]:\GL_2(q)$. This implies that $|\mathcal{P}| = |T:T_P| = (q^6 - 1)/(q-1)$.
By the Classification of Finite $2$-Transitive Groups, we have that $t+1 = q^2$ or $t+1 \le q+1$. If
$t+1 \le q+1$, then $(t+1)^3 < |\mathcal{P}|$, a contradiction to Lemma \ref{lem:Pbounds}(i). Hence
$t+1 = q^2$. Now, since $T_P^{\Gamma(P)}$ is affine, we know that $T_P^{[1]} \le T_\ell$ contains a
group of order $q^3$. Moreover, since $$|T:T_\ell| = |\mathcal{L}| = q^2((q^2-1)s + 1)$$ is coprime
to $q^2-1$, we have also that $(q^2 - 1)^2$ divides $|T_\ell|$. Hence $q^3(q^2 - 1)$ divides
$|T_\ell|$. However, examining the list of subgroups of $T$ (see, for instance, \cite[Table
4.1]{fsg}), none satisfies this property, a contradiction.

\item[(D9): $T = \Sz(q)$] In this case, $T_P=q^{1+1}.C_{q-1}$, $t+1 =q$, and the index
  $|T:T_P|=q^2+1$. Then \[q^2+1=|T:T_P|=|\mathcal{P}|=(s+1)(st+1)=(s+1)(s(q-1)+1).\] It implies that
  $sq(s+1)-s^2=q^2$. Thus $s$ divides $q^2=2^{2n}$, so $s=2^m$ for some $m\leqslant 2n$. Hence
  $2^{m+n}(2^m+1)=sq(s+1)=s^2+q^2=2^{2m}+2^{2n}$, which is not possible.

\item[(D10): $T = \Ree(q)$] In this case, $T_P = q^{1+1+1}:C_{q-1}$. By the Classification of
  Finite $2$-Transitive Groups, we have that $t+1 \le q$. This implies that $$|\mathcal{P}| =
  |T:T_P| = q^3 + 1 > (t+1)^3,$$ a contradiction to Lemma \ref{lem:Pbounds}(i).

\end{description}
\end{description}
Therefore, $T$ cannot be an exceptional group of Lie type.
\end{proof}

\begin{proof}[Proof of Theorem \ref{maintheorem}]
Suppose that $G$ acts transitively on the antiflags of a finite thick generalized quadrangle $\mathcal{Q}$.  By Theorem \ref{thm:characterization}, if $\mathcal{Q}$ is not the unique generalized quadrangle of order $(3,5)$ or its dual, then $G$ is an almost simple group of Lie type acting primitively on both the points and lines of $\mathcal{Q}$.  By Propositions \ref{prop:linearmaxlgeom}, \ref{prop:unitarymaxlgeom}, \ref{prop:symplecticmaxlgeom}, \ref{prop:orthogonalmaxlgeom}, \ref{prop:notC9}, \ref{prop:leftover2}, \ref{prop:noveltydimatleast13}, and \ref{excep-Lie}, we have that any finite thick generalized quadrangle with a collineation group that is an almost simple group of Lie type acting primitively on points, primitively on lines, and transitively on antiflags is a classical generalized quadrangle or the dual of a classical generalized quadrangle, as desired. 
\end{proof}

\section{Concluding remarks}

In this paper, our classification was made possible due mostly in part to Theorems
\ref{thm:3arcNtrans} and \ref{thm:3arcLimprim} that reduce the problem to the case that $G$ acts
primitively on both points and lines of almost simple type. From there, we showed that the
point-stabilizer is \emph{large} and used the work of Alavi and Burness \cite{largesubs} to
determine the possibilities for the almost simple group and its point-stabilizer. For the much
weaker hypothesis of local $2$-arc-transitivity, we can still give strong structural information. In
a forthcoming paper \cite{Locally2}, we show that if $G$ acts locally-$2$-arc-transitively on the
incidence graph $\Gamma$ of a generalized quadrangle $\mathcal{Q}$, and if $G$ acts quasiprimitively
on both points and lines of $\mathcal{Q}$, then $G$ is almost simple.

\medskip

\subsection*{Acknowledgements} We would like to thank Professor Tim Burness for his aid in
determining the novelty parabolic subgroups of exceptional groups of Lie type.

\bibliographystyle{plain}
\bibliography{Loc2arcQuasiprimGQ}

\begin{thebibliography}{10}

\bibitem{largesubs}
Seyed~Hassan Alavi and Timothy~C. Burness.
\newblock Large subgroups of simple groups.
\newblock {\em J. Algebra}, 421:187--233, 2015.

\bibitem{primGQ}
J.~Bamberg, M.~Giudici, J.~Morris, G.F. Royle, and P.~Spiga.
\newblock Generalised quadrangles with a group of automorphisms acting
  primitively on points and lines.
\newblock {\em J. Combin. Theory Ser. A}, 119:1479--1499, 2012.

\bibitem{GQtranshyp}
J.~Bamberg, S.P. Glasby, T.~Popiel, and C.E. Praeger.
\newblock Generalized quadrangles and transitive pseudo-hyperovals.
\newblock {\em J. Combin. Des.}, to appear, DOI: 10.1002/jcd.21411.

\bibitem{Locally2}
John Bamberg, Cai~Heng Li, and Eric Swartz.
\newblock Locally $2$-transitive generalized polygons.
\newblock preprint.

\bibitem{BrayHoltRoney-Dougal}
John~N. Bray, Derek~F. Holt, and Colva~M. Roney-Dougal.
\newblock {\em The maximal subgroups of the low-dimensional finite classical
  groups}, volume 407 of {\em London Mathematical Society Lecture Note Series}.
\newblock Cambridge University Press, Cambridge, 2013.
\newblock With a foreword by Martin Liebeck.

\bibitem{DRG}
A.~E. Brouwer, A.~M. Cohen, and A.~Neumaier.
\newblock {\em Distance-regular graphs}, volume~18 of {\em Ergebnisse der
  Mathematik und ihrer Grenzgebiete (3) [Results in Mathematics and Related
  Areas (3)]}.
\newblock Springer-Verlag, Berlin, 1989.

\bibitem{BuekenhoutHvM1994b}
F.~Buekenhout and H.~Van~Maldeghem.
\newblock A characterization of some rank {$2$} incidence geometries by their
  automorphism group.
\newblock {\em Mitt. Math. Sem. Giessen}, (218):i+70, 1994.

\bibitem{BuekenhoutHvM1994}
F.~Buekenhout and H.~Van~Maldeghem.
\newblock Finite distance-transitive generalized polygons.
\newblock {\em Geom. Dedicata}, 52(1):41--51, 1994.

\bibitem{CameronPerm}
P.J. Cameron.
\newblock {\em Permutation Groups}.
\newblock Cambridge University Press, 1999.

\bibitem{atlas}
J.H. Conway, R.T. Curtis, S.P. Norton, R.A. Parker, and R.A. Wilson.
\newblock {\em Atlas of Finite Groups}.
\newblock Oxford University Press, 2005.

\bibitem{loccompbit}
W.~Fan, D.~Leemans, C.H. Li, and J.~Pan.
\newblock Locally 2-arc-transitive complete bipartite graphs.
\newblock {\em J. Combin. Theory Ser. A}, 120:683--699, 2013.

\bibitem{localrees}
X.G. Fang, C.~H. Li, and C.E. Praeger.
\newblock The locally 2-arc transitive graphs admitting a {R}ee simple group.
\newblock {\em J. Algebra}, 282:638--666, 2004.

\bibitem{Feit:1964aa}
Walter Feit and Graham Higman.
\newblock The nonexistence of certain generalized polygons.
\newblock {\em J. Algebra}, 1:114--131, 1964.

\bibitem{GAP4}
The GAP~Group.
\newblock {\em {GAP -- Groups, Algorithms, and Programming, Version 4.7.8}},
  2015.

\bibitem{localsarc}
M.~Giudici, C.H. Li, and C.E. Praeger.
\newblock Analysing finite locally $s$-arc transitive graphs.
\newblock {\em Trans. Amer. Math. Soc.}, 356(1):291--317, 2004.

\bibitem{localstar}
M.~Giudici, C.H. Li, and C.E. Praeger.
\newblock Characterizing finite locally $s$-arc transitive graphs with a star
  normal quotient.
\newblock {\em J. Group Theory}, 9:641--658, 2006.

\bibitem{localdifferent}
M.~Giudici, C.H. Li, and C.E. Praeger.
\newblock Locally $s$-arc transitive graphs with two different quasiprimitive
  actions.
\newblock {\em J. Algebra}, 299:863--890, 2006.

\bibitem{GLS3}
Daniel Gorenstein, Richard Lyons, and Ronald Solomon.
\newblock {\em The classification of the finite simple groups. {N}umber 3.
  {P}art {I}. {C}hapter {A}}, volume~40 of {\em Mathematical Surveys and
  Monographs}.
\newblock American Mathematical Society, Providence, RI, 1998.
\newblock Almost simple $K$-groups.

\bibitem{HMc61}
D.~G. Higman and J.~E. McLaughlin.
\newblock Geometric {$ABA$}-groups.
\newblock {\em Illinois J. Math.}, 5:382--397, 1961.

\bibitem{Kantor1991}
W.~M. Kantor.
\newblock Automorphism groups of some generalized quadrangles.
\newblock In {\em Advances in finite geometries and designs ({C}helwood {G}ate,
  1990)}, Oxford Sci. Publ., pages 251--256. Oxford Univ. Press, New York,
  1991.

\bibitem{KantorLiebler}
William~M. Kantor and Robert~A. Liebler.
\newblock The rank {$3$} permutation representations of the finite classical
  groups.
\newblock {\em Trans. Amer. Math. Soc.}, 271(1):1--71, 1982.

\bibitem{KleidmanLiebeck}
Peter Kleidman and Martin Liebeck.
\newblock {\em The subgroup structure of the finite classical groups}, volume
  129 of {\em London Mathematical Society Lecture Note Series}.
\newblock Cambridge University Press, Cambridge, 1990.

\bibitem{AutTBd}
Stefan Kohl.
\newblock A bound on the order of the outer automorphism group of a finite
  simple group of given order.
\newblock
  \url{http://www.gap-system.org/DevelopersPages/StefanKohl/preprints/outbound.pdf}.

\bibitem{localsporadic}
D.~Leemans.
\newblock Locally $s$-arc transitive graphs related to sporadic simple groups.
\newblock {\em J. Algebra}, 322:882--892, 2009.

\bibitem{LiSeressSong}
Cai~Heng Li, {{\'A}}kos Seress, and Shu~Jiao Song.
\newblock {$s$}-arc-transitive graphs and normal subgroups.
\newblock {\em J. Algebra}, 421:331--348, 2015.

\bibitem{Li-Song}
Cai~Heng Li and Shu~Jiao Song.
\newblock On the stabilisers of locally $s$-arc transitive graphs.
\newblock {\em Preprint}.

\bibitem{Malle}
Gunter Malle.
\newblock The maximal subgroups of {${}^2F_4(q^2)$}.
\newblock {\em J. Algebra}, 139(1):52--69, 1991.

\bibitem{Ostrom:1958qv}
T.~G. Ostrom.
\newblock Dual transitivity in finite projective planes.
\newblock {\em Proc. Amer. Math. Soc.}, 9:55--56, 1958.

\bibitem{Ostrom:1959ys}
T.~G. Ostrom and A.~Wagner.
\newblock On projective and affine planes with transitive collineation groups.
\newblock {\em Math. Z}, 71:186--199, 1959.

\bibitem{fgq}
S.E. Payne and J.A. Thas.
\newblock {\em Finite Generalized Quadrangles}.
\newblock European Mathematical Society, 2009.

\bibitem{quasiprim1}
C.~E. Praeger.
\newblock An {O}'{N}an-{S}cott theorem for finite quasiprimitive permutation
  groups and an application to 2-arc transitive graphs.
\newblock {\em J. London Math. Soc.}, 47:227--239, 1993.

\bibitem{localsuz}
E.~Swartz.
\newblock The locally 2-arc transitive graphs admitting an almost simple group
  of {S}uzuki type.
\newblock {\em J. Combin. Theory Ser. A}, 119:949--976, 2012.

\bibitem{fsg}
R.A. Wilson.
\newblock {\em The Finite Simple Groups}.
\newblock Springer, 2009.

\end{thebibliography}

\end{document}